\DeclareMathOperator{\sgn}{sgn}
\theoremstyle{plain}
\newtheorem{thm}{Theorem}[subsection]
\newtheorem{cor}[thm]{Corollary}
\newtheorem{prop}[thm]{Proposition}
\newtheorem{lem}[thm]{Lemma}
\newtheorem{claim}[thm]{Claim}
\newtheorem{fact}[thm]{Fact}
\theoremstyle{definition}
\newtheorem{defi}[thm]{Definition}
\theoremstyle{remark}
\newtheorem{remark}[thm]{Remark}
\newtheorem{example}[thm]{Example}
\newtheorem{question}[thm]{Question}
\title{ Lifting Branched covers to Braided Embeddings}
\author{Sudipta Kolay}
\address{School of Mathematics \\ Georgia Institute of Technology\\Atlanta, GA 30332, USA}
\email{skolay3@math.gatech.edu}
\date{}
\begin{document}
\maketitle

\begin{abstract} Braided embeddings are embeddings to a product disc bundle so the projection to the first factor is a branched cover. In this paper, we study which branched covers lift to braided embeddings, which is a generalization of the Borsuk-Ulam problem. We determine when a braided embedding in the complement of branch locus can be extended over the branch locus in smoothly (or locally flat piecewise linearly), and use it in conjunction with Hansen's criterion for lifting covers. We show that every branched cover over an orientable surface lifts to a codimension two braided embedding in the piecewise linear category, but there are non-liftable branched coverings in the smooth category. We  explore the liftability question for covers over the Klein bottle. In dimension three, we consider simple branched coverings over the three sphere, branched over two-bridge, torus and pretzel knots, obtaining infinite families of examples where the coverings do and do not lift. Finally, we also discuss some examples of non-liftable branched covers in higher dimensions. 

\end{abstract}

\section{Introduction}

A classical theorem of Alexander \cite{A1} states that every link in three space is isotopic to a closed braid. A theorem of Markov \cite{Mv} tells us that by stabilizations and conjugations we can go between any two braid closures which are isotopic.

The above results allows us to study knots and links (topological objects) from the point of view of braids (algebraic and combinatorial objects). This viewpoint has been helpful in the study of knot theory, especially in the construction of knot invariants.

One can hope that higher dimensional braids will play a similar role in higher dimensional knot theory. Braids have been generalized in higher dimensions in several different ways, our point of view is a natural one in studying embeddings of manifolds. In fact what we are calling braided embeddings have been studied by various mathematicians, using slightly different names: (branched) polynomial coverings by Hansen \cite{Ha,Ha4}, $d$-fat covers by Petersen~\cite{Pe}, folded embeddings by Carter and Kamada \cite{CK}, and sometimes without explicitly using a name such as in the work of Hilden-Lozano-Montesinos \cite{HLM}. Hansen~\cite{Ha,Ha1,Ha2,Ha3} extensively studied the lifting problem for honest coverings, giving a complete characterization of when a codimension two lift exists, and several results in higher codimension (see also the related work of Petersen~\cite{Pe}). Hansen~\cite{Ha4} also considered lifting branched coverings, although in a slightly different (less restrictive) setting than ours. Rudolph \cite{Ru}, Viro and Kamada \cite{Ka1,Ka2} studied braided surfaces in $\mathbb{R}^4$, obtaining analogues for Alexander's braiding theorem for surfaces. Carter and Kamada studied braided embeddings and immersions in low dimensions, and talked about existence and lifting problems.
Etnyre and Furukawa \cite{EF} studied braided embeddings in all dimensions, and focused on their interplay with contact embeddings. The author was able to show \cite{KAlex} the an analogue of Alexander's theorem in the piecewise linear category, about isotoping every closed oriented three manifold in $\mathbb{R}^5$ to be a braided embedding. The goal of this article will be to study the lifting problem, primarily in codimension two for branched cover over spheres.

We work in both the smooth and piecewise linear category linear categories. We will not mention the categories separately in case they behave similarly, however when necessary we will deal with them separately.

\subsection{Braided Embeddings}
 We say that an embedding $f:M\rightarrow N\times D^l$ is a \textit{co-dimension $l$ braided embedding} over $N$  if the embedding composed with the projection to $N$, $pr_1\circ f:M\rightarrow N$ is a (oriented in case both base and covering spaces are oriented) branched covering map.
\begin{remark} If we do not specify the co-dimension, a braided embedding would be a co-dimension two embedding of $M$ to $N\times D^2$.
\end{remark}

\begin{question}\label{QA}
Can every branched cover be lifted to a co-dimension $l$ braided embedding?
\end{question}

Since we are always working with compact manifolds, by Whitney embedding theorem \cite{W}, they always embed in a sufficiently high dimensional disc, and hence for sufficiently large $l$, any branched covering lifts to a codimension $l$ braided embedding (because it embeds in the disc factor). Hence it makes sense to ask the following:

\begin{question}\label{QB}

What is the smallest $l$ so that a given branched cover can be lifted to a co-dimension $l$ braided embedding?
\end{question}

Let us start begin by discussing the this question for some well known families of honest covering maps.
\begin{example}\label{egcircle}
    $p_n:S^1\rightarrow S^1$ defined by $z\mapsto z^n$, for $n\in \mathbb{N}$ (similar result holds for negative integers). For $n=1$, the map $p_1$ is the identity and hence an embedding, so $l=0$ in this case. For $n>1$, we claim that the smallest such $l$ is 2. It is clear that there are codimension two lifts. That there is no codimension one lift, is illustrated in Figure \ref{annbu} for the case when $n=3$. 
    \begin{figure}[!ht]
    \centering
    \includegraphics[width=7 cm]{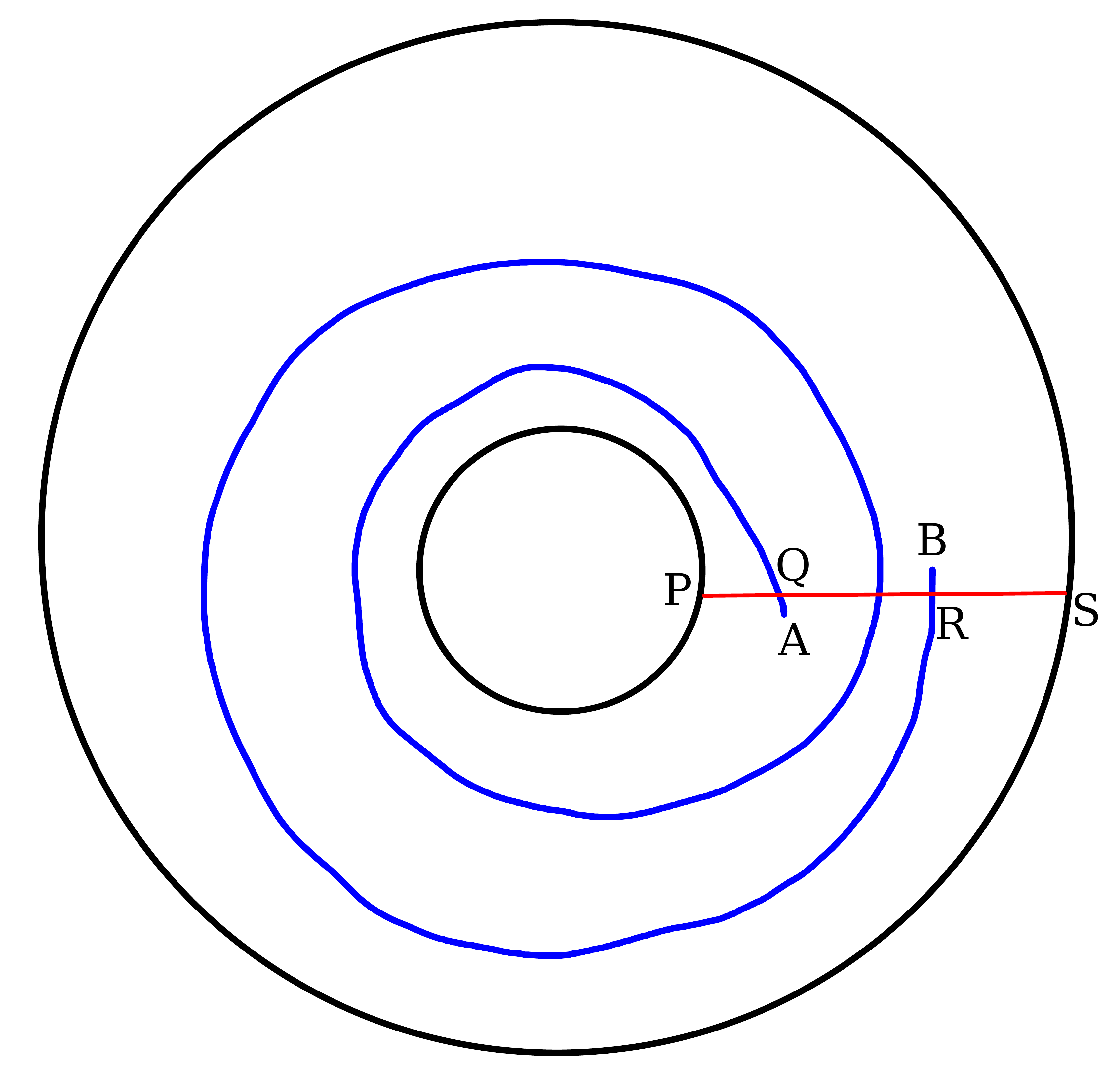}
    \caption{Suppose $n=3$, and we have a codimension 1 braided embedding of $S^1$ over $S^1$. Suppose we have drawn a part of the image starting at $A$ and ending at $B$, together with a transversal (constant slice of the annulus) meeting the image in three points. Since the leftmost (PQ) and rightmost (RS) segment of the transversal together with the segment along the curve from $Q$ to $R$ disconnects the annulus, there is no way to join $B$ to $A$ without causing an intersection.}
    \label{annbu}
\end{figure}
    
    Alternately, this statement also follows as a consequence of the intermediate value theorem. 
\end{example}

   \begin{example} 
  $a_n:S^n\rightarrow \mathbb{RP}^n$ induced by quotienting out by the antipodal action. In this case the smallest such $l$ turns out to be $n+1$.
    To see this note that since $S^n$ embeds in the disc $D^{n+1}$ (being the boundary of the closed disc), we have a braided embedding defined by $a_n\times i:S^n\rightarrow \mathbb{RP}^n\times D^{n+1}$ (where $i$ is some embedding of  $S^n$ in $D^{n+1}$). That $l$ cannot be any less than $n$ is precisely the content of the Borsuk Ulam theorem \cite{Bo}.
    
\end{example}

So the lifting problem is a generalization of the Borsuk Ulam theorem, where instead of allowing two-fold covers, we allow arbitrary branched covers.
The reason for allowing branched coverings instead of just coverings is that typically we will mostly be looking at (branched) coverings over the sphere, and since for $n>1$, the $n$-sphere is simply connected, in order to get interesting manifolds, we need to allow branching. For this paper we are going to focus on co-dimension two liftings of branched covers over spheres, because knots in codimension two turn out to be most interesting.

Let us discuss some fundamental results about branched coverings and embeddings to motivate our discussion of lifting branched covers to braided embeddings.
\subsection{Manifolds branched over spheres}
One can hope to understand all closed oriented manifolds of a given dimension $n$ coming from some sort of operation on a low-complexity $n$-manifold, like the sphere $S^n$. Another classical theorem of Alexander shows this is indeed the case:

\begin{thm}[Alexander \cite{A0}]
Every closed oriented piecewise linear $n$-manifold is a piecewise linear branched cover over the $n$-sphere.
\end{thm}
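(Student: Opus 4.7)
The plan is to realize $M$ as a simplicial branched cover of $S^n$ by means of a cleverly labeled triangulation. View $S^n$ as the union of two copies $\Delta^n_+$ and $\Delta^n_-$ of the standard $n$-simplex glued by the identity on their boundaries. If I had a triangulation of $M$ whose vertices carried labels in $\{0,1,\ldots,n\}$ such that every top-dimensional simplex contains exactly one vertex of each label, then mapping each $n$-simplex to $\Delta^n$ linearly according to labels would give a well-defined map $M \to \Delta^n$, and the remaining task would be to lift this to $S^n$ using the orientation of $M$.

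To produce such a chromatic triangulation, I would take any triangulation $T$ of $M$ (which exists since we are in the PL category) and pass to the barycentric subdivision $T'$. Each vertex of $T'$ is the barycenter of a unique simplex of $T$, and the dimension of that simplex supplies a canonical label in $\{0,1,\ldots,n\}$. An $n$-simplex of $T'$ corresponds to a maximal flag $\sigma_0 \subsetneq \sigma_1 \subsetneq \cdots \subsetneq \sigma_n$ in $T$, so it carries exactly one vertex of each label. This is essentially the only reason to pass to the barycentric subdivision.

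With the chromatic labeling in hand, I assign to each $n$-simplex $\sigma$ of $T'$ a sign $\epsilon(\sigma)\in\{\pm 1\}$ by comparing the orientation of $\sigma$ coming from the label ordering $0<1<\cdots<n$ to the orientation $\sigma$ inherits from the ambient oriented manifold $M$. Then I send $\sigma$ linearly to $\Delta^n_{\epsilon(\sigma)}\subset S^n$ via its labels. A direct computation using the simplicial boundary formula shows that across any shared $(n-1)$-face the two adjacent top simplices pick up opposite signs: the two vertices carrying the missing label contribute identically to the induced label orientation on the face, while the orientability of $M$ forces the two intrinsic orientations to disagree on that face. Hence the map $f:M\to S^n$ is well defined and continuous.

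The final step is to verify that $f$ is a PL branched covering. On the interior of any $n$-simplex $f$ is a PL homeomorphism, and near the interior of any $(n-1)$-face $f$ is locally modeled on the identification of two opposite hemispheres along their common face and so is still a local homeomorphism. Thus $f$ restricts to an honest covering away from the image of the $(n-2)$-skeleton and has the standard local cone structure of a branched covering across this codimension two set. I expect the main obstacle to be the first step, namely producing the chromatic labeling compatibly with orientation; once the barycentric subdivision resolves it the rest of the argument is essentially bookkeeping.
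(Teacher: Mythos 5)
Your argument is correct, and it is essentially Alexander's original proof; the paper does not reproduce an argument for this theorem but simply cites it from \cite{A0}. The dimension labeling on the barycentric subdivision, the sign $\epsilon(\sigma)$ comparing the label ordering with the ambient orientation, and the verification that adjacent $n$-simplices carry opposite signs (same induced label orientation on the shared face versus opposite induced manifold orientations) are exactly the classical ingredients; the only points left implicit are that compactness of $M$ makes the local homeomorphism over the complement of the $(n-2)$-skeleton proper and hence an honest covering, and that the ``compatibility with orientation'' you flag as the main obstacle is not actually needed at the labeling stage, since the orientation enters only afterwards through the choice of hemisphere.
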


We remark here that in the above result there is no control over the number of sheets of the covering, and the branch locus can be an arbitrary codimension two subcomplex.
Moreover, Bernstein and Edmonds \cite{BE0} showed that in general the theorem cannot be improved. In particular, they showed that:
\begin{itemize}
\item Any branched covering of the $n$-torus over the $n$-sphere must have at least $n$-sheets (this result was shown first by Fox \cite{F3} in case $n=4$).
    \item The quaternionic projective plane $\mathbb{HP}^2$
    cannot be obtained as a branched cover of the 8-sphere where the branch locus is a submanifold.
\end{itemize}

However, one can hope that in low dimensions, we can realize all closed oriented $n$-manifolds as a branched over over the sphere, with the branch locus being a submanifold and the degree of the covering being $n$. It is easy to see this in the case $n=1$, as $S^1$ is the only closed connected 1-manifold. By the classification of surfaces, we know that any closed oriented surface is determined by its genus $g$. We know that quotienting the surface of genus $g$  by the hyperelleptic involution gives us the 2-sphere with $2g+2$ branch points, see Figure~\ref{qhu}.

   \begin{figure}[!ht]
    \centering
    \includegraphics[width=7 cm]{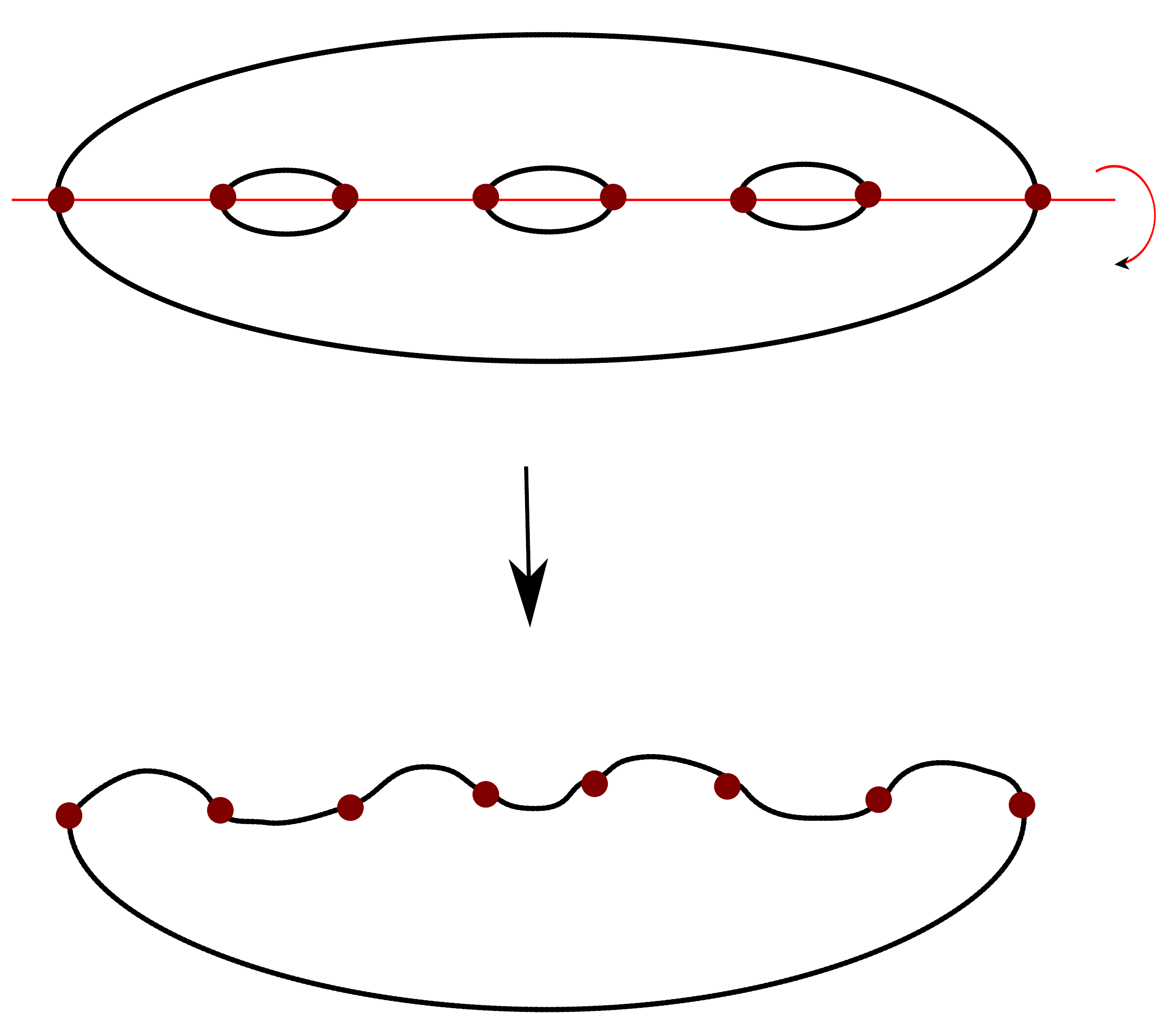}
    \caption{Consider the surface of genus $g$ sitting in $\mathbb{R}^3$ with an axis of rotational symmetry. If we quotient out we get a sphere with as many branch points as the number of times the axis intersects the surface.}
    \label{qhu}
\end{figure}

When $n=3$, we have the following theorem in both smooth and piecewise linear categories:
\begin{thm}[Hilden \cite{H}; Hirsch \cite{Hir}; Montesinos \cite{M1}]\label{HHMbc}
             Every closed oriented three manifold is a three sheeted simple branched cover over $S^3$, with the branch locus being a knot.
\end{thm}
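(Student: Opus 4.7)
The plan is to prove this via Heegaard splittings, reducing the statement to a descent problem for the gluing homeomorphism. Given $M$, take a genus $g$ Heegaard decomposition $M = H_g \cup_\phi H_g$ with gluing map $\phi:\Sigma_g \to \Sigma_g$. The strategy is to exhibit each handlebody as a $3$-fold simple branched cover of $B^3$ and to arrange $\phi$ so that these two covers fit together into a $3$-fold simple branched cover of $B^3 \cup B^3 = S^3$.

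First I would construct an explicit $3$-fold simple branched cover $\pi: H_g \to B^3$ whose branch locus is a collection of $g+1$ disjoint properly embedded unknotted arcs. This is combinatorial: realize $H_g$ as a regular neighborhood of a suitable planar graph in $\mathbb{R}^3$ and write down the cover by specifying a transposition-valued monodromy on each arc so that the total monodromy is trivial. Restricting to the boundary yields a $3$-fold simple branched cover $\Sigma_g \to S^2$ branched at the $2g+2$ endpoints of the arcs, which is the natural three-fold analogue of the hyperelliptic double branched cover pictured in Figure \ref{qhu}.

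The main step, and the principal obstacle, is to isotope $\phi$ so that it descends to a self-homeomorphism $\bar\phi$ of $S^2$ permuting the $2g+2$ branch points. This is a Birman--Hilden type statement for simple (non-cyclic) $3$-fold covers; because the deck group is trivial, one cannot average over deck transformations as in the hyperelliptic case, and one must instead work directly with mapping classes. I would aim to exhibit a generating set of $\mathrm{MCG}(\Sigma_g)$ by mapping classes that respect the covering structure, for example Dehn twists about curves that are themselves lifts of simple curves on $S^2$ avoiding the branch points, together with half-twists that permute the branch points. Carrying this descent through is the technical heart of the proof.

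Once $\phi$ descends to $\bar\phi$, the two handlebody covers fit into a commutative diagram producing a $3$-fold simple branched cover $M \to B^3 \cup_{\bar\phi} B^3$. By Alexander's trick $\bar\phi$ extends across $B^3$, so the base is $S^3$; the branch locus is the union of the two systems of arcs glued along their $2g+2$ endpoints, forming a link $L \subset S^3$. To finish, if $L$ has more than one component I would reduce it to a knot by banding: since the cover is simple, each component of $L$ carries a transposition monodromy, and two components with the same transposition can be joined by a band running across the Heegaard sphere without altering the total space $M$, while components with distinct transpositions can be first merged through a third arc. Iterating yields a single-component branch locus, completing the proof.
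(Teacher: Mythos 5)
The paper does not prove Theorem \ref{HHMbc}: it is quoted from Hilden \cite{H}, Hirsch \cite{Hir} and Montesinos \cite{M1}, so there is no internal argument to compare yours with; your outline is essentially the historical Heegaard-splitting route of Hilden and Montesinos. Judged as a proof, however, it has genuine gaps. First, the step you yourself call the technical heart---that the gluing map $\phi$ of the Heegaard surface can be isotoped to a fiber-preserving homeomorphism of the irregular $3$-fold simple cover $\Sigma_g\to S^2$---is precisely the content of Hilden's paper and of Birman--Hilden theory \cite{BH}; you only state the intention of producing a suitable generating set of projectable mapping classes, so the central lemma is assumed rather than proved. Second, the numerology is off: by Riemann--Hurwitz a $3$-fold simple cover $\Sigma_g\to S^2$ satisfies $\chi(\Sigma_g)=6-n$, so it needs $n=2g+4$ simple branch points, i.e.\ the handlebody $H_g$ is a $3$-fold simple cover of $B^3$ branched over $g+2$ trivial arcs, not $g+1$ arcs with $2g+2$ endpoints; you have carried over the hyperelliptic count from Figure~\ref{qhu}.

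More seriously, the final reduction of the branch locus from a link to a knot is backwards. Fusing two components whose meridians carry the \emph{same} transposition does in general change the covering manifold: for the $2$-component unlink with both meridians sent to $(12)$, the $3$-fold simple cover is $(S^1\times S^2)\sqcup S^3$, while after banding the two components into an unknot it becomes $S^3\sqcup S^3$. The reason is visible in the local model: a ball meeting the branch set in two trivial strands with equal transpositions has preimage a solid torus plus a ball, so exchanging the two trivial tangles inside it is a nontrivial regluing along a torus. The safe move is the opposite one: if the two strands at the feet of the band carry \emph{distinct} transpositions (which can always be arranged by letting the band run around a strand carrying the third transposition, so that the monodromy remains consistent), then the $3$-fold simple cover of the local ball is itself a ball both before and after the band move---the trivial tangle is $(D^2,\,2\text{ points})\times I$ and the simple $3$-fold cover of a disk with two branch points and distinct transpositions is a disk---so the modification only removes and reglues a ball along a sphere and cannot change $M$. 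With this corrected fusion lemma, and with Hilden's projectability theorem actually supplied, your outline becomes the standard proof; as written, both the descent step and the knot-connectedness step are respectively unproved and incorrect.
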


Here by simple branched covering, we mean above any branch point only two sheets can come together.
There are similar results in dimension 4, but with some restrictions.
\begin{thm}[Piergallini \cite{P}]\label{P4}
             Every closed oriented piecewise linear four manifold is a four sheeted simple branched cover over $S^4$, with the branch locus being a transverse immersed piecewise linear surface.
\end{thm}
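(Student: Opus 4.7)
The plan is to combine Alexander's theorem with a sequence of controlled modifications that simultaneously reduce the sheet number to four, enforce simplicity, and clean up the branch locus into a transverse immersed PL surface. I would work by analogy with the proof of Theorem~\ref{HHMbc} in dimension three, lifted one dimension via a handle decomposition.

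First I would apply Alexander's theorem to write $M^4$ as a PL branched cover $p : M^4 \to S^4$ of some (potentially very large) degree $d$, branched over an arbitrary codimension-two subcomplex $B \subset S^4$. Such a covering is determined by its monodromy $\rho : \pi_1(S^4 \setminus B) \to S_d$, so all further modifications can be tracked representation-theoretically. Next I would fix a handle decomposition of $M^4$ and split $S^4$ along an equatorial $S^3$ so that the restriction of $p$ to each 4-ball factor becomes a branched cover over a properly embedded 2-complex in $D^4$, compatible on $S^3$ with a Hilden--Hirsch--Montesinos presentation of the equatorial 3-manifold as a simple 3-fold branched cover over a link. This converts the problem into a statement about branched coverings of $D^4$ relative to their boundary data on $S^3$.

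The heart of the argument is a finite sequence of moves on the branched cover data, the 4-dimensional analogues of the Montesinos/Birman--Hilden moves. These come in three flavors: (i) \emph{stabilization}, which raises the degree by one at the cost of adjoining a trivial disc sheet to the branch set; (ii) \emph{band moves} on the branch 2-complex, which replace local non-generic singularities by pairs of transverse sheets of a PL surface; and (iii) \emph{simplification} moves that trade non-simple branching (three or more sheets coalescing) for simple branching by using the extra room afforded by the additional sheets. The aim is to show that after finitely many such moves the degree can be fixed at exactly $4$, the branching is simple, and the branch locus is a PL surface whose only singularities are transverse double points. Degree four is the smallest value at which there is enough room to resolve all local obstructions, mirroring the way degree three is critical in the Hilden--Hirsch--Montesinos theorem.

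The main obstacle will be the simplification step: one must show that every non-simple branching, and every non-immersed singularity of the branch 2-complex (branch points of order $\geq 3$, triple points of the surface, cone-like singularities, etc.), can be replaced locally by transverse double points of a PL surface \emph{without} introducing new bad singularities elsewhere, and that the process terminates. I would attack this with a local model analysis near each type of singularity combined with a Kirby-calculus style move calculus on branched covers, as developed by Montesinos and refined by Iori--Piergallini; a dimension count shows that transverse double points are the generic codimension-two singularity of a PL map of surfaces into a 4-manifold, so one expects all worse singularities to be removable by PL general position after the appropriate stabilizations have been introduced. Once termination is established, the resulting data gives the desired simple 4-fold branched cover with transverse immersed PL branch surface.
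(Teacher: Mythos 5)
This theorem is not proved in the paper at all: it is quoted as background and attributed to Piergallini \cite{P}, so your proposal has to be measured against Piergallini's actual argument rather than anything in this text. Measured that way, your outline has a genuine gap at exactly the point where the real content lies. Starting from Alexander's theorem gives a cover of huge, uncontrolled degree branched over an arbitrary codimension-two subcomplex, and there is no known ``move calculus'' that reduces such a cover to degree four; Piergallini does not proceed this way. He builds the degree-four cover directly from a handle decomposition: Montesinos's result that a four-dimensional $2$-handlebody is a simple $3$-fold cover of $D^4$ branched over a ribbon surface is applied to the $0$-,$1$-,$2$-handle part and (upside down) to the $3$-,$4$-handle part, and the whole difficulty is that the two induced simple $3$-fold presentations of the common boundary $3$-manifold do not match. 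The engine that makes them match is Piergallini's earlier covering-moves theorem: two simple $3$-fold branched covering presentations of the same $3$-manifold become equivalent through covering moves after stabilization to four sheets, and these moves can be realized inside $S^3\times[0,1]$ at the cost of introducing transverse double points of the branch surface. That theorem is where the number four and the nodal immersed branch surface come from, and it is entirely absent from your sketch.

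The step you do offer in its place does not work: you cannot remove non-simple branching or cone singularities of the branch $2$-complex by ``PL general position'' plus a dimension count, because the branch locus is not a free object to perturb --- the total space of the cover is determined by the monodromy of the complement, and a generic perturbation of the branch set changes the covering manifold. Every modification must be a covering move, i.e. a specific local change of the pair (branch set, monodromy) proved not to change the total space, and establishing a sufficient supply of such moves (and that they suffice after stabilizing to four sheets) is precisely the hard theorem you are implicitly assuming. The reference to Iori--Piergallini is also misplaced here: that work concerns the later refinement removing the nodes at the price of a fifth sheet, not the mechanism of the four-fold result. So as written the proposal is a plausible-sounding outline whose central step is either circular or false, and it would need the covering-moves equivalence theorem, together with Montesinos's ribbon-surface presentation of $2$-handlebodies, to be turned into a proof.
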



\subsection{Embeddings of manifolds}
Let us now discuss results about embedding manifolds in Euclidean space (or equivalently the sphere).

\begin{thm}[Whitney \cite{W} for smooth category, see \cite{RS} for piecewise linear category]
Every closed $n$-manifold embeds in $\mathbb{R}^{2n}$.
\end{thm}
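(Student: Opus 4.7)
The plan is to proceed in three stages in the smooth category: first embed $M$ into some very high-dimensional Euclidean space, then descend via generic linear projections to $\mathbb{R}^{2n+1}$, and finally to $\mathbb{R}^{2n}$ via the Whitney trick.

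Step 1 (embed into a large $\mathbb{R}^N$): Using compactness, I would cover $M$ by finitely many coordinate charts $\{(U_i,\varphi_i)\}_{i=1}^{k}$ with $\varphi_i:U_i\to\mathbb{R}^n$, pick a smooth partition of unity $\{\rho_i\}$ subordinate to a shrinking of this cover, and define
\[ f:M\to \mathbb{R}^{k(n+1)},\qquad f(x)=\bigl(\rho_1(x)\varphi_1(x),\rho_1(x),\ldots,\rho_k(x)\varphi_k(x),\rho_k(x)\bigr), \]
where each term is extended by zero outside its chart. The $\rho_i$-coordinates determine which chart $x$ lies in, and the $\rho_i\varphi_i$-coordinates then recover $x$; this makes $f$ an injective immersion, hence (by compactness) an embedding.

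Step 2 (descend to $\mathbb{R}^{2n+1}$): Given an embedding $f:M\hookrightarrow \mathbb{R}^N$ with $N>2n+1$, I would consider the secant map $\sigma:(M\times M\setminus\Delta)/\mathbb{Z}_2\to\mathbb{RP}^{N-1}$ sending $\{x,y\}$ to the line through $f(x),f(y)$, and the direction map $\tau:STM\to\mathbb{RP}^{N-1}$ on the unit tangent bundle. Their domains have dimensions $2n$ and $2n-1$ respectively, so when $N-1>2n$, Sard's theorem supplies a direction $v$ missing both images; orthogonal projection along $v$ then remains an embedding. Iterating lowers $N$ to $2n+1$.

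Step 3 (descend to $\mathbb{R}^{2n}$): One more generic projection yields an immersion $g:M\to\mathbb{R}^{2n}$ whose only singularities are finitely many transverse double points. The Whitney trick cancels these in pairs: given two double points whose local intersection signs are opposite, one finds an embedded disk in $\mathbb{R}^{2n}$ whose boundary is the union of two arcs on $g(M)$ joining the offending preimages, and an isotopy supported near this disk removes both. This step is the main obstacle, as it requires $n\geq 3$ to embed the Whitney disk via general position, together with an orientation/signed-count argument to ensure the double points pair up; the low-dimensional cases $n=1,2$ are handled by hand (classification of closed $1$-manifolds, respectively of closed surfaces). In the piecewise linear setting the same three-stage strategy applies, with PL general position and the PL analogue of the Whitney trick (cf.\ \cite{RS}) replacing the smooth tools.
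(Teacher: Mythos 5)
The paper itself gives no proof of this statement: it is quoted as classical background, with Whitney cited for the smooth case and Rourke--Sanderson for the PL case, so there is no in-paper argument to compare yours against. Your three-stage outline is the standard route to the strong Whitney theorem: a chart/partition-of-unity embedding into a large $\mathbb{R}^N$, descent to $\mathbb{R}^{2n+1}$ by choosing, via Sard, a projection direction missing the images of the secant map and of the tangent-direction map, and a final generic projection to an immersion of $M$ in $\mathbb{R}^{2n}$ with finitely many transverse double points, to be removed by the Whitney trick when $n\geq 3$ and by classification in dimensions $1$ and $2$. Steps 1 and 2 are fine as written.

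The genuinely incomplete point is the clause ``an orientation/signed-count argument to ensure the double points pair up.'' A generic projection need not produce double points that cancel in pairs: the number of double points can be odd, and the algebraic self-intersection count can be nonzero, and no bookkeeping argument alone fixes this. The essential extra ingredient in Whitney's proof is the local ``kink'' (cusp-type) modification, supported in a single chart, which introduces one new transverse double point of either prescribed sign; only after using such moves to normalize the self-intersection count to zero can one pair the double points with opposite signs and apply the trick. One also needs $M$ connected to choose the two arcs forming the Whitney circle (components are handled separately and then embedded disjointly), needs that circle to bound an embedded disk with interior disjoint from $g(M)$ --- available for $n\geq 3$ by general position and simple connectivity of $\mathbb{R}^{2n}$ --- and needs the framing condition on the disk, which is exactly where the opposite signs (or, for non-orientable $M$ or odd $n$, the fact that the sign of a double point is only defined after a choice of ordering/path, which makes pairing more flexible) enter. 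With the kink move made explicit and these conditions checked, your outline is Whitney's proof; without it, the final step as stated can fail.
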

There are characteristic class obstructions for embedding an arbitrary $n$-manifold in lower dimensional Euclidean space, for instance $\mathbb{RP}^n$ does not embed in $\mathbb{R}^{2n-1}$ when $n$ is a power of 2. However, just like the case of branched covers we have better bounds for low dimensional manifold. Every closed oriented surface embeds in $\mathbb{R}^3$, but as mentioned above, the real projective plane (or any of the non-orientable surfaces) do not embed in $\mathbb{R}^3$. It is a theorem of Hirsch \cite{Hi} that every closed oriented three manifold embeds in $\mathbb{R}^5$, and this was extended to the non-orientable case by Wall. 
  Hilden-Lozano-Montesinos \cite{HLM} gave an alternate proof of Hirsch's result, which in fact constructs a braided embedding\footnote{They used a slightly different terminology, but this is exactly what they proved.}.

\begin{thm}[Hilden-Lozano-Montesinos \cite{HLM}]
Every closed oriented three manifold has a three fold simple branched cover over $S^3$, which lifts to a braided embedding in $S^3\times D^2$.
\end{thm}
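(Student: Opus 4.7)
The plan is to combine Theorem~\ref{HHMbc} with an explicit braided realization of the resulting covering. First, I would apply Theorem~\ref{HHMbc} to the given closed oriented 3-manifold $M$ to obtain a 3-fold simple branched covering $p\colon M\to S^3$ whose branch locus is a knot $K\subset S^3$. This covering is classified by a transitive monodromy homomorphism $\rho\colon\pi_1(S^3\setminus K)\to S_3$ sending every meridian of $K$ to a transposition. The key idea is then to produce a lift $\tilde\rho\colon\pi_1(S^3\setminus K)\to B_3$ of $\rho$ through the standard surjection $B_3\twoheadrightarrow S_3$, where each meridian lifts to a conjugate of one of the standard generators $\sigma_1$ or $\sigma_2$.

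Given such a $\tilde\rho$, I would construct the desired embedding in two stages. Over the complement of $K$, the lift determines a 3-sheeted covering of $(S^3\setminus K)\times D^2$ that is a genuine embedding, because the lifted braid monodromy never forces the three sheets to collide in the disc factor; this produces a braided embedding of $p^{-1}(S^3\setminus K)$ into $(S^3\setminus K)\times D^2$ covering $p$. To extend across $K$, I would use the standard local model for a simple branch point: in a meridional disc to $K$, the three preimages consist of one point fixed by the monodromy and two swapped by it, and the condition that $\tilde\rho$ sends the meridian to a standard generator $\sigma_i$ (rather than, say, $\sigma_i^3$) ensures that the two colliding sheets sweep out a smooth (or locally flat PL) arc through the center of the fiber $\{x\}\times D^2$, while the third sheet stays bounded away. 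Gluing these local extensions to the braided embedding on the complement yields the desired embedding $M\hookrightarrow S^3\times D^2$.

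The hard part will be producing the lift $\tilde\rho$. Although $\rho$ satisfies all the Wirtinger relations for $\pi_1(S^3\setminus K)$ in $S_3$, the lifted meridional elements must satisfy these relations in $B_3$, and since $\ker(B_3\to S_3)=P_3$ is nonabelian, the obstruction is not controlled by a single cohomology class. The approach I would take is to first isotope $K$ into a tractable form (for instance, as the closure of a braid via Alexander's theorem, or into a bridge position) so that the Wirtinger presentation has a manageable structure, and then to exploit the specific form of the monodromy arising from the proof of Theorem~\ref{HHMbc}. Where necessary, I would allow myself to modify the branched cover data — for example by a stabilization that introduces an extra trivial branch arc — which changes neither the homeomorphism type of $M$ nor the property of being a 3-fold simple cover, but may provide the flexibility needed to realize the lift geometrically.
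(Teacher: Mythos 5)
There is a genuine gap, and it sits exactly where you admit ``the hard part'' lies. Your strategy is to take the three-fold simple branched covering $p\colon M\to S^3$ supplied by Theorem~\ref{HHMbc} and then lift its monodromy $\rho$ to a simple $B_3$-coloring $\tilde\rho$. But such a lift need not exist for the covering you are handed: there are simple three-fold branched coverings over $S^3$ that admit no lift at all, and exhibiting them is a main theme of this paper --- the Carter--Kamada tricoloring of $7_4$, the tricolorings of two-bridge knots such as $6_1$ ruled out by Theorem~\ref{2bridge} and the Alexander-polynomial obstruction of Theorem~\ref{alpolydiv}, and the Etnyre--Furukawa families. So ``apply Theorem~\ref{HHMbc}, then lift'' cannot be completed as stated; the failure is not a matter of finding a tractable Wirtinger presentation, it is genuinely obstructed. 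Your escape hatch --- modifying the branched cover data, e.g.\ by introducing extra trivial branching, until a lift appears --- is in fact the route taken by Hilden--Lozano--Montesinos, whose theorem this is (the paper only cites \cite{HLM} and emphasizes precisely this point: they do \emph{not} lift an arbitrary simple cover, but alter the branch locus, keeping the manifold upstairs fixed, until the covering is in a special form that lifts). That alteration-and-normal-form argument is the substantive content of the theorem, and your proposal gives no mechanism or proof that such modifications always succeed; as written it defers the entire difficulty.

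The second half of your plan is sound and matches the machinery of this paper: a simple $B_3$-coloring of $\pi_1(S^3\setminus K)$ yields a braided embedding over the complement by Hansen's criterion (Proposition~\ref{Hcri}), and it extends smoothly (or locally flatly PL) over the branch locus because meridians go to conjugates of $\sigma_1^{\pm1}$, so the braids surrounding branch points are completely split standard unlinks, which is exactly the extension condition of Theorem~\ref{ext} and the local models of Section~\ref{lm}. The missing piece is solely the existence, after suitable modification of the branch locus, of the simple $B_3$-coloring itself.
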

It is important to remark that their proof did not start with an arbitrary three fold simple branched cover and lift it to an embedding, they had to alter the branch locus (without changing the manifold upstairs) and bring it to a special form where the branched covering did lift. As we shall see, there are branched coverings over the three sphere which do not lift to a braided embedding.

Let us now go down in dimension and discuss if we can braided embed closed oriented one and two dimensional manifolds in trivial disc bundle of the sphere (and hence in Euclidean space).
Every closed one-manifold (that is disjoint union of circles) admits a codimension one\footnote{It admits a codimension zero embedding if and only if the manifold is connected} braided embedding in $S^1\times D^1$. In Example \ref{2fbc2s}, we see that the two fold branched cover of the genus $g$ orientable surface over the sphere obtained by quotienting by the hyperelleptic involution (see Figure \ref{qhu}) lifts to a co-dimension two braided embedding, and consequently every surface of genus $g$ braided embeds in $S^2\times D^2$.

Let us now turn to the main results of this paper, where we discuss:
\begin{question}\label{Q1}
Which branched covers over the sphere $S^n$ lift to co-dimension two braided embedding in $S^n\times D^2$?
\end{question}

The question is easy to answer for $n=1$, as any branched cover over $S^1$ must in fact be a covering map, and if we restrict to each component it must be equivalent to (in the notation of Example \ref{egcircle}) $p_n$ for some $n\in \mathbb{Z}\setminus\{0\}$, and thus any such covering lifts (these are the classical closed braids).\\

The situation becomes interesting for branched coverings of surfaces over the two-sphere, where we get different answers for the piecewise linear and smooth categories.

\begin{thm}\label{thm12} 
Every piecewise linear branched cover over $S^2$ lifts to a piecewise linear braided embedding in $S^2\times D^2$. In the smooth category, none of the odd fold cyclic branched covers over the spheres lift to 
a smooth co-dimension two braided embeddings, however the even fold cyclic branched covers, and all simple branched covers do lift.
\end{thm}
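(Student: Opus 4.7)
For the PL statement, let $B \subset S^2$ be the finite branch locus. The punctured sphere $S^2 \setminus B$ is homotopy equivalent to a wedge of circles, so $\pi_1(S^2 \setminus B)$ is free and the monodromy representation into $S_n$ lifts through the surjection $B_n \twoheadrightarrow S_n$. Hansen's criterion then produces a PL braided embedding of the unbranched cover over $(S^2 \setminus B) \times D^2$. To extend across each $b \in B$, where the branched cover is locally $z \mapsto z^k$, I cone the PL braid on a meridian circle around $b$ to the apex $(b, 0) \in S^2 \times D^2$; this cone is an embedded PL disc projecting onto the local branched cover, and assembling the cones gives the global PL braided embedding.

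The key input for both smooth statements is a local rigidity lemma at ramification points. Any smooth braided lift near a ramification point of index $k$ has the form $z \mapsto (z^k, h(z))$ with $h$ smooth, and since $z \mapsto z^k$ has vanishing derivative at $z=0$, the immersion condition on the lift forces $dh(0)$ to be an $\mathbb{R}$-linear isomorphism. Writing $h(z) = h(0) + Az + B\bar z + O(|z|^2)$, this is equivalent to $|A| \neq |B|$; a direct computation of the path of $\{h(\zeta z) : \zeta \in \mu_k\}$ as $z^k$ winds once then shows the local braid is the $1/k$-twist $\delta_k := \sigma_1\sigma_2\cdots\sigma_{k-1}$ when $|A|>|B|$ and its inverse $\delta_k^{-1}$ when $|A|<|B|$. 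Thus the only smooth local braids realizable near a ramification point are $\delta_k^{\pm 1}$.

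For the non-liftability of odd cyclic covers, a $k$-fold cyclic branched cover with $k \geq 3$ odd (and uniform generator monodromy, as in the standard construction) has permutation equal to the fixed $k$-cycle $\sigma = (1\,2\,\cdots\,k)$ at every branch point. Since $k$ is odd and $k \geq 3$, $\sigma \neq \sigma^{-1}$, so the local lift must be $\delta_k$ (and not $\delta_k^{-1}$) at every branch point. Hansen's boundary relation in $B_k$ then becomes $\delta_k^{\,n} = 1$, where $n$ is the number of branch points. This fails because $\delta_k$ has infinite order in $B_k$ for $k \geq 3$, so no smooth lift exists.

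For simple covers and even cyclic covers, the sign of the local lift becomes a free choice and liftability is restored. In the simple case each monodromy is a transposition, which is its own inverse, so both $\sigma^{\pm 1}$ project correctly; the free choice of signs at each branch point is flexible enough to close Hansen's boundary word in $B_n$. In the even cyclic case the group $\mathbb{Z}/k$ contains the involution $g^{k/2}$, whose associated permutation equals its inverse and therefore admits both signs of the local smooth lift, and when $k=2$ the classical hyperelliptic construction (rotation of a genus $g$ surface in $\mathbb{R}^3$ about an axis of symmetry) provides the lift directly. The main obstacle throughout is the rigorous local smooth classification of lifts near a ramification point; once the local braid is pinned down to $\delta_k^{\pm 1}$, both the odd cyclic obstruction and the liftability in the simple and even cyclic cases follow from the interplay between this rigidity and the availability (or not) of involutions in the corresponding monodromy groups.
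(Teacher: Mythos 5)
Your PL argument has a genuine gap precisely where the paper's main work lies. Lifting the monodromy over $S^2\setminus B$ is indeed automatic (free group), but the extension over a branch point $b$ is not achieved by coning an arbitrary lift: the preimage of $b$ consists of one point for each cycle of the local permutation, and these must map to \emph{distinct} points of $\{b\}\times D^2$, so coning the whole meridian braid to a single apex $(b,0)$ is not even injective when there is more than one cycle; and even with one cycle, the cone over a knotted closed braid is not locally flat. By the local model (Theorem~\ref{ext} / Section~\ref{lm}), the braid surrounding \emph{every} branch point must be a completely split unlink, and since the $m$ meridian braids are constrained by the relation $b_1\cdots b_m=1$ in $B_n$, you cannot choose them independently. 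Arranging a lift in which all meridian braids lie in $A_n$ simultaneously is the actual content of the theorem; the paper does this by an induction with Hurwitz sliding moves in the spirit of L\"uroth--Clebsch (Proposition~\ref{p2}), none of which appears in your plan.

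In the smooth part, your local rigidity statement (the braid around a ramification point of index $k$ is, up to conjugacy, $\sigma_1\cdots\sigma_{k-1}$ or its inverse) agrees with the paper, but the step ``$\sigma\neq\sigma^{-1}$ forces the local braid to be $\delta_k$ rather than $\delta_k^{-1}$'' is false: the forgetful map sends $\sigma_i$ and $\sigma_i^{-1}$ to the same transposition, so $\delta_k$ and $\delta_k^{-1}$ (and suitable conjugates) induce the same $k$-cycle. Indeed the paper's lift in the even case colors branch points with identical permutation monodromy by \emph{both} signs; your forced-sign reasoning would equally obstruct even cyclic covers, contradicting the other half of the theorem. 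The correct obstruction in the odd case is abelian: each meridian braid, being conjugate to $\delta_k^{\pm1}$, has exponent sum $\pm(k-1)$, and an odd number of such terms cannot sum to $0$; note also that the product of the meridian braids is a product of \emph{conjugates} of $\delta_k^{\pm1}$, not a power $\delta_k^{\,n}$, so ``$\delta_k$ has infinite order'' is not the relevant invariant. Finally, the positive statements are asserted rather than proved: the even cyclic case requires an actual verification that a chosen system multiplies to the identity (the paper uses the Garside identity $\Delta\beta=\alpha^{-1}\Delta$ to show $(\alpha,\dots,\alpha,\beta,\dots,\beta)$ works), and the simple case requires bringing the transposition system to the L\"uroth--Clebsch normal form and transporting the lift back through the Hurwitz moves; ``the free choice of signs is flexible enough to close Hansen's boundary word'' is the statement to be proved, not an argument for it.
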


Note that we have only answered the lifting problem for simple branched covers and cyclic branched covers in the smooth category. We do have an algebraic obstruction (see Remark \ref{algobs}) to the existence of a lift in the smooth category but we are unable to answer if it is the only obstruction.

From the above theorem, we also obtain similar results for (branched) coverings of more general orientable surfaces.
\begin{thm}\label{thmliftor} 
Every coverings over a  genus $g$ orientable surface $\Sigma_g$ lifts to a branched cover in $\Sigma_g\times D^2$. Moreover, every piecewise linear branched cover over an orientable genus $g$ surface $\Sigma_g$ lifts to a piecewise linear branched cover in $\Sigma_g\times D^2$. 
\end{thm}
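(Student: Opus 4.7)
My plan is to prove the two statements in sequence: first the claim about honest coverings, via Hansen's lifting criterion, and then the claim about PL branched coverings, by combining the covering case with Theorem \ref{thm12} on lifting PL branched covers over $S^2$.

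For the covering statement, a degree-$n$ covering $p \colon M \to \Sigma_g$ corresponds to a monodromy homomorphism $\phi \colon \pi_1(\Sigma_g) \to S_n$, and Hansen's criterion reduces the existence of a codimension-two braided lift to the existence of a lift of $\phi$ along the canonical surjection $B_n \to S_n$. The case $g = 0$ is trivial; for $g \ge 1$, writing $\pi_1(\Sigma_g) = \langle a_1, b_1, \ldots, a_g, b_g \mid \prod_{i=1}^g [a_i, b_i] \rangle$, the task is to find lifts $\tilde a_i, \tilde b_i \in B_n$ of $\phi(a_i), \phi(b_i)$ satisfying $\prod_{i=1}^g [\tilde a_i, \tilde b_i] = 1$ in $B_n$. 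My approach is orbit-by-orbit: for $g = 1$ the pair $(\phi(a_1), \phi(b_1))$ commutes by the torus relation and acts on each orbit as a transitive cyclic subgroup, so lifting the generating $k$-cycle on each size-$k$ orbit to the canonical braid $\sigma_1 \sigma_2 \cdots \sigma_{k-1} \in B_k$ yields commuting lifts whose commutator is trivial. For $g \ge 2$, I would choose arbitrary lifts for the first $g-1$ pairs and then attempt to pick the final pair so that its commutator cancels the accumulated product, exploiting the extra flexibility coming from having $2g$ generators but only one relation.

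For the PL branched cover statement, let $p \colon M \to \Sigma_g$ be a PL branched cover with finite branch set $B \subset \Sigma_g$. Pick a closed disk $D \subset \Sigma_g$ with $B \subset \mathrm{int}(D)$. The restriction of $p$ to $\Sigma_g \setminus \mathrm{int}(D)$ is an honest covering of a surface with free fundamental group $F_{2g}$, whose monodromy lifts without obstruction to $B_n$, producing a PL braided embedding on the complement. Meanwhile, $p|_D$ is a PL branched cover of a disk, which I extend to a PL branched cover of $S^2$ by capping $D$ off with a complementary disk carrying a trivial cover; Theorem \ref{thm12} then lifts this extended cover to a PL braided embedding over $S^2 \times D^2$, whose restriction to $D \times D^2$ provides the piece over $D$. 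Gluing along $\partial D \times D^2$ requires matching the two closed $n$-braids on the boundary, which I plan to achieve by multiplying the free-generator lifts on the complement side by suitable pure braids to absorb any discrepancy.

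The hard part will be the group-theoretic lifting step in the covering case when $g \ge 2$: arranging $\prod [\tilde a_i, \tilde b_i] = 1$ in $B_n$ is delicate because the pairs need not commute individually, and the abelianization $B_n^{\mathrm{ab}} \cong \mathbb{Z}$ constrains which pure braids can appear as products of commutators. A closely related difficulty is the boundary-matching step for the branched cover case, where the braid from the $S^2$-side is a specific pure braid that must be expressed as $\prod [\tilde a_i, \tilde b_i]$ for appropriate lifts. I expect both obstacles to yield to a careful combination of orbit decomposition, the PL flexibility provided by Theorem \ref{thm12}, and the additional freedom present when $g \ge 2$, but verifying this carefully is where the real content of the proof lies.
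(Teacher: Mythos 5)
Your plan leaves unproven exactly the step that \emph{is} the theorem. For the covering case with $g\geq 2$ you need lifts with $\prod_i[\tilde a_i,\tilde b_i]=1$ in $B_n$, and in the branched case you need $\prod_i[\tilde a_i,\tilde b_i]$ to equal a prescribed boundary braid; you only say you ``expect'' the extra freedom for $g\geq 2$ to make this work, which is not an argument --- indeed Petersen's methods only settled small degrees, and deciding which elements of $B_n$ are products of commutators of lifts with prescribed $S_n$-images is subtle (the abelianization constraint you note is just the first obstruction). Your torus step also contains an actual error: two commuting permutations acting transitively on an orbit generate a regular abelian group that need not be cyclic (e.g.\ $(12)(34)$ and $(13)(24)$ on four points), so ``lift the generating $k$-cycle'' does not apply to every orbit. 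Finally, in the branched part you cannot cap $D$ off with a trivial cover unless the monodromy around $\partial D$ --- which is $\prod_j\phi(x_j)$, i.e.\ $\prod_i[\phi(a_i),\phi(b_i)]^{\mp1}$ --- happens to be trivial; in general you must cone, introducing an extra branch point, and the boundary braid you then have to match on the complement side is a nontrivial completely split unlink braid, which returns you to the same unresolved commutator-realization problem.

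The paper sidesteps the surface-group relation entirely: compose $p\colon M\to\Sigma_g$ with the two-fold hyperelliptic branched covering $q\colon\Sigma_g\to S^2$ (in the branched case choosing $q$ so that the branch locus of $p$ misses the preimage of the branch locus of $q$); the composite $r=q\circ p$ is a branched cover of $S^2$, which lifts piecewise linearly by Theorem~\ref{bcl2s}; and Proposition~\ref{comp1} shows that if $r$ lifts then so does $p$, because the map $pr_2\circ R\colon M\to D^2$ separating the fibers of $r$ automatically separates the fibers of $p$. Local flatness is then verified by noting that the braid of $p$ around a branch point occurs as an interior braid of the braid monodromy of $r$ and invoking the local characterization of Subsection~\ref{SSextpl}. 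If you wish to rescue your direct approach, you would have to actually prove the statement that any permutation data on $\pi_1(\Sigma_g)$ (respectively any boundary-matching datum) is realized by a commutator product of lifts in $B_n$; that is essentially a new theorem, not a routine verification, and nothing in your outline supplies it.
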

We should note that the liftability of coverings of degree less than 5 (and regular coverings of degree less than 60) over $\Sigma_g$ was already known from the work of Petersen \cite[Example 5.8]{Pe}.

We also explore liftability of coverings a non-orientable surface, the Klein bottle, in Subections~\ref{2fklein} and \ref{liftklein}. Along the way, we find an error in a thirty year old theorem regarding lifting composition of coverings, see Remark~\ref{countereg}.\\

Going one dimension higher, we will look at which simple branched coverings over the three sphere lift (and the answer will be the same for piecewise linear and smooth categories). As we saw in Theorem \ref{thm12}, not all branched covers over $S^2$ lift in the smooth category, so one would expect that not all branched covers over $S^3$ would lift. Indeed, if we are looking at non-simple branched covers over links in $S^3$, it is not too hard to find examples which do not lift. For instance, we have multiple non-liftable branched covers over the Hopf link, see Example \ref{hopf}.

We can show that three infinite families of branched coverings that lift to braided embeddings, for instance over torus knots.
\begin{thm}
  Every three fold simple branched covering of $S^3$, branched over
  a torus knot lifts to a simple braided embedding.
  \end{thm}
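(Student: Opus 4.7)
The strategy is to combine Hansen's lifting criterion for honest covers (applied to the complement of the branch locus) with the author's extension theorem, which says that a braided embedding on the complement extends smoothly across the branch locus of a simple branched cover precisely when each meridian maps to a half-twist in $B_3$. Hence the problem becomes purely algebraic: for each simple 3-fold monodromy $\rho : \pi_1(S^3 \setminus T_{p,q}) \to S_3$, I must construct a lift $\tilde\rho : \pi_1(S^3 \setminus T_{p,q}) \to B_3$ along $B_3 \twoheadrightarrow S_3$ that sends every meridian to a conjugate of $\sigma_1$.

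Using the standard presentation $\pi_1(S^3 \setminus T_{p,q}) = \langle a, b \mid a^p = b^q \rangle$, such a lift is the data of $(\tilde A, \tilde B) \in B_3 \times B_3$ with $\tilde A^p = \tilde B^q$ projecting to the given $(A, B) \in S_3 \times S_3$; the meridian is $m = a^u b^v$ with $uq + vp = 1$. I would first enumerate the simple 3-fold monodromies. A short cycle-type analysis, combined with $\gcd(p, q) = 1$ and the requirement that the meridian map to a transposition, shows that the only transitive possibility (up to swapping the roles of $a$ and $b$) has $A$ a transposition and $B$ a 3-cycle, and forces $p$ to be even and $3 \mid q$. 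Non-transitive simple monodromies factor as a 2-fold branched cover together with a trivial sheet, and their lifts follow from the classical closed-braid construction for 2-fold covers after placing the extra sheet at a point of $D^2$ disjoint from the braid closure.

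For the connected case, I would build lifts as follows. Choose $\tilde A$ to be a conjugate of $\Delta = \sigma_1 \sigma_2 \sigma_1$ times a central correction $\Delta^{2a}$ (so $\tilde A^2$ is central), and $\tilde B$ to be a conjugate of $\sigma_1 \sigma_2$ times $\Delta^{2b}$ (so $\tilde B^3$ is central). The relation $\tilde A^p = \tilde B^q$ then reduces to a single linear Diophantine equation in $a, b$, whose solvability is an immediate consequence of $p$ being even, $3 \mid q$, and $\gcd(p, q) = 1$. The exponent-sum map $B_3 \to \mathbb{Z}$ further imposes $p \cdot e(\tilde A) = q \cdot e(\tilde B)$, and a case analysis on the residues of $p, q$ modulo $6$ shows that one can always choose $a, b$ so that the meridian image has exponent sum $\pm 1$, the necessary condition for it to be a half-twist.

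The main obstacle I anticipate is verifying that this meridian image is actually a half-twist, i.e., a conjugate of $\sigma_1$ in $B_3$, rather than merely an element of exponent sum $\pm 1$ with transposition image in $S_3$. This is a direct computation using the braid relation $\sigma_1 \sigma_2 \sigma_1 = \sigma_2 \sigma_1 \sigma_2$; for instance, in the trefoil case $T_{2,3}$ the meridian simplifies to $(\sigma_1 \sigma_2) \Delta^{-1} = \sigma_2^{-1}$. I expect the analogous simplification to go through uniformly, with the conjugating elements in the definitions of $\tilde A$ and $\tilde B$ chosen so that the meridian image becomes $\sigma_i^{\pm 1}$ on the nose, reducing the full theorem to a finite bookkeeping check parametrized by $(p, q) \bmod 6$.
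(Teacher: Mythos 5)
Your strategy is sound, but it is genuinely different from the paper's. The paper never touches the presentation $\langle a,b\mid a^p=b^q\rangle$: it quotes the classification of tricolorings of torus knots (a nontrivial tricoloring exists iff one of $p,q$ is an odd multiple of $3$ and the other is even, and it is then unique up to conjugation) and then simply exhibits an explicit periodic simple $B_3$-coloring of the standard $(p,q)$ torus-knot diagram lifting the main tricoloring (Figures \ref{trs} and \ref{trs2}); since the coloring pattern repeats horizontally and vertically, one picture handles all admissible $(p,q)$, and the constant colorings are immediate. You instead rederive the classification of transitive simple monodromies from the presentation (the orders of $\rho(a),\rho(b)$ divide $p,q$ because $a^p=b^q$ is central and $S_3$ is centerless) and build the lift algebraically out of periodic braids, using that $\Delta^2$ generates the center of $B_3$. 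Your route buys a diagram-free argument in which uniqueness of the lift is also visible; the paper's buys an explicit picture with essentially no case analysis.

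The step you flag at the end is the genuine crux, and your caution is warranted: exponent sum $\pm1$ plus transposition image does \emph{not} force a half-twist in $B_3$ (for instance $\sigma_1^2\sigma_2^{-1}$ has exponent sum $1$ and permutation $(23)$ but maps to a trace-$4$ matrix in $SL_2(\mathbb{Z})$, so it is not conjugate to $\sigma_1^{\pm1}$). Fortunately, with your ansatz the verification closes in a few lines, so your outline does complete to a proof. After conjugating the given coloring you may take $\tilde A=\Delta^{\alpha}$ and $\tilde B=(\sigma_1\sigma_2)^{\beta}$ with $\alpha$ odd and $\beta\equiv 1\pmod 3$; the relation $\tilde A^p=\tilde B^q$ is $3\alpha p=2\beta q$, which by $\gcd(p,q)=1$ forces $\alpha=tq/3$ and $\beta=tp/2$ for a single integer $t$, and $t$ is then exactly the exponent sum of $\tilde m=\tilde A^u\tilde B^v$ since $uq+vp=1$; one can arrange $t=\pm1$ because $p/2\not\equiv 0\pmod 3$. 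Moreover $u$ is odd (reduce $uq+vp=1$ modulo $2$), so $\tilde m=\Delta^{\alpha u}(\sigma_1\sigma_2)^{\beta v}=\Delta\,(\sigma_1\sigma_2)^{K}$ for some integer $K$, and exponent sum $3+2K=\pm1$ leaves only $K=-1$ or $K=-2$, giving $\tilde m=\sigma_2$ or $\sigma_1^{-1}$ on the nose. So the half-twist condition follows from the special periodic-times-central form of your lift, not from exponent-sum bookkeeping alone; with that observation, together with your treatment of the non-transitive (constant) colorings, your argument is complete.
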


It turns out that not all simple branched covers over $S^3$ lift to braided embeddings, and we will try to answer the lifting problem of simple branched covers for some classes of knots and links. In fact, Etnyre and Furukawa \cite{EF} gave an infinite family of examples  of non-liftable simple branched covers over $S^3$ (of arbitrarily high degree),  and they used contact geometry to show there is no lift. They also showed connections between braided embeddings and contact embeddings in  three manifolds in $S^5$; in particular, if we can lift simple branched coverings where the branch locus is a closed braid we canonically obtain contact embeddings.

Carter and Kamada \cite{CK} first exhibited a three fold simple branched covering, branched over the $7_4$ knot, which does not lift to a braided embedding, using obstructions for existence of epimorphisms of knot groups. Here we will fill in a slight gap in their argument regarding
 the assumption of surjectivity of the group homomorphism. Moreover, we generalize this example of non-liftable tricoloring of $7_4$ knot to all two-bridge links in $S^3$. We give a complete characterization of which tricolorings of a two-bridge links lift to a braided embedding.
 
 \begin{thm}\label{2bridgeA} Suppose we are given a two-bridge link $L$ in $S^3$, and a Wirtinger presentation of fundamental group of the link complement $\langle a,b|r \rangle$, with $a,b$ meridians.  Then a three fold simple branched covering of $S^3$, branched over $L$ lifts to a braided embedding if and only if the relation $r$ holds when we set $a=\sigma_1$ and $b=\sigma_2$ (where $\sigma_1$ and $\sigma_2$ are the Artin generators of the three strand braid group $B_3$).
\end{thm}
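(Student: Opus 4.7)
The plan is to convert the geometric lifting problem into an algebraic question about lifting the monodromy representation from $S_3$ to the three-strand braid group $B_3$, and then exploit the two-generator Wirtinger presentation of a two-bridge link group.

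First, I would invoke the general lifting criterion alluded to earlier in the introduction (after Hansen, Carter--Kamada, and Etnyre--Furukawa): a simple three-fold branched cover of $S^3$ branched over $L$ with monodromy $\rho\colon\pi_1(S^3\setminus L)\to S_3$ lifts to a braided embedding in $S^3\times D^2$ if and only if $\rho$ factors through the natural surjection $B_3\to S_3$ via some homomorphism $\tilde\rho\colon\pi_1(S^3\setminus L)\to B_3$ sending each meridian to a conjugate of an Artin generator. This is obtained by identifying $B_3$ with $\pi_1$ of the unordered configuration space of three points in the disc and building the braided embedding as the classifying map, together with a local model near the branch locus which forces the meridianal condition.

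Second, I would specialize to a two-bridge link, where $\pi_1(S^3\setminus L)=\langle a,b\mid r\rangle$ with $a,b$ meridians, so $\tilde\rho$ is determined by the pair $(\tilde\rho(a),\tilde\rho(b))$. The simple branched cover hypothesis forces $\rho(a),\rho(b)$ to be transpositions in $S_3$. The case $\rho(a)=\rho(b)$ gives a disconnected cover which lifts trivially and for which the relation also holds trivially under the substitution, so I would concentrate on $\rho(a)\neq\rho(b)$, and after relabeling the three sheets take $\rho(a)=(12)$, $\rho(b)=(23)$. Because the braided embedding is only defined up to ambient isotopy, two lifts differing by global conjugation in $B_3$ give rise to the same braided embedding.

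Third, I would perform the normalization. Any lift must send $a$ to a conjugate of an Artin generator projecting to $(12)$; by conjugating $\tilde\rho$ globally, arrange $\tilde\rho(a)=\sigma_1$. The residual freedom in $\tilde\rho(b)$ is the set of conjugates of Artin generators projecting to $(23)$, modulo the action of the centralizer $C_{B_3}(\sigma_1)$. The crucial claim, which closes the ``slight gap'' in Carter--Kamada's surjectivity assumption, is that this residual action is transitive, so $\tilde\rho(b)$ can always be brought to $\sigma_2$. Given this, the existence of a lift is equivalent to $r(\sigma_1,\sigma_2)=1$ in $B_3$, which is the statement of the theorem.

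The main obstacle is the transitivity claim in the last step, since non-surjective lifts a priori could land in a smaller subgroup and evade the canonical form $(\sigma_1,\sigma_2)$. Handling this requires describing explicitly the set of conjugates of $\sigma_i$ projecting to $(23)$ and the centralizer $C_{B_3}(\sigma_1)$ (which is generated by $\sigma_1$ and the center $\langle(\sigma_1\sigma_2)^3\rangle$); one then verifies by a direct braid computation that $C_{B_3}(\sigma_1)$ acts transitively on the relevant orbit. No deeper ingredient seems required beyond standard structure theory of $B_3$ together with the braid relation $\sigma_1\sigma_2\sigma_1=\sigma_2\sigma_1\sigma_2$.
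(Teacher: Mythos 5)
Your first two steps (Hansen's criterion plus the local model at the branch locus, reducing everything to lifting the two-generator monodromy with meridians going to conjugates of Artin generators, and the normalization $\tilde\rho(a)=\sigma_1$) agree with the paper's setup, but the step you yourself single out as the crux is where the argument fails: the claim that $C_{B_3}(\sigma_1)=\langle\sigma_1\rangle\times\langle(\sigma_1\sigma_2)^3\rangle$ acts transitively on the conjugates of Artin generators projecting to $(23)$ is false, and no direct braid computation will establish it. Since $(\sigma_1\sigma_2)^3$ is central, the orbit of $\sigma_2$ under this action is just $\{\sigma_1^{k}\sigma_2\sigma_1^{-k}\}$, whereas the set of half twists lifting $(23)$ is far larger. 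Concretely, take $h=\sigma_2^{2}\sigma_1^{2}\,\sigma_2\,\sigma_1^{-2}\sigma_2^{-2}$: it is a conjugate of an Artin generator (the conjugator $\sigma_2^2\sigma_1^2$ is a pure braid, so $h$ maps to $(23)$ in $S_3$), yet $h\neq\sigma_1^{k}\sigma_2^{\pm1}\sigma_1^{-k}$ for every $k$. Indeed the negative sign is excluded by exponent sum, and if $h=\sigma_1^{k}\sigma_2\sigma_1^{-k}$ then $\sigma_1^{-k}\sigma_2^{2}\sigma_1^{2}$ would centralize $\sigma_2$, hence lie in $\langle\sigma_2,(\sigma_1\sigma_2)^3\rangle$; but its image in $SL(2,\mathbb{Z})=B_3/\langle\Delta^4\rangle$ under $\sigma_1\mapsto\left(\begin{smallmatrix}1&1\\0&1\end{smallmatrix}\right)$, $\sigma_2\mapsto\left(\begin{smallmatrix}1&0\\-1&1\end{smallmatrix}\right)$ is $\left(\begin{smallmatrix}1+2k&2+3k\\-2&-3\end{smallmatrix}\right)$, which is never $\pm$ a power of the image of $\sigma_2$. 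In the Birman--Hilden picture the half twists lifting $(23)$ correspond to Dehn twists about curves in the once-punctured torus whose homology class is congruent to $(0,1)$ mod $2$; your orbit reaches only the classes $(k,\pm1)$, while $h$ corresponds to $(2,-3)$. So without further input, a lift of the pair $((12),(23))$ need not be conjugable to $(\sigma_1,\sigma_2)$, and the ``only if'' direction is unproved.

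What closes this gap in the paper is that the pair of half twists must in addition satisfy the nontrivial two-bridge relation $r$, and one appeals to the structure of groups generated by two Dehn twists rather than to conjugation. Via the Birman--Hilden isomorphism of $B_3$ with the mapping class group of the once-punctured torus, the lifts of $a,b$ become Dehn twists $T_x,T_y$ (up to sign), and Thurston's trichotomy applies: if the curves meet at least twice, $T_x$ and $T_y$ generate a free group and satisfy no relation at all (this is exactly what eliminates pairs such as $(\sigma_1,h)$ above, whose curves meet three times); they cannot be disjoint or equal here because distinct simple closed curves in the once-punctured torus intersect and because the images in $S_3$ are distinct, non-commuting transpositions; so the curves meet exactly once, the twists satisfy the braid relation, and only at that point does a change of coordinates bring the pair to $(\sigma_1^{\epsilon},\sigma_2^{\delta})$, with the signs handled by exponent-sum/orientation considerations and the automorphism $\sigma_i\mapsto\sigma_i^{-1}$. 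In short, the normalization you want is a consequence of the relation $r$ together with the free-group dichotomy for two Dehn twists, not of transitivity of the centralizer action; that dichotomy (Thurston's theorem, or equivalently the ping-pong argument behind it) is the missing ingredient your proposal cannot do without.
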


We will see in Section~\ref{Secmono} that a braided embedding is equivalent information to braid monodromy, similar to how a branched covering is equivalent information as a permutation monodromy. Moreover, questions of lifting branched coverings to braided embeddings is equivalent to questions about lifting permutation monodromies to braid monodromies (and we will use the language of colorings, as mentioned in Subsection~\ref{Subscol}, to describe such monodromies).

We show in Theorem~\ref{thmepi} that if there is a braided embedding lifting a three fold simple branched covering over a knot in $S^3$ (i.e. a suitable homomorphism from the knot group to $B_3$), there is also an epimorphism from the knot group to $B_3$. Consequently (see the discussion following Theorem~\ref{thmepi}) there are obstructions to the existence of such homomorphisms, coming from (twisted) Alexander polynomials. Using the above two results we can completely answer which knots in Rolfsen's knot table admit a simple $B_3$-coloring.

  We address liftability of tricolorings of a large class of pretzel knots, including a complete understanding for three stranded pretzel knots.
  
  \begin{thm}\label{thm3pret3} Let $P(p,q,r)$ be a three stranded pretzel knot.
  \begin{enumerate}
      \item Suppose $p,q$ and $r$ are all odd. Then a non-trivial tricoloring on $P(p,q,r)$ lifts to a simple braid coloring
if and only if $\pm(p,q,r)\in\{(1,1,1),(3,3,-1),(3,-1,3),(-1,3,3)\}$.
\item Suppose exactly one of $p,q$ or $r$ even. Then, any tricoloring on $P(p,q,r)$ lifts to a simple $B_3$-coloring if and only if the tricoloring is constant on the twist region with an even number of half twists.\\
If we moreover assume exactly two of $p,q,r$ odd multiples of 3, and the third a multiple of 6; then exactly one of the four inequivalent tricolorings of $P(p,q,r)$ lift to a simple $B_3$-coloring.
  \end{enumerate}
 
  \end{thm}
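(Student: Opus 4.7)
The plan is to use the equivalence between lifting simple three-fold branched covers of $S^3$ branched over a knot and extending tricolorings of the knot diagram to simple $B_3$-colorings --- assignments of conjugates of $\sigma_1$ to each arc of the diagram satisfying the Wirtinger relations, compatible with the given tricoloring under $B_3 \to S_3$ --- which is the framework developed in the lead-up to Theorem~\ref{2bridgeA} and in Section~\ref{Secmono}. For the standard diagram of $P(p,q,r)$, the Wirtinger generators split into three ``twist region'' families and two ``cap'' arcs, and I would set up the lift by choosing a pair of base meridians, sending them to $\sigma_1$ and $\sigma_2$, and propagating along the Wirtinger relations.

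The key local fact to establish first is a twist region analysis: in a twist region with $n$ half-twists whose two incoming arcs carry the same transposition, all arcs in the region inherit that transposition and lift uniquely once a generator $\sigma_i$ is chosen; if the incoming arcs carry distinct transpositions, the colors cycle through all three transpositions and the lift, if it exists, is forced --- the resulting braid element across the region is (up to conjugation) a power of $\sigma_1\sigma_2$ whose exponent depends linearly on $n$, with the constraint on $n$ dictating whether the alternating pattern actually closes up at the boundary.

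For Part (1), with $p,q,r$ all odd, every non-trivial tricoloring falls in the cycling case on all three twist regions. Combining the three twist contributions with the two cap crossings, the lifting condition becomes a single equation in $B_3$ of the form $(\sigma_1\sigma_2)^{f(p,q,r)} \cdot C = 1$ for a small correction $C$ coming from the caps. Using that $(\sigma_1\sigma_2)^3$ generates $Z(B_3)$ and $\sigma_1\sigma_2$ has infinite order, this reduces to an enumeration in $B_3/Z(B_3) \cong \mathrm{PSL}(2,\mathbb{Z})$; the only torsion elements there produce the listed eight triples $\pm(1,1,1), \pm(3,3,-1), \pm(3,-1,3), \pm(-1,3,3)$.

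For Part (2), with (say) $r$ even and $p,q$ odd, the local analysis shows that a cycling pattern in the $r$-region cannot match up at the two ends (the alternation has the wrong parity), so any tricoloring must be constant on this region. Conversely, once the $r$-region is monochromatic, lifts of the two odd alternating regions are always available, proving the first statement. For the refinement where two of $p,q,r$ are odd multiples of $3$ and the third is a multiple of $6$, I would enumerate the four tricolorings of $P(p,q,r)$ coming from the choice of a constant color on the even region together with the $\mathfrak{S}_3$-action on the caps, and verify that exactly one of them has the correct divisibility match for the braid word to close. The main obstacle is Part~(1): the bookkeeping of signs (so that $f(p,q,r)$ is computed correctly when any of $p,q,r$ is negative) and the final enumeration. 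Once the reduction to $\mathrm{PSL}(2,\mathbb{Z})$ is carried out, the classification of torsion there should pin down precisely the listed cases.
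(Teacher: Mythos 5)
The framing (tricoloring plus a compatible assignment of half twists to Wirtinger arcs) is the right starting point, but your central local lemma is where the proof collapses. At the $B_3$ level the two arcs entering a twist region carry two half twists that are conjugates of $\sigma_1^{\pm1}$ by words depending on the rest of the coloring; they are \emph{not} in general the standard adjacent pair, and two half twists lifting distinct transpositions need not satisfy the braid relation at all (by Thurston's dichotomy they generate a free group unless the corresponding curves intersect once). Consequently the propagation through $n$ half twists is not ``a power of $\sigma_1\sigma_2$ with exponent linear in $n$'': it depends on the pair, concretely on the algebraic intersection number $k$ of the associated curves in the double branched cover, which is exactly why the paper trades braid words for labellings of strands by primitive vectors in $\mathbb{Z}^2$ and derives the recursions $A_n(k),B_n(k),C_n(k),D_n(k)$. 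Your claimed reduction to a single relation $(\sigma_1\sigma_2)^{f(p,q,r)}C=1$ and an ``enumeration of torsion in $PSL(2,\mathbb{Z})$'' presupposes the braid relation for every incoming pair, which is precisely the question, and no such one-exponent count can decide Part (1): for instance $P(1,3,3)=7_4$ (Carter–Kamada's example) and $P(-5,-9,3)$ (determinant $3$, Alexander polynomial $t^2-t+1$) close up perfectly at the $S_3$ level and pass the coarse divisibility tests, yet their tricolorings do not lift. The actual proof needs the Diophantine analysis (pairwise coprimality of $k,l,m$, the bound $|\theta|\le 2$, the growth estimates) together with the result that Dehn twists about a $1$-proportional triple of vectors satisfy no relations (\cite[Theorem 1.1]{K2}); none of this is present or replaced in your sketch.

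Part (2) contains a more elementary error: you argue that the $S_3$-cycling pattern cannot close up in the even twist region, so every tricoloring is constant there. That is false when the even parameter is divisible by $6$; e.g.\ $P(3,3,6)$ has three inequivalent tricolorings that are non-constant on the $6$-half-twist region, and the whole content of the second half of Part (2) is that these do not lift while the fourth one does. Under your reading that statement becomes vacuous, and the genuinely hard direction (non-liftability of tricolorings that are non-constant on the even region) is never addressed — it again requires the quantitative twist-region analysis, since at the $S_3$ level nothing obstructs them. The easy direction is fine in spirit but should be said correctly: in the one-even case the even region is the unique one with antiparallel strands, so a tricoloring constant there restricts to the parallel-stranded odd regions, where (when $q,r$ are odd multiples of $3$) the $3$-periodic pattern of Remark~\ref{2braid} provides the lift.
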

  
  In particular, we obtain two different tricolorings of a pretzel knot (for instance $P(3,3,6)$), one of which lifts and the other not; which shows liftability is a property of the branched covering, and not just the branch locus. We also obtain some liftability results for higher stranded pretzel knots.
\begin{thm} \label{thm3pretm} 
Let $P(q_1, \ldots, q_m)$ be an $m$ stranded pretzel knot. 
\begin{enumerate}
\item If each of the $q_i$ and $m$ are odd and either 
\begin{enumerate}
\item $q_j=\pm q_{j+1}$ for some $j$ with $|q_j|\geq3$ and $|q_p|>3$ for some $p$, or
\item all the $q_j$ have the same sign,
\end{enumerate}
Then no non-constant simple tricoloring lifts to a simple $B_3$-coloring. 
\item If $m$ and $q_1$ are even and the other $q_j$ are odd, then any simple tricoloring lifts to a simple $B_3$-coloring.
\item If $q_1$ is even and the other $q_j$ and $m$ are odd, then a simple $S_4$-coloring list to a $B_4$-coloring if and only if the transpositions appearing in the left-most twist region (corresponding to $q_1$) are constant (i.e. the transpositions are identical if $q_1$ non-zero). 
\end{enumerate}
\end{thm}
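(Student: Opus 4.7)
The plan is to set up the Wirtinger presentation of $P(q_1,\ldots,q_m)$ adapted to the twist-region decomposition and translate a lift of a simple coloring into a system of conjugacy equations in $B_3$ or $B_4$. For each twist region with $q_i$ half-twists and incoming arcs colored by $\alpha,\beta\in B_n$ (both conjugate to an Artin generator, since the coloring is simple), the Wirtinger relations express the internal and outgoing arcs as iterated conjugates $w_k(\alpha,\beta)$. The pretzel closure requires these local pictures to glue consistently around the diagram, and a lift exists precisely when the resulting global equation in $B_n$ is solvable. The analysis of Theorem~\ref{thm3pret3} gives the $m=3$ template, and the goal is to extend it.

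For Part~(1), since $m$ and every $q_j$ are odd, every twist region genuinely swaps its two strands, so a non-constant simple tricoloring threads through all three transpositions. For Part~(1)(b), the assumption that all $q_j$ share the same sign forces the total exponent sum of any putative $B_3$-lift to be a prescribed non-zero integer; mapping to the abelianization $B_3 \to \mathbb{Z}$ and comparing with the parity demanded by the closure relation yields an immediate contradiction, so only constant tricolorings lift. For Part~(1)(a), the condition $q_j = \pm q_{j+1}$ with $|q_j|\geq 3$ produces an equation of the form $(\alpha\beta)^{q_j} = (\gamma\delta)^{q_{j+1}}$ (up to inversion) among pairs of $\sigma_1$-conjugates in $B_3$. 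I would rule this out by passing to the quotient $B_3 / Z(B_3) \cong PSL_2(\mathbb{Z})$ and tracking the action on an appropriate copy of the Farey tree; the exceptional small cases \emph{not} caught by this reduction are exactly the ones isolated in Theorem~\ref{thm3pret3}, and the extra hypothesis $|q_p|>3$ somewhere ensures we are outside those exceptions.

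For Part~(2), with $m$ and $q_1$ even and all other $q_j$ odd, I would construct explicit lifts. The evenness of $q_1$ means the leftmost twist region does not permute its two strands, while the evenness of $m$ ensures that the global closure relation has trivial total parity. Given any simple tricoloring, I would choose the canonical lift on the arcs of the first region by sending each arc to the Artin generator or conjugate matching its assigned transposition, then propagate through the $m-1$ odd twist regions using the braid relation $\sigma_1\sigma_2\sigma_1 = \sigma_2\sigma_1\sigma_2$ to define the lift on the remaining arcs, and finally verify that the evenness of $m$ makes the glue-up equation a tautology. Part~(3) follows the parallel strategy in $B_4$: sufficiency is by the same explicit construction, now exploiting that $\sigma_i$ and $\sigma_j$ commute when $|i-j|\geq 2$, while necessity uses the contrapositive — if two distinct transpositions appeared in the leftmost region, propagating through the odd outer regions would force adjacent Artin generators $\sigma_i,\sigma_j$ with $|i-j|=1$ to commute in $B_4$, contradicting the defining braid relation.

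The main obstacle is the rigidity analysis in Part~(1)(a). Translating $q_j = \pm q_{j+1}$ with the size constraints into a tractable braid-theoretic equation and ruling out every non-constant solution is delicate, because $B_3$ admits many conjugates of $\sigma_1$ and the relevant equations are non-linear. I expect to need either a Garside normal form argument or the image in $PSL_2(\mathbb{Z})$ to distinguish the genuinely obstructed configurations from the sporadic low-parameter lifts already accounted for by Theorem~\ref{thm3pret3}; the other parts are comparatively constructive once the setup is in place.
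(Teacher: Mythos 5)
Your outline gets the general setting right (twist-region Wirtinger analysis, some appeal to $PSL_2(\mathbb{Z})$), but the steps you propose for the obstruction halves would not go through. The clearest gap is part (1)(b): an abelianization/exponent-sum argument cannot detect anything here. All Wirtinger relations are conjugations, so they become trivial in the abelianization $B_3\to\mathbb{Z}$; a simple coloring sends every meridian of a knot to a conjugate of $\sigma_1^{\pm1}$, all with the same exponent sum, and there is no ``closure parity'' left to contradict. Indeed $P(1,1,1)$ has all twist parameters of one sign and its tricoloring does lift, so no argument that sees only signs and exponent sums can prove (1)(b); the hypothesis must interact with the magnitudes of the $q_j$. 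The paper's proof does this by translating a simple $B_3$-coloring into a labelling of strands by primitive vectors of $\mathbb{Z}^2$ (Dehn twists on the double cover of the marked disc, i.e.\ the once-punctured torus), extracting from each twist region linear relations whose coefficients are recursively defined polynomials in the intersection numbers $k_i$, and then showing that the same-sign hypothesis forces $|\beta_i|\ge|k_i|$, whence the intersection numbers $\langle \vec x_1,\vec x_i\rangle$ strictly increase as one goes around the pretzel --- contradicting the closure condition $\vec x_{m+1}=\pm\vec x_1$. Nothing in your sketch replaces this monotonicity mechanism.

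For (1)(a) you concede the key step is missing; for the record, the paper does not use Garside normal forms or a Farey-tree action, but the same homology labels: monotonicity of the coefficient polynomials forces $|k_j|=|k_{j+1}|$, then an integrality-plus-primitivity squeeze shows $1+k_j^2a_j\in\{1,2\}$, which together with the hypothesis $|q_p|>3$ gives the contradiction (the exceptional cases are exactly those of Theorem \ref{thm3pret3}). In part (3) the necessity direction cannot be ``adjacent Artin generators would be forced to commute'': two distinct half twists in $B_3$ or $B_4$ can perfectly well satisfy the braid relation, so no such algebraic collision arises. The actual obstruction in the paper is arithmetic: a signed telescoping sum of the twist-region equations shows $4$ divides both $1+k_1^2a_1+(1+k_1^2c_1)$ and their difference, hence $1+k_1^2a_1$ is even, violating primitivity of the label on the top-left strand; and the $S_4$-to-$B_4$ statement is reduced to the $B_3$ computation by composing with the homomorphism $B_4\to B_3$ sending $\sigma_1,\sigma_3\mapsto\sigma_1$, $\sigma_2\mapsto\sigma_2$, a reduction absent from your outline. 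Your part (2) is closest to the paper, though the role of $m$ even is not ``trivial total parity of the glue-up'': with exactly one even $q_i$ and $m$ even, both strands of every twist region are parallel, so each non-constant region carries the unique period-three liftable pattern of Remark \ref{2braid}, and these local lifts assemble.
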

  In Subsection \ref{fourfold} we discuss related results about lifting simple four fold branched covers over pretzel knots and show that:
  \begin{thm}\label{thm4pretB} 
Let $P(q_1, \ldots, q_m)$ be an $m$ stranded pretzel knot. 
\begin{enumerate}
\item For $m=3$, a non-constant simple $S_4$-coloring on $P(q_1,q_2,q_3)$ lifts to a simple $B_4$-coloring if and only if one of $q_1, q_2$ or $q_3$ is even, and moreover if said even number is a non-zero, the transpositions in appearing in the corresponding twist region are disjoint.
\item If each of the $q_i$ and $m$ are odd and either 
\begin{enumerate}
\item $q_j=\pm q_{j+1}$ for some $j$ with $|q_j|\geq3$ and $|q_p|>3$ for some $p$, or
\item all the $q_j$ have the same sign,
\end{enumerate}
Then no non-constant simple $S_4$-coloring lifts to a simple $B_4$-coloring. 
\item If $m$ and $q_1$ are even and the other $q_j$ are odd, then any simple $S_4$-coloring lifts to a simple $B_4$-coloring.
\item If $q_1$ is even and the other $q_j$ and $m$ are odd, then a simple $S_4$-coloring list to a $B_4$-coloring if and only if the transpositions appearing in the left-most twist region (corresponding to $q_1$) are disjoint (or identical). 
\end{enumerate}
\end{thm}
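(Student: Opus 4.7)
The plan is to reduce each statement to an algebraic problem about simple $B_4$-colorings of the pretzel diagram and then mimic the case analysis developed for tricolorings in Theorem~\ref{thm3pretm}. A simple $S_4$-coloring of $P(q_1,\ldots,q_m)$ assigns transpositions in $S_4$ to the arcs of the standard pretzel diagram, satisfying the Wirtinger relations; a lift to a simple $B_4$-coloring requires consistent assignments of Artin generators $\sigma_1,\sigma_2,\sigma_3$ fulfilling the same Wirtinger relations in $B_4$. The essential algebraic observation is that pairs of transpositions in $S_4$ fall into three classes: \emph{equal}, which always lift compatibly; \emph{disjoint}, like $(12)$ and $(34)$, which commute in $S_4$ and whose canonical lifts $\sigma_1,\sigma_3$ also commute in $B_4$; and \emph{linked}, like $(12)$ and $(23)$, whose product has order $3$ in $S_4$ but whose lifts $\sigma_1,\sigma_2$ generate a free product up to the braid relation, so have infinite-order product.

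First I would prove a twist-region lemma. In a twist region with $q$ half-twists whose top two strands carry transpositions $\tau_1,\tau_2$, the output colors are determined by $\tau_1,\tau_2,q$, and the lift extends through that twist region precisely when the chosen lifts $a,b\in\{\sigma_1,\sigma_2,\sigma_3\}$ of $\tau_1,\tau_2$ satisfy the identity in $B_4$ obtained by comparing the lifts of the input and output colors. If $\tau_1=\tau_2$, or if $\tau_1,\tau_2$ are disjoint, this identity holds automatically (the relevant braid word either collapses to a power of a single generator or commutes freely). If $\tau_1,\tau_2$ are linked, the identity amounts to a nontrivial power of $ab$ being trivial, which fails unless $q=0$ or the alternation degenerates to a single transposition.

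With this lemma in hand, each part is handled twist region by twist region. For part~(1), a parity case split on $(q_1,q_2,q_3)$ suffices. If all $q_i$ are odd, a standard parity count forces the image of the knot group in $S_4$ to lie inside a conjugate of $S_3$, so the coloring is a simple tricoloring and the statement reduces to Theorem~\ref{thm3pret3}(1). If exactly one $q_i$ is even, the remaining odd twist regions constrain the transpositions on the other strands, and the twist-region lemma applied to the even region shows that a lift exists iff the two transpositions there are equal or disjoint. Parts~(2) and~(4) are obtained by repeating the arguments for Theorem~\ref{thm3pretm}(1) and (3), replacing $B_3$ by $B_4$; the linked-pair obstruction transfers verbatim in both directions. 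Part~(3) follows by applying the twist-region lemma to each odd $q_j$ for $j\geq 2$, noting that constancy of transpositions in an odd twist region is automatic, and observing that the extra freedom in $B_4$ (three Artin generators rather than two) allows every compatible choice to be realized.

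The main obstacle will be the converse direction in part~(1): showing that when the even $q_i$ is nonzero and the two transpositions meeting it are linked, no lift exists. This will require tracking the algebraic identity produced by the twist-region lemma through the other two odd twist regions and verifying that the resulting element of $B_4$ cannot be trivial. Here we exploit that $\sigma_i\sigma_{i+1}$ has infinite order and, more delicately, that any factorization of the pretzel relator through a linked pair would force the braid monodromy to land in a finite quotient, which contradicts the torsion-freeness of the subgroup $\langle\sigma_i,\sigma_{i+1}\rangle\subset B_4$. This is the step that mirrors, and slightly strengthens, the corresponding obstruction used for $B_3$ in Theorem~\ref{thm3pretm}(1).
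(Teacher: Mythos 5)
Your central tool, the ``twist-region lemma,'' is false as stated, and the conclusions you draw from it contradict the very theorem you are proving. First, a simple braid coloring sends meridians to arbitrary conjugates of $\sigma_1^{\pm 1}$ (band generators), not to the Artin generators, so restricting lifts to $\{\sigma_1,\sigma_2,\sigma_3\}$ is unjustified. Second, there is no local obstruction inside a twist region: braid colors propagate through each crossing by the Wirtinger relations, and liftability is a global consistency condition; what actually matters is the relative orientation of the two strands in the region (same-handed versus opposite-handed half twists) and the number of half twists modulo $3$ or $6$, none of which your lemma sees. Concretely, a linked pair in a twist region with $q\neq 0$ does lift whenever the strands are similarly oriented and $3\mid q$ (Remark~\ref{2braid}); this is exactly what happens in the liftable cases of parts (3) and (4) of Theorem~\ref{thm4pretB} and of Theorem~\ref{thm3pretm}, so your lemma would make part (3) false. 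Your treatment of the all-odd case of part (1) is also flawed: it is not true that a parity count forces the image into a conjugate of $S_3$ --- all-odd pretzel knots such as $P(3,3,3)$ carry simple $S_4$-colorings with image all of $S_4$ --- and if the image did lie in $S_3$, Theorem~\ref{thm3pret3}(1) would actually \emph{permit} lifts for trefoil parameters, so the reduction you propose would prove the wrong statement; ruling these out requires knowing which knots admit genuine simple $S_4$-colorings at all.

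The second gap is the claim that parts (2) and (4) follow by ``repeating the arguments for Theorem~\ref{thm3pretm} with $B_3$ replaced by $B_4$.'' Those arguments are not generator-level manipulations: they pass through the double branched cover of the $3$-marked disc, identify half twists with Dehn twists on the once-punctured torus, and label strands by primitive vectors in $H_1(\mathbb{T}^2)\cong\mathbb{Z}^2$ together with their algebraic intersection numbers (Claims~\ref{colorofDT} and~\ref{odd} and the recursions for $A_n,B_n,C_n,D_n$). For $B_4$ the corresponding cover is a twice-punctured torus, where the relevant curves are no longer classified by primitive homology classes, so nothing transfers ``verbatim.'' The paper avoids this entirely: it uses the homomorphisms $\varrho\colon B_4\to B_3$ with $\sigma_1,\sigma_3\mapsto\sigma_1$, $\sigma_2\mapsto\sigma_2$, and $\rho\colon S_4\to S_3$ (quotient by the Klein four-group), so that any $B_4$-lift of a simple $S_4$-coloring induces a $B_3$-lift of the projected tricoloring. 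The non-liftability statements then follow from Theorems~\ref{thm3pret3} and~\ref{thm3pretm} (disjoint transpositions in $S_4$ project to equal transpositions in $S_3$, which is why ``disjoint'' replaces ``constant''), while the liftable cases are produced by explicit twist-region patterns analogous to Remark~\ref{2braid}. Without either this reduction or a genuinely new $B_4$-analysis, your outline does not yield a proof.
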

In the final section, we see how to construct non-liftable branched covers in higher dimensions.

Our main tool for the above results is to translate this problem about lifting group homomorphisms, with some constraints (for the embedding to be smooth or piecewise linear locally flat). Hansen has a complete characterization of which honest coverings lift to codimension two braided embeddings in terms of monodromy maps, see Subsection \ref{hc}. We formulate the following criterion to extend the braided embedding over the branch locus, see Subsection \ref{csplit} for the definition of 
completely split (standard) unlink and Section \ref{lm} for the definition of braid surrounding a branch point.

\begin{thm}\label{ext}

Suppose we have a  piecewise linear (respectively smooth) branched cover $p:M\rightarrow N$ with branch locus $B\subseteq N$ being a submanifold with trivial regular neighbourhood (respectively normal bundle) $\nu B\cong B\times D^2$, and $p|_{\tilde{B}}:\tilde{B}\rightarrow B$ is a covering map. Suppose we choose points $b_1,...,b_k$, one for each connected component of $B$. If we are given a locally flat piecewise linear (respectively smooth) braided proper embedding  $g_1:M\setminus \nu_0 \tilde{B}\hookrightarrow (N\setminus \nu_0 B)\times D^2$ lifting the honest covering $p_1:M\setminus \nu_0 \tilde{B}\hookrightarrow N\setminus \nu_0 B$ induced from $p$,  then $g_1$ extends to a locally flat piecewise linear (respectively smooth) braided embedding $g:M\hookrightarrow N\times D_2$ lifting $p$ if and only if each of the braids surrounding the branch points $b_i$ are completely split unlinks (respectively completely split standard unlinks).

\end{thm}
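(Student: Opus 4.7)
The plan is to reduce the extension problem to a local analysis at each chosen branch point $b_i$. Since $\nu B \cong B \times D^2$ is trivialized and restricts over each component $B_k$ to the product $B_k \times D^2$, and since $p|_{\tilde B}:\tilde B \to B$ is an honest covering, each component $\tilde B_k^{(j)}$ of $p^{-1}(B_k)$ has a pulled-back trivial regular neighborhood $\tilde B_k^{(j)} \times D^2$ on which $p$ has the form $(x,z)\mapsto (p|_{\tilde B}(x), z^{n_j})$ for a local ramification index $n_j$. Using this product structure together with connectedness of $B_k$, an extension over a single meridian fiber at $b_i$ propagates canonically across $B_k$. Hence the global extension exists if and only if a local extension can be produced at each $b_i$.

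For necessity, suppose $g$ is such an extension. Restrict $g$ to the preimage of the meridian disc $D^2 \times \{b_i\}$. This preimage is a disjoint union of closed 2-discs, one for each cycle of the local monodromy at $b_i$ and of degree equal to the cycle length $n_j$, embedded disjointly in the 4-ball $D^2 \times \{b_i\} \times D^2$. The boundaries of these discs are exactly the components of the braid surrounding $b_i$. The existence of disjoint properly embedded bounding 2-discs immediately forces the braid to be a completely split unlink. In the smooth category, each such disc is realized by the smooth map $w \mapsto (w^{n_j}, f_j(w))$, which is the standard smooth disc bounding an unknot; this upgrades the conclusion to a completely split \emph{standard} unlink.

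For sufficiency, assume each braid surrounding $b_i$ is a completely split unlink (respectively completely split standard unlink). The completely split hypothesis provides pairwise disjoint 3-balls in the $S^3$-boundary of the local 4-ball separating the braid components, while the unlink condition furnishes (locally flat PL, respectively standard smooth) 2-discs in the 4-ball bounding each component, which by the splitting may be taken pairwise disjoint. Each braid component winds $n_j$ times around the meridian, matching the ramification index of the corresponding cycle, so after reparametrization the bounding disc can be written as $w \mapsto (w^{n_j}, f_j(w))$, that is, as a branched cover of the meridian disc. Gluing these branched-cover discs in place and spreading them over $B_k$ via the product structure completes the extension.

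The main technical subtlety, and the precise place where the PL and smooth categories diverge, is matching the filling discs with the branched-cover structure at the branch center. In the PL category, locally flat PL bounding discs always suffice, so the completely split unlink condition is enough. In the smooth category, a generic smooth disc bounding a smoothly unknotted component need not be smoothly equivalent rel boundary to the standard model $z \mapsto z^{n_j}$ (for example, one must avoid non-smoothable cone points at the branch center), which is precisely why the stronger standard-unlink hypothesis is needed.
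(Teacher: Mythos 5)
There is a genuine gap, and it sits exactly where the real work of the theorem lies: the sufficiency direction. Your claim that ``an extension over a single meridian fiber at $b_i$ propagates canonically across $B_k$'' (and later, that one can ``spread the branched-cover discs over $B_k$ via the product structure'') is unjustified, because the data you must match is not a product over $B$. The given embedding $g_1$ restricted to $\partial\nu \tilde B \cong \tilde B\times S^1$ has a braid monodromy defined on all of $\pi_1(B\times S^1)$, not just on the meridian: as you traverse a loop in $B$, the components of $\tilde B$ can be permuted (tubular braiding) and, even when they are not, the strands near a given component can twist by a nontrivial power of the standard braid (nontrivial interior braids). A fiberwise filling chosen at one meridian disc and transported by the product structure will in general fail to agree with $g_1$ on the rest of $\partial\nu B\times D^2$, so nothing has been extended. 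This boundary-matching problem is precisely what the paper's proof addresses: it uses the centralizer structure of reducible braids to split the monodromy into tubular and interior parts, builds an ``untwisted'' filling from the tubular data, and then corrects the interior-braid discrepancy --- in the smooth case by precomposing with D-twists (higher-dimensional Dehn twists) along an embedded hypersurface Poincar\'e dual to the class in $H^1(B)$ recording the twisting exponents (this uses that the centralizer of the standard twist $\sigma_1\cdots\sigma_{k}^{\pm1}$ is infinite cyclic), and in the PL case by a parametric coning construction steered by the tubular braids. Your proposal contains no mechanism for this correction, and without one the construction simply does not close up when $\pi_1(B)\neq 1$ or when $p|_{\tilde B}$ is a nontrivial covering.

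A secondary issue is in your necessity argument: ``the existence of disjoint properly embedded bounding $2$-discs immediately forces the braid to be a completely split unlink'' is false as stated --- bounding disjoint locally flat discs in the $4$-ball is sliceness, which implies neither unknottedness of the components in $S^3$ nor splitness in the solid-torus sense (disjoint sub-discs of the $D^2$ factor). What actually forces the conclusion is the braided structure together with local flatness at the cone point: in the PL case the extension near each $O_i$ is the cone on its boundary, so local flatness forces each component to be an unknot, and shrinking the meridian disc makes the $pr_2$-projections of the cone neighbourhoods disjoint, which gives complete splitness; in the smooth case one needs the Jacobian/Schoenflies analysis showing the local model is $z\mapsto(z^n,z)$ or $z\mapsto(z^n,\bar z)$, which is what yields the standard $(n-1)$-crossing twist rather than merely ``some'' disc. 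These are the ingredients of the paper's local-model section, and your write-up asserts their conclusions without the supporting argument.
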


\noindent \textit{Acknowledgements}. The author would like to thank John Etnyre for introducing the problem, many useful discussions and making helpful comments on earlier drafts of this paper. This work is partially supported by NSF grants DMS-1608684 and DMS-1906414.

\section{Background}

\subsection{Covering maps} We recall that a continuous (we will typically be assuming maps are smooth or piecewise linear) map $p:Y\rightarrow X$ is a covering map if ever point in $X$ has a neighbourhood that is evenly covered by $p$. Alternately, they can be defined as a fiber bundle where the fibers are discrete. We will assume all our covering maps are finite sheeted.\\
We will now record a few useful results about covering spaces for future use.
\begin{claim}\label{pullback} Pull-backs of covering maps are covering maps.
 \end{claim}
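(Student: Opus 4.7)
The plan is to prove this directly by producing, around each point of the base of the pullback, an evenly covered neighborhood built from an evenly covered neighborhood downstairs. Recall the pullback setup: given $p:Y\to X$ a covering and a map $f:Z\to X$ (in the ambient category), the pullback is
\[
Z\times_X Y=\{(z,y)\in Z\times Y:f(z)=p(y)\},
\]
with projections $\tilde p:Z\times_X Y\to Z$ sending $(z,y)\mapsto z$ and $\tilde f:Z\times_X Y\to Y$ sending $(z,y)\mapsto y$. The claim is that $\tilde p$ is a covering map.

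First, I fix $z_0\in Z$ and set $x_0=f(z_0)$. Since $p$ is a covering, there exists an evenly covered open neighborhood $U\subseteq X$ of $x_0$, i.e. $p^{-1}(U)=\bigsqcup_{\alpha\in A}V_\alpha$ with each $p|_{V_\alpha}:V_\alpha\to U$ a homeomorphism (and a diffeomorphism or PL-homeomorphism in the appropriate category). Let $W:=f^{-1}(U)$, which is an open neighborhood of $z_0$ in $Z$ since $f$ is continuous. I will show $W$ is evenly covered by $\tilde p$.

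Define, for each $\alpha$, the subset $W_\alpha:=\tilde p^{-1}(W)\cap(Z\times V_\alpha)=\{(z,y)\in W\times V_\alpha:f(z)=p(y)\}$. A straightforward verification gives $\tilde p^{-1}(W)=\bigsqcup_\alpha W_\alpha$ (disjointness comes from the $V_\alpha$ being pairwise disjoint). The key step is to exhibit the inverse of $\tilde p|_{W_\alpha}:W_\alpha\to W$ as
\[
\sigma_\alpha:W\to W_\alpha,\qquad \sigma_\alpha(z)=\bigl(z,\,(p|_{V_\alpha})^{-1}(f(z))\bigr).
\]
This is well defined because $f(z)\in U$ for $z\in W$, and it is continuous (smooth, PL) as it is built from $f$ and the inverse of $p|_{V_\alpha}$, both of which are in the appropriate category. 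One checks $\tilde p\circ\sigma_\alpha=\mathrm{id}_W$ and $\sigma_\alpha\circ\tilde p|_{W_\alpha}=\mathrm{id}_{W_\alpha}$, so $\tilde p|_{W_\alpha}$ is a homeomorphism (respectively diffeomorphism, PL-homeomorphism) onto $W$.

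There is essentially no obstacle here; the only subtlety is to remember that one must check the smooth/PL structure, not just continuity, but this is immediate from the explicit inverse formula for $\sigma_\alpha$. Since $z_0$ was arbitrary, $\tilde p$ is a covering map, with fiber cardinality at each $z\in Z$ equal to that of $p$ at $f(z)$; in particular, if $p$ is $n$-sheeted then so is $\tilde p$.
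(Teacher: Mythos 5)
Your proof is correct, and it is essentially the detailed version of the paper's own (alternate) argument that the preimage of an evenly covered neighborhood is again evenly covered, with the explicit local sections $\sigma_\alpha$ spelled out. The paper's primary one-line proof instead invokes the fiber-bundle/base-change point of view, but both amount to the same verification.
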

 \begin{proof}
 Covering maps are fiber bundles where the fiber is a discrete collection of points. Pull-backs correspond to a base change, and the fibers remain unchanged. So the result follows.\\
 Alternately, it can be checked that the pull back of an evenly covered neighborhood is also evenly covered.
  \end{proof}
  We thank Carlo Petronio for explaining the following result to us.
 \begin{prop}\label{orifactor} Any branched covering of an orientable manifold over a non orientable manifold must factor through the orientation double cover.
  \end{prop}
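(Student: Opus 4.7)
The plan is to restrict to the complement of the branch locus, apply covering-space theory to factor through the orientation double cover there, and then extend the factorization across the branch locus using connectivity of the local link of a branch point.

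First, write $p : \tilde{M} \to M$ for the given branched covering with $\tilde{M}$ orientable and $M$ non-orientable, let $B \subset M$ be the branch locus, and set $M_0 := M \setminus B$, $\tilde{M}_0 := \tilde{M} \setminus p^{-1}(B)$, so that $p_0 := p|_{\tilde{M}_0} : \tilde{M}_0 \to M_0$ is an honest (unbranched) covering. Since $B$ has codimension at least two, $\tilde{M}_0$ remains orientable as an open subset of $\tilde{M}$, the inclusion $M_0 \hookrightarrow M$ is surjective on fundamental groups, and the orientation character $w_1 : \pi_1(M_0) \to \mathbb{Z}/2$ is the pullback of the nontrivial orientation character on $M$, hence still nontrivial. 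In particular $M_0$ is non-orientable and admits an orientation double cover $\pi_0 : \hat{M}_0 \to M_0$ corresponding to $\ker(w_1)$.

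Next, I would show $p_{0*}(\pi_1(\tilde{M}_0)) \subseteq \ker(w_1)$, which by the classification of covering spaces supplies a covering $q_0 : \tilde{M}_0 \to \hat{M}_0$ with $\pi_0 \circ q_0 = p_0$. Concretely, fix a global orientation on $\tilde{M}_0$ and let $\gamma$ be a loop at a basepoint $\tilde{m}$; since $p_0$ is a local homeomorphism, transporting the orientation at $T_{\tilde{m}}\tilde{M}_0$ forward via $dp_0$ produces a continuously varying local orientation along $p_0\circ\gamma$ in $M_0$, and this closes up because $\gamma$ does. Thus $p_0\circ\gamma$ preserves orientation in $M_0$, giving the desired inclusion.

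Finally, I would extend $q_0$ across the branch locus to a map $q : \tilde{M} \to \hat{M}$, where $\hat{M}$ denotes the orientation double cover of $M$. For $\tilde{x} \in p^{-1}(B)$ with image $x := p(\tilde{x})$, choose a small ball neighborhood $V$ of $\tilde{x}$ whose image lies in an evenly covered neighborhood $U$ of $x$ with respect to $\pi : \hat{M} \to M$; then $V \cap \tilde{M}_0 = V \setminus p^{-1}(B)$ is connected (as $p^{-1}(B)$ has codimension at least two in $\tilde{M}$), so its $q_0$-image lies in a single one of the two sheets of $\pi^{-1}(U) \cong U \sqcup U$, determining the unique $\hat{x} \in \pi^{-1}(x)$ to which $\tilde{x}$ must be sent. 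Continuity of the resulting map $q$ and the identity $\pi\circ q = p$ follow by construction.

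The main obstacle I expect is the final extension step: one must verify that the local link of each branch point in $\tilde{M}$ is connected, so that the already-defined lift $q_0$ is forced into a single sheet of $\pi^{-1}(U)$. This relies on the codimension-two hypothesis on the branch locus, which is the standard setting. Checking that the extended map $q$ is itself a (branched) covering rather than merely a continuous lift is then automatic, since $\pi$ is an unbranched covering and the branching of $q$ is inherited from that of $p$.
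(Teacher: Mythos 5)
Your proof is correct, but it is organized differently from the paper's. The paper defines the factorization $r:Y\rightarrow Z$ directly on all of $Y$: for $y\in Y$ choose a path $\gamma$ from the basepoint to $y$, lift $p\circ\gamma$ through the orientation double cover $q$ of the base, and set $r(y)$ to be the endpoint of the lift; well-definedness comes from the observation that any loop in $Y$ is orientation preserving (as $Y$ is orientable), so its $p$-image is orientation preserving in $X$ and hence lifts to a loop. Because $q$ is an honest covering of all of $X$, paths passing through the branch locus lift with no difficulty, so the paper needs no separate extension step over $B$. You instead run the standard lifting criterion on the complement of the branch locus, showing $p_{0*}\pi_1(\tilde M_0)\subseteq\ker w_1$, and then extend across $p^{-1}(B)$ using connectivity of punctured balls in codimension two. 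Your route is longer, but it has the virtue of isolating and correctly handling the one delicate point: $p$ is a local homeomorphism only away from the branch points, whereas the paper's well-definedness argument invokes ``$p$ is a local homeomorphism'' without comment; in your set-up the orientation-transport argument is applied only where it is literally valid, and the branch points are handled by the local-link connectivity argument instead. Two small things to make explicit if you write this up: the lifting criterion needs $\tilde M_0$ connected and based (if $\tilde M$ is connected, removing the codimension-two set $p^{-1}(B)$ keeps it connected; otherwise argue componentwise), and on a small ball $V$ around a point of $p^{-1}(B)$ the extension is simply $(\pi|_{W})^{-1}\circ p$ for the single sheet $W$ over the evenly covered neighbourhood $U$, which gives continuity, agreement with $q_0$ on $V\setminus p^{-1}(B)$, and the identity $\pi\circ q=p$ all at once.
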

  
  \begin{proof}
  Suppose $p:(Y,y_0)\rightarrow (X,x_0)$ with $Y$ orientable, and $X$ non-orientable. Let $q:(Z,z_0)\rightarrow (X,x_0)$ be the orientation double cover. Consider an arbitrary point $y\in Y$ and some path $\gamma$ joining $y_0$ to $y$. Then we can path lift $p(\gamma)$ with respect to the covering map $q$, and we set $r(y)$ to be the second endpoint of this lift (the first endpoint being $z_0$). To see $r:Y\rightarrow Z$ is well defined, note that if we had another path $\delta$ from $y_0$ to $y$, then the concatenation $\gamma* \bar{\delta}$ is loop on $Y$ which is orientation preserving (as $Y$ oriented). Since $p$ is a local homeomorphism, $p(\gamma* \bar{\delta}$ is orientation preserving in $X$, and so it lifts to a loop in $Z$. Thus, the second endpoint of the path lifts of $p(\gamma)$ and $p(\delta)$ coincide, so the map $r:Y\rightarrow Z$ is well-defined. Now one can check that $p=q\circ r$, and the result follows.
  \end{proof} 

\subsection{Branched coverings} By a branched covering in the piecewise linear category we will mean a map $p:M\rightarrow N $ so that there is a codimension two subcomplex $B$ in $M$ so that if we set $\tilde{B}=p^{-1}(B)$, then the restriction $p|_{M\setminus\tilde{B}}:M\setminus\tilde{B}\rightarrow N\setminus B$ is a honest covering map.

For a smooth branched cover we will put more restrictions, we want $B$ to be a smooth codimension two submanifold with trivial normal bundle, and for any point $\tilde{b}\in \tilde{B}$, there product neighbourhoods around $\tilde{b}$ and $b$ so that the map $p$ looks like $(c,z)\rightarrow (c,z^n)$ for some $n\in \mathbb{N}$. This restriction forces the map $p:\tilde{B}\rightarrow B$ to be a covering map.

\subsection{The Braid group} The braid groups were first defined (implicitly) by Hurwitz in \cite{Hu}, but was first explicitly studied by Artin \cite{Ar,Ar1}. The braid group on $n$ strands has  the following equivalent descriptions (see \cite{BB} for details):
\begin{itemize}
 \item the fundamental group of the unordered configuration space, $UConf_n(D^2)$ of $n$ distinct points in the open unit disc in $\mathbb{D}^2$ (or equivalently, the complex numbers $\mathbb{C}$). This group was first defined by Hurwitz, however this viewpoint was forgotten until it was rediscovered by Fox and Neuwirth \cite{FoN}.
 \item the group formed by isotopy classes of $n$-braids under concatenation. Here the identity braid is $n$-parallel strands with no crossings, and inverses are given by the reverse of the mirror.
 \item the mapping class group of the disc $D^2$ with $n$ marked points.
 
\end{itemize}

 We remark here that by our conventions, we write group elements from left to right in the first two formulations, however we write elements from right to left in mapping class groups, as this has become the standard convention for composing functions. Consequently, we have an anti-isomorphism (i.e. a bijective group antihomomorphism) between the groups in first (or second) and third viewpoint.
 
 The braid group $B_n$ on $n$-strands has a presentation (due to Artin \cite{Ar,Ar1})
$$B_n=\{\sigma_1,...,\sigma_{n-1}|\sigma_i\sigma_j=\sigma_j\sigma_i \text{ if } |i-j|>1, \sigma_i\sigma_{i+1}\sigma_i=\sigma_{i+1}\sigma_i\sigma_{i+1}\} $$

Looking at braids from the second viewpoint, if we forget all the crossing information and just look at where the endpoints go, we get a permutation
Forget$:B_n\rightarrow S_n$.


\subsection{Structure of centralizer of a braid}\label{centralizer}
For future use, let us record the a few facts about the structure of centralizers of braids.


\begin{prop} \label{centrper} (Ker\'{e}kj\'{a}rt\'{o} \cite{Ke, CK2}, Eilenberg \cite{Ei})
Every periodic $n$-braid is conjugate to either a power of 
$\delta=\sigma_n...\sigma_2\sigma_1$ or $\gamma=\sigma_n...\sigma_2\sigma_1^2$.
\end{prop}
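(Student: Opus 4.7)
The plan is to pass to the mapping class group formulation of the braid group and apply the classical Kerékjártó--Brouwer theorem on periodic self-homeomorphisms of the disk. Recall that a braid $\beta \in B_n$ is \emph{periodic} when some power of it lies in the center $Z(B_n) = \langle \Delta^2 \rangle$ generated by the full twist; equivalently, once we quotient by the boundary Dehn twist and view $\beta$ as a mapping class of $D^2$ with $n$ marked interior points (no longer rel $\partial D^2$), the braid $\beta$ becomes a finite-order element.

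First I would invoke the elementary form of Nielsen realization for the disk to upgrade this finite-order mapping class to an honest periodic homeomorphism $\phi : D^2 \to D^2$ permuting the $n$ marked points. Then the Kerékjártó--Brouwer theorem says that every orientation-preserving periodic self-homeomorphism of $D^2$ is topologically conjugate to a rigid Euclidean rotation $R_\theta$ about some interior fixed point $p$. After pulling this conjugacy back to $B_n$, we may assume $\beta$ itself is represented by a rotation that permutes the $n$ marked points.

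Next I would split into cases by the location of $p$ relative to the marked set. If $p$ is not a marked point, then the marked points decompose into $R_\theta$-orbits of equal size $d$, where $d$ divides $n$, and they may be placed (after a further topological conjugacy) on concentric regular $d$-gons centered at $p$. In the primitive situation $d = n$ the marked points form a single regular $n$-gon and $R_{2\pi/n}$ is readily identified with $\delta = \sigma_n \cdots \sigma_1$; a general rotation of order $d$ then corresponds to $\delta^{n/d}$, obtained by an ambient isotopy that absorbs the concentric polygons into a single regular $n$-gon. If instead $p$ coincides with one of the marked points, then the remaining $n-1$ points decompose into orbits of equal size $d \mid n-1$ arranged on concentric regular $(n-1)$-gons, and the analogous identification gives $\gamma^{(n-1)/d}$ with $\gamma = \sigma_n \cdots \sigma_1^2$, the rotation by $2\pi/(n-1)$ about the central puncture.

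The main obstacle I expect is the careful identification of a rigid rotation of $D^2$ with a specific braid word modulo the center. A rotation of the closed disk is defined as a self-map of a surface, so the corresponding element of $B_n$ is only well defined up to a power of $\Delta^2$; this is exactly why the proposition concludes with conjugacy to \emph{a power} of $\delta$ or $\gamma$ and not to a specific element. The secondary technical point is to verify that two different $d$-symmetric configurations of $n$ (or $n-1$) marked points do yield conjugate braids, which is handled by constructing a rotationally equivariant ambient isotopy through generic symmetric configurations. Once these bookkeeping steps are in place, the two cases above exhaust the possibilities and the result follows.
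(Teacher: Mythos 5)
Your outline is essentially correct, and it is the standard argument: the paper itself gives no proof of this proposition, treating it as a classical fact and citing Ker\'{e}kj\'{a}rt\'{o}, Constantin--Kolev and Eilenberg, which is exactly the chain of results you invoke (periodic braid $=$ finite order in $B_n/\langle\Delta^2\rangle$, realize the finite-order mapping class by a periodic homeomorphism, apply Ker\'{e}kj\'{a}rt\'{o} to conjugate it to a rigid rotation, then case on whether the rotation center is a marked point). Two small points of care: a rotation of order $d$ about an unmarked (resp.\ marked) center corresponds to $\delta^{kn/d}$ (resp.\ $\gamma^{k(n-1)/d}$) for some $k$ coprime to $d$ determined by the rotation angle, not necessarily to $\delta^{n/d}$ as you wrote --- harmless, since any power suffices for the statement; and the ambiguity up to a central factor is absorbed into ``a power of $\delta$ or $\gamma$'' precisely because $\Delta^2=\delta^n=\gamma^{n-1}$, which is worth saying explicitly. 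Also be aware that the realization of the finite-order mapping class by an honest periodic homeomorphism (your ``elementary Nielsen realization for the disk'') is the genuinely nontrivial ingredient beyond Ker\'{e}kj\'{a}rt\'{o}'s conjugation-to-a-rotation theorem; it is classical (Nielsen/Eilenberg for cyclic groups), but it should be cited rather than waved through as elementary.
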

Since centralizers of conjugate braids are conjugate, the structure of the centralizer of a periodic braid is understood once we understand the centralizers of powers of $\delta^k$ and $\gamma^k$. It is known \cite[Section 3]{GW} that these centralizers are isomorphic to  braid groups $B_d(D\setminus \{0\})$ on the punctured disc, where $d=\gcd(k,n)$ for $\delta^k$ and $d=\gcd(k,n-1)$ for $\gamma^k$.
In particular, we have:
\begin{cor}
For any $m$ coprime to $n$, the centralizer of the periodic braid $(\sigma_1...\sigma_{n-1})^m$ in $B_n$ is the infinite cyclic group generated by $\sigma_1...\sigma_{n-1}$.
\end{cor}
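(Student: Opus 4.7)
The plan is to leverage the identification of the centralizer recalled in the paragraph above. Since $\gcd(m,n)=1$, the quoted result gives $Z_{B_n}(\delta^m)\cong B_1(D\setminus\{0\}) = \pi_1(D\setminus\{0\}) \cong \mathbb{Z}$. As $\delta$ trivially commutes with $\delta^m$, the subgroup $\langle \delta \rangle$ already sits inside this infinite cyclic centralizer, so the only thing that needs checking is that $\delta$ is a generator rather than a proper power.

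My preferred approach is to show directly that $\delta$ is primitive in $B_n$. Suppose instead $\delta = h^k$ with $|k|\geq 2$. Then $h^{kn}=\delta^n$ is central in $B_n$, so $h$ is periodic, and Proposition~\ref{centrper} forces $h$ to be conjugate to a power $\delta^a$ or $\gamma^a$. The case $h\sim\gamma^a$ can be ruled out by projecting to $S_n$: the image of $\gamma$ fixes the symbol $1$, so every power of $\gamma$ (and any conjugate thereof) has a fixed point on $\{1,\ldots,n\}$, whereas the image of $\delta$ is an $n$-cycle with no fixed points. In the remaining case $h\sim\delta^a$, the relation $\delta = h^k$ yields $\delta \sim \delta^{ak}$, and applying the (conjugation-invariant) exponent-sum homomorphism $e\colon B_n\to\mathbb{Z}$, $\sigma_i \mapsto 1$, produces $ak(n-1)=n-1$, so $ak=1$ and $k=\pm 1$, contradicting $|k|\geq 2$.

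A more geometric alternative I would keep as a backup is to unpack the isomorphism $Z_{B_n}(\delta^m)\cong B_1(D\setminus\{0\})$ directly: taking a rotational representative of $\delta^m$ on the $n$-punctured disc and quotienting by the cyclic action gives an orbifold disc with one puncture and one cone point of order $n$; its mapping class group is generated by a braid that sends the puncture once around the cone point, and this generator lifts, on the $n$-fold branched cover, to the $2\pi/n$ rotation of the punctures, which is $\delta$. The main obstacle either way is the bookkeeping: in the algebraic approach, unambiguously dispatching the $\gamma^a$ case via the Ker\'{e}kj\'{a}rt\'{o}--Eilenberg classification; in the geometric approach, setting up the orbifold cover and lift cleanly enough to recognize $\delta$ as the generator.
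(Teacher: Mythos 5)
Your proof is correct, and it rests on the same core input as the paper: the González-Meneses--Wiest identification of $Z(\delta^m)$ with $B_d(D\setminus\{0\})$, which for $\gcd(m,n)=1$ gives an infinite cyclic centralizer; the paper simply quotes this and takes the generator to be $\delta$, leaving that last point to the cited reference. What you add is a self-contained verification that $\delta$ actually generates, by showing $\delta$ is primitive in $B_n$: your reduction to Proposition~\ref{centrper}, the elimination of the $\gamma^a$ case via fixed points of the induced permutation, and the exponent-sum computation $ak(n-1)=n-1$ in the $\delta^a$ case are all sound, so your write-up fills in a step the paper treats as implicit rather than taking a genuinely different route.
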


We now discuss the sructure of centralizers of non-periodic reducible braids.
We refer the reader to \cite[Section 5]{GW} for definitions of interior and tubular braids of a reducible braid and details of the proof of the following result.
\begin{thm}\cite[Theorem 1.1]{GW}
 The centralizer $Z(\beta)$ of a non periodic reducible braid $\beta$  in regular form fits in a split exact sequence:
 $$1\rightarrow  Z(\beta_{[1]})\times ...\times Z(\beta_{[t]})\rightarrow Z(\beta)\rightarrow Z_0(\widehat{\beta})\rightarrow 1,$$ 
 where $\beta_{[1]},...,\beta_{[t]}$ are the various interior braids and $\widehat{\beta}$ is the associated tubular braid; and $Z_0(\widehat{\beta})$ is the subgroup of the centralizer $Z(\widehat{\beta})$ of the tubular braid $\widehat{\beta}$ consisting of elements whose permutation is consistent with $\beta$.
\end{thm}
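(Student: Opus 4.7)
The plan is to prove this by exploiting the canonical (Nielsen--Thurston) reduction system of a reducible braid, together with the fact that the reduction system is preserved setwise by every element of the centralizer. More concretely, we view $\beta$ as a mapping class of the $n$-punctured disc. Because $\beta$ is non-periodic and reducible, it admits a canonical family $\mathcal{C}$ of disjoint essential simple closed curves, bounding disjoint embedded discs $D_1,\ldots,D_s$ whose $\beta$-orbits give the $t$ ``tubes''; collapsing each tube to a single strand yields the tubular braid $\widehat{\beta}$, and for each orbit of tubes the first-return map of $\beta$ on a chosen representative disc gives the interior braid $\beta_{[i]}$. The regular-form assumption makes all these data well-defined representatives (rather than just isotopy classes).

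First I would prove that any $\alpha \in Z(\beta)$ preserves $\mathcal{C}$ up to isotopy: since $\alpha\beta\alpha^{-1}=\beta$, the image $\alpha(\mathcal{C})$ is again a canonical reduction system of $\beta$, and uniqueness of the canonical reduction system forces $\alpha(\mathcal{C})=\mathcal{C}$ as multicurves. This gives a well-defined tubular braid $\widehat{\alpha}$ and, via the induced action on the collection of tubes, a homomorphism $\pi:Z(\beta)\to B_t$ sending $\alpha\mapsto\widehat{\alpha}$. Functoriality of tube-collapse ($\widehat{\alpha\beta}=\widehat{\alpha}\widehat{\beta}$) shows the image lies in $Z(\widehat{\beta})$, and a short check identifies the image as exactly $Z_0(\widehat{\beta})$: the extra ``consistency with $\beta$'' condition is the requirement that $\widehat{\alpha}$ only permute tubes whose interior braids are related by conjugation in the way induced by $\alpha$, since otherwise the lift could not commute with $\beta$ tube-wise.

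Next I would identify the kernel of $\pi$. An element in the kernel has trivial tubular braid, so (up to isotopy relative to $\partial D^2$) it is supported in the disjoint union of the discs $D_j$. Decomposing according to $\beta$-orbits of tubes and using the first-return description of $\beta_{[i]}$, the commutation relation $\alpha\beta=\beta\alpha$ on each orbit of tubes reduces to commutation with $\beta_{[i]}$ on a single representative disc, so $\ker \pi \cong Z(\beta_{[1]})\times\cdots\times Z(\beta_{[t]})$. Finally I would construct a splitting $Z_0(\widehat{\beta})\to Z(\beta)$ by choosing, for each element of $Z_0(\widehat{\beta})$, a canonical ``tube-carrying'' representative that is the identity inside each $D_j$ outside of the motion of the tubes themselves; the consistency condition is exactly what is needed to make the resulting braid commute with $\beta$.

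The main obstacle is the careful bookkeeping for tubes with non-trivial $\beta$-orbits: the interior braid of a tube is defined only after traveling around its entire orbit, so both the image description ($Z_0$, not all of $Z(\widehat{\beta})$) and the kernel description (one $Z(\beta_{[i]})$ per orbit, not per tube) depend on correctly interpreting the first-return data. Verifying that the candidate section actually lands in $Z(\beta)$, and that two different choices of representative disc in an orbit produce conjugate (hence centralizer-isomorphic) interior braids, is where all the work in \cite{GW} goes; once those technicalities are in place, the split exact sequence follows essentially formally from the setup above.
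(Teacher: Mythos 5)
The paper does not actually prove this statement: it is quoted directly from Gonz\'{a}lez-Meneses and Wiest, and the author explicitly refers the reader to Section 5 of that paper for the definitions of interior and tubular braids and for the details of the proof. So there is no in-paper argument to compare yours against. That said, your outline is a faithful summary of the strategy used in the cited source: uniqueness of the canonical reduction system forces every element of $Z(\beta)$ to preserve the reducing multicurve, tube-collapse gives a homomorphism landing in $Z_0(\widehat{\beta})$, the kernel consists of braids supported inside the tubes and is identified orbit-by-orbit with the product of centralizers of the interior braids via the first-return description, and a preferred ``tube-carrying'' representative furnishes the splitting. Two points to watch if you were to write this out in full. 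First, the tubular braid of an element of $Z(\beta)$ lives in the braid group on as many strands as there are tubes (the number of strands of $\widehat{\beta}$), not on the number $t$ of orbits, so your ``$\pi:Z(\beta)\rightarrow B_t$'' is a notational slip; the index $t$ counts orbits of tubes, one interior braid per orbit. Second, you need surjectivity of $\pi$ onto all of $Z_0(\widehat{\beta})$, not merely containment of the image in it, so the consistency condition must be shown to be sufficient (not just necessary) for a commuting lift to exist --- this is exactly what your candidate section has to accomplish, and together with the well-definedness of the interior braids up to conjugacy under change of representative disc, it is where essentially all of the technical content of the theorem resides. Your sketch correctly locates, but defers, that work.
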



\subsection{Completely split links} \label{csplit}

We will say that a link $L$ in the solid torus $S_1\times D^2$ is a \emph{completely split link in regular form} there are disjoint sub-discs $D_1,...,D_k$ so that each component of $L$ lies in a different solid tori $S^1\times D^2_i$.
We will say that a link in the solid torus $S_1\times D^2$ is a \emph{completely split link} if it is isotopic in the solid torus to a completely split link in regular form.

We will say a link in the solid torus $S_1\times D^2$ is a \emph{completely split unlink} if it is a completely split link and each component is an unknot (i.e. bounds a disc in $S^3$, when $S_1\times D^2$ is included in $S^3$ as a neighbourhood of the unknot).

We define a closed braid $\hat{\beta}$ to be a standard unknot in the solid torus if $\beta\in B_n$ is conjugate to $\sigma_1 ...\sigma_{n-1}$ (all positive crossings) or 
$\sigma_1^{-1} ...\sigma_{n-1}^{-1}$ (all negative crossings).

We define a closed braid to be a \emph{completely split standard unlink} in the solid torus if it a completely split link so that each component is a standard unknot.

Let us consider the following subsets of the braid group, as introduced by Kamada
(see \cite[Section~16.5]{K2} for details)
$$A_n:=\{b\in B_n|\text{ the closure } \hat{b} \text{ of } b \text{ is completely split unlink} \}$$
$$SA_n:=\{b\in B_n|b \text{ is conjugate to }\sigma_1^{\pm 1} \}$$

We observe that $A_n$ consists of precisely all the elements of $B_n$ such that coning $\hat{b} \subseteq S^1\times D_2^2 $ over an appropriate number of points projecting to the origin, produces disjoint union of locally flat discs in $D_1^2 \times D^2_2$, and $SA_n\subset A_n$.

\section{Monodromy of Coverings and Braided embeddings }\label{Secmono}

\subsection{Monodromy of a covering}\label{pmh}
Suppose we have a covering map $p:(M,m_1)\rightarrow (N,n_1)$ with $n$ sheets. Then the covering map is determined (up to conjugation) by the monodromy map $\phi:\pi_1(N,n_1)\rightarrow S_n$. Let us suppose the points in the pre-image of $n_1$ are $m_1,...,m_n$ (in other words we are labelling the pre-image points with $1,...,n$, and different such choices give rise to conjugate representations), then for any loop $\gamma$ in $\pi_1(N,n_1)$, if we lift it we get a permutation of $\{1,...,n\}$ by seeing what the endpoint of the lift starting at $m_i$ is. This defines a group homomorphism $\phi:\pi_1(N,n_1)\rightarrow S_n$ which we will refer to as permutation monodromy.
Given a monodromy representation $\phi:\pi_1(N,n_1)\rightarrow S_n$, we obtain a covering space (which will be connected if and only if the monodromy action is transitive) by fixing some $j\in\{1,...,n\}$, and looking at the subgroup $H:=\{\gamma \in \pi_1(N,n_1)|\phi(\gamma)(j)=j\}$. By the correspondence between subgroups of the fundamental group and covering spaces, $H$ gives us a covering space with the required properties.

We now state a useful result concerning monodromy maps of covers.
 
 \begin{claim}\label {pbmono} If $f:(Y,y_0)\rightarrow (X,x_0)$ is a continuous map, and $p:X'\rightarrow X$ is a covering map with monodromy $\phi:\pi_1(X,x_0)\rightarrow S_n$, then the pullback covering $f^*p:f^*X'\rightarrow Y$ has monodromy $\phi\circ f_*:\pi_1(Y,y_0)\rightarrow S_n$, where it is assumed that the ordering on the pre-image points over the respective base-points is consistent with the pullback diagram\footnote{ otherwise the monodromy map will differ by a conjugation.}.
 \end{claim}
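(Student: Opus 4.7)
The plan is to reduce everything to the defining property of the pullback and the path-lifting characterization of monodromy. Realize the pullback concretely as
\[
f^{*}X' = \{(y,x') \in Y \times X' : f(y) = p(x')\},
\]
with covering map $f^{*}p:(y,x')\mapsto y$ and a canonical map $\tilde{f}:f^{*}X'\to X'$, $(y,x')\mapsto x'$, so that the standard pullback square commutes. The fiber $(f^{*}p)^{-1}(y_0)$ is $\{(y_0,x'):p(x')=x_0\}$, which via $\tilde{f}$ is in natural bijection with $p^{-1}(x_0)$. I would fix the ordering of $(f^{*}p)^{-1}(y_0)$ to be the one induced by this bijection from the ordering of $p^{-1}(x_0)$; this is precisely the consistency condition mentioned in the footnote.

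Next I would compute the monodromy of $f^{*}p$ directly by path lifting. Given a loop $\gamma$ in $Y$ based at $y_0$ and a chosen starting point $(y_0,x'_i)$ in the fiber, any path in $f^{*}X'$ lifting $\gamma$ must have the form $t\mapsto(\gamma(t),\tilde{\alpha}(t))$, where $\tilde{\alpha}:[0,1]\to X'$ satisfies $p\circ\tilde{\alpha}=f\circ\gamma$ and $\tilde{\alpha}(0)=x'_i$. In other words, lifts of $\gamma$ through $f^{*}p$ are in bijection with lifts of $f\circ\gamma$ through $p$ via the second coordinate; this is essentially the universal property of the pullback for the interval $[0,1]$. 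Consequently the endpoint of the lift of $\gamma$ starting at $(y_0,x'_i)$ is $(y_0,\tilde{\alpha}(1))$, where $\tilde{\alpha}(1)$ is the endpoint of the lift of $f\circ\gamma$ through $p$ starting at $x'_i$.

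By definition of the permutation monodromy $\phi$ of $p$, the index of $\tilde{\alpha}(1)$ in the ordering of $p^{-1}(x_0)$ equals $\phi([f\circ\gamma])(i) = \phi(f_{*}[\gamma])(i)$. Under the chosen identification of $(f^{*}p)^{-1}(y_0)$ with $p^{-1}(x_0)$, this is exactly the permutation assigned to $[\gamma]$ by the monodromy of $f^{*}p$. Hence the monodromy of the pullback is $\phi\circ f_{*}$, as claimed. The only mildly subtle point — and really the only thing one could get wrong — is the bookkeeping of the base-point identification, which is why without the compatible ordering one only recovers $\phi\circ f_{*}$ up to an overall conjugation in $S_n$.
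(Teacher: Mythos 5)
Your proposal is correct and follows essentially the same route as the paper: lift the loop $\gamma$ through the pullback covering, push the lift forward via the canonical map to $X'$ to obtain a lift of $f\circ\gamma$, and read off the permutation using the consistent ordering of the fibers. Your write-up simply makes explicit (via the concrete model of $f^{*}X'$ and the bijection of lifts) the bookkeeping that the paper's shorter argument leaves implicit.
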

 \begin{proof} If $\gamma$ is any closed curve in $Y$ based at $y_0$, then $f\circ \gamma$ is a closed curve in $X$ based at $x_0$. If we pick any arc $\beta$ covering $\gamma$ (i.e. $f^*p\circ \beta=\gamma$), then the arc $p^*f\circ\beta$ covers $f\circ \gamma$. The result now follows from the consistent choice of numbering the pre-image points over $y_0$ and $x_0$.
    \end{proof}

\subsection{Monodromy of a branched covering}\label{pmb}
Given a branched covering space $p:M\rightarrow N$, then if we remove the branch locus $B$ from $N$, and its pre-image $\tilde{B}$ from $M$, then the restriction
$p|_{M\setminus\tilde{B}}:M\setminus\tilde{B} \rightarrow N\setminus B$ is a covering map, which completely determines the branched covering, by a result of Fox \cite{F1}.
So by monodromy map of a branched covering map, we will mean the monodromy map of the associated covering map.

\subsection{Monodromy of a braided embedding}\label{bm} Just like a covering $p:M\rightarrow N$ is determined by a monodromy representation $\phi$ of $N$ to the symmetric group, a braided embedding $p:M\rightarrow N\times D^2$ with $pr_1\circ f=p$ is determined by a monodromy representation $\psi$ of the $N$ to the braid group, which lifts $\phi$. To see this,
let us choose a base-point $x\in N$. For any simple closed curve\footnote{For self intersecting closed curves $\alpha:S^1\rightarrow N$, we can pullback the bundle $N\times D^2$ by $\alpha$ and get a solid torus over $S^1$, and the same statement holds.} $\gamma$ based at $x$, if we restrict the bundle $N\times D^2\rightarrow N$ to $\gamma$, we get a bundle $\gamma\times D^2\rightarrow \gamma$. For any point $c\in \gamma$ the braided embedding $f$ maps the $n$ pre-image points of $c$ under $pr_1\circ f$ to $n$ distinct points in the disc $\{c\}\times D^2$. Thus as we vary $c$ along $\gamma$, we get a closed braid in the solid torus $\gamma\times D^2$ (or equivalently a loop in the configuration space $UConf_n(D^2)$), which gives rise to a well defined braid by cutting the solid torus $\gamma\times D^2$ at $x\times D^2$ (or equivalently looking at the element of the fundamental group $UConf_n(D^2)$ given by that loop). This map gives rise to a group homomorphism $\psi:\pi_1(N)\rightarrow B_n$, which we will refer to as braid monodromy.

To construct a braided embedding from such a braid monodromy, we recall that Fadell, Fox and Neuwirth  \cite{FaN,FoN} showed that the configuration space $UConf_n(D^2)$ is aspherical, or in other words it is a $K(B_n,1)$. Thus a map of spaces $N\rightarrow UConf_n(D^2)$ is equivalent to a group homomorphism at the level of fundamental groups $\pi_1(N)\rightarrow B_n$. We note that a braided embedding $f:M\rightarrow N\times D^2$ (so that the associated covering map is $n$-sheeted) is equivalent to a choice of $n$-distinct points in $D^2$ for each point in $N$, i.e. a map $N\rightarrow UConf_n(D^2)$. This gives us the following criterion, due to Hansen, for lifting  a honest covering map to a codimension two braided embedding.
\begin{prop}[Hansen's criterion \cite{Ha}]\label{Hcri}
 A finite sheeted covering map $p:M\rightarrow N$ lifts to a braided embedding $f:M\rightarrow N\times D^2$ if and only if the associated permutation monodromy map $\phi:\pi_1(N)\rightarrow S_n$ lifts to a braid monodromy $\psi:\pi_1(N)\rightarrow B_n$, so that $Forget\circ\psi=\phi$.
\end{prop}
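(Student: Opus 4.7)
The plan is to establish both directions by combining the monodromy discussion in Subsection \ref{bm} with the Fadell--Fox--Neuwirth fact that $UConf_n(D^2)$ is a $K(B_n,1)$.

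For the forward direction, given a braided embedding $f: M \to N \times D^2$, the map
\[
g: N \to UConf_n(D^2), \qquad x \mapsto pr_2(f(p^{-1}(x)))
\]
is continuous (since $f$ is an embedding and $p$ a covering), and setting $\psi := g_*: \pi_1(N) \to B_n$ produces precisely the braid monodromy described in Subsection \ref{bm}. The relation $Forget \circ \psi = \phi$ then holds tautologically: forgetting the braid crossings in $\psi(\gamma)$ records only the permutation of the $n$ points of $g(x_0)$ induced by the loop of configurations, which is exactly the permutation of $p^{-1}(x_0)$ recorded by $\phi(\gamma)$.

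The substance lies in the reverse direction. Given a lift $\psi$ of $\phi$, the asphericity of $UConf_n(D^2)$ produces a continuous map $g: N \to UConf_n(D^2)$ realizing $\psi$ on fundamental groups: since $N$ is a manifold and hence has the homotopy type of a CW complex, $[N, K(B_n,1)] \cong \mathrm{Hom}(\pi_1(N), B_n)/\text{conj}$, so such a $g$ exists (and after choosing compatible basepoints one can arrange $g_* = \psi$ on the nose). I would then consider the ``tautological'' $n$-fold covering
\[
E := \{(P,p) \in UConf_n(D^2) \times D^2 : p \in P\} \longrightarrow UConf_n(D^2),
\]
given by projection to the first factor; a loop in $UConf_n(D^2)$ lifts through $E$ by tracking each of the $n$ points of the configuration individually, so the monodromy of this covering is precisely the forgetful homomorphism $B_n \to S_n$. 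Pulling back along $g$ and applying Claim \ref{pbmono} gives an $n$-fold covering $g^*E \to N$ with monodromy $Forget \circ \psi = \phi$; by the classification of covering spaces it is equivalent to $p: M \to N$.

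Finally, the braided embedding appears because $E$ already sits inside $UConf_n(D^2) \times D^2$. Pulling the inclusion $E \hookrightarrow UConf_n(D^2) \times D^2$ back along $g$ gives an inclusion $g^*E \hookrightarrow N \times D^2$ compatible with projection to $N$, and composing with the covering space isomorphism $M \cong g^*E$ yields the desired braided embedding $f: M \to N \times D^2$ with $pr_1 \circ f = p$. The step where the hypothesis $Forget \circ \psi = \phi$ is essential is the identification of $g^*E$ with $p:M \to N$; this is also the only non-formal ingredient, since the rest is naturality of pullback of covers together with the observation that fibers of $E$ are, by construction, already $n$ distinct points in $D^2$, so no further embedding step is required.
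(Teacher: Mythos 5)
Your proof is correct and follows essentially the same route as the paper: the paper sketches exactly this argument, identifying braided embeddings lifting $p$ with maps $N\rightarrow UConf_n(D^2)$ and invoking the Fadell--Fox--Neuwirth asphericity of $UConf_n(D^2)$ so that such maps correspond to homomorphisms $\pi_1(N)\rightarrow B_n$, with the compatibility condition being $Forget\circ\psi=\phi$. Your use of the tautological covering $E\subset UConf_n(D^2)\times D^2$ and Claim \ref{pbmono} is just a careful formalization of the paper's statement that a braided embedding is equivalent to a choice of $n$ distinct points in $D^2$ for each point of $N$.
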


In case  $f:M\rightarrow N\times D^2$ is a braided embedding, with the projection to the first factor $pr_1\circ f:M\rightarrow N$ a branched covering map, it is similarly determined by the associated braided embedding $f|_{M\setminus\tilde{B}}:M\setminus\tilde{B}\rightarrow (N\setminus B)\times D^2$ (or equivalently its braid monodromy $\pi_1(N\setminus B)\rightarrow B_n$)  where we delete the branch locus and its pre-image. However, given a group homomorphism $\pi_1(N\setminus B)\rightarrow B_n$, one gets a braided embedding of the complement of the branch locus, but we need to be careful about extending over the branch locus (with appropriate restrictions, locally flat piecewise linear or smooth). We will discuss this issue in Section~\ref{Sextending}, where we prove Theorem~\ref{ext}.

\subsection{Permutation monodromy under composition} Now we will discuss how monodromy map of a composition of finite sheeted coverings looks like in terms of the individual monodromy maps. 
Suppose we have two covering maps $q:(Y,y_1)\rightarrow (X,x_1)$ and $p:(Z,z_1)\rightarrow (Y,y_1)$, with associated monodromy maps $\phi:\pi_1(X,x_1)\rightarrow S_n$ and  $\chi:\pi_1(Y,y_1)\rightarrow S_m$. Suppose the pre-images of the basepoint $x_0$ under $p$ are $y_1,...,y_{n}$ (i.e. we are fixing some ordering among them), and let us choose some paths $\alpha_1, \alpha_2,...\alpha_{n}$ joining $y_1$ to $y_1,...,y_{n}$ respectively (we may choose $\alpha_1$ to be the constant path).

Consider the composite covering $r=q\circ p:(Z,z_1)\rightarrow (X,x_1)$. Suppose the pre-image points of $y_i$ under $q$ are $z_{i,1},...,z_{i,m}$ , and thus we choose the lexicographic ordering for the pre-images of $x_1$ under $r$, i.e. we will enumerate these points as 
$$z_{(1,1)},...,z_{(1,m)}, \quad..., \quad z_{n,1},...,z_{(n,m)}.$$

For any based loop $\gamma$ in $(X,x_1)$, let us call the path lift of $\gamma$ starting at $y_i$ to be $\gamma_i$, for $1\leq i \leq n$. We know that that $\gamma_i$ ends at $y_{\phi(\gamma)(i)}$, by definition of the monodromy. Then the monodromy $\psi$ for $r$ is determined by:
$$\psi(\gamma)(i,j)=(\phi(\gamma)(i),\chi(\alpha_i*\gamma_i*\bar{\alpha}_{\phi(\gamma)(i)})(j)).$$

Let us make a few observations about the choices we made:
\begin{itemize}
    \item If we had chosen some other collection  of  paths $\beta_i$ joining $x_1$ to $x_i$, then the resulting monodromy using the $\beta$ curves will be conjugate to the one coming from the $\alpha$ curves, the conjugating permutation is given by the permutation built out of blocks $\chi(\alpha_i*\overline{\beta}_i)$.
    \item If we chose some other ordering of the pre-image points of the basepoint $x_1$ under $p$, or their pre-images under $q$, then the monodromy of $r$ the composition will also change by a conjugation, and the conjugating permutation is the one coming from the changing of the labeling.
\end{itemize}

\subsection{Liftings of composition of branched coverings}  

Suppose we have two branched covering maps $q:Y\rightarrow X$ and $p:Z\rightarrow Y$, so that their composition $r=q\circ p$ is also a branched covering.

\begin{prop} \label{comp0}
 If $p:Y\rightarrow X$ lifts to a codimension-$l$ braided embedding with trivial normal bundle, and $q:Z\rightarrow Y$ lifts to a codimension-$l$ braided embedding; then so does the composition $r=q\circ p$.
\end{prop}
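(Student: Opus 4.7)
My plan is to exploit the trivial normal bundle of the lift $\tilde{p}: Y \hookrightarrow X \times D^l$ of $p$ to thicken it into a properly embedded copy of $Y \times D^l$ inside $X \times D^l$, then transport the lift $\tilde{q}: Z \hookrightarrow Y \times D^l$ of $q$ across this thickening.

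Concretely, by the tubular neighborhood theorem (smooth or PL locally flat, as needed), the trivial rank-$l$ normal bundle of $\tilde{p}(Y) \subset X \times D^l$ furnishes an embedding $\Phi: Y \times D^l \hookrightarrow X \times D^l$ with $\Phi(y,0) = \tilde{p}(y)$. I would select the trivialization so that, away from the branch locus of $p$, each normal disc $\Phi(\{y\} \times D^l)$ is \emph{vertical}, i.e., contained in the fiber $\{p(y)\} \times D^l$; this is possible wherever the vertical direction is transverse to $d\tilde{p}(TY)$, and that is precisely the non-branch set. After rescaling the $D^l$-factor of $\tilde{q}$ so that $\tilde{q}(Z)$ lies in the domain of $\Phi$, I set $\tilde{r} := \Phi \circ \tilde{q}: Z \hookrightarrow X \times D^l$; this is an embedding because both $\Phi$ and $\tilde{q}$ are.

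To verify that $\tilde{r}$ is a braided embedding lifting the composition $r$, note that on the complement of the branch locus of $p$ the vertical choice gives $\mathrm{pr}_X \circ \Phi(y,v) = p(y)$, hence $\mathrm{pr}_X \circ \tilde{r}(z) = p(q(z)) = r(z)$. Equivalently at the level of monodromies, the construction corresponds to a braid cabling procedure: each strand of the braid monodromy $\psi_p(\gamma) \in B_n$ is replaced by a small copy of an appropriate value of $\psi_q$ in $B_m$, producing an element of $B_{nm}$; using the permutation monodromy formula for compositions from Subsection~\ref{pmh}, this cabled monodromy is seen to lift $\phi_r$, so by Hansen's criterion (Proposition~\ref{Hcri}) we obtain a codimension-$l$ braided embedding lifting $r$ over the complement of the branch locus.

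The main obstacle I anticipate is handling the branch locus of $p$: at such a point the vertical direction in $X \times D^l$ actually lies inside $d\tilde{p}(TY)$, so the vertical complement is not available there. This is exactly where the hypothesis of trivial normal bundle intervenes---it guarantees a globally continuous choice of normal complement even when the natural vertical choice breaks down. To extend $\tilde{r}$ across the branch locus of $r$, I would appeal to Theorem~\ref{ext} and check that the braid surrounding each branch point of $r$ is a completely split (standard) unlink; this follows from the cabling construction applied to the corresponding completely split braids inherited from the existing lifts $\tilde{p}$ and $\tilde{q}$.
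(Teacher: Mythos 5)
This is essentially the paper's proof: the paper simply identifies the trivial normal bundle of the image of the lift of $p$ with $Y\times D^l$ and then inserts the given braided embedding of $Z$ into this thickened copy, which is exactly your $\Phi\circ\tilde{q}$. Your extra care about the branch locus (vertical trivialization away from it, Theorem~\ref{ext} and the cabling picture across it) goes beyond what the paper records and is welcome, but note that Theorem~\ref{ext}, Hansen's criterion and braid-monodromy cabling are codimension-two tools, so for general $l$ (and without assuming the branch locus of $r$ is a submanifold with trivial normal bundle) that final verification should be phrased directly in terms of the local models rather than through those statements.
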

\begin{proof}
We identify the normal bundle  of the image braided embedding of $Y$ in $X\times D^l$ with $Y\times D^l$, and then use the braided embedding of $Z$ in $Y\times D^l$ to obtain a braided embedding lifting $r$.
\end{proof}
\begin{remark}
We note that the trivial normal bundle condition is always satisfied in case of honest coverings, and in that case this result is proved in \cite[Theorem~4.1]{Pe}.
\end{remark}
We will see how the monodromy map of such a composite braided embedding looks like in the next subsection. Let us now discuss a partial converse to the above proposition.
\begin{prop} \label{comp1}
 If $r:Z\rightarrow X$ lifts to a codimension-$l$ braided embedding, then so does $q:Z\rightarrow Y$ (possibly to a non-locally flat braided embedding).
\end{prop}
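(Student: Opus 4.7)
The plan is to obtain a lift of $q$ by reusing the disc-factor coordinates of the given lift of $r$. Writing the composition as $r=p\circ q$, where $p\colon Y\to X$ is the other branched cover in the setup, I would start from the given codimension-$l$ braided embedding $F\colon Z\hookrightarrow X\times D^l$ with $pr_1\circ F=r$, and decompose it as $F(z)=(r(z),f(z))$ where $f:=pr_2\circ F\colon Z\to D^l$. The candidate lift is then
\[
G\colon Z\to Y\times D^l,\qquad G(z):=(q(z),f(z)).
\]
By construction $pr_1\circ G=q$, which is a branched covering by hypothesis, so once $G$ is shown to be an embedding it will automatically qualify as a codimension-$l$ braided embedding lifting $q$.

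The heart of the argument is to verify that $G$ is injective. If $G(z_1)=G(z_2)$, then $q(z_1)=q(z_2)$ and $f(z_1)=f(z_2)$; applying $p$ to the first equality yields $r(z_1)=p(q(z_1))=p(q(z_2))=r(z_2)$, so $F(z_1)=F(z_2)$, and injectivity of $F$ forces $z_1=z_2$. Continuity (and smoothness or piecewise-linearity, whichever category applies) of $G$ is inherited from $q$ and $f$, and since we are working with compact manifolds, a continuous injection is automatically a homeomorphism onto its image. Hence $G$ is a topological (respectively PL) embedding.

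The main obstacle, already flagged in the statement, is local flatness. The local-flatness information for $F$ sits over the branch locus of $r$ in $X$; it does not transfer to local flatness for $G$ near the branch locus of $q$ in $Y$, because the folding of strands done by $p$ is undone when we replace the first coordinate $r=p\circ q$ of $F$ by the first coordinate $q$ of $G$. Concretely, a locally flat standard model in $X\times D^l$ need not pull back to a locally flat model under the natural map $(p\times\mathrm{id})\colon Y\times D^l\to X\times D^l$. This is precisely why the conclusion only claims a possibly non-locally flat braided embedding, and no further regularity argument is needed to finish the proof.
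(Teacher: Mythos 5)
Your proposal is correct and is essentially the paper's own argument: the paper also takes the second-factor map $pr_2\circ R$ (calling it a ``separating map'') and verifies exactly your injectivity step, namely that two points in the same fiber of the intermediate covering with equal disc coordinates would already violate injectivity of the given lift of $r$. Your remarks on passing from injectivity to embedding via compactness and on why local flatness may fail match the level of detail in the paper, so there is nothing to add.
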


\begin{proof}
Consider a braided embedding $R:Z\rightarrow X\times D^l$ lifting $r$. By composing with projection to the second factor, we obtain a separating map (i.e. which sends different pre-images of a point in the base space to distinct points) $s=pr_2\circ R:Z\rightarrow D^l$. We claim that $s$ is also a separating map for $p$ (i.e. $p\times s:Z\rightarrow Y\times D^l$ is a braided embedding lifting $p$). To see this, consider an arbitrary point $y\in Y$ and consider two pre-image points $z_1$ and $z_2$ under $p$. If it happens that $s(z_1)=s(z_2)$, then $s$ cannot be a separating map for $r$ since $r(z_1)=q(y)=r(z_2)$. The result follows.
\end{proof}

\begin{remark} In the setting of the above proposition, contrary to \cite[Theorem 4.2]{Pe}, $p$ need not lift to a braided embedding, see Remark \ref{countereg}.
\end{remark}
\begin{remark} The above proposition also generalizes to maps other than branched coverings, like the ones considered by Melikhov \cite{Me}. 

\end{remark}

\subsection{Braid monodromy under composition}

We can similarly determine the braid monodromy for composition of two braided embeddings. Let us suppose the braid monodromies of the braided embeddings are given by $\Phi:\pi_1(X,x_1)\rightarrow B_n$ and  $X:\pi_1(Y,y_1)\rightarrow S_m$ lifting $\phi:\pi_1(X,x_1)\rightarrow S_n$ and  $\chi:\pi_1(Y,y_1)\rightarrow S_m$, respectively.

We have $\Psi(\gamma)$ is a reducible braid determined by the tubular braid $\Phi(\gamma)$, and the interior braid corresponding to the $i$-th tubular braid is given by $X(\alpha_i*\gamma_i*\bar{\alpha})$.


\section{Lifting honest coverings}

\subsection{Applications of Hansen's criterion for lifting of covers}\label{hc}

It follows from Hansen's criterion~\cite{Ha} (Proposition \ref{Hcri}) that any covering over a space with free fundamental group lifts to a braided embedding. Moreover, one has a complete characterization when the fundamental group is finitely generated and abelian.
\begin{prop}\cite[Theorem~5.5]{Pe} If $\pi_1 (X)$ is finitely generated and abelian, then a covering $p:Y\rightarrow X$ lifts to a braided embedding iff $p_*(\pi_1(Y))$ contains all the torsion of $\pi_1(X)$.\end{prop}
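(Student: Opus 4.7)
The plan is to apply Hansen's criterion (Proposition~\ref{Hcri}) to reduce the question to whether the permutation monodromy $\phi:\pi_1(X)\to S_n$ associated to $p$ admits a lift $\psi:\pi_1(X)\to B_n$ through $\mathrm{Forget}:B_n\to S_n$. Write $\pi_1(X)\cong\mathbb{Z}^r\oplus T$ with $T$ the finite torsion subgroup, and set $H:=p_*(\pi_1(Y))$. Because $\pi_1(X)$ is abelian, all point stabilizers of the monodromy action agree, so $\ker\phi=H$.

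For the ``only if'' direction, suppose $\psi$ exists. The braid group $B_n$ is torsion-free (a classical theorem, provable via the Fadell--Neuwirth fibration relating successive configuration spaces), so $\psi|_T$ is trivial. Then $\phi|_T=\mathrm{Forget}\circ\psi|_T$ is also trivial, giving $T\subseteq\ker\phi=H$.

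For the ``if'' direction, assume $T\subseteq H$. Then $\phi$ descends to $\bar\phi:\mathbb{Z}^r\to S_n$, and to produce $\psi$ it suffices to lift $\bar\phi$ to $\bar\psi:\mathbb{Z}^r\to B_n$. Since $\mathbb{Z}^r$ is free abelian, $\bar\psi$ is determined by the images of its $r$ standard generators, which must pairwise commute and have the prescribed underlying permutations. I would first decompose the $\mathbb{Z}^r$-action on $\{1,\ldots,n\}$ into orbits and place each orbit in a disjoint sub-disc of $D^2$; since braids supported in disjoint sub-discs automatically commute, this reduces the problem to the transitive case. In that case the action factors through the regular representation of a finite abelian group $A\cong\mathbb{Z}/n_1\oplus\cdots\oplus\mathbb{Z}/n_s$, and after an adapted change of basis of $\mathbb{Z}^r$ furnished by the structure theorem, only the first $s$ generators act non-trivially, mapping to the natural generators of the cyclic factors of $A$.

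To produce pairwise commuting braid lifts of these $s$ generators, I would use a nested sub-disc construction: arrange the $m=n_1\cdots n_s$ marked points as the vertices of an $n_1\times\cdots\times n_s$ grid in $D^2$, group them into $n_s$ sub-discs corresponding to the level sets of the last coordinate, lift the generator of $\mathbb{Z}/n_s$ to a braid that rigidly cyclically permutes these sub-discs, and recursively lift the remaining generators inside each sub-disc by the same recipe. The main obstacle is verifying that these lifts genuinely commute in $B_m$ rather than only modulo pure braids: this should follow because the $\mathbb{Z}/n_s$-generator rigidly permutes identical copies of the inner braid, forcing the two geometric compositions to be isotopic in either order. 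One can alternatively phrase this inductively using the centralizer structure of periodic braids recorded in Subsection~\ref{centralizer}, choosing each successive lift inside the centralizer of the previously constructed periodic braid, which is itself a braid group on a punctured disc.
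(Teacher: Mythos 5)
The paper offers no proof of this proposition at all: it is quoted directly from Petersen \cite[Theorem~5.5]{Pe}, so there is no in-paper argument to compare against; what you have written is an independent, self-contained proof, and it is correct. Your ``only if'' direction is exactly the torsion obstruction the paper records separately in Claim~\ref{torsionobs}: torsion-freeness of $B_n$ kills $\psi|_T$, hence $\phi|_T$, and since for a connected cover of a space with abelian fundamental group the monodromy kernel equals $p_*(\pi_1(Y))$, one gets $T\subseteq p_*(\pi_1(Y))$. (Do note that the identification $\ker\phi=p_*(\pi_1(Y))$ uses transitivity of the monodromy, i.e.\ connectedness of $Y$, which is the implicit setting of the statement; for a disconnected cover the ``if'' direction as literally stated can fail, e.g.\ $S^n\sqcup\mathbb{RP}^n\to\mathbb{RP}^n$ with basepoint in the trivial sheet, so your later orbit decomposition is really only needed inside the construction, not to salvage intransitive monodromy.) The ``if'' direction is the real content, and your construction goes through: after descending to $\mathbb{Z}^r$ and splitting into orbits placed in disjoint sub-discs, a transitive abelian permutation action is the regular representation of a finite abelian group $A$, a Smith-normal-form change of basis of $\mathbb{Z}^r$ reduces you to lifting the translations by generators of the cyclic factors, and the split-braid/cabling recursion does yield honestly commuting lifts: blackboard-framed cabling gives a homomorphism $B_{n_s}\ltimes (B_m)^{n_s}\to B_{mn_s}$ in which conjugation by the cabled cyclic braid permutes the interior braids according to its underlying permutation (any framing discrepancy acts by conjugation by the central full twist, hence trivially), so a split braid carrying the same interior braid in every sub-disc is literally fixed, and no finite-order lift is needed since $\mathbb{Z}^r$ is free abelian. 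In short, you have replaced the paper's external citation by an elementary argument built from tools the paper already uses (Hansen's criterion, torsion-freeness of $B_n$, and the tubular/interior-braid formalism of Subsection~\ref{centralizer}), which is a genuine added value rather than a rederivation of an in-paper proof.
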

 
 In particular, this implies:
 \begin{claim}\label{toruslifts} All finite sheeted coverings over an $n$-torus lifts to a braided embedding.
 \end{claim}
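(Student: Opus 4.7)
The plan is to deduce the claim as an immediate corollary of the preceding proposition (\cite[Theorem 5.5]{Pe}) applied to $X = T^n$. The fundamental group of the $n$-torus is $\pi_1(T^n) \cong \mathbb{Z}^n$, which is finitely generated and abelian, so the hypothesis of the proposition is satisfied.

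The key observation is that $\mathbb{Z}^n$ is torsion-free, so its torsion subgroup is trivial. Consequently, for any covering $p: Y \to T^n$, the condition that $p_*(\pi_1(Y))$ contains all the torsion of $\pi_1(T^n)$ is automatically satisfied (the empty condition). By the proposition, $p$ lifts to a braided embedding.

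There is essentially no obstacle here; the whole argument is a one-line application of the cited result together with the torsion-free-ness of $\mathbb{Z}^n$. The only thing worth mentioning is that the claim holds for \emph{all} finite sheeted coverings (connected or not), which is fine because the proposition is stated for arbitrary covering maps $p: Y \to X$.
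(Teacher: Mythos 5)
Your argument is correct and is exactly how the paper derives the claim: it is stated as an immediate consequence of the cited Petersen result, since $\pi_1(T^n)\cong\mathbb{Z}^n$ is finitely generated, abelian, and torsion-free, making the torsion condition vacuous. (The paper only adds, as an aside, that one could alternatively prove it directly by writing any covering of the torus as a product of coverings of circles and invoking Proposition \ref{comp0}.)
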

 This statement is easy to prove directly, using the fact that by a change of coordinates, any covering of the torus is equivalent to  a product of coverings over the circle; and Proposition \ref{comp0}.

Hansen's criterion also gives us examples of non-liftable covers. Since the braid groups $B_n$ are known to be torsion free, we get the following immediate obstruction to lifting:
\begin{claim}\label{torsionobs}
If $\alpha\in \pi_1(N)$ is torsion, and the permutation monodromy of $\alpha$ is non-trivial, then the permutation monodromy does not lift to a braid monodromy.
\end{claim}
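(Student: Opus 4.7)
The plan is to argue by contradiction, using the fact (already implicitly flagged in the sentence preceding the claim) that the Artin braid groups $B_n$ are torsion-free. So suppose a braid monodromy lift $\psi : \pi_1(N) \to B_n$ of $\phi$ existed, meaning $\mathrm{Forget} \circ \psi = \phi$. I would then extract the image of the given torsion element: since $\alpha \in \pi_1(N)$ has finite order, say $\alpha^k = 1$ with $k \geq 1$, applying the homomorphism $\psi$ yields $\psi(\alpha)^k = \psi(\alpha^k) = 1$ in $B_n$, so $\psi(\alpha)$ is a torsion element of the braid group.

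At this point I would invoke the torsion-freeness of $B_n$ to conclude that $\psi(\alpha) = 1$. The cleanest justification is Fadell--Neuwirth's result that the ordered configuration space of $n$ points in $\mathbb{C}$ is aspherical (already cited in Section~\ref{bm} via \cite{FaN,FoN}): this realizes $B_n$ as the fundamental group of a finite-dimensional $K(\pi,1)$, and any group with a finite-dimensional classifying space is torsion-free. Alternatively, one could cite any of the standard orderability or Garside-theoretic proofs; I would just state the fact with a reference.

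Finally, applying the forgetful map and using $\mathrm{Forget} \circ \psi = \phi$ would give $\phi(\alpha) = \mathrm{Forget}(\psi(\alpha)) = \mathrm{Forget}(1) = 1 \in S_n$, contradicting the hypothesis that the permutation monodromy of $\alpha$ is non-trivial. This yields the desired conclusion that no such lift $\psi$ can exist.

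There really is no hard step here; the entire argument fits in a couple of lines once the torsion-freeness of $B_n$ is in hand. The only thing worth stating carefully is the reference for that fact, so that the reader does not have to take it on faith. The role of this claim in the paper is then to serve as a convenient criterion for producing non-liftable examples (e.g.\ whenever $\pi_1(N)$ has torsion that is detected by the covering), which presumably is how it will be applied in the sequel.
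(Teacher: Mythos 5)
Your argument is correct and is exactly the one the paper intends: the claim is stated there as an immediate consequence of the torsion-freeness of $B_n$, since any lift $\psi$ would send the torsion element $\alpha$ to a torsion element of $B_n$, forcing $\psi(\alpha)=1$ and hence $\phi(\alpha)=\mathrm{Forget}(\psi(\alpha))=1$, contradicting the hypothesis. Your extra care in citing a justification for torsion-freeness (e.g.\ via the aspherical configuration space) is a fine addition but not a departure from the paper's approach.
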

In particular, if we apply this claim for the monodromy of the cover $a_2:S^2\rightarrow \mathbb{RP}^2$, then we recover Borsuk Ulam theorem in this dimension.

 \subsection{Liftings of two fold coverings over the Klein bottle}\label{2fklein}

Let us consider two 2-fold coverings over the Klein bottle $K$, which has fundamental group $\pi_1(K)=\langle a,b  |a^2b^2\rangle$. 
\begin{example} \label{egodckb}  
The first cover we will consider is the orientation double cover, that unwraps both the one-handles so the resulting cover is a torus. In other words this corresponds to the monodromy map: $$a\mapsto (12), b\mapsto (12).$$    

\end{example}   

\begin{example} \label{egkbckb}   
 The second cover we will consider is the also two sheeted cover, but one which unwraps only the first one-handle. In other words this corresponds to the monodromy map: $$a\mapsto (12), b\mapsto I.$$    
 The resulting cover is a connected non-orientable surface which has zero Euler characteristic, i.e. it is the Klein bottle. 
\end{example}  
    
\subsubsection{Liftability the above examples to braided embeddings}

Let us now consider the problem about lifting of the two coverings over the Klein bottle to braided embeddings, or equivalently if we can lift the permutation monodromy maps to a braid monodromy.

\begin{claim}\label{lifting2cov} The orientation double cover $f:\mathbb{T}\rightarrow K$ of the torus over the Klein bottle lifts to a braided embedding.
 \end{claim}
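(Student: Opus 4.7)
The plan is to apply Hansen's criterion (Proposition \ref{Hcri}) directly: the lift of the covering $f$ to a braided embedding exists if and only if the permutation monodromy $\phi:\pi_1(K)\to S_2$ given by $a\mapsto(12),\,b\mapsto(12)$ lifts through Forget to a braid monodromy $\psi:\pi_1(K)\to B_2$. So I would reduce the problem to constructing an explicit group homomorphism from $\pi_1(K)=\langle a,b\mid a^2b^2\rangle$ into $B_2$.

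The key observation is that $B_2\cong\mathbb{Z}$ is infinite cyclic, generated by $\sigma_1$, and the Forget map sends $\sigma_1^n$ to $(12)$ precisely when $n$ is odd. So any candidate lift must satisfy $\psi(a)=\sigma_1^p$ and $\psi(b)=\sigma_1^q$ with $p$ and $q$ odd integers, and the defining relator imposes the single constraint
\[
\psi(a)^2\psi(b)^2=\sigma_1^{2(p+q)}=1\in B_2,
\]
which, since $B_2$ is torsion free, is equivalent to $p+q=0$.

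The pair $p=1,\ q=-1$ is an odd solution, so the assignment $\psi(a)=\sigma_1,\ \psi(b)=\sigma_1^{-1}$ defines a homomorphism $\psi:\pi_1(K)\to B_2$ with $\mathrm{Forget}\circ\psi=\phi$. Hansen's criterion then produces a braided embedding $F:\mathbb{T}\to K\times D^2$ with $\mathrm{pr}_1\circ F=f$.

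There is no real obstacle — once one knows $B_2$ is infinite cyclic and torsion free, the relator gives a single linear equation that admits an odd solution, so the lift exists. (It is worth remarking geometrically that the resulting braided embedding can be visualized: along the meridian $a$ the two sheets cross positively, and along $b$ they cross negatively, so the two crossings cancel around the defining $2$-cell of $K$.)
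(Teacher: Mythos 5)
Your proof is correct and follows essentially the same route as the paper: exhibit the explicit homomorphism $\psi(a)=\sigma_1$, $\psi(b)=\sigma_1^{-1}$ into $B_2$ lifting the permutation monodromy, and invoke Hansen's criterion. The brief analysis showing the relator forces $p+q=0$ with $p,q$ odd is a nice sanity check but not needed beyond producing the one explicit lift.
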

 \begin{proof} We have the braid monodromy $\pi_1(K)\rightarrow B_2$ sending $$ a\mapsto \sigma_1, b\mapsto \sigma_1^{-1},$$ which lifts the permutation monodromy in Example \ref{egodckb}.
 \end{proof}

\begin{claim}\label{nonlifting2cov} The two sheeted cover $g:{K}\rightarrow K$ of the Klein bottle over itself described in Example  \ref{egkbckb} does not lift to any braided embedding.
 \end{claim}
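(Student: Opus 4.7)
The plan is to invoke Hansen's criterion (Proposition~\ref{Hcri}) and derive an algebraic contradiction directly from the presentation $\pi_1(K) = \langle a, b \mid a^2 b^2 \rangle$. Since the covering has two sheets, any lift would be a homomorphism $\psi : \pi_1(K) \to B_2$ with $\mathrm{Forget}\circ\psi$ equal to the permutation monodromy from Example~\ref{egkbckb}.

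The key observation is that $B_2$ is infinite cyclic, generated by $\sigma_1$, and the forgetful map $B_2 \to S_2$ sends $\sigma_1^k$ to the transposition $(12)$ when $k$ is odd and to the identity when $k$ is even. The prescribed monodromy forces $\psi(a) = \sigma_1^{2m+1}$ and $\psi(b) = \sigma_1^{2n}$ for some integers $m,n$. Then
\[
\psi(a^2 b^2) = \sigma_1^{2(2m+1)} \cdot \sigma_1^{2(2n)} = \sigma_1^{4(m+n)+2}.
\]
For $\psi$ to be a well-defined homomorphism, this element must equal the identity in $B_2 \cong \mathbb{Z}$, which would require $4(m+n)+2=0$. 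This has no integer solution, so no such lift $\psi$ exists. By Hansen's criterion, the covering $g$ does not lift to a braided embedding.

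The main (and essentially only) step is identifying the parity obstruction modulo $4$; everything else is immediate from the fact that $B_2$ is abelian and torsion-free. I expect no significant obstacle. One could alternatively phrase this as: the image of $a^2b^2$ in $H_1(K;\mathbb{Z})$ is $2a+2b$, which is nonzero of order $2$ in $H_1(K;\mathbb{Z}) \cong \mathbb{Z} \oplus \mathbb{Z}/2$, so any homomorphism to the torsion-free group $B_2$ must kill the generators in a way incompatible with lifting the prescribed permutation monodromy; but the direct computation above is the cleanest presentation.
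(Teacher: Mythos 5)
Your proof is correct and is essentially identical to the paper's: both write the putative lift as $a\mapsto\sigma_1^{\text{odd}}$, $b\mapsto\sigma_1^{\text{even}}$ in $B_2\cong\mathbb{Z}$ and observe that the image of $a^2b^2$ is $\sigma_1^{2(m+n)}$ with $m+n$ odd, hence nonzero, contradicting the relation. The homological remark at the end is just a rephrasing of the same parity obstruction, so nothing substantively different from the paper's argument.
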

 \begin{proof} Suppose not, say there is a braid monodromy $\pi_1(K)\rightarrow B_2$ lifting it which sends $$ a\mapsto \sigma_1^m, b\mapsto \sigma_1^{n} \quad \text {  for some integers $m$ and $n$}.$$
 It must be the case that $m$ is odd and $n$ is even, as we obtain the symmetric group $S_2$ from the braid group $B_2$ by adding the relation $\sigma_1^2=1$. Then the image of $a^2b^2$ is $\sigma_1^{2(m+n)}$, which cannot be the identity as $2(m+n)$ is not divisible by 4.\\
 \end{proof}
 
 \begin{remark} Up to equivalence, there are only two connected two sheeted covering of the Klein bottle, the ones we discussed above. There is also a disconnected covering, with permutation monodromy $$a\mapsto I, b\mapsto I,$$ which lifts to the braid monodromy $$a\mapsto 1, b\mapsto 1.$$    
 \end{remark}
 
  \begin{remark}\label{countereg} We obtain the following commuting diagram of covering maps by using the two coverings and pull-backs.
  
  \[ \begin{tikzcd}
\mathbb{T} \arrow{r}{g^*f} \arrow[swap]{d}{f^*g} & K \arrow{d}{g} \\%
\mathbb{T} \arrow{r}{f}& K
\end{tikzcd}
\]

We see that by Claims \ref{lifting2cov} and \ref{toruslifts} both the coverings $f$ and $f^*g$ lift to braided embeddings, and hence by Proposition \ref{comp0}, so does their composition $f^*g\circ f=g^*f\circ g$. However, by Claim \ref{nonlifting2cov}, $g$ does not lift to any braided embeddings. This gives a counterexample to one half of one direction of \cite[Theorem 4.2]{Pe}. The gap in the proof of \cite[Theorem 4.2]{Pe} arises from subtleties in continuous variation of roots with respect to coefficients of polynomials.
 \end{remark}
 
 \subsection{Liftability of covers over the Klein bottle} \label{liftklein} 
 In this subsection we will explore some covers over Klein bottle which do (not) lift to braided embeddings. For notational convenience, in this section we will change our presentation of the fundamental group of the Klein bottle $K$  by replacing the generator with its inverse.
 Thus $\pi_1(K)=\langle x,y  |x^2=y^2\rangle$. So the question about liftability of covers can be rephrased as:\\

 \begin{question}\label{squareslift?}
  Given two permutations  $a$ and $b$ with $a^2=b^2$, can we  find braids $\alpha$ and $\beta$ lifting $a$ and $b$ respectively, and $\alpha^2=\beta^2$?
  \end{question}
 The answer, as we have seen is not always yes, and let us try to find what some of the the obstructions are. The first one comes from the sign of the permutations (and exponent sum of braids).
 
 \begin{claim} If the answer to Question \ref{squareslift?} is affirmative, then permutations $a$ and $b$ have to have the same sign, i.e. both are even or both are odd.
   \end{claim}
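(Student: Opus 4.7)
The plan is to use the exponent sum homomorphism $e: B_n \to \mathbb{Z}$ defined on the Artin generators by $e(\sigma_i) = 1$ for all $i$. This is well-defined because all the braid relations are homogeneous in the generators (both sides of the commutation and braid relations have the same word length). I would first recall (or briefly verify) that the exponent sum is compatible with the sign of the underlying permutation: for any braid $\gamma \in B_n$, one has $\sgn(\mathrm{Forget}(\gamma)) = (-1)^{e(\gamma)}$, since each $\sigma_i$ maps to the transposition $(i\ i{+}1)$, which is odd.

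Now suppose we have braids $\alpha, \beta \in B_n$ lifting $a, b$ respectively and satisfying $\alpha^2 = \beta^2$ in $B_n$. Applying the exponent sum homomorphism gives $2\,e(\alpha) = e(\alpha^2) = e(\beta^2) = 2\,e(\beta)$, so $e(\alpha) = e(\beta)$ as integers. Consequently
\[
\sgn(a) = \sgn(\mathrm{Forget}(\alpha)) = (-1)^{e(\alpha)} = (-1)^{e(\beta)} = \sgn(\mathrm{Forget}(\beta)) = \sgn(b),
\]
which is exactly the conclusion.

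There is no real obstacle here; the only thing to be careful about is the well-definedness of $e$ on $B_n$ (i.e., checking that it respects the Artin relations), which is immediate from the presentation recalled earlier in the paper. Everything else is a one-line computation using the homomorphism property.
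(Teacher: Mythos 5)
Your proof is correct and uses essentially the same idea as the paper: apply the exponent sum homomorphism to $\alpha^2=\beta^2$ and use that the parity of the exponent sum determines the sign of the underlying permutation. The only cosmetic difference is that you cancel the factor of $2$ directly in $\mathbb{Z}$, whereas the paper phrases it as a contradiction via divisibility by $4$; the substance is identical.
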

   
   \begin{proof} Suppose not, say $a$ is even and $b$ is odd. Then $\exp(\alpha)$ is even and $\exp(\beta)$ is odd, so $\exp(\alpha^2)$ is divisible by four while $\exp(\beta^2)$ is not, contradicting $\alpha^2=\beta^2$.
   \end{proof}
   The next question we may ask is if $a$ and $b$ satisfying $a^2=b^2$ have the same sign, then can we find lifts $\alpha$ and $\beta$ with the required properties? The answer is 'No' as the following claim and example shows.
 
  \begin{claim}If the answer to Question \ref{squareslift?} is affirmative, then permutations $a$ and $b$ have to be conjugate.
 \end{claim}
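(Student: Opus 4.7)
The plan is to reduce the claim to a classical theorem of Gonz\'alez--Meneses about roots in braid groups, and then push the resulting conjugacy down through the forgetful map to the symmetric group. Suppose $a,b$ admit lifts $\alpha,\beta \in B_n$ with $\alpha^2=\beta^2$, so that $\mathrm{Forget}(\alpha)=a$ and $\mathrm{Forget}(\beta)=b$.

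The key input is the theorem of Gonz\'alez--Meneses (which resolved a conjecture of Makanin): in the braid group $B_n$, if $x^e=y^e$ for some nonzero integer $e$, then $x$ and $y$ are conjugate. Applying this with $e=2$ produces some $\gamma \in B_n$ with $\alpha = \gamma\beta\gamma^{-1}$.

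Finally, since $\mathrm{Forget}:B_n \to S_n$ is a group homomorphism, it preserves conjugation, so
\[
a \;=\; \mathrm{Forget}(\alpha) \;=\; \mathrm{Forget}(\gamma)\,b\,\mathrm{Forget}(\gamma)^{-1},
\]
which establishes that $a$ and $b$ are conjugate in $S_n$.

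The main (essentially only) obstacle is invoking the correct form of Gonz\'alez--Meneses' theorem at the right point; once that is available, the rest is a one-line push-down via $\mathrm{Forget}$. A purely self-contained elementary argument looks harder, since analyzing arbitrary square roots of a fixed braid $\alpha^2$ reduces to understanding the centralizer $Z(\alpha^2)$ via the structural results recalled in Subsection~\ref{centralizer}, which Gonz\'alez--Meneses' theorem effectively packages in one stroke. It is also worth noting that one cannot strengthen the conclusion to $a=b$ in general: distinct but conjugate braids can share a common square, and their images under $\mathrm{Forget}$ need not coincide.
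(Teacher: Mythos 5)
Your argument is correct and is essentially identical to the paper's: the paper likewise invokes Gonz\'alez--Meneses' uniqueness of roots up to conjugacy in $B_n$ to conclude $\alpha$ and $\beta$ are conjugate, and then observes that the associated permutations must therefore be conjugate in $S_n$. Your write-up just makes the final push-down through the forgetful homomorphism explicit.
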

 
 \begin{proof}
  By \cite{GM}, roots in the braid group are unique up to conjugation, and thus if $\alpha^2=\beta^2$ in the braid group, the braids $\alpha$ and $\beta$ are conjugate, whence the associated permutations $a$ and $b$ have to be conjugate in the symmetric group.
 \end{proof}
 
 \begin{example} Consider $a=(123456)$ and $b=(153)(164)$, then $a^2=(135)(264)=b^2$, and $a$ and $b$ are not conjugate since they have different cycle types. Thus by the claim above, the corresponding cover does not lift.
  \end{example}
  
  One may now ask the question with $a$ and $b$ as above with the same cycle type, but the answer still remains 'No', as illustrated by the following claim.
  
  \begin{claim}  Consider $a=(12)$ and $b=(34)$, then $a^2=I=b^2$. There are no four braids  $\alpha$ and $\beta$ lifting $a$ and $b$ satisfying $\alpha^2=\beta^2$.
  \end{claim}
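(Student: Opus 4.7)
The plan is to construct a group homomorphism out of a suitable subgroup of $B_4$ containing both $\alpha$ and $\beta$ that separates $\alpha^2$ and $\beta^2$ on parity grounds. The natural candidate is a ``forget strands $3$ and $4$'' map, defined as follows. Let $G \leq B_4$ be the preimage under $B_4 \to S_4$ of the setwise stabilizer of $\{1,2\}$, namely $\{e, (12), (34), (12)(34)\}$; both $\alpha$ and $\beta$ lie in $G$ since each preserves $\{1,2\}$ as a set. I would then define $\psi : G \to B_2$ by erasing strands $3$ and $4$ from a braid in $G$ (equivalently, viewing $G$ as the stabilizer in the mapping class group of the four-punctured disc of the unordered pair $\{1,2\}$, and filling in the marked points $3$ and $4$). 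The one step requiring care is verifying that $\psi$ is a well-defined group homomorphism; this follows either from the observation that deletion of strands commutes with stacking of braids up to isotopy, or from the standard fact that filling in marked points induces a homomorphism between the appropriate subgroups of mapping class groups.

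With $\psi$ in hand, $\psi(\alpha)$ is a $2$-braid whose permutation is the restriction of $(12)$ to $\{1,2\}$ --- the non-identity transposition --- so $\psi(\alpha) = \sigma_1^{2k+1}$ for some $k \in \mathbb{Z}$. Likewise $\psi(\beta)$ is pure, since $(34)$ restricts to the identity on $\{1,2\}$, so $\psi(\beta) = \sigma_1^{2m}$ for some $m \in \mathbb{Z}$. Squaring gives $\psi(\alpha^2) = \sigma_1^{4k+2}$ and $\psi(\beta^2) = \sigma_1^{4m}$, so $\alpha^2 = \beta^2$ would force $4k+2 = 4m$, a contradiction modulo $4$.

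The main (mild) subtlety worth flagging is that $\beta$ does not individually fix strands $3$ and $4$, so the usual ``forget a strand'' homomorphism defined on a pointwise stabilizer is not directly applicable. One needs to erase the pair $\{3,4\}$ of strands simultaneously, which is legitimate precisely because $G$ is their setwise stabilizer; the key parity arithmetic then takes care of the rest.
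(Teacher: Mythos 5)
Your argument is correct. It is, at bottom, the same reduction the paper makes, just carried out algebraically inside $B_4$ rather than topologically: the paper observes that a lift would give a braided embedding of the disconnected $4$-fold cover (two disjoint Klein bottles), restricts that embedding to one connected component to get a $2$-strand braided embedding lifting the cover of Example~\ref{egkbckb}, and then invokes Claim~\ref{nonlifting2cov}, whose proof is exactly your mod-$4$ exponent count in $B_2\cong\mathbb{Z}$. Your strand-deletion homomorphism $\psi\colon G\to B_2$ on the setwise stabilizer of $\{1,2\}$ is precisely the braid-monodromy shadow of ``restrict the embedding to one component,'' so the two proofs differ only in packaging. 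What your version buys is self-containedness: everything happens in the braid group, with no appeal to the earlier geometric claims, at the modest cost of having to verify that forgetting the pair of strands $\{3,4\}$ is well defined on the setwise (not pointwise) stabilizer --- which you correctly flag and which does hold, e.g.\ because the strands of a product starting at positions $3,4$ are concatenations of such strands of the factors, or via the forgetful map on mapping class groups of the marked disc. The parity bookkeeping ($\psi(\alpha)=\sigma_1^{\mathrm{odd}}$, $\psi(\beta)=\sigma_1^{\mathrm{even}}$, so $\psi(\alpha^2)\equiv 2$ and $\psi(\beta^2)\equiv 0 \pmod 4$) is exactly the computation in the paper's Claim~\ref{nonlifting2cov}, so no gap remains.
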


 \begin{proof}
 We note that if such a lift did exist, we would get a braided embedding of two disjoint Klein bottles. If we restricted our attention to one connected component, we would obtain a braided embedding whose associated covering is equivalent to Example~\ref{egkbckb}. However Claim~\ref{nonlifting2cov} shows this cannot lift to a braided embedding, which completes the proof of the claim.
 \end{proof}
  We can now ask the same liftability question by further adding the restriction that the covering is connected (i.e. a Klein bottle). While we are unable to answer the question completely, we give some partial results below.
  We will discuss possible lifts according to their Nielsen-Thurston type.
  We begin by noting that since for any lift $\alpha$ and $\beta$ of $a$ and $b$, we must have $\alpha^2=\beta^2$ and so $\alpha$ and $\beta$ must have the same Nielsen-Thurston type, i.e. either periodic, or non periodic reducibles or pseudo-Anosov.\\
  It turns out that the \textbf{pseudo-Anosov} case cannot happen unless $a=b$, as shown by the following proposition by specializing to $k=2$.
  \begin{prop} If $n$-braids $\alpha$ and $\beta$ satisfy $\alpha^k=\beta^k$ for $k>0$, and $\alpha$ is pseudo-Anosov, then $\alpha=\beta$.
   \end{prop}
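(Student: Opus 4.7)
The natural strategy is to combine uniqueness of roots in the braid group (the cited theorem of Gonz\'alez-Meneses) with the rigidity of centralizers of pseudo-Anosov braids. First, since $\alpha^k=\beta^k$, the result of \cite{GM} asserting that $k$-th roots in $B_n$ are unique up to conjugation gives a braid $\gamma\in B_n$ with $\beta=\gamma\alpha\gamma^{-1}$. Taking $k$-th powers we obtain
\[
\alpha^k=\beta^k=\gamma\alpha^k\gamma^{-1},
\]
so $\gamma$ lies in the centralizer $Z(\alpha^k)$.

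The heart of the argument is now to show that $Z(\alpha^k)=Z(\alpha)$ whenever $\alpha$ is pseudo-Anosov. Because $\alpha$ is pA, every non-zero power $\alpha^k$ is again pA, with the same pair of invariant measured foliations (only the dilatation changes, by being raised to the $k$-th power). By Nielsen-Thurston theory, any mapping class commuting with a pseudo-Anosov element must preserve its invariant foliations; since $\alpha$ and $\alpha^k$ share the same foliations, an element commutes with $\alpha^k$ if and only if it commutes with $\alpha$. (Equivalently, one can invoke the classification of centralizers in $B_n$: the centralizer of a pseudo-Anosov braid is a free abelian group of rank two generated by a primitive pA root and the full twist $\Delta^2$, and this subgroup depends only on the invariant foliation pair.) Therefore $\gamma\in Z(\alpha)$, i.e.\ $\gamma\alpha=\alpha\gamma$, and so
\[
\beta=\gamma\alpha\gamma^{-1}=\alpha,
\]
completing the proof.

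The only genuinely non-trivial ingredient is the equality $Z(\alpha^k)=Z(\alpha)$ for pseudo-Anosov $\alpha$; once that is in hand, everything else is formal. If one prefers to avoid invoking foliations, the structural description of centralizers in $B_n$ summarized in the preceding background subsection (centralizers of reducible braids fit in a split exact sequence involving interior and tubular braids, while centralizers of periodic braids are cyclic) can be complemented by the pseudo-Anosov case to conclude the same equality, since in a rank-two abelian centralizer any element has the same centralizer as its non-zero powers.
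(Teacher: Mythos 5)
Your argument is correct in substance but follows a different route from the paper. The paper argues directly on the pair $\alpha,\beta$: since $\beta^k=\alpha^k$ is pseudo-Anosov, $\beta$ is pseudo-Anosov with the same stable and unstable foliations as $\alpha$, hence $\alpha$ and $\beta$ commute, so $\alpha^k=\beta^k$ gives $(\alpha\beta^{-1})^k=1$ and torsion-freeness of $B_n$ forces $\alpha=\beta$. You instead first invoke \cite{GM} to get a conjugating element $\gamma$ with $\beta=\gamma\alpha\gamma^{-1}$, place $\gamma$ in $Z(\alpha^k)$, and then reduce everything to the equality $Z(\alpha^k)=Z(\alpha)$. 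That equality is true, and your second justification is the clean one: by \cite{GW} the centralizer of a pseudo-Anosov braid is free abelian (of rank two), so $Z(\alpha^k)$ is abelian and contains $\alpha$, whence $Z(\alpha^k)\subseteq Z(\alpha)$ and $\gamma$ commutes with $\alpha$. What your approach buys is a reusable centralizer statement; what the paper's buys is economy, needing only the commutation-plus-torsion-freeness trick and no appeal to conjugacy-uniqueness of roots.

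Two cautions. First, your main-text justification of $Z(\alpha^k)=Z(\alpha)$ is too quick: an element commuting with $\alpha^k$ preserves the foliation pair of $\alpha$, but preservation of the projective foliations does not by itself give commutation with $\alpha$; one still needs to compare $\gamma\alpha\gamma^{-1}$ with $\alpha$ (same foliations, same dilatation, so their ratio is an isometry of the associated flat structure, hence of finite order, hence trivial by torsion-freeness), or simply cite the abelianness of pseudo-Anosov centralizers from \cite{GW} as you do afterwards. Second, the parenthetical description of that centralizer as generated by a primitive pseudo-Anosov root and the full twist $\Delta^2$ is not quite accurate: the second generator is a periodic braid preserving the foliations, which may be a proper root of the full twist; since you only use abelianness, this does not affect the proof.
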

   \begin{proof}
   This is because $k$-th roots of pseudo-Anosov braids are unique \cite[Section 4.2]{GM}, and we outline the argument below. By Nielsen-Thurston theory, $\alpha$ is  pseudo-Anosov iff $\alpha^k$ is; and they have the same stable and unstable foliations.
      Since $\alpha^k=\beta^k$ the stable and unstable foliations of $\alpha$ and $\beta$ coincide, whence they commute. So, $\alpha^k=\beta^k$ is equivalent to $(\alpha \beta^{-1})^k=1$, which forces $\alpha=\beta$, as braid groups are torsion free.
   \end{proof}

   Let us now discuss the case where $\alpha$ (and hence $\beta$) is \textbf{periodic}. Since $\alpha$ and $\beta$ have to be conjugate, we must have $\beta=\gamma^{-1}\alpha\gamma $ for some braid $\gamma$; and $\alpha^2=\beta^2$ is equivalent to stating $\gamma$ commutes with $\alpha^2$.
   By our discussion in subsection~\ref{centralizer} (i.e. from the results in \cite{GW}), we have a complete understanding of the center of periodic braids. Using this it follows that:
   \begin{prop}
    If $a$ is the permutation associated to a periodic braid $\alpha$, we can lift any $b$ conjugate to $a$ satisfying $a^2=b^2$, to a braid $\beta$ so that $\alpha^2=\beta^2$.
   \end{prop}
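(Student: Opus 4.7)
The plan is to obtain $\beta$ as a conjugate of $\alpha$. Since $b$ is conjugate to $a$ in $S_n$, write $b=\pi^{-1}a\pi$ for some $\pi\in S_n$; the hypothesis $a^2=b^2$ forces $a^2=\pi^{-1}a^2\pi$, i.e.\ $\pi\in Z_{S_n}(a^2)$. If we can exhibit a braid $\eta\in B_n$ lifting $\pi$ and commuting with $\alpha^2$, then $\beta:=\eta^{-1}\alpha\eta$ automatically has permutation $b$ and satisfies $\beta^2=\eta^{-1}\alpha^2\eta=\alpha^2$, finishing the proof. Thus the whole task reduces to proving surjectivity of the Forget map
\[
Z_{B_n}(\alpha^2)\longrightarrow Z_{S_n}(a^2).
\]

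To prove this surjectivity I would use Proposition~\ref{centrper} to reduce, after a simultaneous conjugation of $\alpha,a,b,\pi$ (which does not affect the statement), to the case $\alpha=\delta^k$ for some $k$; the other Ker\'ek\-j\'art\'o--Eilenberg normal form is handled analogously. The results from \cite{GW} cited in Subsection~\ref{centralizer} then identify $Z_{B_n}(\delta^{2k})$ with $B_d(D\setminus\{0\})$, where $d=\gcd(2k,n)$. Geometrically, place the $n$ marked points as vertices of a regular $n$-gon so that $\alpha$ acts as the rigid rotation by $2\pi k/n$; then $\alpha^2$ has $d$ orbits, each of size $n/d$, the quotient $D/\langle\alpha^2\rangle$ is a disc with one central cone-point puncture and $d$ marked points (one per orbit), and the cited isomorphism sends a braid centralizing $\alpha^2$ to its image on this quotient.

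To realize surjectivity I would hit a generating set of the target. On account of the cycle structure of $a^2$, the centralizer $Z_{S_n}(a^2)$ is a wreath product $C_{n/d}\wr S_d$. An Artin generator of $B_d(D\setminus\{0\})$ swaps two adjacent marked points in the quotient, and its lift swaps the corresponding pair of orbits in the cover, supplying all adjacent transpositions in the $S_d$-factor. A loop in $B_d(D\setminus\{0\})$ that takes a single marked point once around the central puncture lifts to a braid that cyclically rotates the $n/d$ preimages of that marked point while fixing every other strand; as the chosen marked point varies, these loops produce generators of the $(C_{n/d})^d$ factor.

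The main obstacle is the permutation bookkeeping: checking that the explicit lifts above really do realize each wreath-product generator of $Z_{S_n}(a^2)$ at the level of the covering $\{1,\ldots,n\}\to\{\text{orbits of }a^2\}$, and that these generators together exhaust $Z_{S_n}(a^2)$. The $\gamma^k$ normal form reduces to the $\delta^k$ analysis on one fewer strand together with one singleton orbit, which causes no new difficulty.
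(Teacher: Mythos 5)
Your proposal is correct and rests on the same underlying mechanism as the paper's argument: both take $\beta=\eta^{-1}\alpha\eta$ where $\eta$ is a braid commuting with $\alpha^2$ whose underlying permutation conjugates $a$ to $b$, and both draw on the Ker\'ek\-j\'art\'o--Eilenberg normal form and the description of centralizers of periodic braids from \cite{GW} recalled in Subsection~\ref{centralizer}. The difference is in how the lift is produced. The paper does not lift an arbitrary conjugator: it observes that the hypotheses let one choose a particular conjugating permutation $c$ that is a cyclic shift of a subset of $\{1,\dots,n\}$, and such a $c$ is visibly realized by a braid symmetric with respect to the periodic action of $\alpha^2$, so only one easy lift is needed. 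You instead prove the stronger statement that the Forget map $Z_{B_n}(\alpha^2)\to Z_{S_n}(a^2)$ is surjective, via the identification of $Z_{B_n}(\alpha^2)$ with an annular braid group and a generation argument for the wreath product $C_{n/d}\wr S_d$; this buys you the freedom to lift \emph{any} conjugator $\pi$, at the cost of the bookkeeping you flag (in particular, checking that the lift of the annular loop around the puncture induces the primitive $(n/d)$-cycle on one orbit, which generates the same cyclic group as the corresponding cycle of $a^2$ because $2k/d$ is coprime to $n/d$, and noting the degenerate cases where $\alpha^2$ is central, in which surjectivity is immediate). Both routes are sound; the paper's is shorter because it only needs one convenient symmetric lift, while yours yields a cleaner general lifting statement for the centralizer.
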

   Indeed, the given condition on $a$ and $b$ ensures there is a permutation $c$ conjugating $a$ to $b$ (i.e. $cb=ac$) which is a cyclic shift of a certain subset of $\{1,...,n\}$. It is then easy to find a symmetric (w.r.t. the periodic action of $\alpha^2$) braid $\gamma$ whose associated permutation is $c$, and we can take $\beta=\gamma^{-1}\alpha\gamma$.\\
   
   Finally, we move onto the case $\alpha$ (and hence $\beta$) is \textbf{non-periodic reducible}. In this case we know that the \emph{canonical reduction system} (CRS) of $\alpha$ and $\beta$ must coincide, by  \cite[Lemma 2.2]{GM} since $\alpha^2=\beta^2$. By utilizing this observation, we will give some examples of (non) liftings
   to such braids. Furthermore, we will outline a sort of algorithmic procedure to see if given $a,b$ satisfying necessary conditions, if one can find a lift to reducible braids. We begin with considering the case of liftability of a three fold cover over the Klein bottle.
   
   \begin{claim}\label{noredli}
   Consider $a=(13)$ and $b=(12)$, so $a^2=I=b^2$. There are no non-periodic reducible braids $\alpha$ and $\beta$ lifting $a$ and $b$ so that $\alpha^2=\beta^2$.
   \end{claim}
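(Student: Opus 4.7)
The strategy is to exploit the observation already highlighted in the text: if $\alpha^2=\beta^2$ and both $\alpha, \beta$ are non-periodic reducible, then by \cite[Lemma~2.2]{GM} their canonical reduction systems $\mathrm{CRS}(\alpha)$ and $\mathrm{CRS}(\beta)$ must coincide. My plan is to classify these reduction systems explicitly in $B_3$ and show that the permutations $(13)$ and $(12)$ force them to be distinct isotopy classes of curves, giving the desired contradiction.

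First I would describe the only possible shape of a CRS in $B_3$. Viewing $B_3$ as the mapping class group of the thrice-punctured disc, the essential non-peripheral simple closed curves (up to isotopy) are precisely those enclosing exactly two of the three punctures; a curve around a single puncture is peripheral and does not reduce anything, while a curve around all three is isotopic to the boundary. Consequently, the CRS of any non-periodic reducible $\gamma \in B_3$ consists of a single isotopy class of such a two-puncture-enclosing curve.

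Next, since $\gamma$ acts on its CRS curve up to isotopy, the induced permutation of the punctures must preserve the $2$-element subset of $\{1,2,3\}$ enclosed by that curve. A direct check shows that the only 2-element subset of $\{1,2,3\}$ preserved by $(13)$ is $\{1,3\}$, while the only one preserved by $(12)$ is $\{1,2\}$. Therefore $\mathrm{CRS}(\alpha)$ would be the isotopy class of the curve surrounding punctures $1$ and $3$, and $\mathrm{CRS}(\beta)$ would be the isotopy class of the curve surrounding $1$ and $2$. These two curves are non-isotopic in the thrice-punctured disc, contradicting $\mathrm{CRS}(\alpha)=\mathrm{CRS}(\beta)$.

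The main point to be careful about is that the hypothesis of \cite[Lemma~2.2]{GM} is available to us, which requires both $\alpha$ and $\beta$ to be non-periodic reducible. This is precisely what the claim assumes, so no extra work is needed there. Beyond that, the argument is a transparent check on which two-element subsets of $\{1,2,3\}$ are invariant under each transposition, so I do not anticipate any serious obstacle.
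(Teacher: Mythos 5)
Your argument is correct and is essentially the paper's own proof: the paper likewise invokes the coincidence of canonical reduction systems and then enumerates the possible two-element (nested) partitions of $\{1,2,3\}$, observing that no single partition is compatible with both $(13)$ and $(12)$. The only slight imprecision is your phrase ``the isotopy class of the curve surrounding punctures $1$ and $3$'' (such a curve is not unique up to isotopy), but this does not affect the argument, since any isotopy preserves the set of enclosed punctures and that set alone already yields the contradiction.
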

   
   \begin{proof}
   Suppose not, since the canonical reduction system of $\alpha$ and $\beta$ have to coincide, the canonical reduction system must induce a nontrivial (nested) partition. We list all possible partitions together with the permutations they can support:
   \begin{itemize}
       \item $\{1,\{2,3\}\}$ can support $I$ and $(23)$,
       \item $\{\{1,2\},3\}$ can support $I$ and $(12)$,
       \item $\{\{1,3\},2\}$ can support $I$ and $(13)$.
   \end{itemize}
  Since we cannot find a partition supporting both 
  $a$ and $b$ simultaneously, we arrive at a contradiction.
   \end{proof}
   \begin{remark}\label{perli} We should note that there is a periodic lift in the situation in the above claim. We can take $\alpha=\sigma_1\sigma_2\sigma_1$, and since $\alpha^2$ is a central element in $B_3$, we see that $\beta=\sigma_2^{-1}\alpha\sigma_2$ satisfies $\alpha^2=\beta^2$.
   \end{remark}
   
   Let us now mention a general approach  to see if we can find suitable reducible braids $\alpha$ and $\beta$ lifting $a$ and $b$.
   \begin{itemize}
       \item Given permutations $a,b\in S_n$, find a nested partition of $\{1,2,...,n\}$, compatible with $a$ and $b$, which is (setwise) invariant under a permutation $c$ satisfying $b=c^{-1}ac$.\\
       
       As mentioned earlier, if such lifts $\alpha$ and $\beta$ do exist, they have to be conjugate i.e. $\beta=\gamma^{-1}\alpha\gamma $ for some braid $\gamma$. If $c$ is the permutation associated with $\gamma$, this forces the relation $b=c^{-1}ac$. The canonical reduction system of $\alpha$ and $\beta$ has to coincide, and this induces a nested partition of $\{1,2,...,n\}$. As the CRS must be invariant under the conjugation by $\gamma$, we must have the nested partition to be invariant under $c$.\\
       
        In the next two steps, we try to use the CRS structure to solve similar problems in smaller stranded braid groups. \\
        
       \item For all possible choices of nested partitions and $c$ obtained in the previous step, look at the associated permutation $\widehat{a}$ formed by sending this nested partition to itself by $a$, and try to find the corresponding permutation $\widehat{a}$ for $c$, and try to find lifts $\widehat{\alpha}$ and $\widehat{\gamma}$ satisfying $\widehat{\alpha}^2=\widehat{\gamma}^{-1}\widehat{\alpha}^2\widehat{\gamma}$.\\
       
       Here we try to find possible tubular braids $\widehat{\alpha}$ for $\alpha$, and $\widehat{\gamma}$ for $\gamma$. We see that if $\alpha^2$ commutes with $\gamma$, then 
       we must have $\widehat{\alpha}^2$ commutes with $\widehat{\gamma}$.\\
      
      \item For all possible compatible choices $\widehat{\alpha}$ and $\widehat{\gamma}$ of tubular braids from the previous step, we try to find interior braids so that all the conditions are satisfied.
       
   \end{itemize}
   We illustrate this approach with an example to see in some cases this will give us a suitable lift.
      \begin{claim}
   Let $a=(172839)(465)$ and $b=(132)(475869)$, and so $a^2=(123)(456)(789)=b^2$. Then there are braids $\alpha$ and $\beta$ lifting $a$ and $b$ satisfying $\alpha^2=\beta^2$.
   \end{claim}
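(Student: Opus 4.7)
The plan is to follow the three-step recipe set out just before the claim, making an explicit choice at each step.

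Step 1 (partition and conjugator). The cycle decomposition of $a^2=b^2=(123)(456)(789)$ selects the nested partition $\mathcal{P}=\{B_1,B_2,B_3\}$ of $\{1,\dots,9\}$ with $B_1=\{1,2,3\}$, $B_2=\{4,5,6\}$, $B_3=\{7,8,9\}$. Both $a$ and $b$ preserve $\mathcal{P}$, inducing tubular permutations $\widehat{a}=(B_1\,B_3)$ and $\widehat{b}=(B_2\,B_3)$; the permutation $c=(1\,4)(2\,5)(3\,6)$ satisfies $b=c^{-1}ac$ and preserves $\mathcal{P}$ with tubular permutation $\widehat{c}=(B_1\,B_2)$. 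Hence all compatibility requirements of Step 1 are met.

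Step 2 (lift the tubular braids). I would lift $\widehat{a}$ by the Garside half twist $\widehat{\alpha}:=\Delta=\Sigma_1\Sigma_2\Sigma_1\in B_3$, and lift $\widehat{c}$ by $\Sigma_1\in B_3$. Then $\widehat{\beta}:=\Sigma_1^{-1}\Delta\Sigma_1=\Sigma_2\Sigma_1^2$ lifts $\widehat{b}$, and because $\Delta^2$ is central in $B_3$ we get $\widehat{\alpha}^2=\widehat{\beta}^2=\Delta^2$ for free, and $\Sigma_1$ commutes with $\widehat{\alpha}^2$ at the tubular level.

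Step 3 (fill in interior braids). Cable the tubular braids above into $B_9$ with trivial interior pieces in each tube, and let $\gamma\in B_9$ denote the cabled lift of $\Sigma_1$. I would take the interior braids of $\alpha$ to be
\[
(\alpha_{[1]},\alpha_{[2]},\alpha_{[3]})=\bigl(1,\;\sigma_1\sigma_2,\;(\sigma_1\sigma_2)^2\bigr),
\]
whose permutations $I$, $(132)$, $(123)$ combine with $\widehat{\alpha}$ to realize exactly $a$. Set $\beta:=\gamma^{-1}\alpha\gamma$; the interior braids of $\beta$ are then $(\alpha_{[2]},\alpha_{[1]},\alpha_{[3]})=(\sigma_1\sigma_2,\,1,\,(\sigma_1\sigma_2)^2)$ above the tubular braid $\widehat{\beta}$, and a direct check shows these realize $b$. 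Using the cabling identity $\widehat{\alpha}\cdot(x_1,x_2,x_3)=(x_3,x_2,x_1)\cdot\widehat{\alpha}$ one computes
\[
\alpha^2=(\alpha_{[1]}\alpha_{[3]},\,\alpha_{[2]}^2,\,\alpha_{[3]}\alpha_{[1]})\cdot\Delta^2=\bigl((\sigma_1\sigma_2)^2,\,(\sigma_1\sigma_2)^2,\,(\sigma_1\sigma_2)^2\bigr)\cdot\Delta^2,
\]
so the interior braids of $\alpha^2$ in tubes $B_1$ and $B_2$ coincide and $\alpha^2$ commutes with the tube-swap $\gamma$. Consequently $\beta^2=\gamma^{-1}\alpha^2\gamma=\alpha^2$, which is the required identity.

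The main obstacle lies in Step 3: the equation $\alpha_{[1]}\alpha_{[3]}=\alpha_{[2]}^2$ is forced on us by the requirement that $\gamma$ commute with $\alpha^2$. The naive choice $\alpha_{[3]}=\sigma_2\sigma_1$ has the correct permutation $(123)$ but fails this matching condition, so the naive construction collapses. Replacing it by $\alpha_{[3]}=(\sigma_1\sigma_2)^2$ exploits the freedom to pick any braid in $B_3$ with the prescribed permutation, and is precisely the adjustment that makes the tubes $B_1$ and $B_2$ carry equal squares so that $\gamma$ can conjugate $\alpha$ to $\beta$ without disturbing $\alpha^2$. Once this is spotted, the remaining verifications are routine.
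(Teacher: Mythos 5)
Your construction is correct and follows the same route as the paper: the partition $\{\{1,2,3\},\{4,5,6\},\{7,8,9\}\}$ determined by $a^2=b^2$, the conjugating permutation $c=(14)(25)(36)$, the periodic tubular lift from the three-strand case (Remark \ref{perli}), and then interior braids chosen precisely so that all interior braids of $\alpha^2$ coincide, making $\alpha^2$ commute with the cabled lift $\gamma$ of $c$ and giving $\beta=\gamma^{-1}\alpha\gamma$ with $\beta^2=\alpha^2$. Your explicit choice $(1,\sigma_1\sigma_2,(\sigma_1\sigma_2)^2)$ of periodic interior braids, with the matching condition $\alpha_{[1]}\alpha_{[3]}=\alpha_{[2]}^2$ made explicit, is exactly the kind of choice the paper encodes in its figure, so this is essentially the paper's proof.
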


   We note that $a$ is cannot be a permutation of a periodic braid, so if a lift exists, it must necessarily be non periodic reducible.
  \begin{proof} From $b^2=(123)(456)(789)$, a natural partition to consider is $\{\{1,2,3\},\{4,5,6\},\{7,8,9\}\}$. Also, we can take $c$ to equal $(14)(25)(36)$ (we see that $b=c^{-1}ac$). Since $a$ swaps $\{1,2,3\}$ with $\{7,8,9\}$; and $b$ swaps $\{4,5,6\}$ with $\{7,8,9\}$, the tubular braids (of a possible lift) have the same configuration as in the statement of Claim~\ref{noredli}. Thus we can take the tubular braids coming from Remark~\ref{perli}. After that we can add periodic 3-braids as interior braids and obtain a reducible braid $\alpha$ lifting $a$, as shown in Figure~\ref{redA}. We observe that with this choice, all the interior braids of $\alpha^2$ is the same periodic three braid.
  \begin{figure}[!ht]
    \centering
    \includegraphics[width=9 cm]{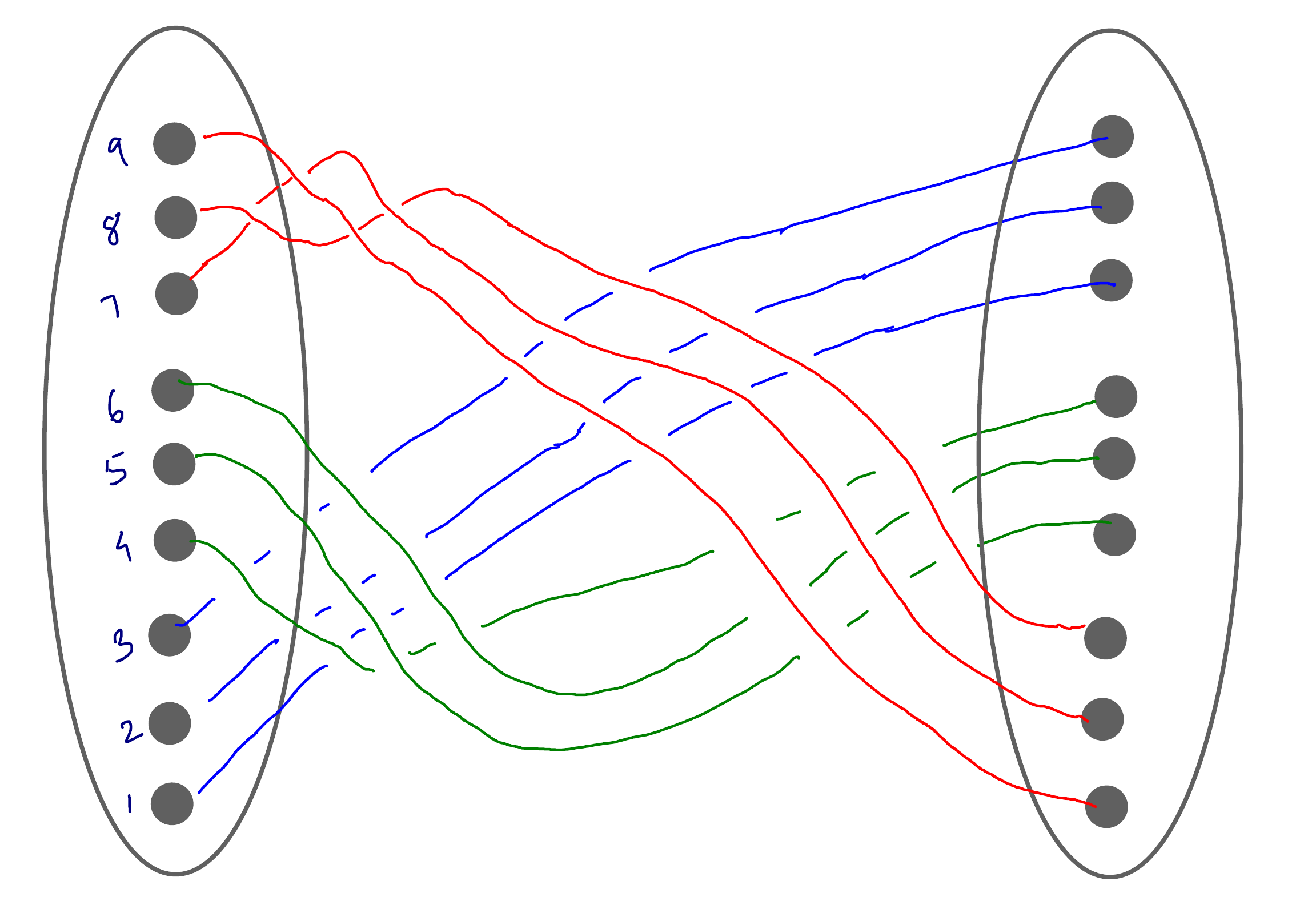}
    \caption{A reducible braid $\alpha$ lifting $a=(172839)(465)$.}
    \label{redA}
\end{figure}
Finally we note that to find $\gamma$ lifting $c$, we can take all interior braids to be the identity, as shown in Figure~\ref{redC}.
  \begin{figure}[!ht]
    \centering
    \includegraphics[width=9 cm]{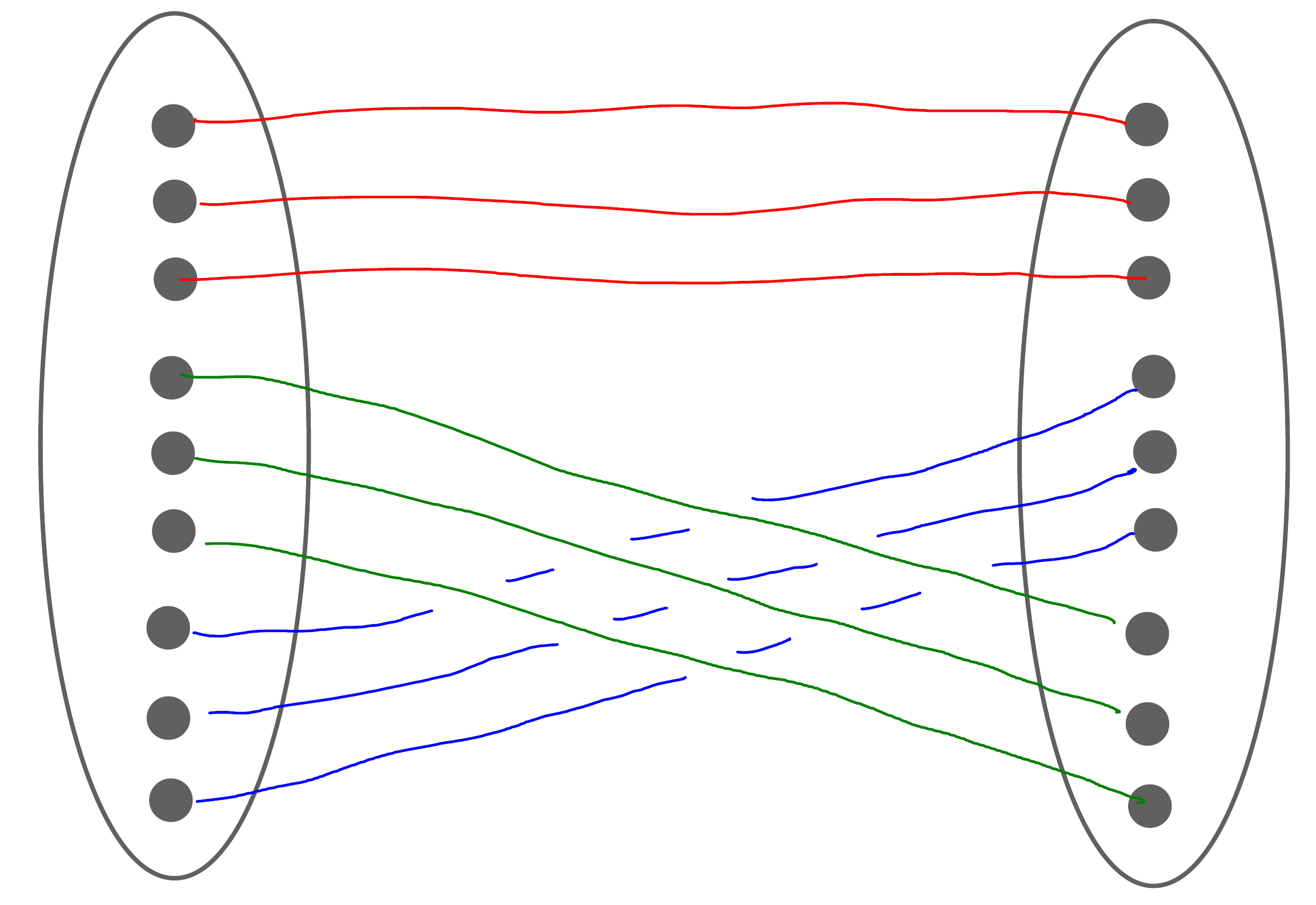}
    \caption{A reducible braid $\gamma$ lifting $c=(14)(25)(36)$.}
    \label{redC}
\end{figure}
Now it is easy to verify $\alpha^2$ commutes with $\gamma$, and so $\beta=\gamma^{-1}\alpha\gamma $ satisfies $\alpha^2=\beta^2$, as desired.
  \end{proof}
   
   Let us conclude this subsection by showing any covering of the torus over the Klein bottle always lifts.

  \begin{prop} Any covering of a torus over the Klein bottle lifts to a braided embedding.
   \end{prop}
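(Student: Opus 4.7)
The plan is to factor $p:T^2\to K$ through the orientation double cover. By Proposition~\ref{orifactor}, since $T^2$ is orientable and $K$ is not, $p$ factors as $p=f\circ q$ with $f:T^2\to K$ the orientation double cover of $K$ and $q:T^2\to T^2$ an intermediate covering of tori. I would then lift the two pieces separately: Claim~\ref{lifting2cov} provides a braided embedding $i:T^2\hookrightarrow K\times D^2$ lifting $f$, and Claim~\ref{toruslifts} provides a braided embedding in $T^2\times D^2$ lifting $q$. Finally, Proposition~\ref{comp0} assembles these into a braided embedding lifting $p$, provided the outer embedding $i$ has trivial normal bundle.

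The key step is therefore to verify that the normal bundle $\nu$ of $i$ is trivial. Since $f$ is a local diffeomorphism, $f^{*}TK\cong TT^2$ is trivial; together with the triviality of $TD^2$, this shows that $i^{*}T(K\times D^2)$ is a trivial rank-four bundle. As $TT^2$ is trivial, $\nu$ is stably trivial, and a stably trivial rank-two real vector bundle over a two-complex is trivial.

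The main obstacle is precisely this normal-bundle triviality: a priori, the non-orientability of $K\times D^2$ could obstruct it, but the local-diffeomorphism character of $f$ ensures that the pullback $f^{*}TK$ is already isomorphic to $TT^2$ and hence trivializes. Once this is in hand, the composition assembles via Proposition~\ref{comp0}, yielding the desired lift.
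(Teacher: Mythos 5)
Your route is exactly the paper's: factor $p$ through the orientation double cover via Proposition~\ref{orifactor}, lift the two pieces by Claim~\ref{lifting2cov} and Claim~\ref{toruslifts}, and assemble with Proposition~\ref{comp0}. You are also right that the only hypothesis of Proposition~\ref{comp0} needing attention is triviality of the normal bundle of the outer braided embedding $i:T^2\hookrightarrow K\times D^2$.

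However, your verification of that hypothesis has a genuine flaw: the statement that a stably trivial rank-two real vector bundle over a two-complex is trivial is false. Over a closed surface, an oriented rank-two bundle is determined by its Euler number, and stabilizing only remembers the Euler number mod $2$; thus the Euler-number-$2$ bundle over $T^2$ (or $TS^2$ over $S^2$) is stably trivial but nontrivial. Stable triviality is only guaranteed when the rank strictly exceeds the dimension of the base, so your argument shows at best that the Euler number of $\nu$ is even, not that it vanishes. The correct (and simpler) argument is the one behind the paper's remark following Proposition~\ref{comp0}: because $pr_1\circ i=f$ is an honest covering, $di(T_xT^2)$ projects isomorphically onto $T_{f(x)}K$, so the vertical subbundle $\{0\}\oplus TD^2$ along the image is a complement to the tangent bundle of the image; hence $\nu\cong (pr_2\circ i)^*TD^2$ is trivial (indeed trivialized). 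With this substitution your proof is complete and coincides with the paper's.
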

 
 \begin{proof}
  If we have any covering of torus over the Klein bottle, then it must factors through the orientation double cover by Proposition~\ref{orifactor}. We know that  by claims \ref{toruslifts} and \ref{lifting2cov}, both these coverings lift to braided embedding, and hence so does the composition (as discussed in Proposition~\ref{comp0}, or in \cite[Theorem 4.1]{Pe}).
  
 \end{proof}
 
 \subsection{Lifting of coverings over general orientable surfaces}
 
 Now that we have seen several non-liftable covers over a non-orientable surface (and it is easy to generalize this to higher genus non-orientable surfaces), we can restrict our attention to orientable surfaces and ask if every covering lifts. The answer turns out to be 'Yes', but the only proof we have will use results about lifting branched coverings, discussed in later sections.

\begin{thm}\label{orcovsurf}
 Every covering of over an orientable surface lifts to a braided embedding.
\end{thm}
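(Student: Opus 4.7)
The plan is to reduce the claim to Theorem \ref{thm12} (lifting of branched covers over $S^2$). The case $g=0$ is immediate: since $S^2$ is simply connected, any finite honest cover is a disjoint union of copies of $S^2$, and these embed into $S^2 \times D^2$ by sending the different components to distinct points in the disc factor.

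For $g \geq 1$, given an honest covering $p: M \to \Sigma_g$, I would compose with the hyperelliptic two-fold branched cover $q: \Sigma_g \to S^2$ from Figure \ref{qhu} to obtain a branched cover $r := q \circ p: M \to S^2$. Because $p$ is unbranched, the branching of $r$ occurs only over the $2g+2$ branch points of $q$, and the local model of $r$ at each branch point of $M$ agrees with that of $q$, namely $z \mapsto z^2$. Thus $r$ is a simple branched cover over $S^2$, and by Theorem \ref{thm12} (which handles simple branched covers in both the PL and smooth categories) it lifts to a braided embedding $R: M \hookrightarrow S^2 \times D^2$.

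I would then produce the required lift of $p$ in the style of Proposition \ref{comp1}: set $s := pr_2 \circ R: M \to D^2$ and $f := (p, s): M \to \Sigma_g \times D^2$. Clearly $pr_1 \circ f = p$, and if $x_1 \neq x_2$ satisfy $p(x_1) = p(x_2)$ then also $r(x_1) = r(x_2)$, whence $s(x_1) \neq s(x_2)$ since $R$ is an embedding lifting $r$; so $f$ is injective. To upgrade this to a locally flat PL (respectively smooth) embedding, around any $x \in M$ choose a neighborhood $V$ on which $p|_V$ is a PL homeomorphism (respectively diffeomorphism) onto $U := p(V) \subset \Sigma_g$; then $f(V)$ is the graph of the PL (respectively smooth) map $s \circ (p|_V)^{-1}: U \to D^2$, and shearing the second factor by $(u,w)\mapsto(u, w - s\circ (p|_V)^{-1}(u))$ exhibits this graph as a locally flat submanifold of $U \times D^2 \subset \Sigma_g \times D^2$.

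Essentially all the nontrivial content is packed into Theorem \ref{thm12}; given that, the only point to verify is that the separating-map construction of Proposition \ref{comp1} yields a locally flat embedding rather than merely an injective one. This is where it matters that $p$ is an honest covering: since $p$ is locally a homeomorphism, $f$ is locally a graph, and graphs of PL or smooth maps are automatically locally flat, so no further work is required.
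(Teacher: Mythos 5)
Your argument is essentially the paper's own proof: compose $p$ with the hyperelliptic two-fold branched cover $q:\Sigma_h\to S^2$, lift the composite branched cover $r=q\circ p$ over $S^2$, and then recover a lift of $p$ by the separating-map trick, which is exactly Proposition \ref{comp1}; your final paragraph just makes explicit the local-flatness point (locally $f$ is a graph) that the paper leaves implicit. One inaccuracy to flag: your claim that $r$ is a \emph{simple} branched cover is false whenever $\deg p\geq 2$. In the paper's usage, simple means the meridian monodromies are single transpositions, whereas the monodromy of $r$ around each of the $2g+2$ branch points is a product of $\deg p$ disjoint transpositions in $S_{2\deg p}$ (each preimage point has local model $z\mapsto z^2$, but there are $\deg p$ of them over each branch point). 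So the ``simple branched covers lift smoothly'' clause of Theorem \ref{thm12} does not apply to $r$. This does not derail your proof, because the first clause of Theorem \ref{thm12} (equivalently Theorem \ref{bcl2s}), which is what the paper itself invokes, lifts \emph{every} PL branched cover over $S^2$; with that citation your argument goes through verbatim, and since for an honest covering liftability is governed by Hansen's criterion (a purely algebraic braid-monodromy condition), the resulting lift of $p$ can be realized smoothly as well if desired.
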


\begin{proof} Given any covering $p:\Sigma_g\rightarrow \Sigma_h$, we can compose with the two sheeted branched covering $q$ of $\Sigma_h$ over $S^2$ obtained by quotienting by the hyperelleptic involution (see Figure~\ref{qhu}). The resulting composition $r=q\circ p:\Sigma_g\rightarrow S^2$ is a branched covering. By Theorem \ref{bcl2s}, we know that 
any branched covering over the sphere lifts to a braided embedding.  The result now follows from Proposition \ref{comp1}.
 
\end{proof}

\section{Local models near branch points in dimension two}\label{lm}

In this section we will find necessary and sufficient conditions for extend a braided embedding of a punctured two dimensional disc (the puncture being one branch point removed) over the puncture, thereby proving Theorem \ref{ext} in the simplest case, where the branch locus is an isolated collection of points. In the next section we will extend it to a more general setup. 

Suppose we have a braided embedding (in either piecewise linear or smooth categories) over a disc $D_b^2$ with a single branch point $O$,  $f:\sqcup_{i=1}^j D_i^2\hookrightarrow D_b^2\times D^2$ , with $p_i:=pr_1\circ f|_{D_i}:D_i^2\rightarrow D^2_b$ a branched covering map with at most one branch point at $O$. Note that $j$ equals the number of cycles (including the fixed elements) in the permutation corresponding to a loop surrounding $O$ in the monodromy representation. Let us suppose the point $O_i$ is the unique point in $D_i^2$ mapping to $O$ under $p_i$. We note that $pr_2\circ f$ maps the $O_i$ to distinct points in $D^2$, as all of them project to the same point $O$ under $pr_1\circ f$ and $f$ is an embedding. Hence we can choose a small closed disc $C_\epsilon$ around $O$ so that when we set $C_i=p_i^{-1}(C_\epsilon)$, the images  $pr_2\circ f(C_i)$ are disjoint for different $i$.

Let us look at the braid monodromy of braided embedding induced from $f$, once we remove $O$ and $O_i$'s. Since the fundamental group of $D^2_b\setminus\{O\}$ is infinite cyclic freely generated by any  circle $\gamma_r$ of radius $r$, where $0<r<1$. Thus the braid monodromy $\psi$ is completely determined by $\psi(\gamma_r)$,
and we will call this the \textit{braid surrounding the branch point} $O$. Note that if we pick $C_\epsilon$ to be the disc of radius $\epsilon$ centered at the origin, then $\psi(\gamma_\epsilon)=f(\sqcup_{i=1}^j \partial C_i)$ is the closure of the braid surrounding the branch point in the solid torus $\gamma_\epsilon\times D^2$. Let us now see what constraints we get on the braids surrounding the branch point $O$.

\subsection{Piecewise linear category}\label{SSextpl} In case we are working in the piecewise linear category, we may assume that we chose the disc $C_\epsilon$ small enough so that $f$ is defined on
$\sqcup_{i=1}^j C_i$ by coning $\sqcup_{i=1}^j f(\partial C_i)$ over the points $f(O_i)$, independently for each $i$. Since the images $f(C_i)$ are disjoint, we see that the braid surrounding the branch point must be reducible if $j>1$.

Moreover since the embedding must be locally flat it must be locally flat near the points $O_i$ which means the image $f(\partial C_i)$ must be an unknot in the solid torus $\gamma_\epsilon\times D^2$. Thus we see that $\psi(\gamma_\epsilon)$, the braid surrounding the branch point must be a completely split unlink.

Conversely given any closed braid which completely split unlink (without loss of generality, let us assume it is in regular form) and satisfies the appropriate permutation restrictions (i.e. agrees with the permutation monodromy at $O$), we can extend it to a (locally flat) piecewise linear braided embedding  over the entire disc. To see this let us assume the various components $L_1,..,L_j$ of the closed braid lie in disjoint solid tori $S_1\times N_1,...,S_1\times N_j$. For each $i\in\{1,...,j\}$, let us pick a point $n_i\in N_i$, and we can then set $f(O_i)=(O,n_i)$, and we can cone $L_i$ (in $\partial C_\epsilon\times D^2$) at $f(O_i)$, and get a well defined braided embedding over $C_\epsilon$. Note that since the projections under $p_2$ of various coning operations are concentrated within the disjoint discs $N_i$, we see that the above map is indeed injective. Therefore we obtain,
\begin{lem}
A braided embedding of a disjoint union of circles (i.e. a closed braid), it extends piecewise linearly to a braided embedding of a disjoint union of discs with one branch point if and only if the closed braid is a completely split unlink.
\end{lem}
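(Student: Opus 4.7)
The plan is to prove both directions by carefully exploiting the local PL structure at the branch point, which allows us to reduce the question of extending the embedding to a statement about the isotopy class of the closed braid inside the solid torus $\gamma_\epsilon \times D^2$.

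For the forward direction (necessity), I would begin with a PL braided extension $f$ over the whole disc and shrink $\epsilon$ enough that the restriction of $f$ to each $C_i$ is the PL cone of $f(\partial C_i)$ on the apex $f(O_i)$. This is the standard local model for PL branched covers, and I would invoke it here. Because $f$ is an embedding, the cone points $f(O_i)$ are distinct, and by continuity I may choose $\epsilon$ so small that the closed discs $pr_2 \circ f(C_i) \subset D^2$ are pairwise disjoint. Picking disjoint sub-discs $N_i \subset D^2$ containing these images gives disjoint solid tori $\gamma_\epsilon \times N_i$ each containing the component $f(\partial C_i)$, so the braid surrounding $O$ is split in regular form. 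For the unknottedness of each component, I would appeal to local flatness of $f$ at $O_i$: a PL embedding of a disc into a $4$-manifold is locally flat at a point precisely when the link of that point is an unknot in the link sphere, which in our setting forces $f(\partial C_i)$ to bound a flat disc in $\gamma_\epsilon \times N_i$, i.e.\ to be unknotted there.

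For the backward direction (sufficiency), I would directly construct the extension as in the sketch already begun in the excerpt. Given a completely split unlink in regular form with components $L_1,\dots,L_j$ lying in pairwise disjoint solid tori $\partial C_\epsilon \times N_i \subset \partial C_\epsilon \times D^2$, I would choose one interior point $n_i \in N_i$ for each $i$, declare $f(O_i) := (O, n_i)$, and define $f$ on each $C_i$ by PL coning $L_i$ to $f(O_i)$ inside the solid cylinder $C_\epsilon \times N_i$. Since the $N_i$ are disjoint, the individual cones live in disjoint pieces of $D_b^2 \times D^2$, so the resulting map is automatically an embedding; and because each component is unknotted in its solid torus, each cone can be chosen to be a PL flat disc, yielding local flatness at the branch preimages. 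The only compatibility to check is that this extension agrees with the given embedding on each $\partial C_i$ and has the correct permutation monodromy, both of which are built into the hypothesis that the closed braid equals $f(\sqcup \partial C_i)$.

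The main conceptual obstacle, and the reason the lemma has content beyond Alexander-style coning, is the local flatness requirement at the branch preimages: without it the forward direction would only force the braid to be split, not a split \emph{unlink}, and the backward direction would work for any split closed braid. I would therefore spend the bulk of the forward argument on the link-pair argument identifying local flatness of a PL cone in a $4$-ball with unknottedness of its link, and I would emphasize in the backward argument that the regular form hypothesis is precisely what lets us choose the cone apex $(O,n_i)$ in the interior of $N_i$ so the cone stays in a standard $B^4$ in which $L_i$ bounds a flat disc. Everything else is bookkeeping about choices of $\epsilon$, $n_i$, and the identification of the braid surrounding $O$ with $f(\sqcup \partial C_i)$.
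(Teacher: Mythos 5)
Your proof is correct and takes essentially the same route as the paper: necessity via the PL cone structure at the branch point (disjointness of the images $pr_2\circ f(C_i)$ gives splitness, and local flatness at each $O_i$ forces the link, i.e.\ the component $f(\partial C_i)$, to be unknotted), and sufficiency by coning each regular-form component $L_i$ to an apex $(O,n_i)$ inside the disjoint pieces $C_\epsilon\times N_i$. One cosmetic slip: the unknotting condition should be that $f(\partial C_i)$ bounds a disc in the boundary $3$-sphere of the $4$-ball $C_\epsilon\times N_i$ (equivalently, is an unknot in $S^3$ under the standard inclusion of the solid torus), not that it bounds a disc inside the $3$-dimensional solid torus $\gamma_\epsilon\times N_i$, where a closed braid component can never bound.
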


\subsection{Smooth Category}
Similarly to the piecewise linear category, we see that the braid surrounding the branch point must be completely split, however there are more conditions to smoothly extend it over a branch point. Let us restrict $f$ (and call this restriction $f_i$) to one of the nontrivial (i.e. we have actual branching) components $D_i^2$ and see what braid we get. The Jacobian matrix at the $O_i$ will look like $Df_i=[0|A]$, as $pr_1\circ f$ has a local model $z\mapsto z^n$ with $n>1$. Since $f_i$ is an embedding $Df_i$ must have rank 2, and hence $A$ is an invertible $2\times 2$ matrix. By the inverse function theorem $pr_2\circ f$ is a local diffeomorphism. So the circle $\{|z|=a\}$ in $D^2$  embeds in $D_2^2$ via $pr_2\circ f_i$ for small $a$. By the Schoenflies theorem in the plane and isotopy extension theorems we see that $p_2\circ f_i$ is isotopic to either the identity or complex conjugation near $O_i$\footnote{An alternate way to think about this is the contractibility of the space of embeddings of disc to a disc, see https://mathoverflow.net/questions/181424/contractibility-of-space-of-embeddings-of-a-disc .}. Thus, $f_i$ is locally equivalent (that is isotopic in a sufficiently small neighborhood of $O_i$) to $f_+$ or $f_-$,
where the maps $f_\pm:D_a^2\rightarrow D_b ^2 \times D ^2$ defined by 
$z\mapsto (z^n,z)$ and $z\mapsto (z^n,\bar{z})$. It therefore suffices to understand what the braids surrounding the branch points in the local models $f_\pm$ are. By choosing the $n$-th roots of unity as collection of $n$ distinct points, it is clear geometrically that we get a positive (respectively negative) partial twist for $f_+$ (respectively $f_-$); i.e. the braid surrounding the branch point is (upto conjugation) $\sigma_1...\sigma_{n-1}$ (respectively $\sigma_1^{-1}...\sigma_{n-1}^{-1}$). We can also argue this more analytically as follows.

Let us say our convention is that for the disc $D^2$ we project out the second factor to get a knot diagram. To be precise, for any $0<a<1$ the circle $\gamma_a=\{|z|=a\}$ in $D^2_a$ maps to the circle $\gamma_b$ in $D^2_b$, where $b=a^n$, and $f_\pm(\gamma_a)$ gives rise to a closed braid in the solid torus $\gamma_b\times D^2$, and ignoring the second coordinate of $D^2$, we get the knot diagram, where crossings happen at the point where the first three coordinates of $f_\pm$ agree. If $z_1^n=z_2^n$, it necessarily must be the case that the complex arguments of $z_1$ and $z_2$ differ by $\frac{2k\pi}{n}$, where $0<k<n$ is an integer. For the third coordinates of $f_\pm$ to agree, we need to have the cosines of these arguments to agree. Now we observe that for each integer $k$ satisfying $0<k<n$,
the equation $$\cos (\theta)=\cos (\theta+\frac{2k\pi}{n}) \text { or equivalently } 
-2\sin (\theta+\frac{k\pi}{n})\sin(\frac{k\pi}{n})=0 $$ has exactly one solution in $[0,\pi)$, namely $\theta=\frac{(n-k)\pi}{n}.$ Thus we see there are exactly $n-1$ crossing points, and by looking at the sines at these points we see that they are positive crossings for $f_+$ and negative crossings for $f_-$, i.e. the braids surrounding these branch points are standard unknots. The actual braids we get can be chosen to be $\sigma_1...\sigma_{n-1}$ and $\sigma_1^{-1}...\sigma_{n-1}^{-1}$ by choosing the base-point with complex argument 0.

\begin{remark} We remark here that if $\tau$ is any permutation of $\{1,...,n\}$ then $\sigma_1^{\eta_1}...\sigma_{n-1}^{\eta_{n-1}}$ is conjugate to $\sigma_{\tau(1)}^{\eta_{\tau(1)}}...\sigma_{\tau(n-1)}^{\eta_{\tau(n-1)}}$. To see this note that we can go between the two words by applying a sequence of far commutation relations and conjugations. In particular, there is nothing special about the braid $\sigma_1...\sigma_{n-1}$ we got for $f_+$, we can apply any permutation $\tau$ and we would get the same braid closures.
\end{remark}

Thus, returning to our original setup, we see that the braid surrounding a smooth branch point has to be a completely split standard unlink, and conversely given such a closed braid, we can extend it smoothly as a braided embeddings over the disc with exactly one branch points by using the models $f_\pm$ (with different values of $n$) locally to extend the map on each of the components separately. In other words, we have:
\begin{lem}
A braided embedding of a disjoint union of circles (i.e. a closed braid), it extends smoothly to a braided embedding of a disjoint union of discs with one branch point if and only if the closed braid is a completely split standard unlink.
\end{lem}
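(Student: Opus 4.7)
My plan is to prove both implications by reducing the problem at each puncture to the explicit local models $f_\pm(z) = (z^n, z^{\pm 1})$ isolated in the discussion above. The forward direction is an analysis of what a smooth braided embedding is forced to look like near a branch point; the converse is a direct construction that stitches local models onto the prescribed closed braid.

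For the forward direction I would fix one component $D_i^2$ with branch point $O_i$ of ramification index $n_i$. Since $pr_1 \circ f_i$ behaves like $z \mapsto z^{n_i}$ near $O_i$, its differential vanishes there, so the Jacobian $Df_i(O_i)$ has block form $[0 \mid A]$; injectivity of $Df_i(O_i)$ forces $A$ to be invertible, and the inverse function theorem then says $pr_2 \circ f_i$ is a local diffeomorphism. Planar Schoenflies combined with smooth isotopy extension lets me straighten $pr_2 \circ f_i$ to the identity or complex conjugation in suitable coordinates, identifying $f_i$ locally with either $f_+$ or $f_-$. A direct calculation in these models, either geometric, by tracking the $n_i$-th roots of unity, or analytic, via the cosine analysis sketched in the preceding paragraph, identifies the braid surrounding the branch point with a conjugate of $\sigma_1 \cdots \sigma_{n_i-1}$ or $\sigma_1^{-1} \cdots \sigma_{n_i-1}^{-1}$, i.e.\ a standard unknot. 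Globally, because $f$ is an embedding, the images $f(C_i)$ lie in disjoint sub-discs of the fiber $D^2$, so the total braid splits completely.

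For the converse, I would begin from a completely split standard unlink in regular form, whose components $L_i$ sit inside disjoint sub-solid tori $S^1 \times N_i$, with each $L_i$ conjugate to the closure of $\sigma_1 \cdots \sigma_{n_i-1}$ or its inverse. Choosing a basepoint $p_i \in N_i$ for each $i$, declaring $f(O_i) := (O, p_i)$, and extending across the puncture on the $i$-th component by a rescaled copy of $f_+$ or $f_-$, positioned so that the boundary of its image matches $L_i$, produces the required extension. Disjointness of the $N_i$ keeps the global map injective, and smoothness is automatic from the explicit local models.

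The delicate step is the boundary match-up in the converse: $L_i$ is specified only up to conjugacy, whereas each $f_\pm$ produces a single canonical representative. Here the far-commutation remark preceding the lemma, that $\sigma_1^{\eta_1} \cdots \sigma_{n-1}^{\eta_{n-1}}$ is conjugate to $\sigma_{\tau(1)}^{\eta_{\tau(1)}} \cdots \sigma_{\tau(n-1)}^{\eta_{\tau(n-1)}}$ for any permutation $\tau$, together with smooth isotopy extension on $\partial C_\epsilon \times D^2$, does the work. The reason the hypothesis strengthens from completely split unlink (in the piecewise linear setting of Subsection~\ref{SSextpl}) to completely split standard unlink here is precisely that smoothly coning a general unknot at a smooth point typically yields only a $C^0$ extension, so in the smooth category one must insist on the standard-unknot form.
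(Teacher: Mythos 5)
Your proposal follows the paper's own proof essentially step for step: the forward direction via the Jacobian block form $[0\mid A]$, the inverse function theorem, planar Schoenflies plus isotopy extension to reduce to the local models, the computation that each model contributes a standard unknot, and complete splitting from disjointness of the $f(C_i)$; and the converse by grafting the models $f_\pm$ onto each component, with the boundary match-up absorbed by an isotopy (of conjugate/isotopic closed braids in the solid torus) in a collar. The only quibble is notational: the models are $z\mapsto(z^{n},z)$ and $z\mapsto(z^{n},\bar z)$, not $(z^{n},z^{\pm 1})$ (the latter is undefined at the origin), though your later references to the identity versus complex conjugation show this is merely a slip.
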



\section{Extending braided embedding over the branch locus}\label{Sextending}
In this section we address the question:
\begin{question}
Given a co-dimension $2$ braided embedding on the complement of the branch locus, when can we extend the braided embedding over the branch locus (smoothly or locally flat p.l.)?

\end{question}

We will answer this question completely when the branch locus is a submanifold with trivial normal bundle. We will mostly use this result for branched covers over the sphere, and whenever the branch locus is embedded as a codimension two submanifold, thus has a Seifert hypersurface \cite{Kir}, which trivializes the normal bundle.

As we saw in the last section, when we are looking at branched covers of surfaces (i.e. the branch locus is just a discrete set of points), we had conditions on the braid surrounding the branch points for the braided embedding to extend over the branch point. Of course, the same constraint is there for each braid corresponding to meridian around the branch point, and as we see below, these conditions are enough in the general case too.

In case the branch locus is a manifold, the various meridians are conjugate as long as the branch points belong to the same connected components. Consequently, in the above case, one needs to verify that for each connected component, the braid surrounding the branch points satisfy the appropriate conditions for us to extend the braided embedding over the branch locus. 


We will analyze the situation in the smooth and the piecewise linear categories seriously below. We will use the local models discussed in the last section, to first deal with the special case when the branch locus $B$ and its pre-image $\Tilde{B}$ are both connected, and then show the general case reduces to the above special case by using the structure of centralizers of reducible braids, see \cite{GW}.



\subsection {Smooth Category}

Here we will prove Theorem \ref{ext} in the smooth category.
\begin{prop}
Suppose we have a smooth branched cover $p:M\rightarrow N$ with branch locus $B\subseteq N$ being a submanifold with trivial normal bundle $\nu B\cong B\times D^2$. Suppose we choose points $b_1,...,b_k$, one for each connected component of $B$. If we are given a smooth braided proper embedding  $g_1:M\setminus \nu_0 \tilde{B}\hookrightarrow (N\setminus \nu_0 B)\times D^2$ lifting the honest covering $p_1:M\setminus \nu_0 \tilde{B}\hookrightarrow
N\setminus \nu_0 B$ induced from $p$, then $g_1$ extends to a smooth braided embedding $g:M\hookrightarrow N\times D_2$ lifting $p$ if and only if each of the braids surrounding the branch points $b_i$ are completely split standard unlinks.

\end{prop}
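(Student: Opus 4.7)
The forward direction follows by direct reduction to Section \ref{lm}. Suppose $g$ extends $g_1$ smoothly. For each chosen branch point $b_i$ lying in a component $B_j$ of $B$, use the trivialization $\nu B_j \cong B_j \times D^2$ to restrict $g$ over the normal disc $\{b_i\} \times D^2$. The $p$-preimage of this disc is a disjoint union of smooth discs, each containing at most one branch point, and $g$ pulls back to a smooth braided embedding of these discs into $\{b_i\} \times D^2 \times D^2$. The smooth local-model lemma at the end of Section \ref{lm} then forces the braid surrounding $b_i$ to be a completely split standard unlink.

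For the converse, I would extend $g_1$ smoothly over a tubular neighborhood of each connected component $\tilde B_\alpha$ of $\tilde B$ separately, and glue with $g_1$ outside. Fix $\tilde B_\alpha$ mapping to a component $B_j$ of $B$ with local branching degree $n_\alpha$ and covering degree $m_\alpha$. By the definition of a smooth branched cover, I may choose a tubular $D^2$-bundle neighborhood of $\tilde B_\alpha$ compatible with $\nu B_j \cong B_j \times D^2$ so that $p$ has the local form $(c,z) \mapsto (p|_{\tilde B}(c), z^{n_\alpha})$ in fibered coordinates. Since $B_j$ is connected, the braid surrounding any point of $B_j$ is conjugate to the one surrounding $b_j$, and so the sub-braid contributed by $\tilde B_\alpha$ is everywhere a collection of $m_\alpha$ standard unknots on $n_\alpha$ strands, with a sign that is locally and hence globally constant on $B_j$. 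Using whichever of the local models $f_+$ or $f_-$ from Section \ref{lm} realizes this sign, I would construct a template smooth braided extension $g_0$ on $\nu \tilde B_\alpha$ by fiberwise application of the model.

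The main obstacle is merging $g_1$ and $g_0$ smoothly. I would produce a smooth fiberwise ambient isotopy, through braided embeddings, taking $g_1$ on a collar of $\partial \nu \tilde B_\alpha$ to $g_0$ on the same collar; splicing $g_0$ inside with $g_1$ outside would then yield a smooth braided extension. This is a parametric version of the single-branch-point extension at the end of Section \ref{lm}. On each fiber the required isotopy exists because both closed braids represent the same standard unknot and the space of smoothly embedded representatives relative to boundary data is connected; the upgrade to a smooth $\tilde B_\alpha$-family would use smooth isotopy extension. Doing this independently on each component $\tilde B_\alpha$ produces the desired smooth braided embedding $g$.
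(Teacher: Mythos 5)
Your forward direction coincides with the paper's: restrict the extension $g$ to a normal slice $\{b_i\}\times D^2$ and invoke the local model of Section \ref{lm}. That part is fine.

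The converse, however, has a genuine gap at the gluing step, and it is precisely the step where all the work in the paper happens. The restriction of $g_1$ to $\partial\nu\tilde{B}\cong\tilde{B}\times S^1$ is recorded by a braid monodromy $\psi_2:\pi_1(B\times S^1)\rightarrow B_n$, and besides the meridional braid $\psi_2(\mu)$ (which your hypothesis controls) it carries longitudinal data: for each loop $x$ in $B$ a braid $\psi_2(x)$ commuting with $\psi_2(\mu)$. A template $g_0$ built by ``fiberwise application of $f_\pm$'' must already choose, for every $b\in B$, the positions of the small discs in the $D^2$ factor, so it implicitly fixes its own tubular and interior braid monodromies, and there is no reason these agree with those of $g_1$. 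Your proposed remedy --- on each fiber the two closed braids are isotopic, the space of representatives rel boundary data is connected, then ``upgrade'' by isotopy extension --- does not produce a coherent family of isotopies over $B$: connectivity gives an isotopy for each $b$ separately, but assembling them smoothly over $B$ meets a monodromy obstruction governed by the fundamental group of that space of representatives (equivalently, by the centralizer of the braid surrounding the branch point), and the isotopy extension theorem extends a single isotopy ambiently rather than resolving this. This is exactly what the paper's argument addresses: it projects $\psi_2(\pi_1(B))$ into the centralizer of the tubular braid to build the untwisted extension $g_4$, uses that the interior braids of the $\psi_2(x_i)$ are forced to be powers of $\sigma_1\cdots\sigma_k$ (whose centralizer is the cyclic group it generates), observes that the resulting homomorphism $\pi_1(B)\rightarrow\mathbb{Z}$ factors through $H_1(B)$, and then realizes it geometrically by a D-twist along an embedded hypersurface Poincar\'e dual to the corresponding class in $H^1(B)$, before splicing along a collar. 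Without an argument of this kind that kills the longitudinal discrepancy between $g_1$ and your template, the splicing step in your proposal does not go through.
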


We observe that if $\hat{b}_i$ is another point is the same connected component as $b_i$, then the meridians surrounding $b_i$ and $\hat{b}_i$ are conjugate in the fundamental group of $N\setminus \nu_0 B$, and consequently so are the braids surrounding $b_i$ and $\hat{b}_i$. Thus we see that the condition stated in the the above proposition is independent of which particular points from each connected components of $B$ is chosen.

\begin{proof}
Suppose $g_1$ does extend to a smooth braided embedding $g$. Then for any $i$, if we restrict $g$ to the slice $\{b_i\}\times D^2$ in $\nu B$, then we see that we get a smooth braided embedding over a disc $D^2$ with exactly one branch point. By the local model we studied in the previous section we see that the braid surrounding the branch point $b_i$ has to be a completely split standard unlink.

It remains to show the converse, so let us now suppose we have a braided embedding $g_1$ so that the braid surrounding the branch points are completely split standard unlinks. It suffices to construct a braided embedding of $h:\nu\tilde{B}\hookrightarrow \nu B\times D^2$, so that the braided embedding on the boundary $h_1:\tilde{B}\times S^1\hookrightarrow  (B\times S^1)\times D^2$ coming from $h$ agrees with that coming from $g_1$. Observe that in the above case we can isotope both the braided embeddings $g_1$ and $h$ in the above case, so the braided embedding is invariant in a collar neighbourhoods of the boundary, and then identify the collar neighbourhoods, and thereby obtaining a smooth braided embedding $g$ lifting $p$.

Recall that since we are considering a smooth branched covering $p:M\rightarrow N$, then the restriction of $p|_{\tilde{B}} :\tilde{B}\rightarrow B$ is a covering map. To define this map, we can define the braided embedding on each connected component of $B$ individually. Without loss of generality, let us now assume $B$ is connected. Suppose the fundamental group of $B$ has the presentation $\langle x_1,...,x_s|r_1,...,r_t\rangle$ (recall we are assuming our manifolds are compact, and thus the branch locus being a closed submanifold also has the same property. Consequently, its fundamental group is finitely generated. However, the reader can observe that the same argument given here also works if $\pi_1B$ is not finitely presented).

Since $B$ has trivial normal bundle, the boundary of $\nu B$ is diffeomorphic to $B\times S^1$, and so has fundamental group $$\pi_1(\partial \nu B)\cong\pi_1(B)\times \mathbb{Z}\cong \langle x_1,...,x_p,\mu|r_1,...,r_q, [x_i,\mu] \text { for all } 1\leq i\leq s\rangle.$$ Here $\mu$ denotes the loop (meridian) corresponding to the $S^1$ factor, and $[x_i,\mu]$ denotes the commutator of $x_i$ and $t$.
The braided embedding $g_1$ induces a braided embedding $g_2:\tilde{B}\times S^1\hookrightarrow (B\times S^1)\times D^2$, which in turn gives rise to the braid monodromy map $\psi_2:\pi_1(B\times S^1)\cong \pi_1(B)\times \mathbb{Z}\rightarrow B_n$, where the covering $p_2$ (i.e. restriction of $p$ to $\tilde{B}\times S^1$) is $n$-sheeted.
To define the braided embedding $h$ of $\nu \tilde{B}$, we will first construct a braided embedding lifting $p|_{\tilde{B}} :\tilde{B}\rightarrow B$ induced from the braided embedding $g_2$.

Recall from Subsection \ref{centralizer}, we have a map $\Theta:Z(\psi_2(\mu))\rightarrow Z_0(\overline{\psi_2(\mu)})$ sending a braid commuting with $\psi_2(\mu)$ to a braid commuting with the associated tubular braid $\overline{\psi_2(\mu)}$.
To this end, note that for each $1\leq i \leq t$ of the braids $\psi_2(x_i)$ commute with $\psi_2(\mu)$, and consequently the image $\psi_2(\pi_1(B)\times \{0\})$ is contained in $Z(\psi_2(\mu))$. Thus we get a well defined group homomorphism $\psi_3:\pi_1(B)\rightarrow B_m$ defined on the generators by sending $x_i\mapsto \Theta \circ \psi_2(x_i)$. This braid monodromy induces a braided embedding:
$g_3:\tilde{B}\rightarrow B\times D^2\cong \nu B$. Taking products with a disc, we obtain a proper braided embedding $g_4:\nu \tilde{B}\rightarrow \nu B\times D^2$ lifting $p|_{\nu \tilde{B}}:\nu \tilde{B}\rightarrow \nu B$, and restricting $g_4$ to the  boundary of the normal bundles we obtain the braided embedding $g_5: \tilde{B}\times S^1\rightarrow (B\times S^1)\times D^2$, with braid monodromy defined by $\psi_5:\pi_1(\partial \nu B)\rightarrow B_n$, so that for any $\gamma\in\pi_1(\partial \nu B)$ with the tubular braids under $\psi_2$ and $\psi_5$ agreeing, i.e. $\overline{\psi_2(\gamma)}=\overline{\psi_4(\gamma)}$;
although the interior braids (and hence $\psi_2(\gamma)$ and $\psi_5(\gamma)$) may differ in the following way. The interior braids of $\psi_5$ are all identity, since we uniformly took products with a disc.  By hypothesis, we know that the interior braids $\{\alpha_k\}$ of $\alpha:=\psi_2(\mu)$ is a standard positive or negative braid, i.e. conjugate to $(\sigma_1...\sigma_k)^{\pm 1}$ for some $k$. We know that centralizer of the above braid is the cyclic subgroup generated by itself. Thus for any $i$, the $k$-th interior braids of $\psi_2(x_i)$ has to be powers of $\alpha_k$.


The idea now is to twist the braided embedding $g_4$ so that the induced braided embedding $g_5$ agrees with that of $g_2$. Let us first consider the special case $B$ is a circle $S^1$, with fundamental group $\langle x \rangle\cong \mathbb{Z}$. $\tilde{B}$ will be a disjoint union of circles, and each component of the normal bundle $\nu \tilde{B}$ will be a solid torus.
In order to construct a braided embedding $h:\nu\tilde{B}\hookrightarrow \nu B\times D^2$ so that the braid monodromy of which agrees with that of $\psi_2$, we will modify $g_4$ by applying Dehn twists on those components of $\nu\tilde{B}$ so that the interior braids match with that of $\psi_2(x)$.

\begin{figure}[H]
    \centering
    \includegraphics[width=11 cm]{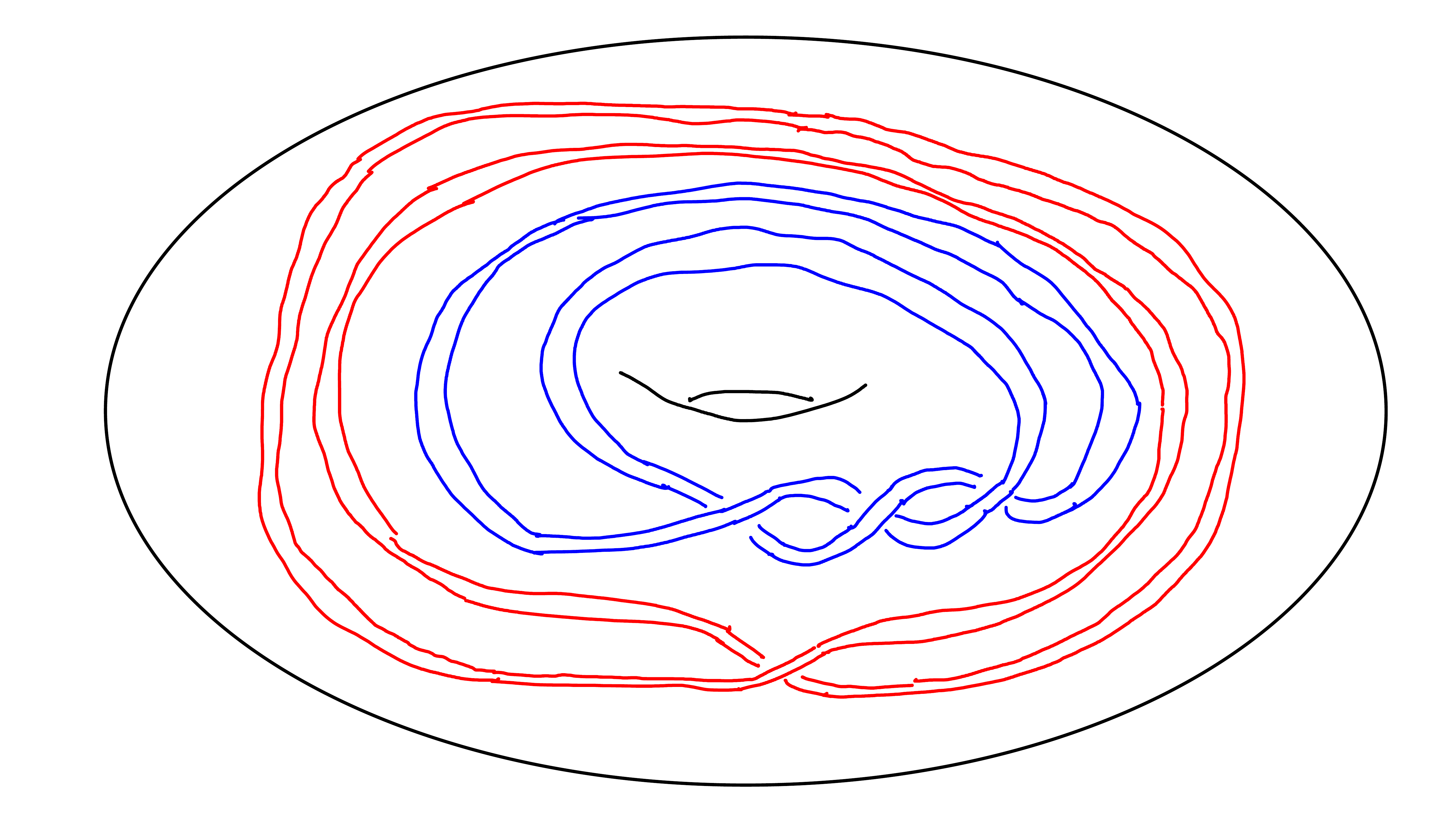}
    \caption{When $B$ is $S^1$, the figure illustrates a braided embedding of $\nu\tilde{B}$ in $B\times D^2$}
    \label{exten}
\end{figure}
More formally, we will pre-compose 
the braided embedding $g_4$ with a diffeomorphism of $\nu \tilde{B}$ which preserves each solid tori setwise, and induces a number (this number is equal to the exponent of the corresponding interior braid of $\psi_2(x)$) of Dehn twist\footnote{Recall for any diffeomorphism of the boundary of a solid torus which preserves the meridian can be extended to a diffeomorphism of the entire solid torus.} on the boundary of the solid torus. The braided embedding $h$ so constructed has the property that the induced braided embeddings on the boundary $h_1$ is isotopic to that of $g_5$ (since the braid monodromies agree), and thus we can patch up $h$ and $g_1$ to extend the braided embedding over the branch locus.

To make the above idea work in general we need to come up with an analogue of Dehn twists in higher dimension, and ensure we can carry out a similar construction as above even when the fundamental group of $B$ is complicated. To elaborate on the latter point, suppose we did some twisting of $g_4$ so the interior braid corresponding to $x_1$ agrees with that of $\psi_2(x_1)$, but now if we do some twisting so that the interior braids of $x_2$ agree, we need to make sure this does not alter the interior braids of $x_1$.

Let us begin with the analogues of Dehn twists, which we will call D-twists.

\begin{defi}
Suppose $X$ is any smooth manifold, and $Y$ is any 
 hypersurface of $X$ with a tubular neighbourhood $Y\times [0,2\pi]$. Then we can define a diffeomorphism of $X\times S^1$ as follows: we cut $X\times S^1$ along $Y\times S^1$, and define $$Y\times [0,2 \pi]\times S^1\rightarrow Y\times [0,2 \pi]\times S^1,$$ by sending:
 $$(y,\theta,z)\mapsto (y,\theta,e^{i\theta}z).$$
 we see that this map agrees with the identity at the boundary $Y\times \{0,2\pi\}\times S^1$, and as such we can extend this to the rest of $X\times S^1$ by identity. We will call this \textit{D-twist} of $X\times S^1$ along $Y\times S^1$ and denote it by $T_Y$.
 
 Also, we observe that this map $T_Y$ extends to a diffeomorphism $S_Y$ of $X\times \overline{D^2}$ by essentially the same formula:
 let us define $$Y\times [0,2 \pi]\times \overline{D^2}\rightarrow Y\times [0,2 \pi]\times \overline{D^2}$$ 
by  $$(y,\theta,z)\mapsto (y,\theta,e^{i\theta}z),$$ and the identity map elsewhere.

\end{defi}
Notice that when $X$ is the unit circle and $Y$ is a single point in $X$, the D-twist $T_Y$ is exactly the Dehn twist $T_Y\times S^1$ along a meridian, and $S_Y$ is the extension of this diffeomorphism to the entire solid torus.

Returning to the case $X$ being a manifold, if $\gamma$ is a simple closed curve in $X$ which intersects $Y$ in some finite set of points $y_1,...,y_l$, then we claim that the effect of $T_Y$ on the torus $\gamma\times S^1$ is the Dehn twist along the meridian $\{y_1\}\times S^1$ to the power $\langle \gamma, Y \rangle$ times, where $\langle \gamma, Y \rangle$ denotes the algebraic intersection number of $\gamma$ with $Y$. For, if there is one intersection it is easy to see that the only change happens near that point and it is a positive or negative Dehn twist about the meridian, depending on the sign of the intersection. If there are multiple such intersections, we will get several meridional Dehn twists, with signs corresponding to that of the intersection.
 
 \textbf{Special Case}: Let us assume the map $p:\tilde{B}\rightarrow B$ is a diffeomorphism. We would like to realize the braided embedding $g_2:B\times S^1\rightarrow (B\times S^1)\times D^2$ as the boundary of braided embedding $h:\nu B \rightarrow \nu B\times D^2$.
 In this context, the closure of $\beta=\psi_2(\mu)$ is a standard unknot, and we know that the centralizer of $\psi_2(\mu)$ is an infinite cyclic group generated by itself. Thus if we restrict the braid monodromy map $\psi_2:\pi_1(\partial\nu B)$ to the subgroup $\pi_1(B)$
 (obtained by fixing a particular point in $S^1$), we see that the restriction maps from $\pi_1(B)$ to $Z(\beta)\cong\mathbb{Z}$. Thus it must
  factor through the first homology $H_1(B)$, or in other words is an element of $Hom(H_1(B),\mathbb{Z})$. By the universal coefficient theorem for cohomology, this element is the image of some cohomology class in $H^1(B)$ (there may be multiple pre-images depending on if the corresponding Ext term is non-trivial). By Poincare duality there is a homology class in $H_{\dim B-1}(B)$ dual to it, which is represented by an embedded \cite{Kir} hypersurface $Y$. It follows that if we pre-compose the braided embedding $g_4:\nu\tilde{B}\hookrightarrow \nu B\times D^2$ with the diffeomorphism $S_Y:\nu \tilde{B}\rightarrow \nu \tilde{B}$ (recall $B$ and $\tilde{B}$ are diffeomorphic), we obtain a braided embedding $h$ with the required properties.

\textbf{General Case}:
Given the braided embedding $g_2:\tilde{B}\times S^1\hookrightarrow (B\times S^1)\times D^2$, we had its braid monodromy map
 $\psi_2:\pi_1(B\times S^1)\rightarrow B_n$. Let $C$ be one component of $\tilde{B}$, by restricting $g_2$ to the component $C\times S^1$, we get a braided embedding $g_2|_{C\times S^1}:C\times S^1\hookrightarrow (B\times S^1)\times D^2$, and we claim that this braided embedding is the composition of two braided embeddings:\\
 $s: C\times S^1\hookrightarrow (C\times S^1)\times D^2$ (the associated cover $id\times p$ unwraps only the $S^1$-direction), and the braided embedding $C\times S^1\hookrightarrow (B\times S^1)\times D^2$ (the associated  cover preserves the $S^1$ direction, and we get a covering $p|_{C}:C\rightarrow B$ in the orthogonal direction). To see this note that if we restrict the braid monodromy $\psi_2:\pi_1(B\times S^1)\rightarrow B_n$ to the subgroup $\pi_1(C\times S^1)$ obtained from (corresponding to the covering map $p|_{C}\times id:C\times S^1\rightarrow B\times S^1)$, if we ignore the tubular braids not corresponding to $C$, we see that the tubular braids are all identity (and the number of such strands of the tubular braid is the number of sheets of the cover $p|_{C}:C\rightarrow B$), and the various interior braids are conjugate (the number of strands in an interior braid equals the number of times the $S^1$ factor is unwrapped, say $l$). If we choose a particular base point in $C$ over the base point in $B$, the interior braids coming from that factor gives us a braid monodromy $\psi_6:\pi_1(C\times S^1)\rightarrow B_l$. It suffices to show that we can realize this braid monodromy by twisting, i.e. we will precompose each $\nu C$ with an appropriate $D$-twist realizing appropriate interior braids, and doing this for each component $C$ of $\tilde{B}$ will give us a braided embedding $h$ with the required properties.
 Thus, we reduce to the special case considered above when $p:\tilde{B}\rightarrow B$ is a diffeomorphism, and the result follows. \end{proof}
 
 



\subsection{Piecewise Linear Category}

We have a similar result in the piecewise linear category, when the branch locus is a submanifold. The reader will note that this results requires braids surrounding branch points to be completely split unlinks (as opposed to standard unlinks), which is to be expected given our local model near a branch point in the last section.

\begin{prop}
Suppose we have a piecewise linear branched cover $p:M\rightarrow N$ with branch locus $B\subseteq N$ being a submanifold with trivial regular neighbourhood $\nu B\cong B\times D^2$, and $p|_{\tilde{B}}:\tilde{B}\rightarrow B$ is a covering map. Suppose we choose points $b_1,...,b_k$, one for each connected component of $B$. If we are given a locally flat piecewise linear braided proper embedding  $g_1:M\setminus \nu_0 \tilde{B}\hookrightarrow (N\setminus \nu_0 B)\times D^2$ lifting the honest covering $p_1:M\setminus \nu_0 \tilde{B}\hookrightarrow N\setminus \nu_0 B$ induced from $p$, then $g_1$ extends to a locally flat smooth braided embedding $g:M\hookrightarrow N\times D_2$ lifting $p$ if and only if each of the braids surrounding the branch points $b_i$ are completely split unlinks.

\end{prop}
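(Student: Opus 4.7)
The plan is to mirror the smooth argument almost verbatim, substituting the piecewise linear local model from Subsection~\ref{SSextpl} for the smooth one. For the necessity direction, given an extension $g:M\hookrightarrow N\times D^2$ and a branch point $b_i$, I would restrict $g$ to the slice $\{b_i\}\times D^2 \subseteq \nu B$. This produces a locally flat PL braided embedding of a disjoint union of discs over a 2-disc with a single branch point, and the lemma established at the end of Subsection~\ref{SSextpl} forces the braid surrounding $b_i$ to be a completely split unlink. Conjugation of meridians within a component of $B$ ensures the condition is independent of the chosen representative $b_i$.

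For the sufficiency direction, the reduction is the same as in the smooth case: it suffices to build a locally flat PL braided embedding $h:\nu\tilde{B}\hookrightarrow \nu B\times D^2$ whose restriction to $\partial\nu\tilde{B}$ matches (up to isotopy in a collar) the boundary braided embedding $g_2$ induced by $g_1$, and then glue along a collar. Working one component of $B$ at a time, I would first produce a product braided embedding $g_4:\nu\tilde{B}\to \nu B\times D^2$ by taking products of a braided embedding $g_3:\tilde{B}\to B\times D^2$ with the disc $D^2$, where $g_3$ arises from the tubular braid monodromy $\psi_3:\pi_1(B)\to B_m$ obtained by applying the tubular–braid map $\Theta:Z(\psi_2(\mu))\to Z_0(\overline{\psi_2(\mu)})$ to $\psi_2|_{\pi_1(B)}$. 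Here the hypothesis that the braid surrounding the branch point is a completely split unlink is what guarantees $\alpha=\psi_2(\mu)$ is reducible with each interior braid bounding a locally flat PL disc, so the cone construction of Subsection~\ref{SSextpl} gives a well-defined locally flat PL braided embedding over each disc slice after twisting.

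The remaining task is to correct the interior braids of $g_4$ so that they agree with those of $\psi_2$. I would introduce PL D-twists $S_Y$ on $\nu\tilde{B}$ associated to a codimension-one PL subcomplex $Y\subseteq B$ Poincar\'e dual to the cohomology class in $H^1(B)$ determined by the restriction of $\psi_2$ to $\pi_1(B)$ composed with the injection $Z(\alpha_k)\hookrightarrow \mathbb{Z}$ for each interior slot $k$. The PL D-twist is defined exactly as in the smooth case by cutting $X\times S^1$ along $Y\times S^1$ and regluing via $(y,\theta,z)\mapsto (y,\theta,e^{i\theta}z)$, which is a PL homeomorphism since it is a composition of PL Dehn twists along meridians of tori transverse to $Y$; its extension $S_Y$ to $X\times D^2$ is also PL. Precomposing $g_4$ with appropriate D-twists then adjusts the interior braids one generator of $\pi_1(B)$ at a time without disturbing the tubular braid, reducing us, as in the smooth case, to the situation where $p|_{\tilde{B}}$ is a PL homeomorphism and the remaining obstruction lives in $\mathrm{Hom}(H_1(B),\mathbb{Z})$.

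The main obstacle is verifying that the resulting $h$ is genuinely \emph{locally flat} PL along $\tilde{B}$: the centralizer/twisting machinery guarantees the correct braid monodromy on $\partial\nu\tilde B$, but local flatness at interior points of $\tilde B$ must be checked from the local cone picture. Here the hypothesis that each component of the braid surrounding $b_i$ is an unknot (rather than just a closed braid) is precisely what allows the cone on each component to be a locally flat PL disc, so the constructed $h$ is locally flat at every point of $\tilde B$. Gluing $h$ to $g_1$ along a collar of $\partial\nu\tilde{B}$, after a PL isotopy making both braided embeddings collar-invariant, produces the desired extension $g$.
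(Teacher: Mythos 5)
Your necessity direction and the overall reduction (build $h$ on $\nu\tilde B$ matching $g_2$ on the boundary, then glue along a collar) are fine, but the heart of your sufficiency argument does not survive the passage from smooth to PL. You correct the interior braids of the product embedding $g_4$ by D-twists dual to a class in $H^1(B)$ obtained from ``the injection $Z(\alpha_k)\hookrightarrow\mathbb{Z}$.'' That step is exactly what is \emph{not} available here. In the smooth setting the interior braids $\alpha_k$ of $\psi_2(\mu)$ are standard unknot braids, conjugate to $(\sigma_1\cdots\sigma_j)^{\pm1}$, whose centralizers are infinite cyclic generated by themselves (Subsection~\ref{centralizer}); this is what forces the interior braids of $\psi_2(x_i)$ to be powers of $\alpha_k$, makes the discrepancy a homomorphism $\pi_1(B)\to\mathbb{Z}$, and lets D-twists (which only alter interior braids by powers of a fixed braid) realize the correction. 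In the PL setting the hypothesis only says each component of the closed braid $\psi_2(\mu)$ is an unknot in $S^3$; such a braid need not be conjugate to a standard one, and its centralizer need not be cyclic. For instance $\sigma_1\sigma_2^{-1}\in B_3$ has unknotted closure but is pseudo-Anosov, so its centralizer contains the rank-two abelian group generated by itself and $\Delta^2$; an element of $\pi_1(B)$ could then have interior braid $\Delta^2$, which no D-twisting of your untwisted model can produce. There is no known classification of braids with unknotted closure, nor of their centralizers, so your ``interior slot'' homomorphisms to $\mathbb{Z}$ are not defined and the twisting machinery breaks down.

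The paper avoids this entirely: it never tries to match interior braids. Instead it cones the given boundary embedding $g_2$ parametrically. Viewing a normal slice $D^2_\nu$ as the cone on $S^1$, the space $B\times D^2_\nu\times D^2$ is the fiberwise join of $B\times S^1\times D^2$; for each $b\in B$ the slice of $g_2$ in $\{b\}\times S^1\times D^2$ is a (completely split) unlink, so coning each component to a point $\{b\}\times\{p\}\times\{O_i\}$ yields locally flat PL discs, and doing this for all $b$ gives $h$ with $\partial h=g_2$ on the nose. The tubular-braid embedding $g_3$ (and the associated $g_4$) is used only to choose the cone points $O_i$ varying over $B$ inside disjoint sub-discs of $D^2$, guaranteeing injectivity when $\tilde B$ has several components; no centralizer structure, cohomology class, or D-twist is needed. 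If you want to keep your outline, you would have to either prove that the centralizer of every braid with completely split unlinked closure is generated by elements realizable by twisting along $\tilde B$ --- which is open --- or replace the interior-braid-matching step by the direct coning construction above.
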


The proof will be similar to that of the smooth category, once again there will be a special case, and a reduction from the general case to the special case. The latter step is essentially the same as the proof in the smooth category, however we will need to approach the special case differently, as the author does not know any sort of classification which braids closures are unknots, let alone a precise understanding of their centralizers.

\begin{proof}
By our study of local models in the last section, it is clear that the hypothesis (of braids surrounding branch points be completely split unlinks) is a necessary condition. It suffices to show it is a sufficient condition as well.

 \textbf{Special Case}: let us assume the map $p:\tilde{B}\rightarrow B$ is the identity map. We would like to realize the braided embedding $g_2:B\times S^1\rightarrow (B\times S^1)\times D^2$ as the boundary of braided embedding. We will obtain $h$ simply by coning $g_2$.  Thinking of a slice $D^2_\nu$ of the normal bundle as the cone $p*S^1$, we see that $B\times D^2_\nu\times D^2$ is nothing but the parametric join $B\times S^1\times D^2$ along $B\times p$.
 
 Let us choose the origin $O$ in the disc $D^2$, we see that for each $b\in B$,the image of $g_2$ in $\{b\}\times S^1\times D^2$ is an unknot (because we are in the special case), we can cone this at the point $\{b\}\times \{p\}\times\{O\}$, and obtain a locally flat braided disc in $\{b\}\times D^2_\nu\times D^2$. By doing this coning operation for each $b\in B$, we obtain a locally flat piecewise linear proper braided embedding $h$ which induces the braided embedding $g_2$ in the boundary, and can be used to extend $g_1$.
 
 \textbf{General Case}: Without loss of generality we may assume $B$ is connected (we can run the same argument for each component of $B$), however $\tilde{B}$ may have multiple components. In this case we need to choose an appropriate number of points $O_i$, and cone over them. However, we need to make sure that this procedure gives us an embedding, we would like to vary the points $O_i$ in $D^2$ (continuous) parametrically in $B$ so the result of the coning is injective. To do this formally, we will make use of the tubular braids.
 Given braided embedding $g_2:\tilde{B}\times S^1\hookrightarrow (B\times S^1)\times D^2$, just like in the smooth category, we obtain a braided embedding $g_3:\tilde{B}\hookrightarrow B\times D^2$ by looking at the tubular braids of the braid monodromy $\psi_2$ of $g_2$ (we have less control over the interior braids in this case, but the tubular braids behave similarly). By looking at a small neighbourhood of this braided embedding, we again obtain a braided embedding $g_4:\nu \tilde{B}\rightarrow \nu B\times D^2$ lifting $p|_{\nu \tilde{B}}:\nu \tilde{B}\rightarrow \nu B$. We can replace the braided embeddings on the boundary of this untwisted braided embedding $g_4$ with the one coming from $g_2$, and use the small neighbourhoods in the disc $D^2$ (i.e, the points $O_i$ is determined by the braided embedding $g_3$) to do the coning operation.
 The result follows. \end{proof}

\section{Lifting branched coverings over the two-sphere}

\subsection{Braid Systems and Permutation Systems}
Since the fundamental group of a sphere with $m$ punctures has the presentation $\langle x_1,...,x_m|x_1....x_m\rangle$ where the $x_i$ is the loop surrounding the $i$-th puncture, we can store permutation and braid monodromies of the punctures sphere as tuples.
Let $G$ be any group with any subset $H$, let us define
$$P^m_g(G,H):=\{(h_1,...,h_m)|h_i\in H, h_1...h_m=g\}$$
We omit $H$ from the notation if $H=G$, and we omit $g$ from the notation if $g=e$.\\
An element of $P^m(S_n)$ will be called a \emph{permutation system}.
An element of $P^m(B_n,A_n)$ will be called a \emph{braid system}.
An element of $P^m(B_n,SA_n)$ will be called a \emph{simple braid system}. Here we recall that the notations $A_n$ and $SA_n$ were introduced in Subsection~\ref{csplit}.

\subsection {Hurwitz sliding moves} Let us consider the sliding moves $s_k:P^n_g(G)\rightarrow P^n_g(G)$ defined by:
$$(a_1,...,a_k,a_{k+1},...,a_m)\mapsto
(a_1,...,a_ka_{k+1}a_k^{-1},a_k,...,a_m)$$
and its inverse $s_k^{-1}:P^n_g(G)\rightarrow P^n_g(G)$

$$(a_1,...,a_k,a_{k+1},...,a_m)\mapsto
(a_1,...,a_{k+1},a^{-1}_{k+1}a_k a_{k+1},...,a_m)$$

\textit{Notation}: We will use the following notation to describe operations on  braid/permutation systems
\begin{itemize}
\item $\rightarrow$ sliding move (including inverse), \item $\downarrow$ deleting a subsystem,\item $\uparrow$ inserting a subsystem.
\end{itemize}


\subsection{Lifting simple branched covers}
Let us begin by discussing liftings of two fold branched covers over the two sphere.

\begin{example}\label{2fbc2s}
The monodromy of the branched covering described in Figure \ref{qhu} can be described by the permutation system $((12),(12),(12),(12),(12),(12),(12),(12)).$
One possible lift is given by the braid system
$(\sigma_1,\sigma_1,\sigma_1,\sigma_1,\sigma_1^{-1},\sigma_1^{-1},\sigma_1^{-1},\sigma_1^{-1}).$ We observe that this example generalizes to arbitrary genus, and any two fold branched cover over the sphere lifts to a braided embedding. 
\end{example}

For simple branched covers over the sphere, Carter and Kamada \cite[Theorem~3.8]{CK}  answered Question~\ref{Q1} concerning liftability of branched covers to braided embeddings affirmatively,
 using the following classification of simple branched covers due to L\"{u}roth \cite{L} and Clebsch \cite{C}  (see \cite[Section~4]{BE} for a proof in English). 
\begin{prop}[L\"{u}roth \cite{L} and Clebsch \cite{C}]\label{p1} Any transposition (that is simple permutation) system can be brought to the form 
$$((12),...,(12),(13),(13),(14),(14),...,(1n),(1n))$$ (with an even number of $(12)$'s) using some sliding moves and conjugation.
\end{prop}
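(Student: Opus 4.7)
My plan is to proceed by induction on the number of sheets $n$, under the standing assumption (forced by the shape of the target normal form) that the transposition system generates a transitive subgroup of $S_n$. Both sliding moves and simultaneous conjugation by $S_n$ preserve the class of transposition systems with product equal to the identity, so all operations stay within our setting. The base case $n = 2$ is immediate: the only available transposition is $(12)$ and product identity forces its count to be even, so we are already in normal form.

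For the inductive step, assume the statement for $S_{n-1}$ and consider a transitive transposition system in $S_n$. The goal is to bring it to the form $(b_1, \ldots, b_{m-2}, (1n), (1n))$ in which none of the $b_i$ involves $n$; then the prefix is a transposition system in $S_{n-1}$ (still with product identity, since $(1n)(1n) = e$, and still transitive after an auxiliary conjugation if needed), and the inductive hypothesis finishes the argument.

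The technical core is a short catalogue of direct slide computations on transposition pairs, each a routine check:
\begin{itemize}
\item $((in),(jl)) \to ((jl),(in))$ when $i \notin \{j, l\}$, a pure swap;
\item $((in),(il)) \to ((ln),(in))$, trading a non-$n$-involving entry for a second $n$-involving one;
\item $((pn),(qn)) \to ((pq),(pn))$ for $p \neq q$, collapsing two $n$-involving transpositions into one.
\end{itemize}
Combined with their inverses and the two-slide computation $(\sigma,\sigma,\tau) \to (\sigma, \sigma\tau\sigma^{-1}, \sigma) \to (\tau,\sigma,\sigma)$ (using $\sigma^2 = e$) showing that an equal adjacent pair commutes past any neighbor, these suffice to (i) bring two $n$-involving transpositions into adjacency, (ii) normalize them to $((1n),(1n))$ by a further slide and a global conjugation by a permutation fixing $n$, and (iii) slide this canonical pair to the rightmost position of the tuple without disturbing earlier entries beyond the latitude of Hurwitz moves.

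The main obstacle is termination of the reduction, since the second catalogue move can create new $n$-involving transpositions. I would address this via a monovariant, for instance the lexicographic minimum of (number of $n$-involving entries, position of the first $n$-involving entry) taken over all Hurwitz-equivalent representatives; one then argues that whenever this minimum is not the pair $(2, m-1)$ one of the catalogue moves strictly decreases it. Once the final pair $((1n),(1n))$ is parked at the right, the inductive hypothesis applied to the $(m-2)$-prefix in $S_{n-1}$ completes the reduction to the stated normal form, and the evenness of the trailing block of $(12)$'s emerges from the $n = 2$ base case applied at the very last step.
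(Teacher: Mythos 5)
Your overall plan is the classical Clebsch-style argument: induct on $n$, use slides to gather and collapse the entries involving the letter $n$ down to a parked pair $((1n),(1n))$, and apply the inductive hypothesis to the $S_{n-1}$-prefix. For what it is worth, the paper does not actually prove this proposition (it cites Clebsch, L\"uroth and Berstein--Edmonds and only illustrates the mechanism by example), but its proof of the analogous statement for braid systems runs on exactly this ``strip off the letter $n$'' induction, so your route is the intended one. Your slide computations and the two-slide observation that an adjacent equal pair commutes past any entry are all correct, as is the (implicit) standing transitivity hypothesis, and once the count of $n$-involving entries is exactly two and they are adjacent they are indeed automatically equal, since their product must fix $n$.

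The genuine gap is the step you yourself flag as the main obstacle: the reduction of the number of $n$-involving entries to two is asserted, not proved, and the proposed justification does not work as stated. Your monovariant is defined as a minimum over the whole Hurwitz-equivalence class, so it is a class invariant and no single move can ``strictly decrease'' it; what has to be shown is that a representative with count two always exists, which is the entire content. Moreover, on a given representative the catalogue moves need not lower the count at all: if every $n$-involving entry equals one fixed $(pn)$ (this happens, and the count can even be odd, e.g.\ $((13),(12),(13),(12),(13),(12))$ with $n=3$), the collapse move is unavailable, and one must first invoke transitivity a second time to find an entry containing $p$ but not $n$ (for $n\ge 3$, otherwise $\{p,n\}$ would be a component), apply the count-\emph{increasing} move $((pn),(p\ell))\to((\ell n),(pn))$, and only then collapse. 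That case analysis is the heart of the classical proof and is missing from your sketch. A second, smaller gap: the inductive hypothesis produces the prefix normal form only up to a global conjugation by some $\pi\in S_{n-1}$, which need not fix the letter $1$ and so turns your parked pair $((1n),(1n))$ into $((\pi(1),n),(\pi(1),n))$; you need either to strengthen the induction so the conjugation fixes $1$, or a final fix-up by slides against the pair $((1,\pi(1)),(1,\pi(1)))$ occurring in the prefix. Relatedly, ``still transitive after an auxiliary conjugation if needed'' is not a justification (conjugation preserves orbit structure); the prefix is transitive for a different reason, namely that the trailing pair only joins $1$ to $n$, so transitivity of the whole system forces the prefix to connect $\{1,\dots,n-1\}$.
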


Once a permutation system is brought to this standard form, it is easy to find a braid system lifting it. If we recursively define $\alpha_1:=\sigma_1$, and $\alpha_k:=\sigma_k \alpha_{k-1}\sigma_{k}^{-1}$, we see that $\alpha_{k-1}$ is a braid lifting $(1k)$ which is conjugate to the standard generator $\sigma_1$. Then we see that the braid system 
$(\alpha_1,\alpha_1^{-1},...,\alpha_1,\alpha_1^{-1},\alpha_2,\alpha_2^{-1},\alpha_3,\alpha_3^{-1},...,\alpha_{n-1},\alpha_{n-1}^{-1})$ lifts the permutation system \\ $((12),(12),...,(12),(12),(13),(13),(14),(14),...,(1n),(1n))$.
After this one can apply the sliding and conjugation moves in reverse to the braid system and finally get a braid system lifting the original permutation system.

 We will build on L\"{u}roth and Clebsch's method of proof to show that the answer Question~\ref{Q1} is yes for any branched cover over the 2-sphere.
Before doing that, let us illustrate the idea of proof of Proposition~\ref{p1} with an example, and then show that we can then get a braid system lifting the permutation system by applying the reverse process to the braid system.
\begin{example}
Let us consider a 4-fold simple branched cover with Hurwitz permutation system  $((12),(34),(13),(24),(14),(23))$. We will perform sliding moves
to get the permutation system to standard form
$$((12),(34),(13),(24),(14),(23))\xrightarrow{s_5}
((12),(34),(13),(24),(23),(14))$$
$$\xrightarrow{s_4^{-1}}((12),(34),(13),(23),(34),(14))\xrightarrow{s_2^{-1}}
((12),(13),(14),(23),(34),(14))$$
$$\xrightarrow{s_3}((12),(13),(23),(14),(34),(14))\xrightarrow{s_4}
((12),(13),(23),(13),(14),(14))$$
$$\xrightarrow{s_2}((12),(12),(13),(13),(14),(14))$$

Now it is easy to find a braid system that lifts the given permutation system once it is in standard form.

The braids $\alpha_1=\sigma_1$, $\alpha_2=\sigma_2\sigma_1\sigma_2^{-1}$
and $\alpha_3=\sigma_3\sigma_2\sigma_1\sigma_2^{-1}\sigma_3^{-1}$
lift $(12),(13)$ and $(14)$, respectively.
Clearly each $\alpha_i\in SA_4$ being a conjugate of $\sigma_1$. Now we see that the braid system 
$(\alpha_1,\alpha_1^{-1},\alpha_2,\alpha_2^{-1},\alpha_3,\alpha_3^{-1})$ lifts $((12),(12),(13),(13),(14),(14))$. We can apply the inverses of the sliding moves we applied earlier to $(\alpha_1,\alpha_1^{-1},\alpha_2,\alpha_2^{-1},\alpha_3,\alpha_3^{-1})$ and get
$$(\alpha_1,\alpha_1^{-1},\alpha_2,\alpha_2^{-1},\alpha_3,\alpha_3^{-1})\xrightarrow{s_2^{-1}}(\alpha_1,\alpha_2,\alpha_2^{-1}\alpha_1^{-1}\alpha_2,\alpha_2^{-1},\alpha_3,\alpha_3^{-1})=(\alpha_1,\alpha_2,\sigma_2^{-1},\alpha_2^{-1},\alpha_3,\alpha_3^{-1})$$
$$\xrightarrow{s_4^{-1}}(\alpha_1,\alpha_2,\sigma_2^{-1},\alpha_3,\alpha_3^{-1}\alpha_2^{-1}\alpha_3,\alpha_3^{-1})
=(\alpha_1,\alpha_2,\sigma_2^{-1},\alpha_3,\sigma_3^{-1},\alpha_3^{-1})$$
$$\xrightarrow{s_3}(\alpha_1,\alpha_2,\sigma_2^{-1}\alpha_3\sigma_2,\sigma_2^{-1},\sigma_3^{-1},\alpha_3^{-1})=(\alpha_1,\alpha_2,\alpha_3,\sigma_2^{-1},\sigma_3^{-1},\alpha_3^{-1})$$
$$\xrightarrow{s_2}(\alpha_1,\alpha_2\alpha_3\alpha_2^{-1},\alpha_2,\sigma_2^{-1},\sigma_3^{-1},\alpha_3^{-1})=(\alpha_1,\sigma_3,\alpha_2,\sigma_2^{-1},\sigma_3^{-1},\alpha_3^{-1})$$
$$\xrightarrow{s_4}(\alpha_1,\sigma_3,\alpha_2,\sigma_2^{-1}\sigma_3^{-1}\sigma_2,\sigma_2^{-1},\alpha_3^{-1})$$
$$\xrightarrow{s_5^{-1}}(\alpha_1,\sigma_3,\alpha_2,\sigma_2^{-1}\sigma_3^{-1}\sigma_2,\alpha_3^{-1},\alpha_3\sigma_2^{-1}\alpha_3^{-1})=(\alpha_1,\sigma_3,\alpha_2,\sigma_2^{-1}\sigma_3^{-1}\sigma_2,\alpha_3^{-1},\sigma_2^{-1})$$
Thus we see that $(\alpha_1,\sigma_3,\alpha_2,\sigma_2^{-1}\sigma_3^{-1}\sigma_2,\alpha_3^{-1},\sigma_2^{-1})$ lifts the given permutation system\\ $((12),(34),(13),(24),(14),(23))$.

\end{example}

For closed two braids, being a completely split unlink is equivalent to being a completely split standard unlink, whence the lifting question for simple branched covers are equivalent for the piecewise linear and smooth categories. However, this does not hold more generally, so let us consider the two categories separately.

\subsection{Lifting in the piecewise linear category}

\begin{thm}\label{bcl2s} Every branched cover of a surface over $S^2$ can be lifted to a piecewise linear braided embedding. 
\end{thm}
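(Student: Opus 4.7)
The plan is to translate the theorem, via the correspondence in Section \ref{Secmono} between branched coverings and permutation systems, into the algebraic statement that every permutation system $(\tau_1,\ldots,\tau_m)\in P^m(S_n)$ admits a braid system $(\beta_1,\ldots,\beta_m)\in P^m(B_n,A_n)$ lifting it under $\mathrm{Forget}:B_n\to S_n$. Once such a braid system is produced, the associated braid monodromy gives a piecewise linear braided embedding of $\Sigma\setminus p^{-1}(B)$ into $(S^2\setminus B)\times D^2$, and because each meridional braid lies in $A_n$, Theorem \ref{ext} (PL version) extends this embedding over the branch locus to yield the required braided embedding $\Sigma\hookrightarrow S^2\times D^2$ lifting $p$.

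My plan to produce the braid lift is to adapt the Lüroth--Clebsch strategy already used in the simple case (Proposition \ref{p1} and the construction following it) in three steps. First, use Hurwitz sliding moves and simultaneous conjugation in $S_n$ to bring the permutation system to a normal form, in which entries of the same conjugacy class are grouped together and arranged so that an $A_n$-valued lift with product identity is transparent. Second, construct an explicit lift in the normal form: for a cycle $c=(a_1,\ldots,a_k)$, conjugate so the strands are consecutive and lift $c$ to $\sigma_{a_1}\sigma_{a_1+1}\cdots\sigma_{a_1+k-2}$, whose closure in the solid torus is a single unknot; for a permutation with cycle decomposition on disjoint supports, the product of such standard lifts on the different supports is a braid whose closure is a completely split unlink, i.e.\ lies in $A_n$. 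Pair each such lift $\beta$ of $\tau$ with an inverse $\beta^{-1}$ (which lifts the conjugate permutation $\tau^{-1}$) so that the pair contributes trivially to the overall product. Third, apply the inverses of the sliding moves used in the first step; since a sliding move acts by conjugation on adjacent entries, and conjugation in $B_n$ preserves the isotopy type of the closure in the solid torus, each entry of the resulting braid system still lies in $A_n$, and by construction it lifts the original permutation system.

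The main obstacle is the first step. For transposition systems the Lüroth--Clebsch argument exploits that transpositions are involutions and that they generate $S_n$; for arbitrary cycle types the pairing structure of the normal form is more subtle, and one must verify that the constraint $\prod\tau_i=e$ is compatible with the intended pairing. A useful fallback is a resolution argument: factor each $\tau_i$ as a product of transpositions $\tau_i=t_{i,1}\cdots t_{i,r_i}$ to obtain a simple branched cover $p'$ (with more branch points but the same upstairs surface possibly modified by handles), apply Carter--Kamada to obtain a braided lift of $p'$, and then arrange by further sliding moves within each group of branch points destined to be merged so that the product of the simple braids in that group lies in $A_n$. This last step uses essentially that in the PL category we only need $\beta\in A_n$, which is much more permissive than the condition $\beta\in SA_n$ required in the smooth case, so the analogue of this theorem in the smooth category will fail in general as advertised in Theorem \ref{thm12}.
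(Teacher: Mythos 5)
Your overall framework --- reduce to producing, for every permutation system in $P^m(S_n)$, a braid system with entries in $A_n$ lifting it, obtain the braided embedding over the complement of the branch points from the braid monodromy, and extend over the branch locus via Theorem \ref{ext} --- is exactly the paper's framework. The gap is in how you propose to produce the braid system. The normal form in your first two steps, in which the entries are organized into pairs $(\beta,\beta^{-1})$ each contributing trivially to the product, cannot be reached in general: Hurwitz sliding moves only conjugate entries, so a paired-up system is, up to equivalence, of the form $(\tau_1,\tau_1^{-1},\tau_2,\tau_2^{-1},\ldots)$, and many permutation systems with product the identity are not of this type. The cleanest counterexample is the cyclic system $(\rho,\ldots,\rho)$ with $\rho=(12\cdots n)$ repeated $n$ times, $n$ odd --- precisely the family the paper uses to show smooth non-liftability. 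It has an odd number of nontrivial entries, so after pairing one entry is left over and would have to be lifted by the identity braid, which is impossible; yet by the theorem this system does lift in the PL category, necessarily by braids that are not organized into inverse pairs (their closures are unknots but not standard ones). So the main route does not merely ``need verification''; it fails.

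Your fallback (factor each $\rho_i$ into transpositions, lift the resulting simple cover by L\"{u}roth--Clebsch/Carter--Kamada, then fuse) is much closer to what the paper actually does, but as written it leaves the crux unproven: the L\"{u}roth--Clebsch normalization scrambles the transpositions belonging to a given $\rho_i$ across the whole system, so ``sliding within each group'' is not available as stated, and the assertion that one can arrange the product of the simple lifts in each fusion group to lie in $A_n$ is exactly the content of the theorem --- no mechanism is offered. The paper instead proves the stronger Proposition \ref{p2} by induction on the degree $n$: each $\rho_i$ is factored as $\varrho_i\tau_i$ with $\varrho_i\in S_{n-1}$ and $\tau_i$ a transposition of the form $(a,n)$; the $(a,n)$'s are slid to the right, cancelling pairs $((r,n),(r,n))$ are deleted, the inductive lift is taken, and the whole process is then reversed, inserting braid pairs $(\eta_r,\eta_r^{-1})$ and checking by an explicit braid-word computation that every resulting braid retains the controlled form $\beta\sigma_{k-1}^{\pm 1}\gamma$ with $\beta,\gamma\in B_{k-1}$ (conjugation form in the transposition case). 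That control is what guarantees, after fusion, that $\alpha_i\delta_i$ has the same solid-torus closure as the lower-degree braid $\alpha_i$ (conjugation plus Markov destabilization), hence inductively a completely split unlink, which is what Theorem \ref{ext} needs. This inductive bookkeeping is the missing ingredient in your proposal.
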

 The proof will follow from the following slightly stronger Proposition~\ref{p2} below, which in particular shows that one can lift to a braided embedding which can be perturbed to be simple.\\

\textit{Notation}: For $n<m$, there are canonical inclusions $\iota_{n,m}:S_n\hookrightarrow S_{m}$
and $i_{n,m}:B_n\hookrightarrow B_{m}$, and for notational convenience, we will be implicitly using these maps to make identifications. For example if $\rho\in \iota_{n,m}(S_n)$ then we can think of $\rho\in S_{n}$, and conversely any element of $S_n$ can be thought to be an element of $S_{m}$ (and similarly for the braid groups).

\begin{prop}\label{p2}  Every  permutation system $(\rho_1,...,\rho_m)$ in $S_n$  lifts to a braid system $(\alpha_1,...,\alpha_m)$ in $B_n$ so that
\begin{enumerate}
\item If $\rho_j\in S_k$ with smallest such $k$, $\alpha_j$ has a braid word of the form $\beta_j \sigma_{k-1}^{\pm 1}\gamma_j$, where $\beta_j$ and $\gamma_j$ are in $B_{k-1}$. 
\item Moreover if  $\rho_j\in S_k$ with smallest such $k$ is a transposition, then 
$\alpha_j$ has a braid word of the form $\beta_j \sigma_{k-1}^{\pm 1}\beta_j^{-1}$, where  $\beta_j$ is in $B_{k-1}$. 
\end{enumerate}
\end{prop}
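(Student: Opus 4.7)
The plan is induction on $n$, with the base case $n=1$ vacuous. For the inductive step, I would first use Hurwitz sliding moves to rearrange the system into a convenient form. Call $\rho \in S_n$ \emph{high} if the smallest $k$ with $\rho \in S_k$ is $n$, and \emph{low} otherwise. Applying $s_k^{-1}$ at each adjacent (high, low) pair pushes high elements to the right, while preserving the high/low partition of entries (conjugation preserves cycle type, so lowness/highness of a given entry is invariant). The rearranged system thus reads $(\rho_1', \ldots, \rho_s', \tau_1, \ldots, \tau_l)$ with each $\rho_i' \in S_{n-1}$ and each $\tau_j$ high. The inductive hypothesis applied to the low block produces a braid lift $(\alpha_1', \ldots, \alpha_s')$ in $B_{n-1} \subset B_n$ of the required form.

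For each high $\tau_j$, I would use the decomposition $\tau_j = u_j (n-1, n) v_j$ with $u_j, v_j \in S_{n-1}$: pick $u_j \in S_{n-1}$ with $u_j(n-1) = \tau_j(n)$, and then $v_j := (n-1, n) u_j^{-1} \tau_j$ automatically lies in $S_{n-1}$ by a direct calculation. Choosing braid lifts $U_j, V_j \in B_{n-1}$ of $u_j, v_j$ and signs $\epsilon_j \in \{\pm 1\}$, the braid $\alpha_j := U_j \sigma_{n-1}^{\epsilon_j} V_j$ is visibly of form~(1); when $\tau_j$ is a transposition one can further arrange $v_j = u_j^{-1}$, giving $\alpha_j = U_j \sigma_{n-1}^{\pm 1} U_j^{-1}$ of form~(2). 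The product relation $\alpha_1' \cdots \alpha_s' \alpha_1 \cdots \alpha_l = 1$ in $B_n$ must then be enforced using the remaining freedom in the $U_j, V_j$ (which may be modified by pure braids in $P_{n-1}$) and the signs $\epsilon_j$, with any discrepancy absorbed into the final factor via the known structure of the kernel of $P_n \to P_{n-1}$ as a free group generated by conjugates of $\sigma_{n-1}^2$.

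Finally, applying the inverses $s_k$ (in reverse order) of the rearrangement moves transfers the constructed braid system to a lift of the original $(\rho_1, \ldots, \rho_m)$. A key observation is that the inverse moves only ever conjugate a lift $U \sigma_{n-1}^{\pm} V$ (at position $k+1$) by a low braid $\alpha_k \in B_{n-1}$ (at position $k$), yielding
\[
(\alpha_k U) \sigma_{n-1}^{\pm} (V \alpha_k^{-1}),
\]
which remains in $B_{n-1} \cdot \sigma_{n-1}^{\pm} \cdot B_{n-1}$, so form~(1) is preserved; form~(2) with $V = U^{-1}$ is preserved by the same computation. The main obstacle I anticipate is enforcing the product relation within the constrained form in the middle step---this likely requires an auxiliary induction on $l$ together with careful bookkeeping of the abelianization and the pure braid correction---whereas the Hurwitz-move preservation of form conditions is essentially automatic from the shape of Step~1.
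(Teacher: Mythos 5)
Your outline breaks down exactly at the step that carries all the content of the proposition, so there is a genuine gap rather than a complete alternative proof. Two concrete problems. (i) The inductive hypothesis cannot be applied to the rearranged low block: the hypothesis concerns permutation systems, i.e.\ tuples whose product is the identity, whereas after pushing the high entries to the right the low block $(\rho_1',\dots,\rho_s')$ has product equal to the inverse of the product of the high block, which is typically a nontrivial element of $S_{n-1}$ (in $S_3$, take a system of the shape $((12),h_1,h_2)$ with $h_1,h_2$ moving $3$ and $h_1h_2=(12)$; the low block is the single entry $(12)$). (ii) The relation $\alpha_1'\cdots\alpha_s'\alpha_1\cdots\alpha_l=1$ in $B_n$ is precisely what makes the statement nontrivial, and you leave it to ``remaining freedom'' in the $U_j,V_j$ and the signs $\epsilon_j$; the specific fallback you name --- pushing the discrepancy into the kernel of $P_n\to P_{n-1}$ and absorbing it into the last factor --- is not sound as stated, because multiplying a braid of the form $U\sigma_{n-1}^{\pm1}V$ with $U,V\in B_{n-1}$ by a conjugate of $\sigma_{n-1}^{2}$ in general destroys both form~(1) and the condition, built into the definition of a braid system $P^m(B_n,A_n)$, that each entry close up to a completely split unlink: already $\sigma_1\cdot\sigma_1^{2}=\sigma_1^{3}$ in $B_2$ is not of the form $\beta\sigma_1^{\pm1}\gamma$ with $\beta,\gamma\in B_1$, and its closure is a trefoil. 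Relatedly, you never address the $A_n$ requirement at all for high entries whose permutation is not a single transposition, and the justification ``conjugation preserves cycle type, so highness is invariant'' is not the right reason (cycle type does not detect whether $n$ is moved); your moves are fine only because the conjugators happen to be low.

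The paper's proof never has to solve for the product, and that is the idea your proposal is missing. After fission $\rho_i=\varrho_i\tau_i$ with $\varrho_i\in S_{n-1}$ and $\tau_i=(a,n)$, every operation performed --- Hurwitz slides, and deletion of adjacent canceling pairs $((r,n),(r,n))$ with $r$ maximal, the conjugates $(b,r)$ created along the way being swept into the $S_{n-1}$ block --- preserves the product, and the procedure terminates in an honest permutation system in $S_{n-1}$, to which the inductive hypothesis genuinely applies. The braid system is then obtained by running the same sequence of moves backwards, starting from the inductive lift with inserted inverse pairs $(\eta_r,\eta_r^{-1})$, $\eta_r=\sigma_r\cdots\sigma_{n-2}\sigma_{n-1}\sigma_{n-2}^{-1}\cdots\sigma_r^{-1}$: the product is then automatically trivial, each final entry $\alpha_i\delta_i$ is conjugation-plus-stabilization equivalent to an inductively obtained entry (which is what gives the completely split unlink condition), and the only real work is the braid-relation computation showing that forms~(1) and~(2) survive the reverse slides. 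Your observation that conjugating $U\sigma_{n-1}^{\pm1}V$ by an element of $B_{n-1}$ preserves the form is the easy case of that check; the paper must also conjugate by the $\eta_r^{\pm1}$, which is where the computation lives. To repair your proposal you would need to replace ``enforce the relation afterwards'' by a product-preserving cancellation scheme of this kind (and fix the low-block issue), at which point you would essentially have reconstructed the paper's argument.
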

\begin{proof} The proof will be by induction on $n$.\\

\emph{Base case}: $n=2$. Every permutation system in $S_2$ looks like $((12),...,(12))$ with a even number of $(12)$'s. The braid system 
$(\sigma_1,\sigma_1^{-1},...,\sigma_1,\sigma_1^{-1})$ lifts $((12),(12),...,(12),(12))$ satisfying the above conditions.\\

\emph{Inductive step}: Let us assume the statement holds for $q=n-1$. Suppose we have the permutation system $(\rho_1,...,\rho_m)$ in $S_n$.
For each $\rho_i$, let us factorize $\rho_i=\varrho_i \tau_i$ so that $\varrho_i\in S_{n-1}$, and $\tau_i$ is a transposition of the form $(a,n)$, where $a\in\{1,2,...,n-1\}$ (such a factorization is not unique, and certain $\varrho_i$ or $\tau_i$ can be the identity, in which case we can drop it from the permutation system). We will do fission (that is split certain permutations, for example break up a three cycle into two transpositions) to the original permutation system to obtain $(\varrho_1,\tau_1...,\varrho_m,\tau_m)$. We will use inverse sliding moves to bring all the $\varrho_i$'s to the left of all the $\tau_i$, as follows:
$$(\varrho_1,\tau_1,...,\varrho_{m-1},\tau_{m-1},\varrho_m,\tau_m)\xrightarrow{s_{2m-2}^{-1}}
(\varrho_1,\tau_1...,\varrho_{m-1},\varrho_m,\tilde{\tau}_{m-1},\tau_m)\xrightarrow{s_{2m-4}^{-1}s_{2m-3}^{-1}}...$$ 
$$\xrightarrow{s_2^{-1}...s_{m}^{-1}}(\varrho_1,...,\varrho_{m-1},\varrho_m,\tilde{\tau}_{1},...,\tilde{\tau}_{m-1},\tilde{\tau}_m)$$
where each $\tilde{\tau_i}$ is a conjugate of $\tau_i$ and is of the form $(a,n)$.
Now we will break this permutation system into  permutation tuples $\Theta=(\varrho_1,...,\varrho_{m-1},\varrho_m)$, $\Phi=(\tilde{\tau}_{1},...,\tilde{\tau}_{m-1},\tilde{\tau}_m)$, and apply sliding moves and its inverses to modify $\Phi$, as described below.

$\dagger$ Let $r$ be the largest number smaller than $n$ so that there is a transposition in $\Phi$ so that $(r,n)=\tilde{\tau}_{i}$ for some $i$.
Let $i_1<i_2<...<i_j$ be the indices so that $\tilde{\tau}_{i}=(r,n)$. We use sliding moves to bring $\tilde{\tau}_{i_{2k-1}}$ to 
the ${i_{2k}-1}$' th spot, i.e. bring it to the very left of $\tilde{\tau}_{i_{2k}}$. If $j$ is even, we bring $\tilde{\tau}_{i_j}$
to the extreme right of the permutation tuple.
After applying this sequence of sliding moves to $(\tilde{\tau}_{1},...,\tilde{\tau}_{m-1},\tilde{\tau}_m)$, the permutation system will now look like (we illustrate the case when $j$ is odd, where there will be a single $(r,n)$ at the very right):

$$(\mu_1,\nu_1,(r,n),(r,n),\mu_2,\nu_2,(r,n),(r,n),\mu_3,\nu_3,...,\mu_k,\nu_k,(r,n))$$
where $ \mu_i$'s  are permutation tuples with each element of the form $(a,n)$ for some $a<n$, and $\nu_i$'s are permutation tuples not containing any $n$, so each element is of the form $(b,r)$ for some $b<r$. Now we consider the new permutation tuple which is formed by deleting the sub-permutation systems $((r,n),(r,n))$
$$\downarrow(\mu_1,\nu_1,\mu_2,\nu_2,\mu_3,\nu_3,...,\mu_k,\nu_k,(r,n)) $$

We can then use inverse sliding moves on this permutation tuple to bring the $\nu_j$'s to the left 
$$\rightarrow (\nu_1,\nu_2,\nu_3,...,\nu_k,\tilde{\mu}_1,\tilde{\mu}_2,\tilde{\mu}_3,...,\tilde{\mu}_k,(r,n))$$

Let us now append $(\nu_1,\nu_2,\nu_3,...,\nu_k)$ to the right of $\Theta$ and set $\Phi=(\tilde{\mu}_1,\tilde{\mu}_2,\tilde{\mu}_3,...,\tilde{\mu}_k,(r,n))$, and apply the same procedure (beginning in $\dagger$) to it. At each step the length of this permutation tuple reduces by at least $2$, and in a finite number of steps $\Phi$ will be empty. At that stage $\Theta$ will be a permutation system in $S_{n-1}$, and by the induction hypothesis we can find a braid system lifting it with the stated properties.\\

Now we can apply to this braid system the reverse of the entire process we applied to the permutation system, i.e.
\begin{itemize} 
\item we will introduce the braid system $(\eta_r,\eta_r^{-1})$ 
corresponding to the places we deleted the permutation system $((r,n),(r,n))$, where $$\eta_r:=(\sigma_r...\sigma_{n-2})\sigma_{n-1}(\sigma_r...\sigma_{n-2})^{-1}=\sigma_r...\sigma_{n-2}\sigma_{n-1}\sigma_{n-2}^{-1}...\sigma_r^{-1}$$
\item we will apply $s_k^{\mp 1}$ to the braid system if we applied $s_k^{\pm 1}$ to the permutation system.\\
\end{itemize}

After we apply all these moves we will be left with a braid system $$(\alpha_1,{\delta}_{1},...,\alpha_{m-1},{\delta}_{m-1},\alpha_m,\delta_m)$$
lifting the permutation system $$(\varrho_1,\tau_1,...,\varrho_{m-1},\tau_{m-1},\varrho_m,\tau_m),$$ 
\begin{claim}
This braid system has Properties 1 and 2 as in the statement.
\end{claim}
Properties 1 and 2 for the $\alpha_i$'s will follow from the induction hypothesis.
The following observations will show that the $\delta_i$'s have the Properties 1 and 2 as in the statement.
\begin{itemize}
\item 

The first time we introduce the braid $\eta_r$, it is of the form $\beta \sigma_{n-1}^{\pm 1}\beta^{-1}$ where $\beta\in B_{n-1}$, and when we apply sliding moves or their inverses to it, we conjugate it by an element of $B_{n-1}$, and so it remains of that form.

\item Note that the only times we applied the sliding moves of the form $((r,n),(s,n))\rightarrow ((s,r),(r,n))$, it was the case that $s<r$. Moreover, since $(s,r)$ is a permutation in $S_{n-1}$, we ensured that we applied inverse sliding moves to it to bring it to $\Theta$, and then used the induction hypothesis to find a braid $\beta \sigma_{r-1}^{\pm 1}\beta^{-1}$ lifting it, where $\beta\in B_{r-1}$. While applying the reverse procedure this braid does not change until it becomes adjacent to the braid  $\eta_r^{\pm 1}$ lifting $(r,n)$, and then we apply the inverse  sliding move $$(\beta \sigma_{r-1}^{\pm 1}\beta^{-1},\eta_r^{\pm 1})\rightarrow 
(\eta_r^{\pm 1}, \eta_r^{\mp 1}\beta \sigma_{r-1}^{\pm 1}\beta^{-1}\eta_r^{\pm 1})$$
Now we will show that $\eta_r^{\mp 1}\beta \sigma_{r-1}^{\pm 1}\beta^{-1}\eta_r^{\pm 1}$ is of the form $\gamma \sigma_{n-1}^{\pm 1}\gamma^{-1}$,
where $\gamma\in B_{n-1}$, by repeatedly applying (equivalent form of) the braid relation
$$\sigma_{i+1}^{-1}\sigma_{i}^{\pm 1} \sigma_{i+1}=\sigma_i\sigma_{i+1}^{\pm 1} \sigma_i^{-1}. $$
We will consider the case that the exponent of $\eta_r$ above is $1$, the other case is similar.

$$\eta_r^{-1}\beta \sigma_{r-1}^{\pm 1}\beta ^{-1}\eta_r^{}=\beta \eta_r^{-1}  \sigma_{r-1}^{\pm 1}\eta_r^{}\beta ^{-1}$$
$$=\beta(\sigma_{r}\sigma_{r+1}...\sigma_{n-2}\sigma_{n-1}^{-1}\sigma_{n-2}^{-1}...\sigma_{r+1}^{-1}\sigma_r^{-1})\sigma_{r-1}^{\pm 1}(\sigma_r\sigma_{r+1}...\sigma_{n-2}\sigma_{n-1}\sigma_{n-2}^{-1}...\sigma_{r+1}^{-1}\sigma_r^{-1} )\beta ^{-1}$$
$$=\beta\sigma_{r}\sigma_{r+1}...\sigma_{n-2}\sigma_{n-1}^{-1}\sigma_{n-2}^{-1}...\sigma_{r+1}^{-1}(\sigma_r^{-1}\sigma_{r-1}^{\pm 1}\sigma_r)\sigma_{r+1}...\sigma_{n-2}\sigma_{n-1}\sigma_{n-2}^{-1}...\sigma_r^{-1} \beta ^{-1}$$
$$=\beta(\sigma_{r}\sigma_{r+1}...\sigma_{n-2})\sigma_{n-1}^{-1}\sigma_{n-2}^{-1}...\sigma_{r+1}^{-1}\sigma_{r-1}\sigma_{r}^{\pm 1}\sigma_{r-1}^{-1}\sigma_{r+1}...\sigma_{n-2}\sigma_{n-1}(\sigma_{n-2}^{-1}...\sigma_{r+1}^{-1}\sigma_r^{-1}) \beta ^{-1}$$
$$=\beta(\sigma_{r}\sigma_{r+1}...\sigma_{n-2})\sigma_{r-1}\sigma_{n-1}^{-1}\sigma_{n-2}^{-1}...(\sigma_{r+1}^{-1}\sigma_{r}^{\pm 1}\sigma_{r+1})...\sigma_{n-2}\sigma_{n-1}\sigma_{r-1}^{-1}(\sigma_{n-2}^{-1}...\sigma_{r+1}^{-1}\sigma_r^{-1}) \beta ^{-1}$$
$$=...=\beta(\sigma_{r}\sigma_{r+1}...\sigma_{n-2})(\sigma_{r-1}\sigma_{r}...\sigma_{n-2})\sigma_{n-1}^{\pm 1}(\sigma_{n-2}^{-1}...\sigma_{r}^{-1}\sigma_{r-1}^{-1})(\sigma_{n-2}^{-1}...\sigma_{r+1}^{-1}\sigma_r^{-1}) \beta ^{-1}$$
\end{itemize}

Assuming the claim, we see that the braids $\alpha_i\in B_{n-1}$ and $\alpha_i\delta_i\in B_n$ have the same closure since they are related by conjugation and stabilization,
$$\alpha_i\doteqdot\gamma_i^{-1}\alpha_i\gamma_i\nearrow \gamma_i^{-1}\alpha_i\gamma_i\sigma_{n-1}^{\pm 1}\doteqdot \alpha_i\gamma_i\sigma_{n-1}^{\pm 1}\gamma_i^{-1}=\alpha_i\delta_i $$
Here, following Morton, we are denoting conjugation by $\doteqdot$ and stabilization by $\nearrow$.
Thus the braid system $$(\alpha_1{\delta}_{1},...,\alpha_{m-1}{\delta}_{m-1},\alpha_m\delta_m)$$ lifts the permutation system $(\rho_1,...,\rho_{m-1},\rho_m)$ with all the required properties.  \end{proof}

The above proof is notationally inconvenient, so let us work out some examples explicitly.

\begin{example} Let us consider the permutation system $\rho=((123),(24),(14)(23),(34))$.
We use fission on each permutation containing 4 to construct the new permutation system $$((123),(24),(23),(14),(34)).$$
Now we will use sliding moves to move each transposition containing 4 to the right.

$$((123),\textcolor{purple}{(24),(23)},(14),(34))\xrightarrow{s_2^{-1}}((123),(23),(34),(14),(34))$$

Now we will just focus on the transpositions on the right containing $4$ and use  sliding moves there.
$$((123),(23),\textcolor{blue}{(34),(14)},(34))\xrightarrow{s_3}((123),(23),(13),(34),(34))$$

We now observe that $$(\sigma_2\sigma_1, \sigma_2^{-1},\sigma_1^{-1} \sigma_2^{-1}\sigma_1, \sigma_3, \sigma_3^{-1})$$ is a braid system lifting $$((123),(23),(13),(34),(34))$$ with the required properties as in statement.
Now we apply $s_3^{-1}$ to this braid system.
$$(\sigma_2\sigma_1, \sigma_2^{-1},\textcolor{purple}{\sigma_1^{-1} \sigma_2^{-1}\sigma_1, \sigma_3}, \sigma_3^{-1})\xrightarrow{s_3^{-1}}
(\sigma_2\sigma_1, \sigma_2^{-1}, \sigma_3,  \sigma_3^{-1}\sigma_1^{-1} \sigma_2^{-1}\sigma_1 \sigma_3, \sigma_3^{-1})$$
Now observe that $\sigma_3^{-1}\sigma_1^{-1} \sigma_2^{-1}\sigma_1 \sigma_3=\sigma_1^{-1}\sigma_3^{-1} \sigma_2^{-1} \sigma_3\sigma_1=\sigma_1^{-1}\sigma_2 \sigma_3^{-1} \sigma_2^{-1}\sigma_1$. Now we apply $s_2$ to this braid system.
$$(\sigma_2\sigma_1, \textcolor{blue}{\sigma_2^{-1}, \sigma_3},\sigma_1^{-1}\sigma_2 \sigma_3^{-1} \sigma_2^{-1}\sigma_1 , \sigma_3^{-1})\xrightarrow{s_2} (\sigma_2\sigma_1,\sigma_2^{-1} \sigma_3 \sigma_2, \sigma_2^{-1}, \sigma_1^{-1}\sigma_2 \sigma_3^{-1} \sigma_2^{-1}\sigma_1 , \sigma_3^{-1}) $$

We obtain a braid system $(\sigma_2\sigma_1,\sigma_2^{-1} \sigma_3 \sigma_2, \sigma_2^{-1}, \sigma_1^{-1}\sigma_2 \sigma_3^{-1} \sigma_2^{-1}\sigma_1 , \sigma_3^{-1}) $ lifting the permutation system $((123),(24),(14)(23),(34))$ with the required properties. 

\end{example}

\begin{example}  Let us consider the permutation system $\rho=((143),(24),(34),(23),(13))$.
We use fission on each permutation to construct the new permutation system
$$\varrho=((13),(34),(24),(34),(23),(13)).$$
We will now be using sliding moves and its inverse to this system, (almost) as described before.
$$((13),\textcolor{blue}{(34),(24)},(34),(23),(13))$$
$$\xrightarrow{s_2}((13),(23),\textcolor{red}{(34),(34)},(23),(13))$$
$$\downarrow((13),\textcolor{red}{(23),(23)},(13))$$
$$\downarrow((13),(13))$$
We see that the braid system $(\sigma_1 \sigma_2 \sigma_1^{-1},\sigma_1 \sigma_2^{-1} \sigma_1^{-1})$
Using the sliding moves in reverse to this braid system and introducing appropriate braid subsystems at places we deleted permutation subsystems, we get a braid system lifting $\varrho$.
$$(\sigma_1 \sigma_2 \sigma_1^{-1},\sigma_1 \sigma_2^{-1} \sigma_1^{-1})$$
$$\uparrow(\sigma_1 \sigma_2 \sigma_1^{-1},\textcolor{magenta}{\sigma_2, \sigma_2^{-1}},\sigma_1 \sigma_2^{-1} \sigma_1^{-1})$$
$$\uparrow(\sigma_1 \sigma_2 \sigma_1^{-1},\sigma_2,\textcolor{magenta}{\sigma_3, \sigma_3^{-1}}, \sigma_2^{-1},\sigma_1 \sigma_2^{-1} \sigma_1^{-1})$$
$$=(\sigma_1 \sigma_2 \sigma_1^{-1},\textcolor{blue}{\sigma_2,\sigma_3}, \sigma_3^{-1}, \sigma_2^{-1},\sigma_1 \sigma_2^{-1} \sigma_1^{-1})$$
$$\xrightarrow{s_2^{-1}}(\sigma_1 \sigma_2 \sigma_1^{-1},\sigma_3,\sigma_3^{-1}\sigma_2\sigma_3, \sigma_3^{-1}, \sigma_2^{-1},\sigma_1 \sigma_2^{-1} \sigma_1^{-1})$$
$$=(\sigma_1 \sigma_2 \sigma_1^{-1},\sigma_3,\sigma_2\sigma_3\sigma_2^{-1}, \sigma_3^{-1}, \sigma_2^{-1},\sigma_1 \sigma_2^{-1} \sigma_1^{-1})$$
By fusion (that is merging) we see that $$(\sigma_1 \sigma_2 \sigma_1^{-1}\sigma_3,\sigma_2\sigma_3\sigma_2^{-1}, \sigma_3^{-1}, \sigma_2^{-1},\sigma_1 \sigma_2^{-1} \sigma_1^{-1})$$ is a braid system lifting $\rho$.

\end{example}

\begin{example}  Let us consider the permutation system $\rho=((143),(15),(25),(45),(25),(35),(45),(25),(15))$.

$$((143),(15),(25),(45),(25),(35),(45),(25),(15))$$
$$\xrightarrow{s_4}((143),(15),(25),(24),(45),(35),(45),(25),(15))$$
$$\xrightarrow{s_5}((143),(15),(25),(24),(34),(45),(45),(25),(15))$$
$$\searrow((143),(15),(25),(24),(34),(25),(15))$$
$$\xrightarrow{s_4^{-1} s_3^{-1}}((143),(15),(24),(34),(35),(25),(15))$$
$$\xrightarrow{s_4^{-1} s_3^{-1}}((143),(15),(24),(34),(35),(25),(15))$$
$$\xrightarrow{s_3^{-1} s_2^{-1}}((143),(24),(34),(15),(35),(25),(15))$$
$$\xrightarrow{s_6 s_5}((143),(24),(34),(15),(23),(13),(35))$$
$$\xrightarrow{s_5^{-1} s_4^{-1}}((143),(24),(34),(23),(13),(35),(35))$$
$$\searrow ((143),(24),(34),(23),(13)) $$

From the previous example we know that $(\sigma_1 \sigma_2 \sigma_1^{-1}\sigma_3,\sigma_2\sigma_3\sigma_2^{-1}, \sigma_3^{-1}, \sigma_2^{-1},\sigma_1 \sigma_2^{-1} \sigma_1^{-1})$ is a braid system lifting $((143),(24),(34),(23),(13)) $

We will get required braid system by following the above mentioned process. Using the sliding moves in reverse to this braid system and introducing appropriate braid subsystems at places we deleted permutation subsystems, we will get a braid system lifting $\rho$.
\end{example}

\subsection{Lifting branched covers in the smooth category}

For any natural number $n\geq 2$, consider the branched cover over the sphere with permutation system
$(\rho,...,\rho)$, where $\rho=(12...n)$ is an $n$-cycle, which is repeated $n$ times.

\begin{claim} For even $n$, the permutation system $(\rho,...,\rho)$ lifts to a smooth braided embedding.
\end{claim}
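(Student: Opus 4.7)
The plan is to exhibit a braid system $(\alpha_1,\dots,\alpha_n)$ in $B_n$ lifting the permutation system $(\rho,\dots,\rho)$ such that each $\alpha_i$ is conjugate to $\delta^{\pm 1}$, where I set $\delta:=\sigma_1\sigma_2\cdots\sigma_{n-1}$ (so $\sigma(\delta)=\rho$, and the closure of $\delta^{\pm 1}$ is a standard unknot in the sense of Subsection~\ref{csplit}). Since $\rho$ is an $n$-cycle, each such $\alpha_i$ has a one-component closure which is a standard unknot, so Theorem~\ref{ext} will promote the braid system to a smooth braided embedding lifting the branched cover. The exponent sum of a positive (resp.\ negative) standard braid equals $\pm(n-1)$, so the condition $\prod\alpha_i=1$ forces exactly $n/2$ entries of each sign, which is possible only when $n$ is even.

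For the construction I would take $\alpha_i=\delta$ for $1\le i\le n/2$ and $\alpha_i=c\delta^{-1}c^{-1}$ for $n/2<i\le n$, for a single braid $c\in B_n$ to be chosen. The requirement $\sigma(\alpha_i)=\rho$ for the latter block becomes $\sigma(c)\rho^{-1}\sigma(c)^{-1}=\rho$, and the total product is $\delta^{n/2}\cdot c\delta^{-n/2}c^{-1}$, which is trivial iff $c\in Z(\delta^{n/2})$. Thus the whole problem reduces to exhibiting $c\in Z(\delta^{n/2})$ whose image $\sigma(c)\in S_n$ conjugates $\rho^{-1}$ to $\rho$.

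For this, consider the reversal permutation $\tau(i)=n+1-i$. A direct computation gives $\tau\rho\tau^{-1}=\rho^{-1}$, and since $\tau(i)+\tau(i+n/2)=n+1$ the involution $\tau$ permutes the pairs $\{i,i+n/2\}$ for $1\le i\le n/2$, so it preserves the cycle decomposition of $\rho^{n/2}=(1,\tfrac{n}{2}{+}1)(2,\tfrac{n}{2}{+}2)\cdots(\tfrac{n}{2},n)$; hence $\tau\in Z_{S_n}(\rho^{n/2})\cong\mathbb{Z}/2\wr S_{n/2}$. The main obstacle is then lifting $\tau$ through $\sigma$ to an element of $Z(\delta^{n/2})$. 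By Subsection~\ref{centralizer} we have $Z(\delta^{n/2})\cong B_{n/2}(D\setminus\{0\})$, and the surjectivity of its image under $\sigma$ onto $Z_{S_n}(\rho^{n/2})$ can be seen as follows: any permutation of the pairs is realized by an equivariant braid obtained by doubling a braid on the quotient disc, while the internal swap within every pair is realized simultaneously by $\delta^{n/2}$ itself; these together generate all of $\mathbb{Z}/2\wr S_{n/2}$. Any choice $c\in\sigma^{-1}(\tau)\cap Z(\delta^{n/2})$ then yields a braid system $(\delta,\dots,\delta,c\delta^{-1}c^{-1},\dots,c\delta^{-1}c^{-1})$ with product $\delta^{n/2}\cdot c\delta^{-n/2}c^{-1}=cc^{-1}=1$ that projects to $(\rho,\dots,\rho)$ and has each entry standard, completing the proof via Theorem~\ref{ext}.
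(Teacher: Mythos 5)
Your argument is correct, and it takes a genuinely different route from the paper's --- in fact it repairs a real defect in it. The paper sets $\alpha=\sigma_{n-1}\cdots\sigma_1$, $\beta=\sigma_{n-1}^{-1}\cdots\sigma_1^{-1}$ and asserts $\alpha^{n/2}=\Delta$; this is false for every even $n\ge 4$, already at the level of permutations, since $\alpha^{n/2}$ induces $\rho^{n/2}$ while the half twist $\Delta$ induces the reversal $i\mapsto n+1-i$. Because $\beta=\Delta\alpha^{-1}\Delta^{-1}$, the product of the paper's braid system is the commutator $\alpha^{n/2}\Delta\alpha^{-n/2}\Delta^{-1}$, which is trivial if and only if $\Delta$ centralizes $\alpha^{n/2}$ --- and it does not: in $B_4$ the pairwise algebraic crossing numbers of $\alpha^2$ and of $\Delta^{-1}\alpha^2\Delta=(\sigma_1\sigma_2\sigma_3)^2$ already disagree (the strands starting at positions $1$ and $2$ cross $0$ times in the former and twice in the latter), so $\alpha^2\beta^2\neq 1$. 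In your notation the paper is simply taking the conjugator $c=\Delta$, which violates exactly the centralizer condition $c\in Z(\delta^{n/2})$ that you correctly isolate as the crux. Your resolution --- choose $c\in Z(\delta^{n/2})$ whose permutation conjugates $\rho^{-1}$ to $\rho$, using the reversal $\tau$ and the identification $Z(\delta^{n/2})\cong B_{n/2}(D\setminus\{0\})$ --- is sound, and by uniqueness of roots it is essentially forced: the negative entries of any such two-block system must be $(n/2)$-th roots of $\delta^{-n/2}$, i.e.\ precisely the elements $c\delta^{-1}c^{-1}$ with $c\in Z(\delta^{n/2})$.

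Two small points. First, the families you exhibit (doubled braids, giving pair permutations with no internal swaps, and $\delta^{n/2}$ itself, giving the simultaneous internal swap of all pairs) generate only $S_{n/2}\times\mathbb{Z}/2$ inside $\mathbb{Z}/2\wr S_{n/2}$, not the full wreath product; for surjectivity one also needs the loop of a single point around the central puncture, whose lift swaps a single pair. This does not affect your proof, since $\tau$ factors as the pair-reversal composed with $\rho^{n/2}$ and hence already lies in the subgroup your two families do generate --- but you should either weaken the surjectivity claim or add the extra generator. Second, whether $\sigma(\delta)$ equals $\rho$ or $\rho^{-1}$ depends on the left-to-right composition convention; this is immaterial, as the argument is symmetric in $\rho$ and $\rho^{-1}$.
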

\begin{proof} Note that if we take $\alpha=\sigma_{n-1}...\sigma_{1}$ and $\beta=\sigma_{n-1}^{-1}...\sigma_{1}^{-1}$, then both $\alpha$ and $\beta$ lift $\rho$, and 
both these braids give rise to valid smooth local models near branch points (see Section \ref{lm}).
Note that $\alpha^{\frac{n}{2}}$ is the Garside element $\Delta$, see \cite{Ga}. Recall, the Garside element has the property that $\Delta \sigma_i^\pm=\sigma_{n-i}^\pm\Delta$ for all $i$. Thus it follows that $$\Delta \beta=\Delta \sigma_{n-1}^{-1}...\sigma_{1}^{-1}=\sigma_{1}^{-1}...\sigma_{n-1}^{-1}\Delta=\alpha^{-1}\Delta;$$
and hence $\Delta \beta^{\frac{n}{2}}=\alpha^{-\frac{n}{2}}\Delta=\Delta^{-1}\Delta=1$. Hence
the braid system $(\alpha,...,\alpha,\beta,...,\beta)$ smoothly lifts the given permutation system, where both $\alpha$ and $\beta$ appears $\frac{n}{2}$ times in the braid system.
\end{proof}

\begin{claim} For odd $n$, the permutation system $(\rho,...,\rho)$ does not lift to a smooth braided embedding.

\end{claim}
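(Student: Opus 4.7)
The plan is to combine the smooth local model near a branch point from Section~\ref{lm} with the exponent-sum (abelianization) homomorphism on the braid group. Since $\rho=(1\,2\,\cdots\,n)$ is a single $n$-cycle, at each of the $n$ branch points the preimage under the associated branched cover is a single point, so the braid surrounding each branch point is a one-component closed braid. For a smooth extension across the branch point to exist, the local model from Section~\ref{lm} forces this closed braid to be a standard unknot; i.e., each lift $\alpha_i$ of $\rho$ occurring in a smooth braid system must be conjugate in $B_n$ either to $\sigma_1\sigma_2\cdots\sigma_{n-1}$ or to $\sigma_1^{-1}\sigma_2^{-1}\cdots\sigma_{n-1}^{-1}$.

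Next I would apply the exponent-sum homomorphism $\exp\colon B_n\to\mathbb{Z}$ sending each $\sigma_i\mapsto 1$. Since $\exp$ factors through the abelianization it is a conjugacy invariant, so each $\alpha_i$ satisfies $\exp(\alpha_i)=\pm(n-1)$. The requirement that $(\alpha_1,\ldots,\alpha_n)$ be a braid system lifting the permutation system on $S^2$ is exactly $\alpha_1\cdots\alpha_n=1$ in $B_n$, which forces $\sum_{i=1}^{n}\exp(\alpha_i)=0$. If $k$ of the $\alpha_i$ have exponent sum $+(n-1)$ and the remaining $n-k$ have exponent sum $-(n-1)$, then $(2k-n)(n-1)=0$. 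Since $n\geq 3$ is odd, $n-1\geq 2$, so this reduces to $n=2k$, which is impossible for odd $n$. Hence no smooth braid system lifts $(\rho,\ldots,\rho)$.

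The argument is short because the smooth local model does all of the heavy lifting: it reduces the possible local behavior of $\alpha_i$ to exactly two conjugacy classes, both of which are detected by the exponent-sum homomorphism. The only point that requires any care is the dichotomy between the local models $f_+$ and $f_-$ (which is what introduces the two possible signs of $n-1$); after that the claim is a parity count on $n$. I do not anticipate any other obstacle.
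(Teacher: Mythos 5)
Your proposal is correct and is essentially the paper's own argument: the smooth local model forces each braid surrounding a branch point to be conjugate to $\sigma_1\cdots\sigma_{n-1}$ or its inverse, hence of exponent sum $\pm(n-1)$, and the relation $\alpha_1\cdots\alpha_n=1$ makes the total exponent sum zero, which an odd number of terms equal to $\pm(n-1)$ cannot achieve. The only difference is that you spell out the parity count $(2k-n)(n-1)=0$ explicitly, which the paper states in one line.
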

\begin{proof} Recall from Section \ref{lm}, that for any braid surrounding a branch point, it has to be conjugate to either $\sigma_{n-1}...\sigma_{1}$ or $\sigma_{n-1}^{-1}...\sigma_{1}^{-1}$, in particular the exponent sum has to be $\pm (n-1)$. However, there is no way of adding up an odd number of $\pm (n-1)$ and getting $0$. Consequently, there cannot be any lift, as required.

\end{proof}

\begin {remark} The above claim shows that there are some differences between braided embeddings in the piecewise linear and smooth categories. While all branched covers over $S^2$ lift in the former, there are infinitely many cyclic branched covers in the smooth category which do not lift.
\end{remark}

\begin {remark}\label{algobs}
We see that we obtain the following algebraic obstruction to lifting a branched cover over $S^2$ smoothly, given the permutation system, we need to be able to assign positive or negative signs to the various disjoint cycles appearing in the permutation system, so that the total sum is zero. We also can refine this by looking at connected components, for instance the permutation system
$$((123)(456),(123)(456),(123)(456))$$ cannot be lifted to a braid system smoothly, although if we assign positive signs to $(123)$ and negative signs to $(456)$ the total sum is 0.
\end{remark}

\subsection{ Branched covers over higher genus orientable surfaces}
We will use the same strategy as that of Theorem~\ref{orcovsurf} to prove a more general result for branched coverings over all orientable surfaces.
\begin{thm}
 Every branched covering of over an orientable surface lifts to a piecewise linear locally flat braided embedding.
\end{thm}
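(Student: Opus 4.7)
The plan is to mirror the proof of Theorem~\ref{orcovsurf} by reducing to the $S^2$ case through the hyperelliptic branched cover and then invoking Theorem~\ref{bcl2s}, with one extra general-position step to guarantee PL local flatness. Given a branched cover $p:M\to\Sigma_g$ with branch locus $B\subset\Sigma_g$, I would first isotope $B$ so that it is disjoint from the $2g+2$ fixed points of the hyperelliptic involution on $\Sigma_g$ (such an ambient isotopy does not change the equivalence class of the branched cover). Writing $q:\Sigma_g\to S^2$ for the quotient by this involution, which is a $2$-fold branched cover with branch set $B_q\subset S^2$, the composition $r:=q\circ p:M\to S^2$ is then a branched cover with branch locus $q(B)\sqcup B_q$.

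Next, apply Theorem~\ref{bcl2s} to obtain a PL locally flat braided embedding $R:M\to S^2\times D^2$ lifting $r$. Following the proof of Proposition~\ref{comp1}, set $s:=\mathrm{pr}_2\circ R$ and form $P:=p\times s:M\to \Sigma_g\times D^2$; Proposition~\ref{comp1} guarantees that $P$ is a braided embedding lifting $p$.

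The essential step is to verify PL local flatness of $P$, which is the only ingredient missing from the proof of Theorem~\ref{orcovsurf}. At any point $m\in M$ where $p$ is a local PL homeomorphism, local flatness of $P$ at $m$ follows immediately from that of $R$ at $m$, since $P$ agrees locally with $s$ up to a local PL homeomorphism of the base. At a branch point $m$ of $p$, setting $b:=p(m)$, the general-position arrangement forces $b\notin q^{-1}(B_q)$, so $q$ is a local PL homeomorphism near $b$; hence $q\times\mathrm{id}_{D^2}$ is a local PL homeomorphism near $(b,s(m))$, and $P$ agrees locally with $(q\times\mathrm{id}_{D^2})^{-1}\circ R$. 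PL local flatness of $R$ at $m$ therefore transfers to $P$ at $m$.

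The principal (and only new) obstacle, compared to Theorem~\ref{orcovsurf}, is controlling local flatness at the branch points of $p$, which is resolved entirely by the disjointness arrangement in the first step. As an alternative verification, one could instead invoke Theorem~\ref{ext} and check that the braid surrounding each point of $B$ in $P$ is a completely split unlink; this holds because, in a neighborhood disjoint from $B_q$, the braid surrounding the corresponding branch point of $r$ is already a completely split unlink by Theorem~\ref{bcl2s}, and the local PL equivalence given by $q\times\mathrm{id}_{D^2}$ preserves this property.
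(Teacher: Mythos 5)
Your proposal is correct and takes essentially the same route as the paper: isotope the branch locus of $p$ off the ramification points of the hyperelliptic quotient $q$, lift the composite branched cover over $S^2$ by Theorem~\ref{bcl2s}, descend to a braided embedding lifting $p$ via Proposition~\ref{comp1} (using $s=\mathrm{pr}_2\circ R$), and then verify PL local flatness at the branch points of $p$. The paper carries out that last verification in monodromy language — the braid of the descended embedding around a branch point of $p$ is the sub-braid, on the strands of the fiber $p^{-1}\{*\}$, of the completely split unlink braid of $R$ around the corresponding branch point of $r$, and then Subsection~\ref{SSextpl} applies — which is just the braid-monodromy version of your direct transfer of local flatness through the local PL homeomorphism $q\times\mathrm{id}_{D^2}$.
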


\begin{proof}
 Given any branched covering $p:\Sigma_g\rightarrow \Sigma_h$, we can compose with the two sheeted branched covering $q$ of $\Sigma_h$ over $S^2$ (see Figure~\ref{qhu}), in such a way that the branch locus of $p$ is disjoint from the pre-image of the branch locus of $q$. By Theorem \ref{bcl2s} we know that the composite branched covering $r=q\circ p$ lifts to a braided embedding. By Proposition \ref{comp1}, we see that $p$ lifts to a braided embedding. To check that this embedding is actually a piecewise linear locally flat, we need to check the braid monodromy around branch points. For any based loop $\gamma$ in $(\Sigma_h,*)$, we can push it down by $q$, and look at the braid monodromy of $q\circ \gamma$ corresponding to the braided embedding $R$ lifting $r$. Since $\gamma$ is a loop, the pre-image points of $*$ must go to itself under the permutation monodromy, and consequently the braid monodromy of $\gamma$ with respect to the braided embedding $p$ is the interior braid of the braid monodromy of $q\circ \gamma$ with respect to the braided embedding $r$, corresponding to the fiber $p^{-1}\{*\}$. The result now follows from our local characterization of braids surrounding locally flat branch points in Subsection \ref{SSextpl}.
\end{proof}





\section{Lifting branched coverings over the three-sphere}\label{lS3}

In this section, we will consider the case of branched covers over the three sphere $S^3$. We will see that there are algebraic obstructions (torsion) to lifting branched covers over $S^n$, when $n\geq 4$. There is no such easy obstructions in three dimension, because knot groups are torsion free. 

We would mostly restrict to the case of simple branched covers (actually only simple three and four colorings) in this subsection, and contrast with the case one dimension lower, where we saw every simple branch cover lifted  to a braided embedding, in both piecewise linear and smooth categories. In fact by our analysis of local models earlier, we saw that in any dimension and over any manifold, a simple branched cover lifts in the piecewise linear category if and only if it lifts in the smooth category.

\subsection{Colorings}\label{Subscol}
\begin{defi}
For any group $G$, we define a $G$-coloring to be a homomorphism (or antihomomorphism) from the fundamental group of the link complement to $G$. By the Wirtinger presentation of the link group\footnote{By this we will mean the fundamental group of link complement.}, it is equivalent to color (or label) each strand of any link diagram by elements of $G$ such that at the crossings the Wirtinger relations are satisfied.

\end{defi}

For instance, a Fox-$n$ coloring of a link is a homomorphism from link group to the dihedral group $D_n$ (which canonically is a subgroup of the symmetric group $S_n$), so that meridians go to reflections. In particular, a tricoloring of a link is a homomorphism from link group to $D_3\cong S_3$. We will use the following colors to indicate a simple $S_3$ or $B_3$ colorings, see Figure \ref{braidcol}.
\begin{figure}[H]
    \centering
    \includegraphics[width=8 cm]{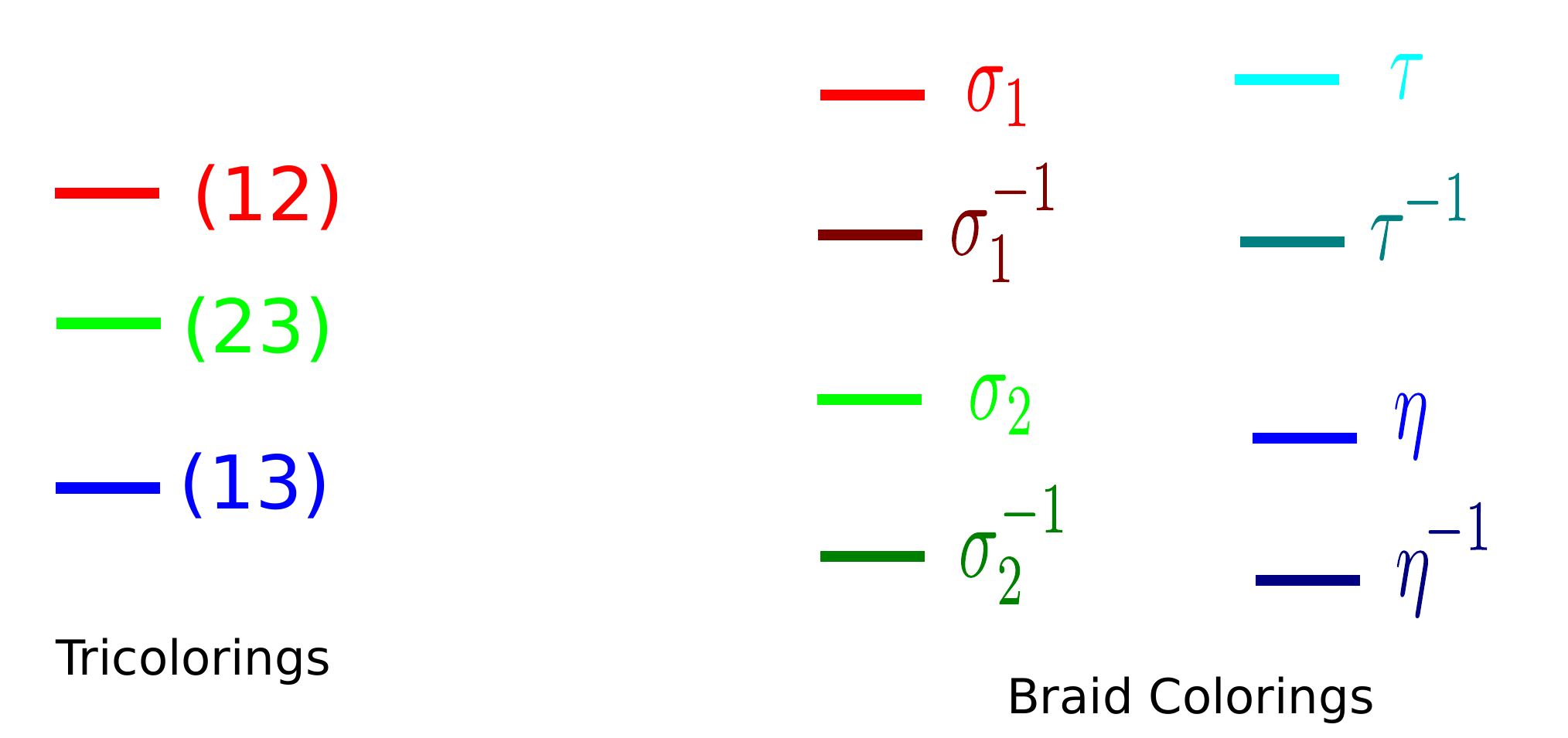}
    \caption{We will use these colors to indicate the colorings on the strands. In the right, $\sigma_1$ and $\sigma_2$ are the standard generators of $B_3$, and $\tau=\sigma_2^{-1}\sigma_1\sigma_2=\sigma_1\sigma_2\sigma_1^{-1}$, and $\eta=\sigma_2\sigma_1\sigma_2^{-1}=\sigma_1^{-1}\sigma_2\sigma_1$.   }
    \label{braidcol}
\end{figure}

\begin{remark}
Our convention is that we read group elements from left to right in fundamental groups (which includes braid groups) and symmetric group (the elements of which we think about as a product of cycles), and from right to left in mapping class groups (this is the standard convention for composing functions). Consequently, the some of the colorings we will consider will be group anti-homomorphisms (depending on the conventions of multiplication in the target group), which is why we included it in the definition of $G$-colorings. For any given coloring, it will be clear from the context if we are talking about a homomorphism or anti-homomorphism. An alternate notational convention would be to take the opposite group when necessary, so that a coloring is always a homomorphism. 
\end{remark}


It turns out that a branched covering is completely determined by its monodromy data of the associated covering space, and following Fox, we will call such homomorphisms colorings.


 


\subsection{Torus Knots}
It is known that \cite{BOT} a torus knot $T_{p,q}$ (by symmetry, let us assume that $p$ is odd) tricolorable if and only if $p$ is a multiple of $3$, and $q$ is even, and moreover in those cases the tricoloring is conjugate to the ``main tricoloring'' as illustrated by Figure \ref{trs} (the color pattern repeats both horizontally and vertically as $p$ and $q$ vary):
\begin{figure}[H]
    \centering
    \includegraphics[width=10 cm]{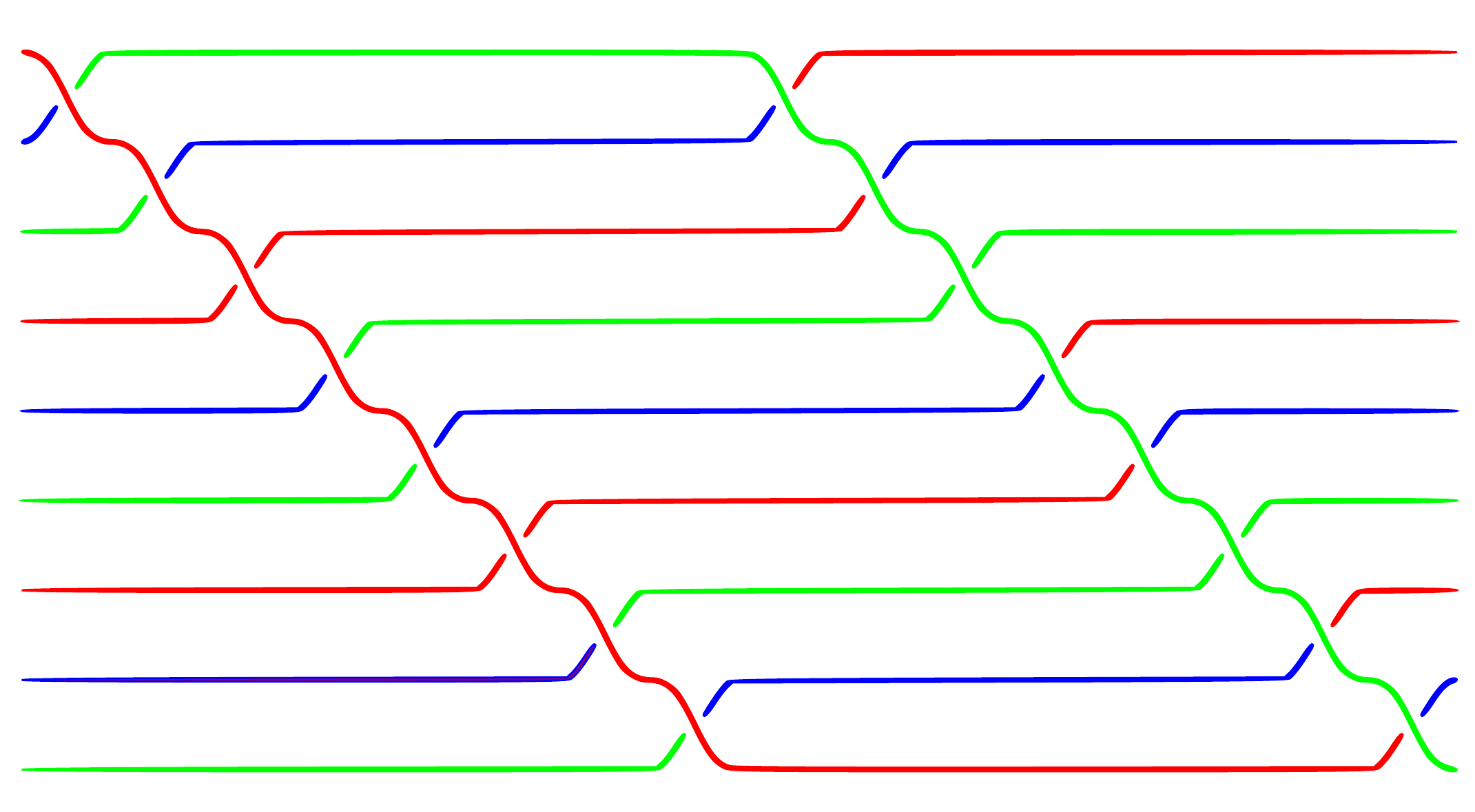}
    \caption{Main Tricoloring on the (9,2) torus knot}
    \label{trs}
\end{figure}

It follows that if we can show that this tricoloring lifts to a simple $B_3$-coloring, then all tricolorings on torus knots lift to simple braid coloring. Indeed, the following braid coloring pattern shows that there indeed is a lift (observe that for the lift of (1,3), we need to use both $\tau$ and $\eta$ to obtain a valid coloring), see Figure \ref{trs2}.

\begin{figure}[H]
    \centering
    \includegraphics[width=10 cm]{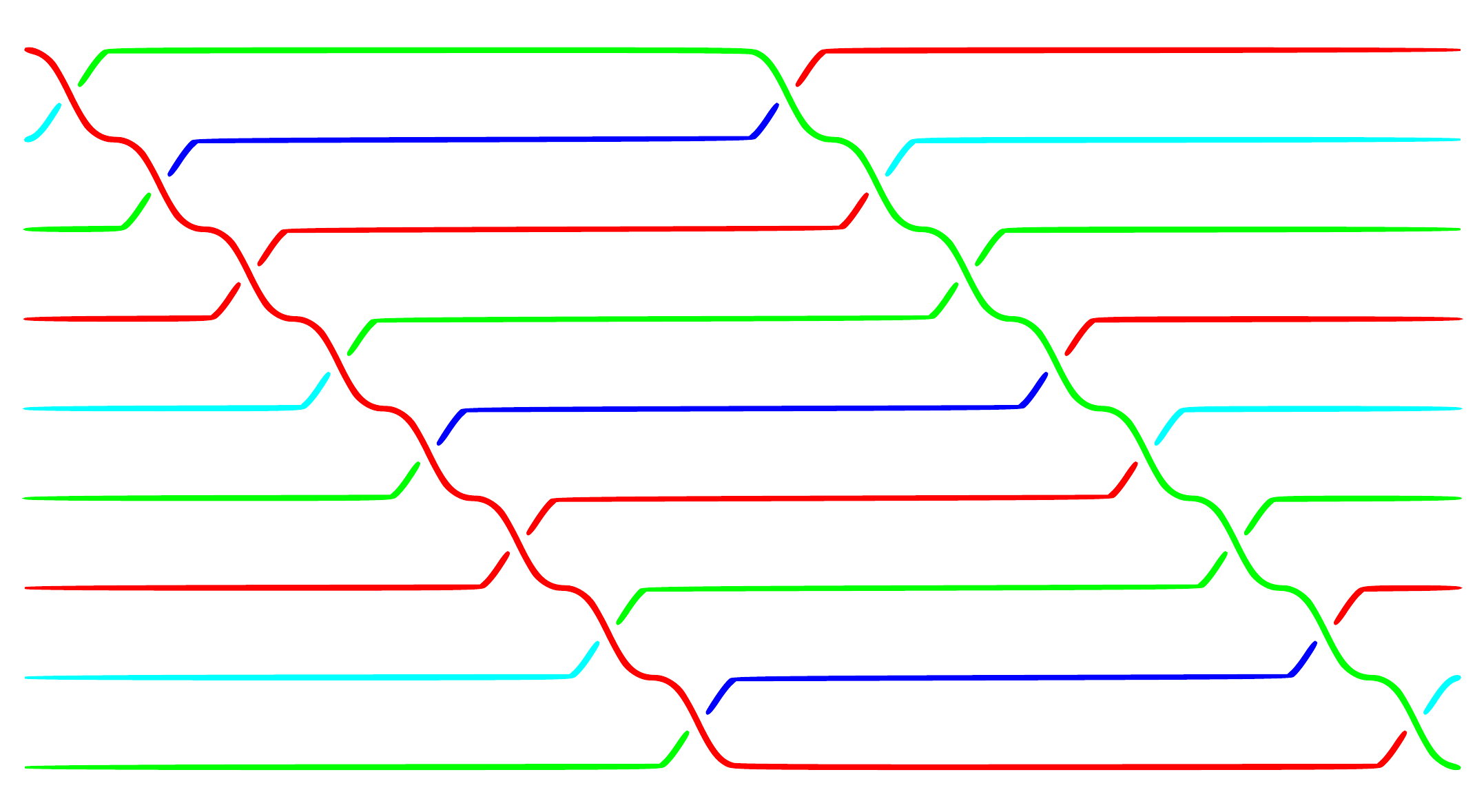}
    \caption{Simple $B_3$-coloring on the (9,2) torus knot}
    \label{trs2}
\end{figure}

Let us summarize the above discussion:
\begin{prop}
 A torus knot $T_{p,q}$ is tricolorable if and only if one of $p$ and $q$ is an odd multiple of 3, and the other is even. In this case, we moreover have that there is only one tricoloring (up to conjugation), and the tricoloring lifts to a simple $B_3$-coloring.
\end{prop}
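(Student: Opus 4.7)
My plan is to split the proof into two parts: a classification part (tricolorability and uniqueness up to conjugation) and a construction part (the lift to $B_3$). For the classification, I would invoke the result of \cite{BOT} already recalled in the paragraph preceding the statement: $T_{p,q}$ is tricolorable iff (after swapping $p$ and $q$ if necessary) $p$ is an odd multiple of $3$ and $q$ is even, and in that case every tricoloring is conjugate in $S_3$ to the ``main tricoloring'' of Figure~\ref{trs}. As an independent verification one can use $\det(T_{p,q}) = |\Delta_{p,q}(-1)|$ to check when $3$ divides the determinant, and then enumerate meridian-to-transposition homomorphisms from the knot group $\langle a,b\mid a^p=b^q\rangle$ to $S_3$, taking $S_3$-orbits to obtain the conjugation uniqueness.

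For the lift, I would present $T_{p,q}$ as the closure of the braid $(\sigma_1\sigma_2\cdots\sigma_{p-1})^q$ and assign to each strand one of the four elements $\sigma_1,\sigma_2,\tau,\eta$ of Figure~\ref{braidcol}. All four are conjugate to $\sigma_1$ in $B_3$, so any coloring using them is automatically simple; they pair up as the two $B_3$-preimages of each transposition in $S_3$. The explicit assignment is given by the periodic pattern of Figure~\ref{trs2}, which prescribes which preimage to use on each strand. Because every tricoloring of $T_{p,q}$ is $S_3$-conjugate to the main tricoloring, and conjugating the main tricoloring corresponds to permuting $\{\sigma_1,\sigma_2,\tau,\eta\}$ compatibly, a lift of the main tricoloring immediately yields a lift of every tricoloring.

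The main obstacle is verifying global consistency of this labeling. At each crossing, lifting the Wirtinger relation reduces to checking a conjugation identity $m_1 m_2 m_1^{-1} = m_3$ in $B_3$ with $m_i \in \{\sigma_1,\sigma_2,\tau,\eta\}$; every such identity is immediate from the defining equalities
\[ \tau = \sigma_1\sigma_2\sigma_1^{-1} = \sigma_2^{-1}\sigma_1\sigma_2, \qquad \eta = \sigma_2\sigma_1\sigma_2^{-1} = \sigma_1^{-1}\sigma_2\sigma_1. \]
The delicate point is that the labeling must close up: after traversing all $p$ strands across the braid and after iterating the braid word $q$ times, each strand must return to its original $B_3$-label. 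This is precisely why the $(1,3)$-strand is lifted sometimes as $\tau$ and sometimes as $\eta$, as indicated in Figure~\ref{trs2}; a uniform choice fails the closure condition, but the alternating pattern shown there does close. Once this consistency is verified on one fundamental period of the pattern and propagated by periodicity, the proposition follows.
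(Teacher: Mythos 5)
Your proposal is correct and follows essentially the same route as the paper: cite \cite{BOT} for tricolorability and uniqueness up to conjugation, then exhibit the explicit periodic lift of the main tricoloring using the colors $\sigma_1,\sigma_2,\tau,\eta$ of Figure~\ref{braidcol} as in Figure~\ref{trs2}, noting in particular the alternation between $\tau$ and $\eta$ needed for the pattern to close up, and transporting the lift to all tricolorings by conjugation. The extra determinant/enumeration check you mention is harmless but not needed.
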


 It turns out other sorts of colorings are possible for torus links, and there is no known classification (to the best of the author's knowledge) of tricolorings of torus links. For example, consider the following tricoloring on $T_{4,4}$ in Figure \ref{44t}:
 \begin{figure}[H]
    \centering
    \includegraphics[width=14 cm]{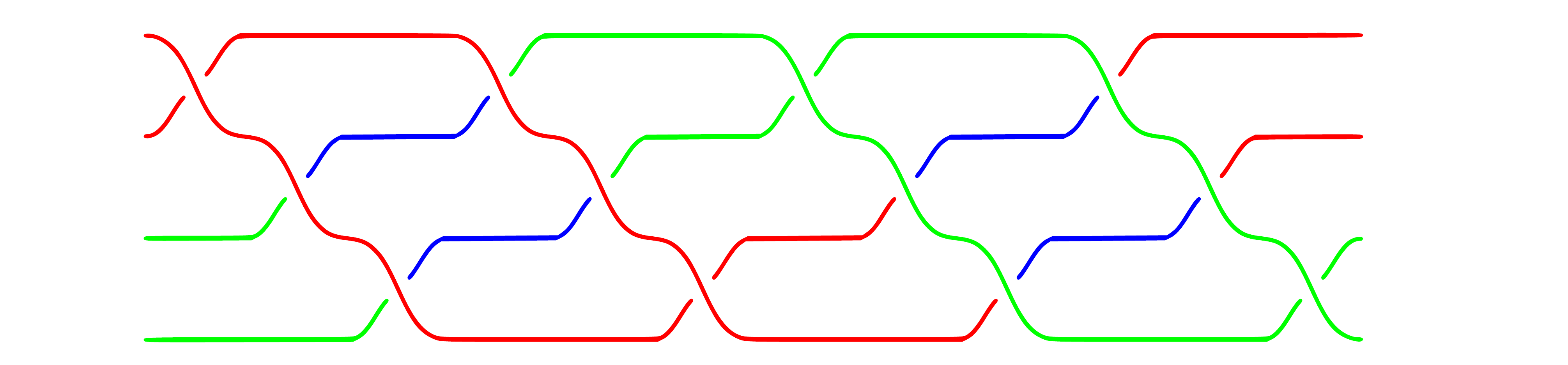}
    \caption{A tricoloring on the (4,4) torus link}
    \label{44t}
\end{figure}

\noindent While this tricoloring lifts to a simple braid coloring, as illustrated by Figure \ref{44b};

\begin{figure}[H]
    \centering
    \includegraphics[width=14 cm]{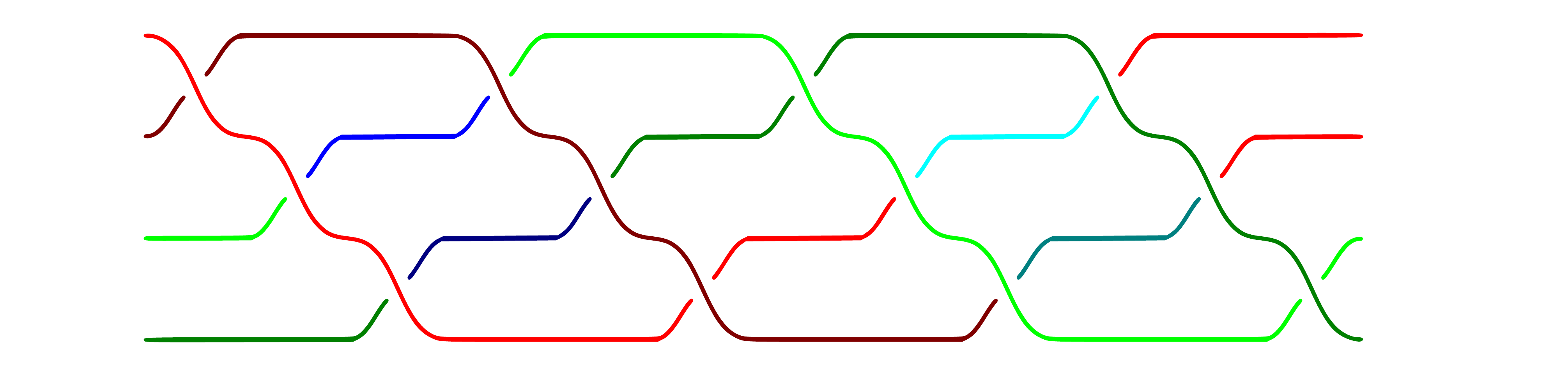}
    \caption{Simple $B_3$-coloring on the (4,4) torus link}
    \label{44b}
\end{figure}

\noindent Etnyre and Furukawa \cite{EF} show 
 that it is possible to modify the branch locus by Montesinos 3- moves to obtain a branched cover over a knot, see Figure \ref{Efeg}:
 \begin{figure}[H]
    \centering
    \includegraphics[width=12 cm]{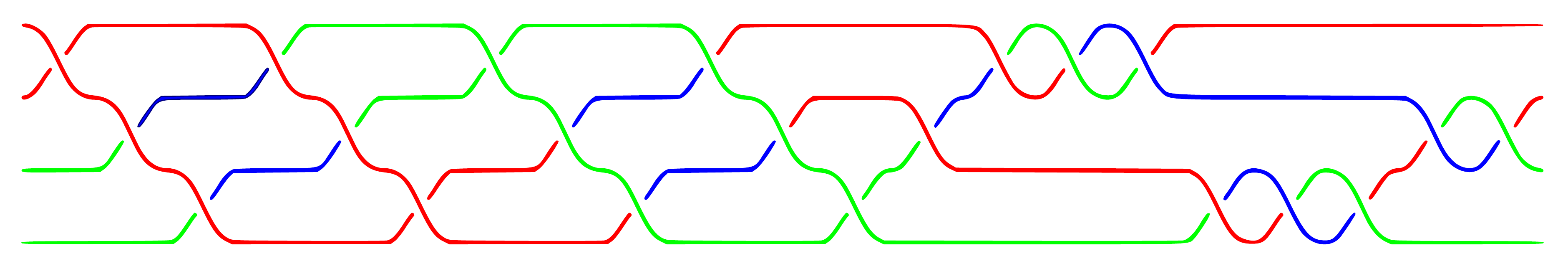}
    \caption{A non-liftable tricoloring.}
    \label{Efeg}
\end{figure}
\noindent which does not lift to a braided embedding. In fact the above example is one of an infinite family of Etnyre and Furukawa, which starts of with a liftable simple $S_n$ coloring on a torus link, but after some Montesinos 3- moves one obtains a $S_n$ colored knot which do not lift. These examples suggest something subtle is going on with changing the branch locus with 3-moves, while it does not change the branched manifold upstairs, one frequently can go from a liftable branched covering to a non-liftable one (and vice versa).

We now discuss lifts of non-simple branched covers of the Hopf link (the simplest torus link after the unlink).
\begin{example}\label{hopf}
The link group of the Hopf link is $\langle x, y| xy=yx \rangle$, where $x$ and $y$ are meridians. It follows that we need two commuting elements to define a coloring. However, the lifting problem for branched coverings of the three sphere, branched over the Hopf link is not the same as the problem of lifting a covering over the torus, since we now have constraints on which braids the meridians can go to, as we want the braided embedding to be smooth (or piecewise linear locally flat). Recall, that the only braids lifting the $n$-cycle $(12...n)$ to a smooth braided embedding, must be conjugate (by a pure braid) to either $\alpha_n=\sigma_{n-1}...\sigma_{2}\sigma_{1}$, or to $\beta_n=\sigma_{n-1}^{-1}...\sigma_{2}^{-1}\sigma_{1}^{-1}$. The center of $\alpha_n$ (respectively $\beta_n$) is known to be generated by $\alpha_n$ (respectively $\beta_n$). However, the only power of $\alpha_n$ (respectively $\beta_n$) which have exponent sum $\pm n$ is $\alpha_n^{\pm 1}$ (respectively $\beta_n^{\pm 1}$). Hence it follows that if we take $a$ to be the $n$-cycle
$(12...n)$, and $b=a^p$ for some $p$ coprime to $n$ so that $b$ is different from $a$ and $a^{-1}$, then the permutation coloring on the Hopf link determined by $a,b$ does not lift to a smooth braided embedding. 
\end{example}

\subsection{Two-bridge knots and links}
Recall that two bridge links (see \cite{G} for details) in $S^3$ are parameterized by a rational number $\frac{p}{q}$, and such a link always has a Wirtinger presentation of the form
$\langle a,b|aw=wb \rangle$ if it is a knot, and $\langle a,b|aw=wa \rangle $ if it is a link. Here $a,b$ are meridians and $w$ is a word in $a,b$. 
If there is a homomorphism from the link group to braid group $B_3$ sending the meridians to half twists, then those half twists have to satisfy the relation $\star$  of the link group (where $\star$ denotes either $aw=wb$ or $aw=wa$). We can take double branched cover of the  disc with three points (recall $B_3$ is the mapping class group of the thrice punctured disc), and those half twists lift to Dehn twist in the mapping class group of $S_1^1$, the once punctured torus. In general, the study of mapping class group of a surface and some cover is called Birman-Hilden theory \cite{BH}, but in this particular cases it is fairly straightforward since the mapping class groups are isomorphic.
Thus the original problem translates to: can we find two Dehn twists which satisfies the relation $\star$. This problem  of relations between two Dehn twists in an orientable surface has been studied by Thurston \cite{T}, who showed that two Dehn twists either satisfy the braid relation (if and only if the geometric intersection number of the corresponding curves is 1), or they commute (if and only if the geometric intersection number of the corresponding curves is 0, however note that this cannot happen for two distinct simple closed curves in once punctured torus); or they do not have any relation. For a non-trivial 2-bridge link, the above relation $\star$ is always non trivial, and thus the two Dehn twists have to satisfy the braid relation.
Consequently, we have:
\begin{thm}\label{2bridge} Given any two-bridge link, and any Wirtinger presentation of link group  $\langle a,b| r \rangle$ .   Suppose the link has  a non-trivial tricoloring (i.e. the relation $r$ holds when we set $a=(12)$ and $b=(23)$),  then the tricoloring lifts to a group homomorphism to $B_3$ if and only if the relation $r$ holds when we set $a=\sigma_1$ and $b=\sigma_2$.
\end{thm}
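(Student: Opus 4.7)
My plan is to handle the backward direction immediately and then devote the bulk of the argument to the forward direction via Birman--Hilden theory, essentially fleshing out the sketch given in the paragraph just before the theorem.

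For the backward implication, the work is trivial: if $r(\sigma_1,\sigma_2)=1$ holds in $B_3$, then the assignment $a\mapsto\sigma_1$, $b\mapsto\sigma_2$ extends to a well-defined homomorphism from the link group to $B_3$, and since $\mathrm{Forget}(\sigma_1)=(12)$ and $\mathrm{Forget}(\sigma_2)=(23)$, this homomorphism lifts the given tricoloring.

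For the forward direction, suppose $\phi:\pi_1(S^3\setminus L)\to B_3$ is a lift of the tricoloring. I would first identify $B_3$ with the mapping class group of the once-punctured torus $T_{1,1}$ via the Birman--Hilden correspondence: $T_{1,1}$ is the double cover of a disc branched at three points, and under the induced isomorphism the standard generators $\sigma_1,\sigma_2$ become positive Dehn twists $T_\alpha, T_\beta$ about non-separating simple closed curves with geometric intersection number $i(\alpha,\beta)=1$. More generally, any half-twist (a conjugate of $\sigma_1^{\pm 1}$) corresponds to a Dehn twist $T_\gamma^{\pm 1}$ about some non-separating simple closed curve. Writing $\phi(a)=T_\mu^{\epsilon_1}$ and $\phi(b)=T_\nu^{\epsilon_2}$, the non-triviality of the tricoloring forces $\mu\neq\nu$ as isotopy classes, and on $T_{1,1}$ any two distinct isotopy classes of simple closed curves satisfy $i(\mu,\nu)\geq 1$.

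Next I would invoke Thurston's trichotomy for subgroups generated by two Dehn twists on an orientable surface: either $i(\mu,\nu)=1$ and $T_\mu,T_\nu$ satisfy only the braid relation, or $i(\mu,\nu)\geq 2$ and they generate a free group of rank two with no non-trivial relation. The free-group alternative is incompatible with the hypotheses, since $\{T_\mu^{\epsilon_1},T_\nu^{\epsilon_2}\}$ would also be a free basis and so $\phi(r)=1$ would force $r=1$ as a word in $\langle a,b\rangle$, contradicting the non-triviality of the two-bridge link relation. Hence $i(\mu,\nu)=1$, and by the change-of-coordinates principle there is an orientation-preserving self-diffeomorphism of $T_{1,1}$ carrying $(\mu,\nu)$ to $(\alpha,\beta)$; the induced inner automorphism of $B_3$ sends $T_\mu\mapsto\sigma_1$ and $T_\nu\mapsto\sigma_2$ and converts the identity $r(T_\mu^{\epsilon_1},T_\nu^{\epsilon_2})=1$ into $r(\sigma_1^{\epsilon_1},\sigma_2^{\epsilon_2})=1$. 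To conclude I would reduce to $\epsilon_1=\epsilon_2=+1$: for a two-bridge knot all meridians are conjugate so the exponent sums $\epsilon_1,\epsilon_2$ must agree, and the involution of $B_3$ sending $\sigma_i\mapsto\sigma_i^{-1}$ (well-defined since it preserves the braid relation) converts the case of both signs negative into both signs positive; the two-component case can be handled analogously via exponent-sum considerations on the abelianized Wirtinger relation. I expect the main technical obstacle to be the careful Birman--Hilden identification combined with the sign bookkeeping; once these are in place, the trichotomy for pairs of Dehn twists does the rest almost immediately.
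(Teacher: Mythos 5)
Your route is the same as the paper's: the backward direction is immediate, and for the forward direction the paper likewise passes through the Birman--Hilden identification of $B_3$ with the mapping class group of the once-punctured torus, uses that two distinct essential simple closed curves there must intersect, invokes Thurston's dichotomy (braid relation when the geometric intersection number is one, a free group of rank two when it is at least two), and uses that the relator of a non-trivial two-bridge link is a non-trivial word in the free group on $a,b$. Up through the change-of-coordinates normalization your write-up is a correct, and in fact more detailed, version of that argument; in particular the knot case, where conjugacy of meridians forces equal signs and the mirror involution $\sigma_i\mapsto\sigma_i^{-1}$ handles the all-negative case, is fine.

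The step that would fail is your sign bookkeeping for two-component links. The relator of a two-bridge link has the commutator form $r=awa^{-1}w^{-1}$, so its abelianization is trivially satisfied by every choice of exponents; ``exponent-sum considerations on the abelianized Wirtinger relation'' therefore impose no constraint and can neither rule out nor normalize a mixed-sign lift $a\mapsto\sigma_1$, $b\mapsto\sigma_2^{-1}$. Nor can mixed signs be removed inside $B_3$: every automorphism of $B_3$ is inner up to the mirror involution, and both preserve or simultaneously negate exponent sums, so the pair $(\sigma_1,\sigma_2^{-1})$ is not equivalent to $(\sigma_1,\sigma_2)$ (indeed $\sigma_1$ and $\sigma_2^{-1}$ do not satisfy the braid relation). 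The issue is genuine rather than cosmetic: for the $(2,6)$-torus link with relator $a(babab)=(babab)a$, the substitution $a=\sigma_1$, $b=\sigma_2$ works while $a=\sigma_1$, $b=\sigma_2^{-1}$ does not, which is exactly the orientation phenomenon recorded in Remark~\ref{2braid}; a lift sending one meridian to a negative half-twist corresponds to re-orienting that component, i.e.\ to a different Wirtinger presentation of the same link. So in the link case you must either build the orientation convention into the definition of ``lift'' (the chosen meridian generators go to positive half-twists) or weaken the forward conclusion to ``$r(\sigma_1^{\epsilon_1},\sigma_2^{\epsilon_2})=1$ for some choice of signs''. The paper's own proof, it should be said, is silent on this point as well.
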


\begin{remark}\label{2braid} (Closures of two strand braids): The closure of the 2-braid $\sigma_1^n$ is tricolorable if and only if $n$ is a multiple of 3. If $n$ is a multiple of 3, any non-constant tricoloring on closure $\widehat{\sigma_1^n}$, lifts to a unique braid coloring (up to conjugation in the braid group $B_3$), by repeating the following braid coloring, see Figure \ref{2br}.
\begin{figure}[H]
    \centering
    \includegraphics[width=10 cm]{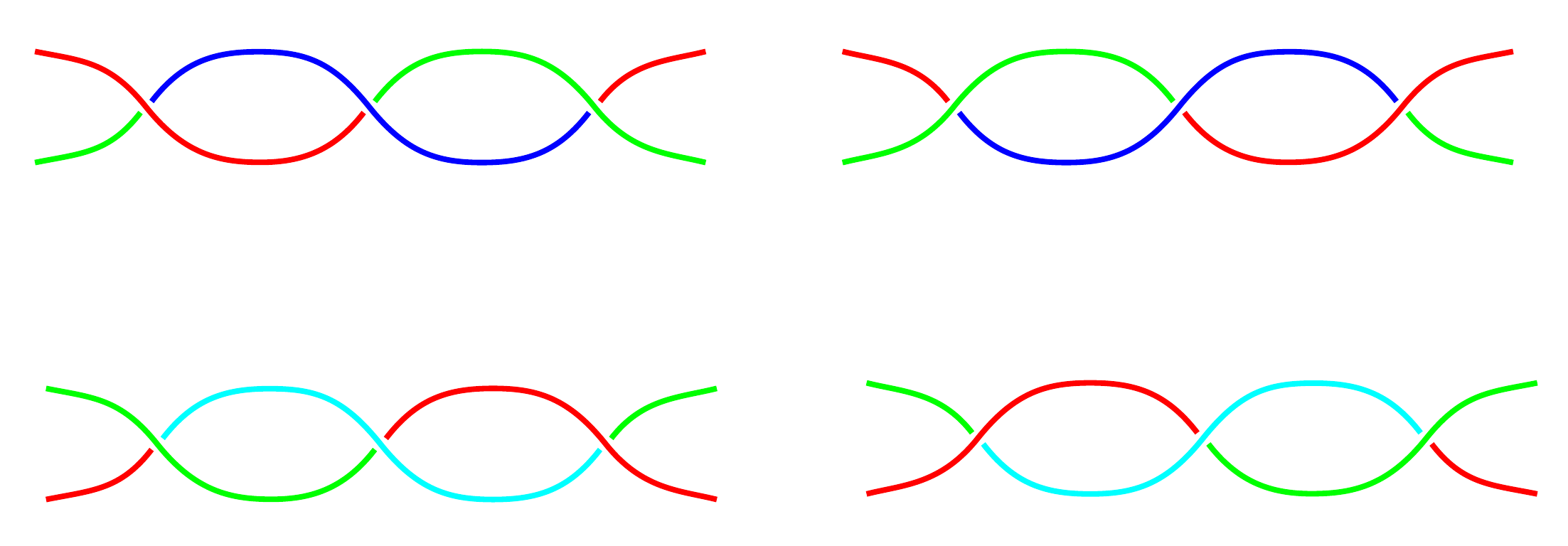}
    \caption{We indicate the various possibilities of colorings with three half twists, if the initial colorings are $\sigma_1$ and $\sigma_2$.}
    \label{2br}
\end{figure}
When $n$ is even, the closure $\widehat{\sigma_1^n}$ is a link, and the above theorem implies that the tricoloring does not lift to a braid coloring if we want the induced orientation on each component going the opposite way.

\end{remark}

Since the word problem in braid groups is solvable \cite{Ar}, the above results give a complete characterization of which tricolorings of two-bridge knots or links lift. It would, however be interesting to find a characterization more directly in terms of the rational number  $\frac{p}{q}$ (maybe something involving continued fraction expansion of  $\frac{p}{q}$, or the up-down graph \cite{G}). 
Recall that a two-bridge knot can have at most one (non-trivial) tricoloring, up to conjugation; and in fact it follows from the above discussion that when a two-bridge knot admits a simple simple $B_3$ coloring, it is unique, up to conjugation.

Among the tricolorable two-bridge knots in Rolfsen's knot table \cite{Ro}, the following admits a simple simple $B_3$ coloring:
$$3_1,9_1, 9_6, 9_{23}, 10_5, 10_9,  10_{32}, 10_{40};$$
and the following do not:
$$ 6_1, 7_4, 7_7, 8_{11}, 9_2, 9_4, 9_{10}, 9_{11}, 9_{15}, 9_{17}, 10_4, 10_{10}, 10_{19}, 10_{21}, 10_{29}, 10_{31} , 10_{36},10_{42}. $$


\subsubsection{Homomorphism of link groups}
In this subsection, let us discuss some generalities about having homomorphisms from a link group $\pi_1(S^3\setminus L)$ to some group $G$. 

Let $H$ be a subgroup of $G$ contained in the center $\mathcal{Z}(G)$ of $G$. So given any homomorphism $\phi:\pi_1(S^3\setminus L)\rightarrow G$, we get a group homomorphism from $\psi:\pi_1(S^3\setminus L)\rightarrow G/H$ by composing with the natural projection $G\rightarrow G/H$. We will show that the converse is also true. 

We know that $\pi_1(S^3\setminus L)$ has a Wirtinger presentation $\langle x_1,...,x_k|r_1,...,r_k \rangle$, so given any group homomorphism $\psi:\pi_1(S^3\setminus L)\rightarrow G/H$, let us pick any element $g_1$ in $G$ which projects to $\psi(x_1)$. The Wirtinger relation $x_j=x_ix_1x_i^{-1}$
will determine where $x_j$ has to go, as follows. Pick any $g_i$ lifting $\psi(x_i)$, and we are forced to send $x_j$ to $g_ig_1g_i^{-1}$. The reader should note that if we picked another lift $g'_i$ then $g'_i=g_iz$ for some central element $z$, and consequently $g_ig_1g_i^{-1}=g_i'g_1{g'}_i^{-1}$. Hence, if we choose meridians, one for each of the component of the link, and lift for the images under $\psi$ each of those meridians, the Wirtinger relations give us a lift $\phi$ of $\psi$, as required. 

Let us focus on the case of $G=B_3$ and $H=Z(G)$, which is known to be generated by the square of the Garside element $\Delta^2=(\sigma_1\sigma_2\sigma_1)^2$ . It is well known that the quotient $G/H$ is the modular group $PSL(2,\mathbb{Z})\cong \mathbb{Z}_2*\mathbb{Z}_3$.
The above result means given any homomorphism  $\psi:\pi_1(S^3\setminus L)\rightarrow PSL(2,\mathbb{Z})$ then it lifts to a homomorphism  $\phi:\pi_1(S^3\setminus L)\rightarrow B_3$ (and of course we get similar statement if we replace $PSL(2,\mathbb{Z})$  with $SL(2,\mathbb{Z})$). Consequently, it follows that lifting an $S_3$-coloring of a link to a $B_3$-coloring is equivalent to whether it lifts to an $PSL(2,\mathbb{Z})$-coloring  (or $SL(2,\mathbb{Z})$-coloring). The reader should note that the natural projection $B_3\rightarrow S_3$ factors through $SL(2,\mathbb{Z})$ and $PSL(2,\mathbb{Z})$, and so it makes sense to talk about the lifting problem in that context. Also, we see that admissible $B_3$-colorings correspond to admissible $PSL(2,\mathbb{Z})$-coloring  (or admissible $SL(2,\mathbb{Z})$-coloring).

Now if we were considering an epimorphism (= surjective homomorphism) $\psi:\pi_1(S^3\setminus L)\rightarrow PSL(2,\mathbb{Z})$ sending meridians to standard generators (standard transvections), then we see that it has to lift to an epimorphism $\phi:\pi_1(S^3\setminus L)\rightarrow B_3$, as follows. Observe that, up to conjugation, we can choose to send via $\phi$ a meridian $\mu\in\pi_1(S^3\setminus L)$ to $\sigma_1$ (which is the lift of a standard transvection), and since we know $\psi$ is surjective we know there is an element $\alpha\in \pi_1(S^3\setminus L)$ so that $\psi(\alpha)=p(\sigma_2)$. Then for the lift $\phi$ we have $\phi(\alpha \mu\alpha^{-1})=\sigma_2\sigma_1 \sigma_2^{-1}$.  Since by the braid relation $$\sigma_2=\sigma_1\sigma_2\sigma_1 \sigma_2^{-1}\sigma_1^{-1}=\phi(\mu\alpha \mu\alpha^{-1}\mu^{-1}),$$ the image of $\phi$ contains both $\sigma_1$ and $\sigma_2$,
an so $\phi$ is surjective.

If we have a homomorphism $\psi:\pi_1(S^3\setminus L)\rightarrow PSL(2,\mathbb{Z})$, then the image of $\psi$  is a subgroup of $PSL(2,\mathbb{Z})$; and by Kurosh subgroup theorem, will be abstractly isomorphic to a free product of $\mathbb{Z}$, $\mathbb{Z}_2$ and $\mathbb{Z}_3$'s.\\

\subsection{Tricolorings of knots}
If we are considering the image of a knot group, then we know that the abelianization has to be cyclic, so the only possible images in  $PSL(2,\mathbb{Z})$ are isomorphic to $\{1\}$, $\mathbb{Z}$, $\mathbb{Z}_2$, $\mathbb{Z}_3$ and $\mathbb{Z}_2*\mathbb{Z}_3$. If we are considering such a $\psi$ coming from a non-trivial simple $B_3$-coloring, then the only possibility is $\mathbb{Z}_2*\mathbb{Z}_3$ because:
\begin{itemize}
 \item The subgroup generated by the coset of half twist is isomorphic to the integers (rules out $\{1\}$, $\mathbb{Z}_2$, $\mathbb{Z}_3$)
 \item Two distinct half twists in $B_3$ cannot commute (as one of the endpoints has to be the same); and this continues to be true in the quotient, which rules out $\mathbb{Z}$. It is known that two half twists in $B_3$ either satisfy no relations, or satisfy the braid relation. 
\end{itemize}
 
 Suppose we have any epimorphism\footnote{we do not assume here it sends meridians to standard transvections, unlike the previous subsection} $\psi:\pi_1(S^3\setminus K)\rightarrow PSL(2,\mathbb{Z})$, and we will show that it has to lift to an epimorphism $\phi:\pi_1(S^3\setminus K)\rightarrow B_3$, as follows. For any meridian $\mu\in\pi_1(S^3\setminus K)$, if $\psi(\mu)$ has exponent sum in $\mathbb{Z}_6$ (exponent sum from $B_3$ is well defined to the integers, and since we get $PSL(2,\mathbb{Z})$ by quotienting by an element of exponent sum 6, we see that exponent sum descends to a well defined homomorphism $\varepsilon:PSL(2,\mathbb{Z})\rightarrow\mathbb{Z}_6 $, and by abuse of terminology we will still call it exponent sum) not equal to $\pm 1$, then the exponent sum is in $\{2,3,4\}$, which would imply the exponent sum of the entire image of $\psi$ is a proper subset of $\mathbb{Z}_6$, which contradicts the fact that $\psi$ is surjective. Changing orientation of $K$ if necessary, let us assume that the exponent sum of $\psi(\mu)$ is $1\in \mathbb{Z}_6$.

Let us choose the lift $\tau\in B_3$ of $\psi(\mu)$,  with exponent sum of $\tau$ being 1 (the various choices for $\tau$ differ up to a central element, $(\sigma_1\sigma_2\sigma_1)^{2n}$ where $n$ is some integer). As we saw earlier, there is a unique lift $\phi:\pi_1(S^3\setminus K)\rightarrow B_3$ of $\psi$ sending $\mu$ to $\tau$.  Moreover we see by using the surjectivity of $\psi$, that for $i=1,2$ there are elements  $\alpha_i\in\pi_1(S^3\setminus K)$ so that $\psi(\alpha_i)=p(\sigma_i)$ and $\alpha_1$ is conjugate to $\alpha_2$. We must have $\phi(\alpha_i)=\sigma_i(\Delta^2)^n$ where $n$ is some integer.

If we write $\tau$ as a word in $\sigma_1$ and $\sigma_2$; and we write out the same word in $\alpha_1$ and $\alpha_2$, we see that element will map under $\phi$ to $\tau (\Delta^2)^n$. Thus $(\Delta^2)^n$ is in the image $\Im\phi$, and hence so are $\sigma_1$ and $\sigma_2$ and consequently $\phi$ is surjective. Hence, we have
\begin{thm}\label{thmepi}
If $K$ is a knot which has a simple $B_3$ coloring, then there is an epimorphism from the knot group $\pi_1(S^3\setminus K)$ to $B_3$. 
\end{thm}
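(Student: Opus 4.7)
The proof plan hinges on the central extension $1 \to Z(B_3) \to B_3 \xrightarrow{p} PSL(2,\mathbb{Z}) \to 1$ (with $Z(B_3) = \langle \Delta^2 \rangle$ and $PSL(2,\mathbb{Z}) \cong \mathbb{Z}_2 * \mathbb{Z}_3$). Given a simple $B_3$-coloring $\phi : \pi_1(S^3 \setminus K) \to B_3$, let $\psi := p \circ \phi$. The plan is to first show $\psi$ is surjective, and then exploit centrality of $\ker(p)$ together with the conjugacy of meridians in a knot group to upgrade this to surjectivity of $\phi$.

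For surjectivity of $\psi$, the image is a subgroup of $\mathbb{Z}_2 * \mathbb{Z}_3$, hence by Kurosh's theorem a free product of copies of $\mathbb{Z}, \mathbb{Z}_2, \mathbb{Z}_3$. Since $K$ is a knot, the abelianization of $\pi_1(S^3 \setminus K)$ is $\mathbb{Z}$, so $\Im(\psi)$ has cyclic abelianization, limiting the possibilities to $\{1\}, \mathbb{Z}, \mathbb{Z}_2, \mathbb{Z}_3$, or $PSL(2,\mathbb{Z})$. The class $p(\sigma_1)$ of a half twist has infinite order in $PSL(2,\mathbb{Z})$, ruling out the three finite options (the coloring is non-trivial since meridians map to half twists). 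Two distinct half twists in $B_3$ do not commute, and this non-commutation descends to $PSL(2,\mathbb{Z})$ since any alleged abelian relation in $B_3$ would land in $Z(B_3) = \langle \Delta^2\rangle$ and be forced to be trivial by comparing exponent sums; this rules out the abelian case $\mathbb{Z}$. Hence $\Im(\psi) = PSL(2,\mathbb{Z})$.

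For the bootstrap to $\phi$, fix a meridian $\mu$. Since meridians are mutually conjugate and normally generate $\pi_1(S^3 \setminus K)$, their common exponent sum modulo $6$ under $\psi$ must generate $\mathbb{Z}_6$, hence equals $\pm 1$; after reversing orientation of $K$ if needed, assume it is $1$, and set $\tau := \phi(\mu)$. Using surjectivity of $\psi$, choose $\alpha_1 \in \pi_1(S^3 \setminus K)$ with $\psi(\alpha_1) = p(\sigma_1)$, and set $\alpha_2 = \beta \alpha_1 \beta^{-1}$ where $\beta$ is a $\psi$-preimage of an element in $PSL(2,\mathbb{Z})$ conjugating $p(\sigma_1)$ to $p(\sigma_2)$; then $\psi(\alpha_i) = p(\sigma_i)$ and $\alpha_2$ is conjugate to $\alpha_1$ in $\pi_1$. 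Consequently $\phi(\alpha_i) = \sigma_i \Delta^{2 n_i}$, and since $\phi(\alpha_1), \phi(\alpha_2)$ are conjugate in $B_3$, matching exponent sums modulo $6$ forces $n_1 = n_2 =: n$. Writing $\tau = w(\sigma_1, \sigma_2)$ as a word of exponent sum $1$ and substituting $\alpha_i$ for $\sigma_i$, the centrality of $\Delta^2$ yields $\phi(w(\alpha_1, \alpha_2)) = \tau \cdot \Delta^{2n}$; hence $\phi(\mu^{-1} w(\alpha_1, \alpha_2)) = \Delta^{2n} \in \Im(\phi)$, and therefore $\sigma_i = \phi(\alpha_i) \cdot \Delta^{-2n} \in \Im(\phi)$ for $i = 1, 2$. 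Since $\sigma_1, \sigma_2$ generate $B_3$, $\phi$ is surjective.

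The main obstacle is the algebraic bookkeeping in the last step: pinning down $n_1 = n_2$ from the conjugacy of $\alpha_1, \alpha_2$ by matching exponent sums, and then extracting a generator of $Z(B_3)$ from the image so that the half twists $\sigma_i$ themselves (not their central translates) lie in $\Im(\phi)$. The surjectivity of $\psi$ onto $PSL(2,\mathbb{Z})$ is more conceptual but also hinges on the subtle point that distinct half twists in $B_3$ remain non-commuting after projection.
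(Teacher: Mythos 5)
Your proposal follows essentially the same route as the paper's proof: project to $PSL(2,\mathbb{Z})\cong\mathbb{Z}_2*\mathbb{Z}_3$, use Kurosh's theorem together with the cyclic abelianization of the knot group and the fact that distinct half twists in $B_3$ cannot commute to get surjectivity onto $PSL(2,\mathbb{Z})$, and then use centrality of $\Delta^2$ and exponent-sum bookkeeping to pull surjectivity back up to $B_3$ (the paper runs this last step for an arbitrary epimorphism onto $PSL(2,\mathbb{Z})$ by re-choosing the lift of the meridian, while you apply it directly to the given coloring, which works and even shows the coloring itself is onto). One tiny slip: $n_1=n_2$ follows from equality of the integer exponent sums of the conjugate braids $\sigma_1\Delta^{2n_1}$ and $\sigma_2\Delta^{2n_2}$ (i.e. $1+6n_1=1+6n_2$), not from exponent sums modulo $6$, which are both $1$ regardless.
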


The advantage of promoting the existence of a homomorphism to the existence an epimorphism is that there are known obstructions to such epimorphism. For example, Fox \cite{CF} showed that if there is any epimorphism between knot groups (or groups like knot groups, where Alexander polynomials are defined) then the Alexander polynomial of the target space has to divide the Alexander polynomial of the domain space. This obstruction was upgraded to obstruction coming from twisted Alexander polynomials \cite{KSW}, and these tools have been used to study partial orders on the set of knots \cite{KS}.
As a consequence,  there are lots of tricolorable knots, for which the tricoloring does not lift to a simple $B_3$-coloring.

\begin{thm}\label{alpolydiv}
Suppose $K$ is any tricolorable knot so that $1-t+t^2$ does not divide the Alexander polynomial of $K$ (or an analogous statement with the twisted Alexander polynomials), then no tricoloring of $K$ lifts to a simple $B_3$-coloring.
\end{thm}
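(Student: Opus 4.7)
The plan is to combine Theorem~\ref{thmepi} with the classical divisibility theorem of Fox from \cite{CF} and its twisted refinement from \cite{KSW}. The essential ingredient is a well-known identification: the braid group $B_3$ is the knot group of the trefoil $3_1$. Indeed, the Artin presentation $\langle \sigma_1,\sigma_2 \mid \sigma_1\sigma_2\sigma_1 = \sigma_2\sigma_1\sigma_2\rangle$ coincides with a Wirtinger presentation of $\pi_1(S^3\setminus 3_1)$, and under this identification the Alexander polynomial of $3_1$ is exactly $1 - t + t^2$. So the theorem will follow by recognizing that the existence of a simple $B_3$-coloring forces the trefoil's Alexander polynomial to divide that of $K$.

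I would proceed by contrapositive. Assume $K$ admits a simple $B_3$-coloring. By Theorem~\ref{thmepi}, there is an epimorphism $\phi : \pi_1(S^3 \setminus K) \twoheadrightarrow B_3 \cong \pi_1(S^3\setminus 3_1)$; moreover, since we started with a \emph{simple} coloring, $\phi$ sends meridians of $K$ to conjugates of $\sigma_1$, which are precisely the meridians of the trefoil under the above identification. Fox's divisibility theorem, applied to this epimorphism of knot groups sending meridians to meridians, then forces $\Delta_{3_1}(t) = 1-t+t^2$ to divide $\Delta_K(t)$, which contradicts the hypothesis of the theorem.

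The twisted version runs along identical lines. Given any representation $\rho : B_3 \to GL_n(R)$, the composition $\rho \circ \phi$ is a representation of $\pi_1(S^3\setminus K)$ compatible with $\phi$, and the main divisibility theorem of \cite{KSW} yields the divisibility of the twisted Alexander polynomial of $3_1$ (with coefficients in $\rho$) into that of $K$ (with coefficients in $\rho \circ \phi$). I do not anticipate substantive obstacles: the argument is essentially a one-line deduction from Theorem~\ref{thmepi}. The only technical point to confirm is the meridian-compatibility hypothesis in the divisibility theorems, and this is exactly what the proof of Theorem~\ref{thmepi} was engineered to provide, since it produced an epimorphism for which meridians are mapped into the conjugacy class of $\sigma_1$.
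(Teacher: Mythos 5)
Your proof is correct and follows essentially the same route as the paper: the theorem is deduced from Theorem~\ref{thmepi} together with Fox's divisibility theorem for epimorphisms of knot groups (using $B_3\cong\pi_1(S^3\setminus 3_1)$ with $\Delta_{3_1}(t)=1-t+t^2$), and the twisted refinement of \cite{KSW} for the twisted statement. One small remark: the meridian-to-meridian condition you worry about is not actually needed for Fox's divisibility, since any epimorphism between knot groups automatically induces an isomorphism on the infinite cyclic abelianizations, so that step of your argument is even more robust than you suggest.
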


This theorem let's us answer for each knot in Rolfsen's knot table, if a knot admits a simple $B_3$-coloring. In Rolfsen's table the bridge index of each knot is at most three, and we already know the answer for two-bridge knots, so it remains to answer it for the three-bridge knots in the table.
The three-bridge tricolorable in Rolfsen's knot table which have a simple braid coloring (see \cite{KS} for explicit homomorphisms) are:
$$8_{5},8_{10},8_{15},8_{18},8_{19}, 8_{20},8_{21}, 9_{16}, 9_{24}, 9_{28}, 9_{40}, 10_{61},10_{62},10_{63},10_{64},10_{65},10_{66}, 
10_{76},10_{77},10_{78},10_{82},$$ 
$$10_{84},10_{85},10_{87},
10_{98},10_{99},10_{103},10_{106},10_{112},10_{114},10_{139},10_{140},10_{141},10_{142},10_{143},10_{144},10_{159},10_{164};$$
and those knots which does not have a simple braid coloring are:
$$9_{29},9_{34}, 9_{35},9_{37},9_{38},9_{46},9_{47},9_{48}, 10_{59},10_{67},10_{68},10_{69},10_{74},10_{75},10_{89},10_{96},10_{97},10_{107},10_{108},10_{113},$$
$$10_{120},10_{122},10_{136},10_{145},10_{146},10_{147},10_{158},10_{160},10_{163},10_{165}. $$

The above statement is about existence of a simple $B_3$-coloring on a knot, not about whether a given tricoloring lifts. More precisely, we can ask:
\begin{question}\label {q1}
Is there a prime knot $K$ with two tricolorings, one of which lifts to a simple $B_3$-coloring, and the other does not?
\end{question}

If we did not include the hypothesis that $K$ is prime, then the answer is easily seen to be "Yes". For we can take a connect sum of a knot $K_1$ which has a non-trivial simple $B_3$-coloring (for example the trefoil, $3_1$), and a tricolorable knot which does not have a non-trivial simple $B_3$-coloring (for example the $6_1$ knot) and consider two tricolorings of $K_1\#K_2$, one which is non-trivial on $K_1$ and trivial on $K_2$, and another which is non-trivial on both $K_1$ and $K_2$.

Let us digress for a moment and discuss of colorings for a connect sum of knots, and we see that the lifting problem for a connect sum reduces to a lifting problem for each of the components.
\begin{prop}\label{conn}
 Suppose $K$ is a connect sum of two knots $K_1, K_2$ in $S^3$. Then for any $G$-coloring on $K$, the colors on the two strands on the boundary of the band corresponding to the connect sum are the same (where we choose consistently oriented meridians for each of the strands). Consequently, by cutting the two strands and joining them to each component, we get $G$- colorings on $K_1$ and $K_2$ respectively.
\end{prop}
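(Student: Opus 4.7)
The plan is to realise the connect sum via the standard separating $2$-sphere, read the meridian identification directly off the geometry of the resulting punctured sphere, and then split the Wirtinger presentation of $K$ along the band. The main obstacle will be keeping the orientation bookkeeping straight.

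First I will fix the standard geometric model of $K = K_1 \# K_2$: an embedded $2$-sphere $S \subset S^3$ transverse to $K$ meeting it in exactly two points $p_1, p_2$, such that the two closed balls $B_1, B_2$ bounded by $S$ intersect $K$ in arcs which, capped off by an unknotted arc on $S$, recover $K_1$ and $K_2$ respectively. The two parallel strands at the boundary of the connecting band are precisely the two local pieces of $K$ through $p_1$ and $p_2$; let $\mu_1, \mu_2$ denote their meridians, oriented by the right-hand rule with respect to the orientation of $K$.

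The key step is to prove $\mu_1 = \mu_2$ inside $\pi_1(S^3 \setminus K)$. Because $K$ is an oriented circle meeting $S$ transversely in two points, the two strands must pierce $S$ with opposite signs. The surface $S \setminus \{p_1, p_2\}$ is an annulus embedded in $S^3 \setminus K$, and its two boundary circles (taken inside a tubular neighbourhood of $K$) are isotopic, as loops in $S^3 \setminus K$, to $\mu_1$ and $\mu_2$ respectively. The sign cancellation between the two piercings ensures that the right-hand-rule orientations on $\mu_1$ and $\mu_2$ pull back to oriented loops on $S \setminus \{p_1, p_2\}$ which represent the \emph{same} generator of $\pi_1(S \setminus \{p_1, p_2\}) \cong \mathbb{Z}$, as opposed to inverse generators (which would be the case if both piercings had the same sign). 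Using a path along the annulus to transport basepoints then promotes the resulting free homotopy to an equality $\mu_1 = \mu_2$ in $\pi_1(S^3 \setminus K)$, and applying any $G$-coloring $\phi$ yields $\phi(\mu_1) = \phi(\mu_2)$, which is the first assertion.

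For the ``consequently'' clause I will work with a standard connect-sum diagram for $K$ in which $K_1$ and $K_2$ sit in disjoint disks joined by a crossing-free band. Every Wirtinger relation is local at a single crossing, and every crossing lies entirely on the $K_1$-side or the $K_2$-side. Cutting the two band strands and reconnecting each side into itself produces standard diagrams for $K_1$ and $K_2$, whose Wirtinger generators and relations are inherited from those of $K$, the only structural change being that the two band strands are merged into a single strand in each summand. Therefore a $G$-coloring of $K$ restricts to a $G$-coloring of each $K_i$ exactly when the two band strands carry the same color, and this is guaranteed by the previous paragraph.
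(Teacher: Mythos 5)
Your proof is correct and follows essentially the same route as the paper: homotope both meridians into the twice-punctured splitting sphere (an annulus), use the opposite intersection signs to see that the consistently oriented meridians represent the same class there, and conclude equality in the knot group, hence equal colors. The extra detail you supply on the orientation bookkeeping and on splitting the Wirtinger presentation for the ``consequently'' clause just elaborates steps the paper leaves implicit.
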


\begin{proof} Consider a splitting sphere $S$ or for the connect sum which intersects $K_1\# K_2$ in exactly two points. Let us choose the basepoint for the knot complement on $S$, and by choosing an orientation on $K$, we get oriented meridians for the two strands of $K_1\# K_2$ intersecting $S$. We can homotope the meridians to lie on the sphere $S$ punctured at two points (we remove from $S$ the two points of intersection). We know see the meridians are the homotopic in the twice punctured sphere, and hence in the knot complement. The result follows since any homomorphism must preserve equalities in the domain.
\end{proof}

\begin{remark} We note that the analogue of the above result also holds for  analogues of connect sum of links (by which we mean we can pick any two components of either link and perform a band attachment). While the resulting link will depend (in general) on the choice of the components and the band, any $G$-coloring on it will canonically give rise to $G$-colorings on each of the original links. We also remark that similar result holds for codimension two links in higher dimensional spheres, with exactly the same proof.
\end{remark}

It follows from the above proposition that a tricoloring on a connected sum of knots lift if and only if the corresponding tricolorings on each of the individual knots lift.

Returning to our example, we can conclude the first tricoloring on $K_1\#K_2$ lifts to a braid coloring and the second does not.


Note that for two-bridge knots there is only one tricoloring up to conjugation, and so in this case whether a given tricoloring lifts is the same question as whether the underlying knot has a simple $B_3$ coloring. The answer to the above question is still "Yes", and this means the question about lifting tricolorings is something pertaining to a branched covering map (corresponding to permutation coloring), not just about what the underlying branch set is (or the branched covering manifold upstairs).

To answer Question~\ref{q1}, we need to gain a better understanding of types of relations about more than two Dehn twists. From \cite{K2}, we know that subgroups in $B_3$ generated by three half twists is either free (of rank at most 3), or the entire braid group. This implies any simple braid coloring of a three bridge knot must actually be surjective.  However this does not immediately answer the question about whether a tricoloring on a 3-bridge knot lifts to a braid coloring.

However, by building upon ideas from \cite{K2}, we will answer the question for a family of 3-strand pretzel knots. Let us first make the observation that the epimorphism $\pi:B_3\rightarrow S_3$ factors through $SL_2(\mathbb{Z})$, the mapping class group of the torus. The mapping class group of the once holed torus is isomorphic to $B_3 \cong\langle a,b|aba=bab \rangle$, and from there we can get the mapping class group of the torus by capping off the boundary component, which corresponds to adding the relation $(ab)^6=1$. Recall that we get the presentation of the symmetric group $S_3$ by adding the relations $a^2=1$ and $b^2=1$. Note the relation $(ab)^6=1$ holds in the symmetric group since:
$$(ab)^6=(ababab)^2=(abaaba)^2=(ab^2a)^2=(a^2)^2=1.$$

Thus, we see that $\pi$ factors through\footnote{In fact $\pi$ factors also through $PSL_2(\mathbb{Z})$ as we actually showed $(ab)^3=1$ in $S_3$.} $SL_2(\mathbb{Z})$, i.e. there are homomorphisms $\tau:B_3\rightarrow SL_2(\mathbb{Z})$ and $\rho:SL_2(\mathbb{Z})\rightarrow S_3$ so that $\pi=\rho\circ \tau$.  We observe that $\pi_1$ sends half twists in the braid group to Dehn twists in the mapping class group of the torus (this is exactly the Dehn twist about the curve in the double branched cover which is a  lift of the arc corresponding to the half twist, except that we have capped off the boundary component). Recall simple closed curves (and Dehn twists about them) in the torus are in one to one correspondence of projectivised\footnote{up to a sign (for orientation)} primitive vectors in the first homology $H_1(\mathbb{T}^2)\cong\mathbb{Z}^2$. So given a primitive vector $\vec x=\spalignmat{p;q}$, let us first discuss how to find the image of a Dehn twist under $\rho$.

\begin{claim}\label{colorofDT}
The image under $\rho$ of any Dehn twist $T_x$ only depends on the congruence class of $\vec x$ (with either orientation) modulo 2. More specifically,
\begin{itemize}
    \item if $\vec x\equiv \spalignmat{1;0}\pmod {2}$, then $\rho(T_x)=(12)$,
    \item if $\vec x\equiv \spalignmat{0;1}\pmod {2}$, then $\rho(T_x)=(23)$,
    \item if $\vec x\equiv \spalignmat{1;1}\pmod {2}$, then $\rho(T_x)=(13)$.
\end{itemize}

\end{claim}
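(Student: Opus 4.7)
The plan is to show that $\rho$ is, up to a natural identification, the reduction mod $2$ homomorphism $r \colon SL_2(\mathbb{Z}) \to SL_2(\mathbb{Z}/2)$. First I would use that $\tau$ is surjective and sends $\sigma_1, \sigma_2$ to Dehn twists about two simple closed curves on the torus that intersect once, giving $\tau(\sigma_1) = T_{(1,0)}$ and $\tau(\sigma_2) = T_{(0,1)}$ (after a choice of basis), which together generate $SL_2(\mathbb{Z})$. The relation $\pi = \rho \circ \tau$ then uniquely pins down $\rho$ on a generating set: $\rho(T_{(1,0)}) = (12)$ and $\rho(T_{(0,1)}) = (23)$.

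Next I would exploit the standard isomorphism $SL_2(\mathbb{Z}/2) \cong S_3$ arising from the permutation action of $SL_2(\mathbb{Z}/2)$ on the three nonzero vectors of $(\mathbb{Z}/2)^2$. Choosing the labeling $(0,1) \leftrightarrow 1$, $(1,1) \leftrightarrow 2$, $(1,0) \leftrightarrow 3$, a direct $2 \times 2$ matrix computation mod $2$ shows that $r(T_{(1,0)})$ fixes $(1,0)$ and swaps $(0,1)$ with $(1,1)$, giving $(12)$, and $r(T_{(0,1)})$ fixes $(0,1)$ and swaps $(1,0)$ with $(1,1)$, giving $(23)$. Since $\rho$ and $r$ agree on the generators $T_{(1,0)}, T_{(0,1)}$ of $SL_2(\mathbb{Z})$, they coincide as homomorphisms to $S_3$; in particular $\rho(T_x)$ depends only on $x \pmod{2}$.

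Finally, I would identify the specific transposition in each congruence class using the Picard--Lefschetz formula: the Dehn twist $T_x$ acts on $H_1(\mathbb{T}^2) \cong \mathbb{Z}^2$ by $y \mapsto y + \langle y, x \rangle x$, where $\langle \cdot, \cdot \rangle$ is the intersection pairing. Reducing mod $2$, since the intersection pairing is nondegenerate on $(\mathbb{Z}/2)^2$, we have $\langle y, x \rangle \equiv 1 \pmod{2}$ for each nonzero $y$ with $y \not\equiv x \pmod 2$. Thus $T_x \pmod 2$ fixes $x$ and swaps the other two nonzero elements of $(\mathbb{Z}/2)^2$. Reading this off in each of the three cases $x \equiv (1,0), (0,1), (1,1)$ via the labeling above gives exactly $(12), (23)$, and $(13)$ respectively.

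The main obstacle I anticipate is fixing the correct identification $SL_2(\mathbb{Z}/2) \cong S_3$; a naive labeling could satisfy one generator constraint but fail the other, producing the wrong permutations. The two constraints $\rho(T_{(1,0)}) = (12)$ and $\rho(T_{(0,1)}) = (23)$ must be imposed simultaneously, which forces the bijection between nonzero $\mathbb{F}_2$-vectors and $\{1,2,3\}$ uniquely. Once this bookkeeping is settled, the rest of the argument reduces to the Picard--Lefschetz computation above.
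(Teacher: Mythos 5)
Your argument is correct, but it takes a genuinely different route from the paper. The paper fixes $\rho(T_x)=(12)$, $\rho(T_y)=(23)$, $\rho(T_z)=(13)$ for $\vec x=(1,0)$, $\vec y=(0,1)$, $\vec z=(1,1)$ by convention, and then invokes a result from the author's earlier paper \cite{K2}: any primitive vector $\vec u$ equals $w\vec v$ for some $\vec v\in\{\pm\vec x,\pm\vec y,\pm\vec z\}$ and some word $w$ in $T_x^2,T_y^2,T_z^2$; hence $T_u=wT_vw^{-1}$, and since the squares $T_x^2,T_y^2,T_z^2$ lie in $\ker\rho$, one gets $\rho(T_u)=\rho(T_v)$ (and $\vec u\equiv\vec v\pmod 2$ because squares of transvections act trivially on homology mod $2$). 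You instead identify $\rho$ outright with the mod-$2$ reduction $SL_2(\mathbb{Z})\to SL_2(\mathbb{Z}/2)\cong S_3$: the transvections $T_{(1,0)},T_{(0,1)}$ generate $SL_2(\mathbb{Z})$, the constraint $\pi=\rho\circ\tau$ pins $\rho$ down on them, and a direct check shows the reduction map (with the forced labelling of the three nonzero vectors of $(\mathbb{Z}/2)^2$) agrees there, so the two homomorphisms coincide; the value on an arbitrary Dehn twist then falls out of the Picard--Lefschetz formula $y\mapsto y+\langle y,x\rangle x$ reduced mod $2$, where any two distinct nonzero classes pair to $1$. Your route is more self-contained (it does not rely on \cite{K2}) and yields the cleaner statement that $\rho$ \emph{is} mod-$2$ reduction, while the paper's route avoids choosing an explicit matrix identification and reuses machinery the author needs elsewhere. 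The only soft spot is your phrase attributing $\langle y,x\rangle\equiv 1$ to nondegeneracy alone; the precise reason is that two distinct nonzero vectors of $(\mathbb{Z}/2)^2$ are linearly independent, so their pairing (a determinant) is $1$ --- a cosmetic point, not a gap.
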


\begin{proof} By definition (or by convention) we have that 
$\rho(T_x)=(12), \rho(T_y)=(23), \rho(T_z)=(13) $, where 
$\vec x=\spalignmat{1;0}, \vec y= \spalignmat{0;1}, \vec z=\spalignmat{1;1}$ respectively. By the proof of \cite[Remark 7.4]{K2}, we see that
given any primitive vector $\vec u$, there is another primitive vector $\vec v\in \{\pm \vec x, \pm \vec y,\pm \vec z \}$, and a word $w$ in $T_x^2$, $T_y^2$ and $T_z^2$ so that $w\vec v=\vec u$. It follows that $T_u=wT_vw^{-1}$. Since $\rho (T_x^2)=\rho (T_y^2)=\rho (T_z^2)=1$, it follows that $\rho(T_u)=\rho(T_v)$, and hence the claim holds.

\end{proof}

Let us record the following consequence of the above claim for future use.
\begin{claim} \label{odd}
If $\vec u$ and $\vec v$ are primitive vectors in $\mathbb{Z}^2$, then $\rho(T_u)=\rho(T_v)$ if and only if the algebraic intersection number $\langle \vec u, \vec v \rangle$ is even.
\end{claim}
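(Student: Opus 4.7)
The plan is to deduce this directly from Claim \ref{colorofDT} together with an elementary computation in $\mathbb{F}_2^2$. First I would observe the following reformulation of Claim \ref{colorofDT}: the three classes $(1,0)$, $(0,1)$, $(1,1)$ in $(\mathbb{Z}/2\mathbb{Z})^2$ map under $\rho\circ T_{(\cdot)}$ to the three distinct transpositions $(12)$, $(23)$, $(13)$ respectively, and none of these reductions is zero (if $\vec u \equiv \vec 0 \pmod 2$, then both coordinates of $\vec u$ are even and $\vec u = 2\vec u'$ is not primitive). Hence $\rho(T_u)=\rho(T_v)$ if and only if $\vec u \equiv \vec v \pmod 2$.

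Next I would translate the intersection condition into the same mod-2 language. Writing $\vec u, \vec v$ as the columns of a $2\times 2$ integer matrix, we have $\langle \vec u, \vec v\rangle = \det[\vec u\,|\,\vec v]$, and this determinant is even precisely when the reductions of $\vec u$ and $\vec v$ are linearly dependent over $\mathbb{F}_2$. Since both reductions are nonzero (by primitivity) and the only nonzero scalar in $\mathbb{F}_2$ is $1$, linear dependence is equivalent to $\vec u \equiv \vec v \pmod 2$.

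Combining the two equivalences gives $\rho(T_u) = \rho(T_v) \iff \vec u \equiv \vec v \pmod 2 \iff \langle \vec u, \vec v\rangle \equiv 0 \pmod 2$, which is exactly the claim. There is essentially no obstacle here; the content is entirely packaged in Claim \ref{colorofDT}, and the remainder is a one-line observation about $\mathbb{F}_2^2$. If desired, one could instead verify the equivalence by a brute-force case check over the three nonzero classes mod $2$ (for example, $\vec u\equiv(1,0),\vec v\equiv(0,1)$ gives $ad-bc$ odd, while $\vec u\equiv\vec v\equiv(1,0)$ gives both $ad$ and $bc$ even), but the determinant viewpoint makes the structure transparent.
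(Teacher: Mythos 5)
Your proof is correct and follows essentially the same route as the paper: reduce both vectors modulo $2$, invoke Claim \ref{colorofDT} to see that the image under $\rho$ depends only on the nonzero class in $(\mathbb{Z}/2\mathbb{Z})^2$, and observe that the parity of the determinant $\langle \vec u, \vec v\rangle$ is likewise determined by (and detects equality of) those classes. No gaps.
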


\begin{proof}
Recall the algebraic intersection number $\langle \vec u, \vec v \rangle$ is equal to the determinant of the matrix with columns $\vec u$ and $\vec v$ (in that order). To find the parity of the determinant (i.e. reduce the determinant modulo 2), we may equivalently reduce $\vec u$ and $\vec v$ modulo 2, and then find the determinant. For a primitive vector modulo 2, there are exactly three choices, and the result now follows from the previous claim.
\end{proof}





\subsection {Labellings in a twist region}

Suppose we have a twist region with $m$ half twists ($m$ can be positive or negative) both strands oriented bottom to top\footnote{Even if the natural orientation of the strands coming from the knot or link goes the other way, we can take the inverse of the meridian and carry out the computations assuming the strands are oriented from bottom to top in the twist region}.
 Let the bottom left and bottom right meridians in the fundamental group be $a$ and $b$, and we denote by $A$ and $B$ the inverses of $a$ and $b$ respectively.

If $m=2n$ is even, by Wirtinger relations the top left and top right meridians are $a^{(ba)^n}$ and $b^{(ba)^n}$ (which  equals $b^{(ab)^{n-1}a}$) respectively. If $m=2n+1$  is odd, by Wirtinger relations the top left and top right meridians are $b^{a(ba)^n}$ and $a^{a(ba)^n}$ (which  equals $a^{(ba)^n}$) respectively.

Figure \ref{wtwist} illustrates these formulas with twist regions with four positive and negative crossings respectively.

\begin{figure}[H]
    \centering
    \includegraphics[width=9 cm]{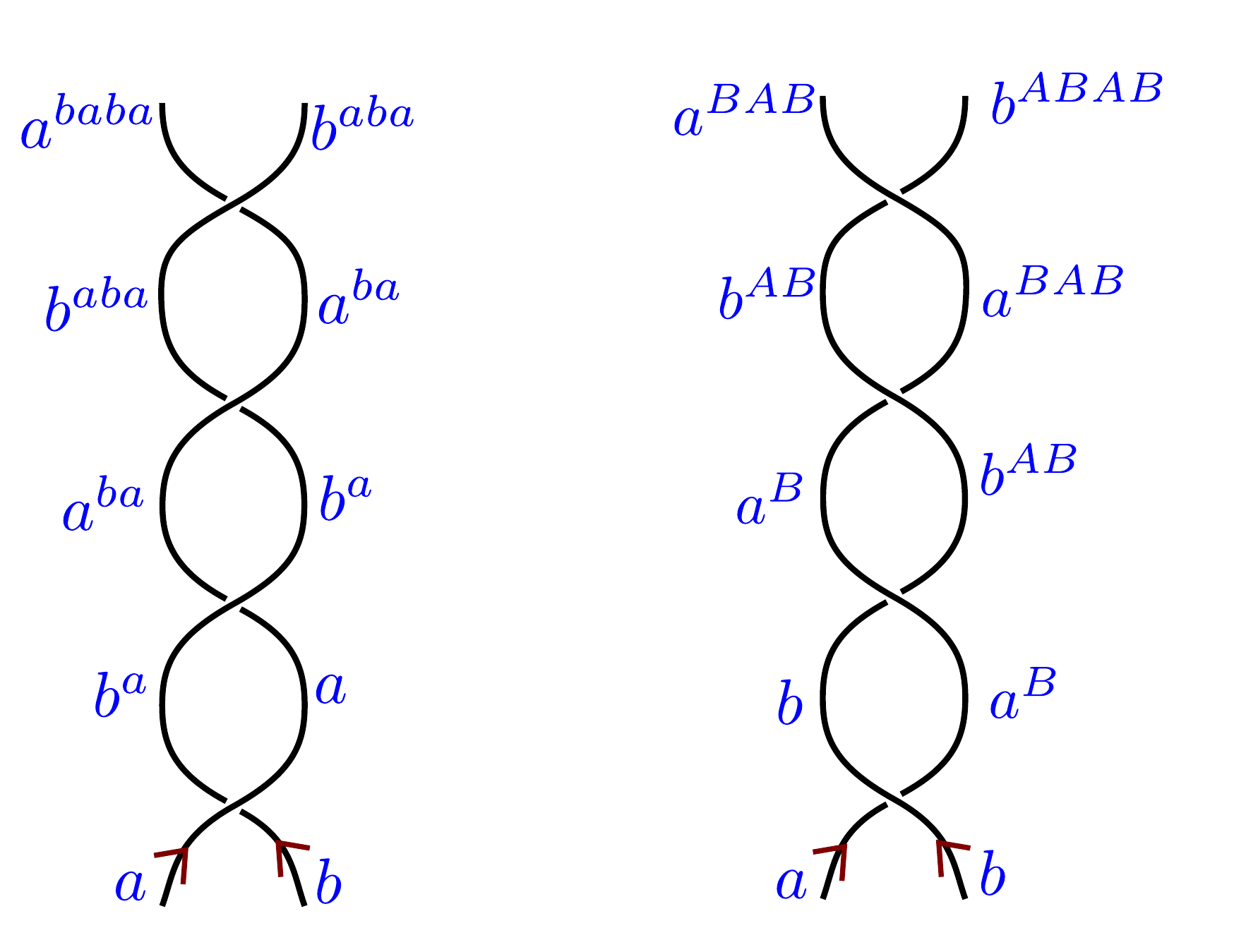}
    \caption{Meridians in twist regions using the Wirtinger presentation, if we start with meridians $a$, $b$ in the bottom; and $A$ and $B$ denote their inverses.}
    \label{wtwist}
\end{figure}

Consider a braid coloring of the twist region with the meridians mapping to half twists, and let us further send it to the corresponding Dehn twist in the double branched cover,  and call this coloring $\phi$.

Note that for Dehn twists we have $T_{f(x)}=f\circ T_x \circ f^{-1}$, so 
if $\phi(a)=T_x$ and $\phi(b)=T_y$, then by Wirtinger relation $c=a^{-1}ba$ (respectively $c=aba^{-1}$) we have $\phi(c)=T_x \circ T_y$ (respectively $\phi(c)=T_x^{-1} \circ T_y$).

We see that $$\phi(a^{(ba)^n})=T_{(T_xT_y)^n(x)} \quad \phi(b^{(ba)^n})=T_{(T_xT_y)^n(y)}$$
$$\phi(b^{a(ba)^n})=T_{(T_xT_y)^n(T_x(y))} \quad \phi(a^{(ba)^n})=T_{(T_xT_y)^n(x)}$$

Instead of labeling the strands with a left or right handed Dehn twist, we could just label by the vector in homology corresponding to the simple closed curve (there is a sign ambiguity when we make initial choices for the bottom left and bottom right) about which the Dehn twists are taking place.
We make the following observation:
\begin{claim}
A simple $B_3$ coloring on a link in $S^3$ is equivalent to having a labelling on the strands by projectivised primitive vectors in $\mathbb{Z}^2$ , such that at each crossing the associated Dehn twists satisfy the Wirtinger relations.
\end{claim}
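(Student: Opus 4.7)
The plan is to establish a bijection between half twists in $B_3$ (i.e.\ conjugates of $\sigma_1$) and projectivised primitive vectors in $\mathbb{Z}^2$, and then to observe that a simple $B_3$-coloring in terms of Wirtinger data translates literally across this bijection.

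First I would invoke the classical isomorphism $B_3 \cong MCG(\Sigma_{1,1})$, where $\Sigma_{1,1}$ is the once-holed torus, realized as the double cover of the thrice-punctured disk branched over the three marked points. This is exactly the Birman--Hilden setup used implicitly in the 2-bridge discussion (where the author lifts half twists to Dehn twists on $\Sigma_{1,1}$). Under this isomorphism, the Artin generator $\sigma_1$ is sent to the Dehn twist $T_\alpha$ about the essential simple closed curve $\alpha \subset \Sigma_{1,1}$ which is the preimage of the arc between the two points being exchanged.

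Next, I would upgrade this to a statement about \emph{all} half twists: the conjugacy class of $\sigma_1$ in $B_3$ maps, via the identity $f T_\gamma f^{-1} = T_{f(\gamma)}$, to the set of Dehn twists $T_{\gamma}$ as $\gamma$ ranges over the orbit of $\alpha$ under $MCG(\Sigma_{1,1})$. Since this action is the standard action of $SL_2(\mathbb{Z})$ on primitive vectors in $H_1(\Sigma_{1,1}) \cong \mathbb{Z}^2$, which is transitive, the orbit of $\alpha$ is precisely the set of all essential simple closed curves in $\Sigma_{1,1}$. Combining this with the standard classification of essential simple closed curves on the torus (by their unoriented primitive homology class, so up to sign), I obtain a bijection
\[
\{\text{half twists in } B_3\} \longleftrightarrow \{\text{projectivised primitive vectors in } \mathbb{Z}^2\},
\]
under which a half twist corresponds to the Dehn twist about the curve in its homology class.

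Finally, by the Wirtinger presentation of $\pi_1(S^3 \setminus L)$, a simple $B_3$-coloring is exactly the data of a half twist on each strand satisfying the Wirtinger relation at each crossing. Transporting this data along the bijection above gives an equivalent description as a labelling of strands by projectivised primitive vectors in $\mathbb{Z}^2$ with the associated Dehn twists satisfying the Wirtinger relations, which is the claim. The only substantive step is the Birman--Hilden identification carrying half twists to Dehn twists about essential curves; everything else is formal once that bijection is in hand.
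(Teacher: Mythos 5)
Your route is the same circle of ideas the paper has in mind, with one organizational difference: you stay with the faithful identification $B_3\cong MCG(\Sigma_{1,1})$ from Birman--Hilden, so that half twists literally are Dehn twists and the Wirtinger relations transport verbatim, whereas the paper passes to the closed torus, i.e.\ to the central quotient $\tau:B_3\rightarrow SL_2(\mathbb{Z})$, where a Dehn twist is a transvection determined by a primitive homology vector up to sign; the identification of $B_3$-colorings with $SL_2(\mathbb{Z})$-colorings is then supplied by the earlier lemma that colorings lift along central extensions (the kernel of $\tau$ lies in the center, generated by $(\sigma_1\sigma_2\sigma_1)^4$). Your version avoids that lifting step, which is a genuine simplification for this claim; the paper's version has the advantage that the subsequent twist-region computations are purely homological. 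Two small imprecisions in your argument: the $MCG(\Sigma_{1,1})$-orbit of $\alpha$ is the set of \emph{non-separating} essential simple closed curves (the boundary-parallel curve is not in the orbit, and its twist is central, not a half twist), and the classification of curves by primitive homology classes up to sign should be quoted for the once-holed torus rather than the closed torus, though it follows by capping off.

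The substantive gap is handedness. A simple $B_3$-coloring sends meridians to elements of $SA_3$, i.e.\ conjugates of $\sigma_1^{\pm1}$, not only of $\sigma_1$; under Birman--Hilden these are right- \emph{or} left-handed Dehn twists, and the projectivised primitive vector records only the underlying curve, not the handedness. So the map from half twists to projectivised primitive vectors is two-to-one, not a bijection, and your final "everything else is formal" step only transports Wirtinger data verbatim for colorings in which every meridian maps to a positive half twist. For a knot this is harmless (all meridians are conjugate, so the handedness is constant and a global mirror fixes the sign), but the claim is stated for links, where different components may carry different signs, and the paper's later analysis explicitly has to distinguish the same-handed and opposite-handed cases in a twist region. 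To make your proof match the claim you should either carry the handedness as auxiliary data or state the equivalence as: a labelling by projectivised primitive vectors corresponds to a simple $B_3$-coloring if and only if \emph{some} choice of handedness of the associated Dehn twists satisfies the Wirtinger relations --- which is exactly the sign ambiguity the paper flags immediately before the claim.
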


Let us now rewrite the labellings on the strands in a twist region with $m$ half twists with this convention. Suppose the bottom left strand is labelled with $\vec x$ and the bottom right strand is labelled with $\vec y$. If $m=2n$ is even, the labelling on the top left and top right strands are $(T_xT_y)^n(\vec x)$ and $(T_xT_y)^n(\vec y)$ respectively. If $m=2n+1$  is odd, the labelling on the top left and top right strands are $(T_xT_y)^n(T_x(\vec y))$ and $(T_xT_y)^n(\vec x)$ respectively. Let us try to understand how these expressions look as a linear combination of $\vec x$ and $\vec y$.


Let $k=\langle \vec x,\vec y \rangle$, the by definition we have
$T_y(\vec x)=\vec x-k\vec y$, and hence we obtain
$$ T_x\circ T_y(\vec x)=T_x(T_y(\vec x))=\vec x- k\vec y+\langle \vec x, \vec x-k\vec  y\rangle\vec x= \vec x- k\vec y-k \langle \vec x, \vec y \rangle \vec x=(1-k^2)\vec x-k\vec y$$

More generally, we inductively have for any natural number $n$: $$(T_x\circ T_y)^n(\vec x)=[1-k^2 A_n(k)]\vec x-kB_n(k)\vec y,$$ as justified below for some integer valued functions $A_n,B_n$ of $k$.

We note that $$T_y\circ(T_x\circ T_y)^{n}(\vec x)= T_y((T_x\circ T_y)^n(\vec x))=T_x\circ T_y([1-k^2 A_n(k)]\vec x-kB_n(k)\vec y)$$
$$= [1-k^2 A_n(k)][\vec x-k \vec y]-kB_n(k)\vec y=[1-k^2 A_n(k)]\vec x-k[B_n(k)+1-k^2 A_n(k)]\vec y$$

Hence we have $$(T_x\circ T_y)^{n+1}(\vec x)=T_x([1-k^2 A_n(k)]\vec x-k[B_n(k)+1-k^2 A_n(k)]\vec y)$$
$$=[1-k^2 A_n(k)]\vec x-k[B_n(k)+1-k^2 A_n(k)][\vec y+k\vec x]$$
$$=[1-k^2 [(1-k^2) A_n(k)+B_n(k)+1]]\vec x-k[B_n(k)+1-k^2 A_n(k)]\vec y$$

Hence we have the recursive formulas \begin{align}\label{recA}
  A_{n+1}(k)=(1-k^2) A_n(k)+B_n(k)+1 \text {, and }  
\end{align}
\begin{align} \label{recB}
B_{n+1}(k)=B_n(k)+1-k^2 A_n(k),
\end{align}
 with the initial conditions $A_1(k)=1=B_1(k)$.

Since in the recursive formulas for $A_n(k)$ and $B_n(k)$  we only see quadratic terms in $k$, it follows that $A_n(-k)=A_n(k)$ and $B_n(-k)=B_n(k)$.
By interchanging the roles of $x$ and $y$ (and remembering that $\langle \vec y,\vec x \rangle=-\langle \vec x,\vec y \rangle=-k$) we see that $$(T_y\circ T_x)^n(\vec y)=kB_n(k)\vec x+(1-k^2 A_n(k))\vec y$$

Along the way we also have found formulas for the other labels appearing at the end of the twist regions:

$$T_y\circ(T_x\circ T_y)^{n}(\vec x)=[1-k^2 A_n(k)]\vec x-kB_{n+1}(k)\vec y$$
and (once again by interchanging $x$ and $y$)
$$T_x\circ(T_y\circ T_x)^{n}(\vec y)= [1-k^2 A_n(k)]\vec y +kB_{n+1}(k)\vec x$$

In a similar fashion we may calculate 
$$(T_x^{-1}\circ T_y^{-1})^n(\vec x)=[1-k^2 A_n(k)]\vec x-kB_n(k)\vec y,$$
$$T_y^{-1}(T_x^{-1}\circ T_y^{-1})^n(\vec x)=[1-k^2 A_n(k)]\vec x-kB_{n+1}(k)\vec y,$$


One similarly has functions $C_n,D_n$ of $k$ so that we have the following formulas for composition of left and right handed Dehn twists:
$$(T_x\circ T_y^{-1})^n(\vec x)=[1+k^2 C_n(k)]\vec x+kD_n(k)\vec y,$$
$$T_y^{-1}(T_x\circ T_y^{-1})^n(\vec x)=[1+k^2 C_n(k)]\vec x+kD_{n+1}(k)\vec y,$$
$$(T_x^{-1}\circ T_y)^n(\vec x)=[1+k^2 C_n(k)]\vec x-kD_n(k)\vec y,$$
$$T_y(T_x^{-1}\circ T_y)^n(\vec x)=[1+k^2 C_n(k)]\vec x-kD_{n+1}(k)\vec y.$$

We have similar recursive formulas for $C_n$ and $D_n$, just like the ones for $A_n$ and $B_n$
\begin{align}\label{recC}
  C_{n+1}(k)=(1+k^2) C_n(k)+D_n(k)+1 \text {, and }   
\end{align}
\begin{align} \label{recD}
D_{n+1}(k)=D_n(k)+1+k^2 C_n(k),
\end{align}
 with the initial conditions $C_1(k)=1=D_1(k)$.

In fact, we can directly relate the functions $C_n$ and $D_n$ to the functions $A_n$ and $B_n$ as follows:
$$C_n(k)=A_n(ik) \text{, and } D_n(k)=B_n(ik),$$
where $i$ is a square root of $-1$. An alternate (and probably less mysterious) way of making the same statement is as follows. Recall that $A_n,B_n,C_n$ and $D_n$ are polynomial functions of $k$, where only even degree terms appear. Consequently, we can think of them as polynomials in $k^2$. We get the polynomial $C_n$ (respectively $D_n$) by substituting all occurrences of $k^2$ in $A_n$ (respectively $B_n$) by $-k^2$, and vice versa.

We illustrate using recurrence formulas by finding the labellings in a twist regions with three positive half twists, with the bottom strands
colored by the Dehn twists $T_x^{\pm 1}$ and $T_y^{\pm 1}$, and we pick some orientation of $x$ and $y$, and let $k$ denote the algebraic intersection number between $x$ and $y$. Depending on the sign of the exponents, we have four cases to deal with, in Figure \ref{lab1} (see Remark \ref{same handed}) we consider the two cases that the two Dehn twists have the same handedness (i.e. both right handed or both left handed); while in Figure \ref{lab2} (see Remark \ref{opp handed})
 we consider the two cases that the two Dehn twists have different handedness.
 
 We can also do a similar calculation and see what the labellings would turn out to be if we had a negative twist region. With the same choices as above, if the labellings after $n$ positive half twists are $a\vec x+b\vec y$ and $c\vec x+d\vec y$ (read from left to right), then the labellings after $n$ positive half twists are $(-1)^n(d\vec x-b\vec y)$ and $(-1)^n(-c\vec x+a\vec y)$. One way to think about this is think of the input colors as a basis, and the output colors as the image of a linear map. The linear maps corresponding to positive and negative twist regions (with the same number of half twists) should be inverses of each other, and everything boils down to the formula of the inverse of a $2\times 2$ matrix: $$\spalignmat{a b; c d}^{-1}=\frac{1}{ad-bc}\spalignmat{d -b; -c a}.$$
\begin{figure}[!ht]
    \centering
    \includegraphics[width=14 cm]{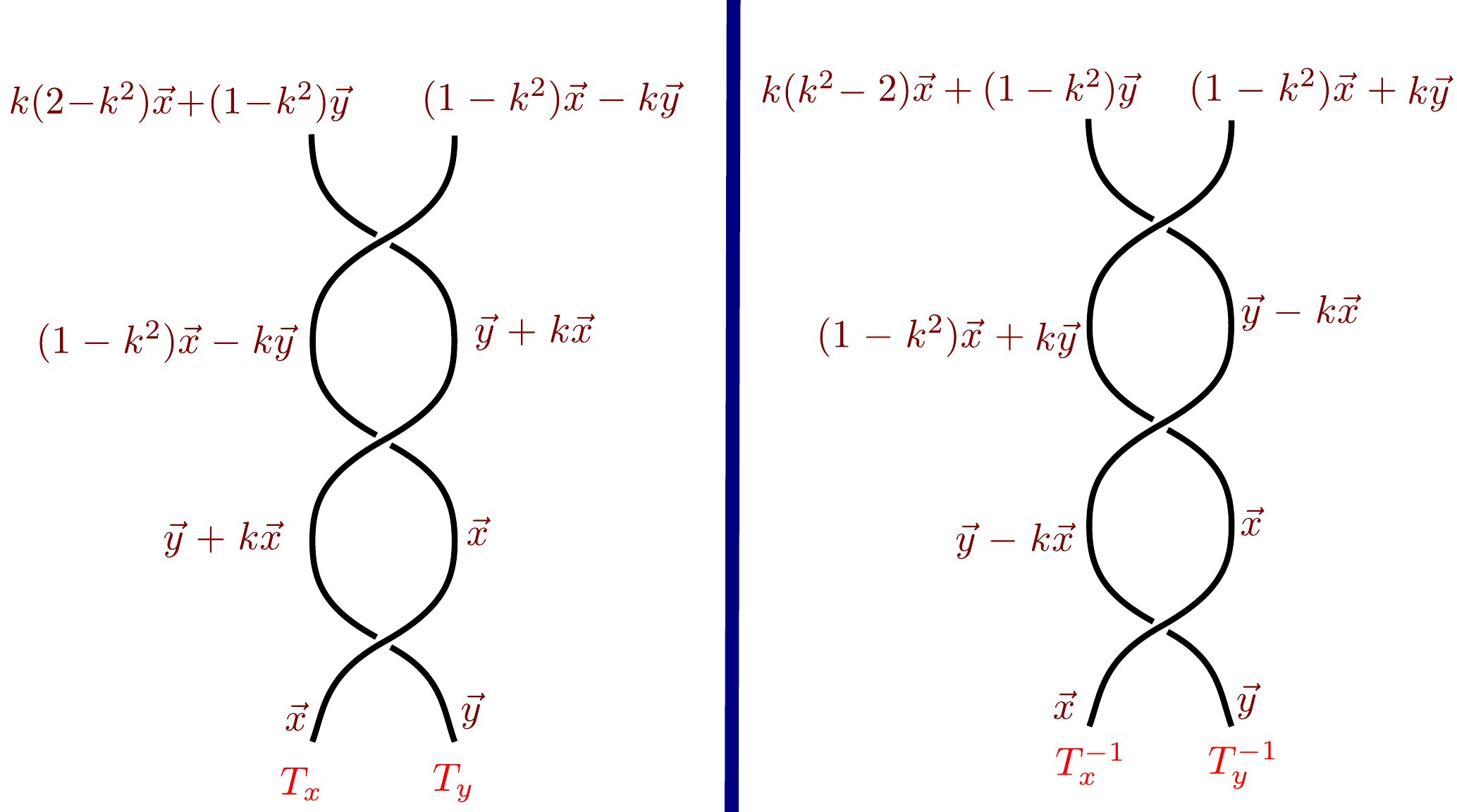}
    \caption{Labellings of twist region with homology vectors corresponding to similar handed Dehn twists}
    \label{lab1}
\end{figure}

\begin{remark} \label{same handed} We discuss the case the two  Dehn twists which have same handedness, and this case is illustrated by Figure \ref{lab1}.

If there are an even number of half twists, the labelling on the top left and top right strands are of the form $(1-k^2a)\vec x+ kb\vec y$ and $-kb \vec x+ (1-k^2c)\vec y$ respectively for some integers $a,b,c$.
Moreover we have $a+c=\mp b$, where the sign is same as the sign of the twist region, times the exponent sum of the bottom left coloring.

If there are an odd number of half twists, the labelling on the top left and top right strands are of the form $kb\vec x+ (1-k^2a)\vec y$ and $(1-k^2a) \vec x+ kc\vec y$ respectively for some integers $a,b,c$.
Moreover we have $b+c=\pm (1-k^2a)$, where the sign is same as the sign of the twist region, times the exponent sum of the bottom left coloring.
\end{remark}

\begin{figure}[H]
    \centering
    \includegraphics[width=14 cm]{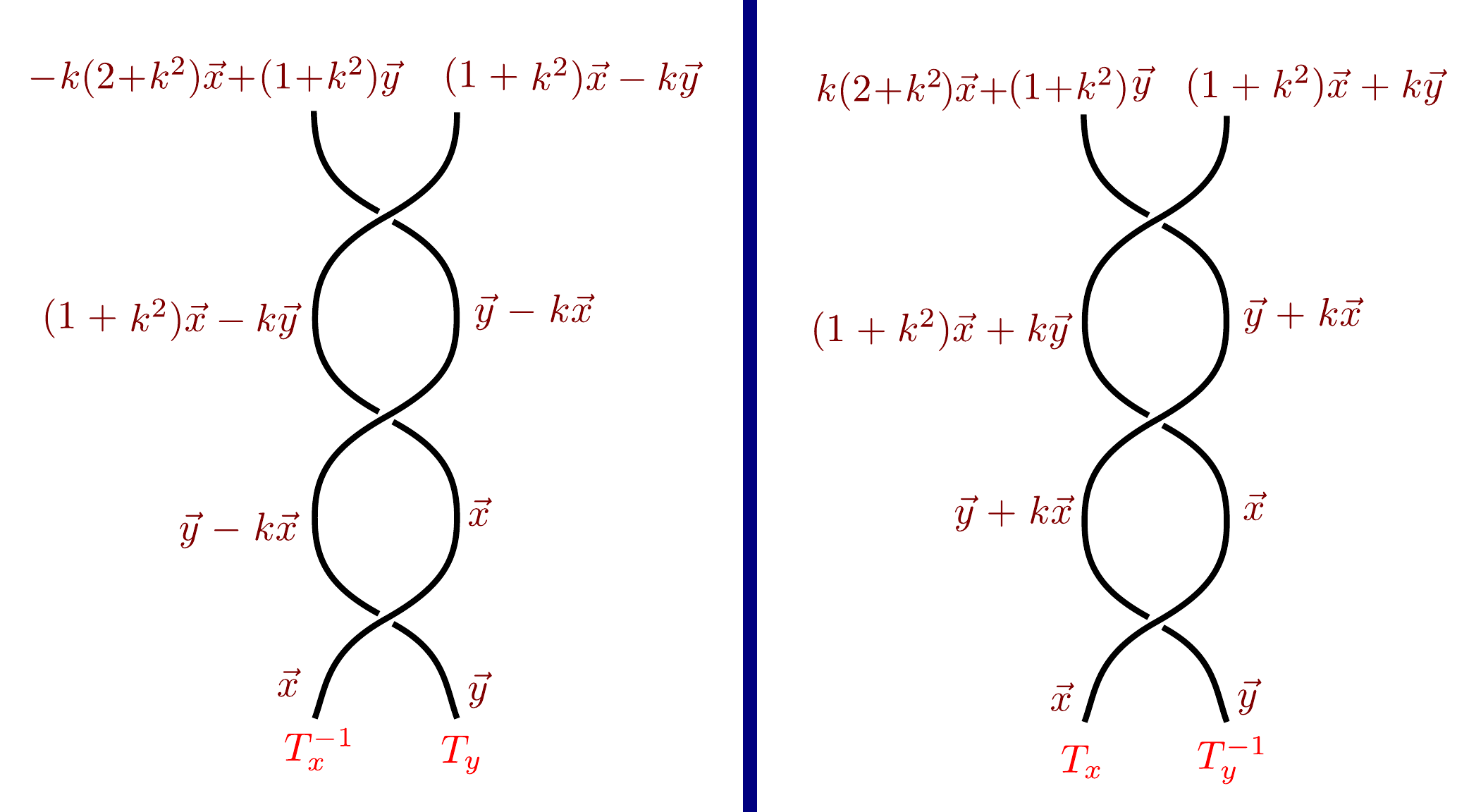}
    \caption{Labellings of twist region with homology vectors corresponding to different handed Dehn twists}
    \label{lab2}
\end{figure}
\begin{remark}\label{opp handed} We now discuss the case of two Dehn twists which have opposite handedness, see Figure \ref{lab2}.

If there are an even number of half twists, the labelling on the top left and top right strands are of the form $(1+k^2a)\vec x+ kb\vec y$ and $kb \vec x+ (1+k^2c)\vec y$ respectively for some integers $a,b,c$.
Moreover we have $a-c=\pm b$, where the sign is same as the sign of the twist region, times the exponent sum of the bottom left coloring.

If there are an odd number of half twists, the labelling on the top left and top right strands are of the form $kb\vec x+ (1+k^2a)\vec y$ and $(1+k^2a) \vec x+ kc\vec y$ respectively for some integers $a,b,c$.
Moreover we have $b-c=\pm (1+k^2a)$, where the sign is same as the sign of the twist region, times the exponent sum of the bottom left coloring.

Note that in this case if the number of half twists increase, then so do these coefficients $a,b$ and $c$. This is because the functions $C_n(k)$ and $D_n(k)$ are strictly increasing for $k\neq 0$, by the recurrence relations.

\end{remark} 

\begin{remark}
Consider $\vec x$ and $\vec y$ to have algebraic intersection number 1, and consider the two situations in Figure \ref{lab1}. We see the labellings on the top strands are $\vec x$ and $-\vec y$ for the sub figure on the left (corresponding to two right handed Dehn twists), and $-\vec x$ and $\vec y$ for the sub-figure on the right (corresponding to two left handed Dehn twists). We see the labellings at the top and bottom match up only if we allow the sign ambiguity. However, if we close up the twist region, we get a trefoil, with the standard non-trivial braid coloring. This example is to illustrate that it is necessary that we allow the sign ambiguity when we are dealing with labellings.
\end{remark}





\subsection{Pretzel Knots: Introduction} 
A pretzel link $P(q_1, q_2,...,q_m)$ has a diagram with $m$ twist regions joined up as illustrated in Figure \ref{pret}, where there are $q_i$ (which can be both positive and negative and zero) is the number of half-twists in the $i$-th region. 
\begin{figure}[H]
    \centering
    \includegraphics[width=9 cm]{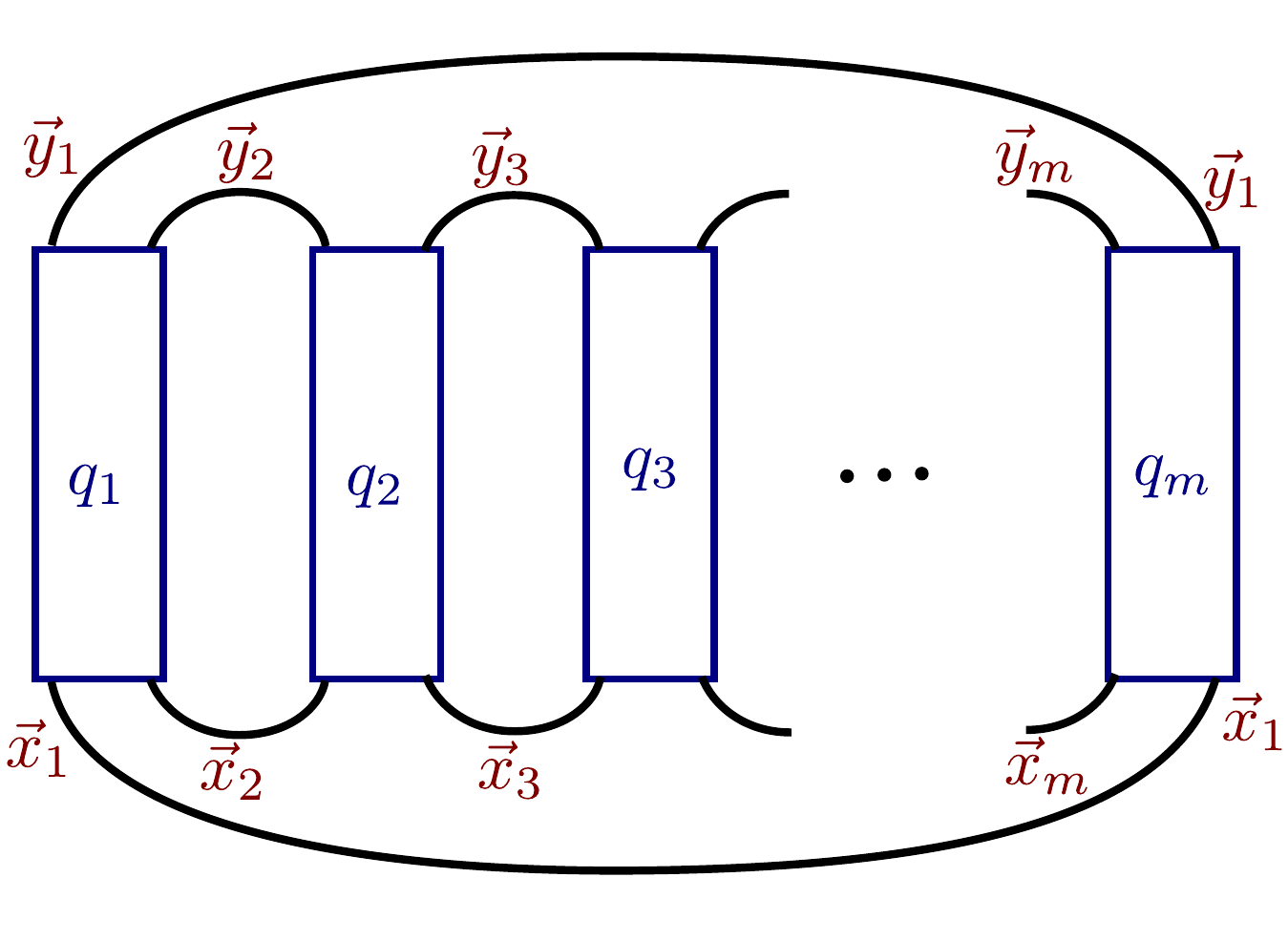}
    \caption{A pretzel link $P(q_1, q_2,...,q_m)$.}
    \label{pret}
\end{figure}
\begin{fact}

 We recall some basic facts about pretzel links.
\begin{itemize}
    \item $P(q_1, q_2,...,q_m)$ is isotopic to $P(q_2,...,q_m, q_1)$.
    \item $P(q_1, q_2,...,q_m)$ is a knot if and only if 
    either exactly one of the $q_i$ is even, or all the $q_i$'s and $m$ are odd.
    \item For a pretzel knot, if we pick any orientation on the knot, then one of the following possibilities occur:
    \begin{enumerate}
     \item in every twist region the strands go in opposite directions (this happens if and only if all the $q_i$'s and $m$ are odd);
     \item in every twist region the strands go in same directions (this happens if and only if exactly one of the $q_i$ is even and $m$ is even);
     \item in exactly one twist region the strands go in opposite direction (this happens if and only if exactly one of the $q_i$ is even and $m$ is odd).
     \end{enumerate} 
\end{itemize}
\end{fact}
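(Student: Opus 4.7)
The plan is to encode a pretzel diagram combinatorially by labelling the four endpoints of the $i$-th twist region as $(i,\mathrm{TL})$, $(i,\mathrm{TR})$, $(i,\mathrm{BL})$, $(i,\mathrm{BR})$ and recording two pairings on the resulting $4m$ endpoints: the \emph{tangle pairing}, which pairs $(i,\mathrm{TL})$ with $(i,\mathrm{BL})$ and $(i,\mathrm{TR})$ with $(i,\mathrm{BR})$ when $q_i$ is even, and pairs $(i,\mathrm{TL})$ with $(i,\mathrm{BR})$ and $(i,\mathrm{TR})$ with $(i,\mathrm{BL})$ when $q_i$ is odd; and the \emph{closure pairing} $(i,\mathrm{TR})\leftrightarrow(i+1,\mathrm{TL})$, $(i,\mathrm{BR})\leftrightarrow(i+1,\mathrm{BL})$ (indices mod $m$) coming from the top and bottom arcs of the diagram. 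For the first bullet I would redraw $P(q_1,\ldots,q_m)$ with the $m$ twist regions arranged radially about a central axis; this rearrangement is an ambient isotopy of $S^2$, and in the radial picture a rotation by $2\pi/m$ is manifestly an isotopy sending $P(q_1,\ldots,q_m)$ to $P(q_2,\ldots,q_m,q_1)$.

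For the second bullet, observe that the two pairings above depend only on the parities of the $q_i$, so the number of components of $P(q_1,\ldots,q_m)$ (equal to the number of orbits of the composition of the two pairings) is unchanged if each $q_i$ is replaced by $q_i\bmod 2$. I would then split on the reduced data: (i) if all $q_i=1$, the diagram is a standard $(2,m)$-torus link diagram and so has $\gcd(2,m)$ components, yielding a knot iff $m$ is odd; (ii) if exactly one $q_i$ vanishes, a direct trace starting at $(1,\mathrm{TL})$ shows that the single $0$-tangle acts as a ``crossover'' which merges what would be the two orbits of case~(i) into a single orbit of length $4m$, producing a knot regardless of $m$; (iii) if at least two $q_i$ vanish, an induction on the number of vanishing entries (each new $0$ splits an orbit) produces at least two orbits. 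Combining the three cases recovers the stated dichotomy.

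For the third bullet, the plan is to extract a global parity identity and then pin down the individual tangles by direct orientation-trace. Fix an orientation of the knot, let $\delta_i\in\{\pm1\}$ record whether the two strands of the $i$-th twist region are co-oriented ($+1$) or oppositely oriented ($-1$), and let $\varepsilon(i,X)\in\{\pm1\}$ indicate whether the strand enters ($+1$) or exits ($-1$) the tangle at endpoint $(i,X)$. A short check against both tangle parities gives the uniform identity $\delta_i=\varepsilon(i,\mathrm{TL})\,\varepsilon(i,\mathrm{TR})$, while the closure pairing forces $\varepsilon(i,\mathrm{TR})=-\varepsilon(i+1,\mathrm{TL})$; multiplying over $i$ and telescoping yields
\[
\prod_{i=1}^{m}\delta_i\;=\;(-1)^m\prod_{i=1}^{m}\varepsilon(i,\mathrm{TL})^2\;=\;(-1)^m,
\]
which already reproduces the parity of the number of ``opposite-direction'' tangles predicted by (1), (2), (3). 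To upgrade from the parity to the exact identification I would re-run the trace from the previous paragraph in each case: in~(1) one sees that after a single pass through the $m$ odd tangles (with $m$ odd) one returns on the opposite strand of tangle~$1$ with reversed vertical direction, which forces every $\delta_i=-1$; in~(2) a single even tangle together with $m-1$ odd tangles and $m$ even forces every $\delta_i=+1$; in~(3) the unique even tangle turns out to be the unique one with $\delta_i=-1$. The main obstacle, beyond the global parity identity, is carrying out these traces cleanly enough to pin down \emph{which} tangles have $\delta_i=-1$ in cases (1) and (3) rather than only the parity of their number; the key observation that makes the traces work is that the vertical direction of a strand alternates along consecutive closure arcs, so the second traversal of any given tangle has opposite vertical direction from the first precisely when $m$ is odd.
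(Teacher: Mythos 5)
The paper does not prove this statement at all: it is stated as a recalled \emph{Fact} about pretzel links, with no argument supplied. So the only question is whether your combinatorial scheme actually establishes it, and in outline it does. The encoding by two perfect matchings on the $4m$ tangle endpoints, the reduction of the component count to the parities of the $q_i$, the rotation argument for cyclic permutation, and the identity $\delta_i=\varepsilon(i,\mathrm{TL})\varepsilon(i,\mathrm{TR})$ together with $\varepsilon(i,\mathrm{TR})=-\varepsilon(i+1,\mathrm{TL})$, hence $\prod_i\delta_i=(-1)^m$, are all correct (I checked the sign identity against both tangle parities and both orientation patterns), as is your key observation that consecutive tangle passages alternate in vertical direction, since every tangle strand runs from a bottom endpoint to a top endpoint and closure arcs join top to top and bottom to bottom.

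Two places are asserted rather than derived, and one of them hides the single idea you are missing. First, in bullet two, case (iii), the induction step ``each new $0$ splits an orbit'' is not automatic: changing one tangle's parity can merge as well as split components in general (indeed passing from $P(1,1,1)$ to $P(0,1,1)$ changes nothing, both are knots), so you must rule out merging, which is essentially the same work as a direct trace. Second, in bullet three your parity identity only controls the number of antiparallel regions mod $2$; to pin down \emph{which} regions are antiparallel you need the structural dichotomy you never state: an odd tangle passes the traversal through to the adjacent region on the opposite side, while an even tangle reflects it back to the region it came from. With that, the visit sequence is forced: if there is exactly one even tangle the walk sweeps out to it, bounces, sweeps back, and bounces again, so the odd region at distance $k$ from the even one is visited at passage numbers $k$ and $2m-k$ (same parity, hence parallel by your alternation observation), while the even region is visited at passages $m$ and $2m$ (opposite parity iff $m$ is odd); and when all tangles are odd the walk just circles twice, visits $m$ apart, giving all regions antiparallel iff $m$ is odd. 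The same reflector picture gives the component counts cleanly (the number of components equals the number of even $q_i$ when that number is at least one, and $\gcd(2,m)$ otherwise), replacing the shaky induction. With these supplements your argument is complete and, as a bonus, identifies the unique antiparallel region in case (3) as the even twist region, which is exactly what the paper uses later in its pretzel computations.
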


\begin{claim} If we have any $G$-coloring on a pretzel link, and suppose
that for some twist region the color on the bottom left  (respectively right) strand is the same as the color on the top left (respectively right) strand, where we look at the color individually on each twist region orienting the strands from bottom to top. Then same holds in any twist region, i.e. the colors on the bottom strands is the same as the colors on the top strands (in the same order).
\end{claim}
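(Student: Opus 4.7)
The plan is to exploit the cyclic combinatorial structure of the pretzel diagram together with the twist-region Wirtinger formulas recalled earlier in this section. Label the bottom arcs cyclically as $\alpha_1,\dots,\alpha_m$, with $\alpha_i$ the arc between regions $i-1$ and $i$, and set $c_i=\phi(\alpha_i)\in G$; color the top arcs $\beta_i$ by $d_i$ analogously. Under the bottom-to-top convention, region $i$ has bottom colors $(c_i,c_{i+1})$ and top colors $(d_i,d_{i+1})$, and the hypothesis in some region $j$ reads $c_j=d_j$ and $c_{j+1}=d_{j+1}$.

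The central step is the following \emph{matching lemma}: in any region $i$, a left match $c_i=d_i$ forces the right match $c_{i+1}=d_{i+1}$. Writing $a=c_i$, $b=c_{i+1}$, and $s=(ba)^n$, the even case $q_i=2n$ gives $d_i=s^{-1}as$ and $d_{i+1}=s^{-1}bs$; the left match becomes $as=sa$, which by the identity $a(ba)^n=(ab)^n a$ rearranges to $(ab)^n=(ba)^n$, and then by $(ba)^n b=b(ab)^n$ yields $sb=bs$, so $d_{i+1}=b$. The odd case $q_i=2n+1$ gives $d_i=(as)^{-1}b(as)$ and $d_{i+1}=s^{-1}as$; the left match unpacks to $bas=asa$, and the cyclicity $sba=(ba)^{n+1}=bas$ then converts this to $sba=asa$, so right-cancelling $a$ yields $sb=as$, i.e.\ $d_{i+1}=s^{-1}as=b$. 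The negative-twist cases $q_i<0$ are handled identically after replacing the relevant powers of $ba$ by their inverses, or equivalently by applying the argument to the mirror diagram.

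To finish, I would propagate the matching around the cyclic arrangement of the pretzel. By construction the arc labeled $c_{i+1}$ (respectively $d_{i+1}$) is simultaneously the bottom-right (respectively top-right) arc of region $i$ and the bottom-left (respectively top-left) arc of region $i+1$, so the right match of region $j$ granted by the hypothesis is literally the same equation as the left match of region $j+1$. The matching lemma then upgrades this to a full match in region $j+1$, and iterating around the cycle of $m$ regions produces $c_i=d_i$ for every $i$, which is exactly the stated conclusion in every twist region.

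The main obstacle is the odd-twist case, where the two strands swap under the twist region and the algebra is less symmetric than in the even case. The clean resolution is the cyclicity identity $bas=sba$ (both equal $(ba)^{n+1}$), which makes the right-cancellation step $sba=asa\Rightarrow sb=as$ available and delivers the needed relation between the conjugating elements in one line.
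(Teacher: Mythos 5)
Your proof is correct, and its overall shape --- a local lemma in a single twist region plus cyclic propagation through the shared connecting arcs of the pretzel --- is the same as the paper's; the difference lies in how the local step is justified. The paper observes that a loop encircling the two bottom strands of a twist region is homotopic to a loop encircling the two top strands, so the product of the two meridian colors (in the appropriate order) is unchanged from bottom to top of the region; a match on one side then forces a match on the other by cancellation, with no case analysis and no appeal to the explicit twist formulas, so odd/even and positive/negative twists are handled at once. You instead prove the implication (left match $\Rightarrow$ right match) directly from the Wirtinger formulas $a^{(ba)^n}, b^{(ba)^n}$ (even) and $b^{a(ba)^n}, a^{(ba)^n}$ (odd), and your manipulations check out: in the even case $as=sa$ gives $(ab)^n=(ba)^n$ and hence $sb=bs$; in the odd case $bas=asa$ combined with $sba=bas$ gives $sb=as$, hence $s^{-1}as=b$; the negative cases follow by inverting the powers as you say. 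In effect your computation is a case-by-case verification of the paper's stronger product identity $d_{i+1}d_i=c_{i+1}c_i$, which is what the topological one-liner buys (brevity and convention-independence), while your route is self-contained algebra that simultaneously double-checks the twist-region formulas. One small point both you and the paper gloss over: the connecting arc shared by regions $i$ and $i+1$ receives opposite orientations under the two regions' bottom-to-top conventions, so its color as the right strand of region $i$ and as the left strand of region $i+1$ agree only up to inversion; since the bottom and the top connecting arcs are inverted simultaneously, the equality you propagate is unaffected, so this is a bookkeeping remark rather than a gap.
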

\begin{proof}
In any twist region, the products of the meridians (in the fundamental group of the complement) we have around the top two strands is the same as that around the bottom two strands, since a loop enclosing the bottom two strands is homotopic to the loop bounding the top two strands. 
Alternately, this can be deduced by repeated application of the Wirtinger relations, notice that the Wirtinger relation is precisely the statement in the previous sentence if the twist region has one crossing. If we are given that the $G$-coloring fixes the colors on the left strand (which is equivalent to both left and right strand), we see the same has to be true for the rightmost strand of the twist region to the left, and we get the result by repeating this observation.
\end{proof}

\begin{prop} \label{equallab}
Suppose we have a non-trivial simple $B_3$-coloring, so that the associated labelling by primitive vectors in $\mathbb{Z}^2$ on two strands on the bottom of a twist region in any pretzel knot $P(q_1,...,q_m)$ are the linearly dependent (i.e. same or negatives of each other). Then in every twist region where the coloring is non-constant must have the strands gong the same way, and the number of half twists must be a multiple of 3, and we have the pattern from Remark 
\ref{2braid} repeated in each such a twist region.
\end{prop}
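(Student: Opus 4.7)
The plan is to exploit the cyclic arc structure of the pretzel diagram together with the explicit formulas in Remarks~\ref{same handed} and~\ref{opp handed}. First, I will observe that if the bottom labels $\vec{x}, \vec{y}$ of some twist region are linearly dependent then $k = \langle \vec{x}, \vec{y}\rangle = 0$, so $T_x = T_y$ fix both labels and the twist region passes labels through: all four of its strand labels lie in a single projective class $[\vec{v}]$. Propagating this cyclically around the pretzel, using the identifications along top and bottom arcs, I will argue inductively from the hypothesis region that at every non-constant region the top-left label shares its projective class with the bottom-left, and symmetrically the top-right with the bottom-right (each non-constant region is sandwiched between class chains that must match across its left and right sides).

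I will then translate this geometric condition into algebraic constraints. For opposite-handed Dehn twists (strands going opposite ways in a region), the formulas in Remark~\ref{opp handed} give obstruction coefficients built from $C_n(k)$ and $D_n(k)$; the recurrences~\eqref{recC},~\eqref{recD} with positive initial data $C_1 = D_1 = 1$ force $C_n(k), D_n(k) \geq 1$ for all $n \geq 1$, so these coefficients cannot vanish when $k \neq 0$. Hence strands in every non-constant region must go the same way. For same-handed Dehn twists, the constraint reduces to $B_n(k) = 0$ (for $2n$ half twists) or $k^2 A_n(k) = 1$ (for $2n+1$ half twists). The latter forces $k = \pm 1$ as an integer equation, and for the former I will apply the Cayley--Hamilton recurrence $B_{n+1}(k) = (2-k^2) B_n(k) - B_{n-1}(k)$ with $B_0 = 0$, $B_1 = 1$: for integer $|k| \geq 2$ the matrix of $T_xT_y$ has trace $2-k^2$ with $|2-k^2|\geq 2$, making it parabolic or hyperbolic in $SL_2(\mathbb{Z})$, and a short induction shows $|B_n(k)|$ is strictly increasing in $n \geq 1$, so no zero can occur.

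With $k = \pm 1$ forced, $T_xT_y$ has order $6$ and $(T_xT_y)^3 = -I$, from which $B_n(\pm 1) = 0$ precisely for $3 \mid n$ and $A_n(\pm 1) = 1$ precisely for $n \equiv 1 \pmod{3}$; in both cases the number of half twists in the non-constant region is a multiple of $3$. Moreover, $(T_xT_y)^3 = -I$ means that the three primitive projective classes of $\mathbb{Z}^2/2\mathbb{Z}^2$ cycle with period $3$ as we move up through the twist region, which is exactly the color pattern of Remark~\ref{2braid}. The main obstacle in carrying out the plan is the matrix-theoretic step ruling out integer $|k| \geq 2$; the clean geometric reason is that among integer values of $k$ only $k \in \{0, \pm 1\}$ makes $T_xT_y$ elliptic (hence allowing a power to return $\vec{x}$ to $\mathrm{span}(\vec{x})$), and this follows from the trace computation together with running the Cayley--Hamilton recurrence in the parabolic and hyperbolic cases.
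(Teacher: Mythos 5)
Your proposal is correct, and its local ingredients coincide with the paper's (a region with linearly dependent bottom labels passes its single projective class through, and oppositely oriented regions are excluded because the $\vec y$-coefficient of the top-left label, $kD_n(k)$ resp.\ $1+k^2C_n(k)$, never vanishes for $k\neq 0$), but the global organization is genuinely different. The paper does not propagate projective classes around the cycle: it upgrades the projective coincidence on the left side of the adjacent non-constant region to an equality of actual braid colors --- this is what its exponent-sum step does, since for same-way strands all consistently oriented meridians have the same sign, so two half twists about the same curve with the same sign are the same element of $B_3$ --- and then invokes the earlier Claim that if colors pass through one twist region of a pretzel diagram they pass through all of them; the multiple-of-$3$ and pattern statements are then quoted from Remark~\ref{2braid}, because a same-way region whose top colors equal its bottom colors is exactly a colored closed $2$-braid. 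You instead stay at the level of labels, propagate region by region around the cycle (valid, since in a same-handed region the left-side coincidence forces $B_n(k)=0$, resp.\ $k^2A_n(k)=1$, which in turn gives the right-side coincidence), and then extract $|k|=1$ and the mod-$3$ periodicity from the Cayley--Hamilton recurrence and the order-$6$ elliptic element $T_xT_y$; this trace/recurrence analysis is correct and in effect reproves the content of Remark~\ref{2braid} instead of citing it. In a full write-up you should make two implicit points explicit: the dictionary ``strands oppositely oriented $\Leftrightarrow$ the two bottom-to-top twists have opposite handedness'' is not a tautology about labels --- it uses that the exponent sum of a simple coloring factors through $H_1$, which is precisely the role exponent sums play in the paper's proof; and negative twist regions should be reduced to positive ones by the inverse-matrix observation in the paper, which leaves your vanishing conditions unchanged. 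Finally, the mod-$2$ cycling argument only recovers the tricoloring pattern; to get the full $B_3$-pattern of Remark~\ref{2braid} add that a same-handed pair of half twists whose curves intersect once is conjugate, as a pair, to $(\sigma_1^{\pm1},\sigma_2^{\pm1})$, so the coloring of the whole region is a conjugate of the standard one.
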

 \begin{proof}
 As we have a non-trivial braid coloring, so there must be two twist regions, say $T_1$ and $T_2$ next to each other with say $T_1$ on the left having the same labellings, and $T_2$ on the right different labellings.
 We note that this twist region with different labellings must go in the same direction, as otherwise we cannot get the same label (since the functions $C_n(k)$ and $D_n(k)$ are monotonically increasing by Remark \ref{opp handed}). 
 
 
 Since we have the strands going the same way in the twist region on the right, we know that the exponent sums of the coloring top left and bottom left strands on $T_2$ must be the same, and hence it follows that the same should be true for the exponent sums of the coloring top right and bottom left strands on $T_2$. In particular, for the twist region $T_2$  the colorings on the top left strand is the same as the colorings of the bottom left strand, and now the result follows from the previous claim and Remark \ref{same handed}.
\end{proof}

As a consequence of the above proposition, when we are analyzing simple $B_3$- colorings on pretzel knots, we may assume the associated labellings in each twist regions are distinct, since otherwise we understand the coloring extremely well by the above proposition.
\begin{remark}\label{basis}
We introduce some notation for future use. Suppose we label the strands at the bottom and top with the labels $\vec x_1$,...,$\vec x_m$ and $\vec y_1$,...,$\vec y_m$ as illustrated in Figure \ref{podd}.

Let us denote by $k_i$ the algebraic intersection number
$\langle \vec x_i, \vec x_{i+1} \rangle$.

Since we assume for all $i$, $\vec x_{i-1}$ and $\vec x_{i}$ are linearly independent we can express each $x_{i+1}$ as a rational linear combination of $x_{i-1}$ and $x_{i}$, $$\vec x_{i+1}=\alpha_{i} \vec x_{i-1}+\beta_{i} \vec x_i \text { for some } \alpha_{i}, \beta_{i} \in \mathbb{Q}.$$

Notice that we have:

\begin{align}\label{eqa}
    k_i=\langle \vec x_i, \vec x_{i+1} \rangle =\langle \vec x_i, \alpha_{i} \vec x_{i-1}+\beta_{i} \vec x_i \rangle=\alpha_{i}\langle \vec x_i, \vec x_{i-1} \rangle=-\alpha_{i}k_{i-1} \text{, and}
\end{align}
\begin{align}\label{eqb}
    \langle \vec x_{i-1}, \vec x_{i+1} \rangle =\langle \vec x_{i-1}, \alpha_{i} \vec x_{i-1}+\beta_{i} \vec x_i \rangle=\beta_{i}\langle \vec x_{i-1}, \vec x_{i} \rangle=\beta_{i}k_{i-1}.
\end{align}

\end{remark}

\subsection{Pretzel knots I: three twist regions and all odd}
In this subsection, we show the first part of Theorem~\ref{thm3pret3}.
Let us consider the pretzel knot $P(p,q,r)$ with $p,q,r$ all odd. The Alexander polynomial of this knot is \cite[Example 6.9]{L}
$$\Delta_{P(p,q,r)}(t)=\frac{1}{4}((pq+qr+rp)(t^2-2t+1)+t^2+2t+1)).$$
In particular the determinant of such a pretzel knot is $\Delta_{P(p,q,r)}(-1)=pq+qr+rp$ (this formula for the determinant is true for more general pretzel knots, see \cite[Section 8]{K1}) and so pretzel knots $P(p,q,r)$ is tricolorable if and only if 3 divides $pq+qr+rp$.

It is easy to see that $\Delta_{P(p,q,r)}(t)$ is divisible by $1-t+t^2$, the Alexander polynomial of the trefoil knot (which is a pretzel knot $P(1,1,1)$ or $P(-1,-1,-1)$ depending on the handedness) if and only if $pq+qr+rp=3$. It follows from Theorem \ref{alpolydiv} that if the determinant $pq+qr+rp$ is any multiple of three different from three, then any associated tricoloring does not lift to a simple $B_3$-coloring.
In this subsection we will show that the only such pretzel knots which admit a simple $B_3$-coloring are the left and right handed trefoils.
We remark that the knot group of the trefoil (recall the knot groups of any knot in $S^3$ and it's mirror are isomorphic) is isomorphic to the braid group
$B_3$ on three strands, and this isomorphism sends meridians to half-twist, and gives us the desired braid coloring.

\begin{figure}[H]
    \centering
    \includegraphics[width=9 cm]{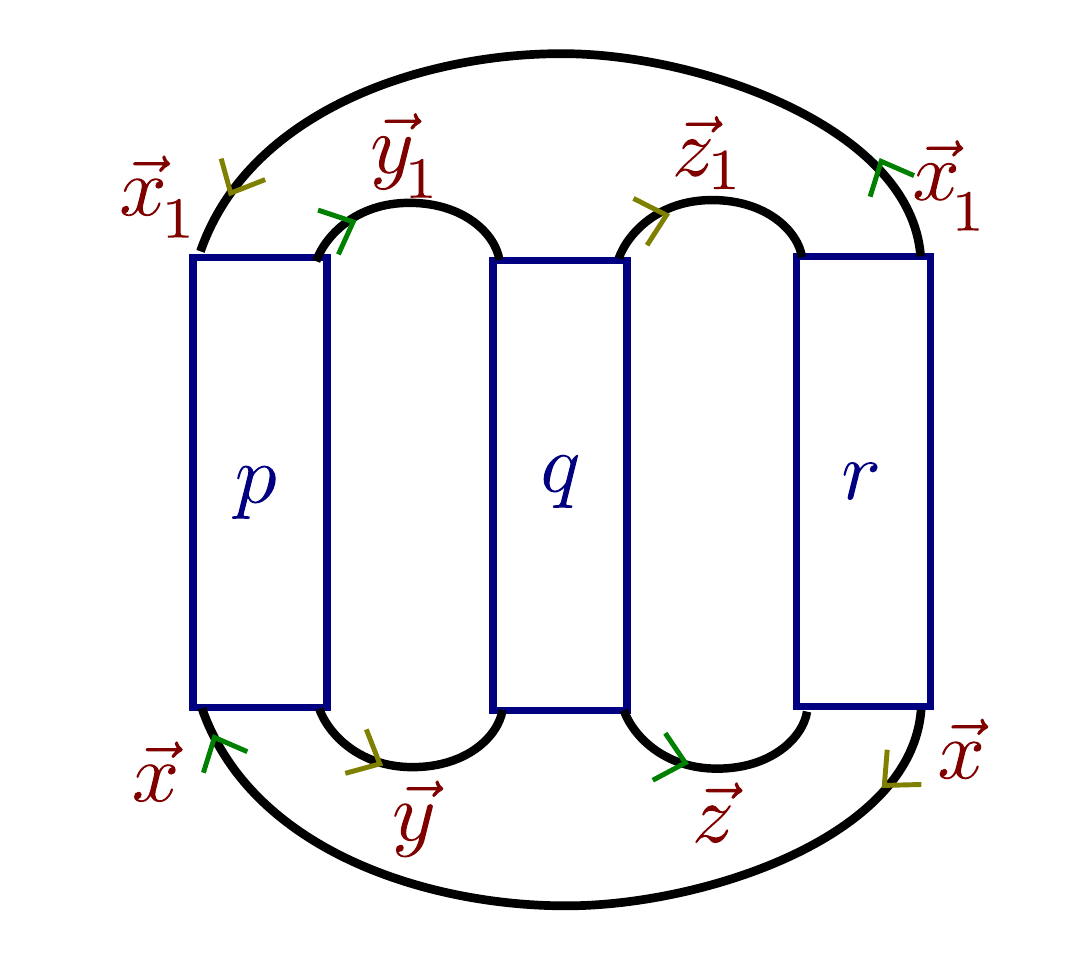}
    \caption{A three strand pretzel knot with $p,q$ and $r$ are all odd. The arrows (in different colors) indicate an orientation of the knot in each twist region.}
    \label{3odd}
\end{figure}

Suppose we have a pretzel knot $P(p,q,r)$, with the strands in the bottom labelled by $\vec x,\vec y,\vec z$ and the three strands on top labelled by $\vec x_1,\vec y_1,\vec z_1$, as illustrated in Figure \ref{3odd} , and let us suppose $\langle \vec x, \vec y \rangle =k$ , $\langle \vec y, \vec z \rangle =l$  and $\langle \vec z, \vec x \rangle =m$.

By our discussion in the last subsection, from the leftmost twist region we get:
$$\pm\vec x_1=(1+k^2a)\vec y+kb \vec x \text { and } \pm\vec y_1=(1+k^2a)\vec x+kc \vec y.$$
From the middle twist region we get:
$$\pm\vec y_1=(1+l^2d)\vec z+le \vec y \text { and } \pm\vec z_1=(1+l^2d)\vec y+lf \vec z.$$
From the rightmost twist region we get:

$$\pm\vec z_1=(1+m^2g)\vec x+mh \vec z \text { and }  \pm\vec x_1=(1+m^2g)\vec z+mi \vec x.$$

In case any two of $\vec x$, $\vec y$ and $\vec z$ are linearly dependent, any braid coloring has to be trivial by Proposition \ref{equallab}. So we may assume that we are in the non trivial case of $\vec x$ and $\vec y$ are linearly independent in $H_1(S_{1,1})\cong\mathbb{Z}^2$, and thus forms a basis for $\mathbb{Q}^2$ over the rationals, and thus we can express $\vec z$ as a rational linear combination of $\vec x$ and $\vec y$, $\vec z=\alpha\vec x+\beta\vec y$.

Note that $l=\langle \vec y, \vec z \rangle= \alpha \langle \vec y, \vec x \rangle =-\alpha k$ and
$m=\langle \vec z, \vec x \rangle= \beta \langle \vec y, \vec x \rangle =-\beta k.$\\
Comparing coefficients in the expansion of $\vec x_1$, for some $\eta_1\in\{\pm1\}$ we obtain:

\begin{align}\label{3odd1}
    \eta_1 (1+k^2a)=\beta (1+m^2g);\quad  \eta_1 kb= \alpha (1+m^2g)+mi
\end{align}
Similarly, comparing coefficients in the expansion of $\vec y_1$, for some $\eta_2\in\{\pm1\}$ we get:
\begin{align}\label{3odd2}
    \eta_2 (1+k^2a)=\alpha(1+l^2d);\quad  \eta_2 kc= \beta(1+l^2d)+le
\end{align}
Finally, comparing coefficients in the expansion of $\vec z_1$, for some $\eta_3\in\{\pm1\}$ we have:
$$\eta_3lf\alpha =(1+m^2g)+mh\alpha;\quad  \eta_3 (lf\beta+(1+l^2d))= mh\beta $$
Equivalently, we obtain:
\begin{align}\label{3odd3}
    1+m^2g=\alpha (\eta_3lf-mh) ;\quad  1+l^2d= \beta (\eta_3mh -lf)= -\eta_3\beta (\eta_3lf-mh)
\end{align}
From the equalities in the left of Equations \eqref{3odd1} and \eqref{3odd2}  we obtain:
\begin{align}\label{3oddA}
    |k|(1+k^2a)=|l|(1+l^2d)=|m|(1+m^2g)
\end{align}

\begin{claim} \label{cl1} The integers $k,l$ and $m$ are pairwise coprime.

\end{claim}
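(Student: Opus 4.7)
The approach is to combine primitivity of $\vec x, \vec y, \vec z \in \mathbb{Z}^2$ (which forces the three pairwise gcds to coincide) with the braid-coloring equations (which then force this common gcd to be $1$).

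First, after a change of coordinates by an element of $SL(2, \mathbb{Z})$, we may assume $\vec x = (1, 0)$, so $\vec y = (a, k)$ with $\gcd(a, k) = 1$ and $\vec z = (c, d)$ with $\gcd(c, d) = 1$. Direct computation gives $l = ad - kc$ and $m = -d$. Using $\gcd(a, k) = \gcd(c, d) = 1$, routine gcd manipulations yield $\gcd(k, l) = \gcd(k, d) = \gcd(k, m)$ and $\gcd(l, m) = \gcd(kc, d) = \gcd(k, m)$. Set $\delta := \gcd(k, l) = \gcd(k, m) = \gcd(l, m)$; since $\delta \mid m$, we have $\delta = \gcd(k, l, m)$ as well.

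Now suppose for contradiction that $\delta > 1$, and write $k = \delta k'$, $l = \delta l'$, $m = \delta m'$. The next step uses Equation~\eqref{3oddA} to show $\delta$ exactly divides each of $k, l, m$: for any prime $p \mid \delta$, we have $1 + k^2 a \equiv 1 \pmod p$, so $v_p(|k|(1 + k^2 a)) = v_p(k)$, and equating with the analogous expressions for $l$ and $m$ forces $v_p(k) = v_p(l) = v_p(m) = v_p(\delta)$. Hence $\gcd(k', \delta) = \gcd(l', \delta) = \gcd(m', \delta) = 1$.

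Finally, we substitute into the first equation of~\eqref{3odd3}, $k(1 + m^2 g) = l(m h - \eta_3 l f)$, to obtain after dividing by $\delta$:
$$k'(1 + \delta^2 m'^2 g) = \delta l'(m' h - \eta_3 l' f).$$
Reducing modulo $\delta$ gives $k' \equiv 0 \pmod \delta$, contradicting $\gcd(k', \delta) = 1$ and forcing $\delta = 1$. The main subtlety is the second step: without controlling that $\delta$ divides each of $k, l, m$ to exactly the first power, the modular reduction in the final step would yield only a weaker divisibility that could be consistent with $\delta > 1$.
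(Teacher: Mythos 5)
Your proof is correct, but it takes a noticeably different route from the paper's. The paper's argument never invokes primitivity of the labels: assuming $\gcd(k,l)>1$, it clears denominators in the second equality of \eqref{3odd2} to get $\eta_2k^2c=-m(1+l^2d)+kle$, deduces $\gcd(k,l)^2\mid m$ (every prime dividing $\gcd(k,l)$ divides $l$, hence not $1+l^2d$), then feeds this back through \eqref{3oddA} to get $\gcd(k,l)^2\mid k$ and $\gcd(k,l)^2\mid l$, so $\gcd(k,l)^2\mid\gcd(k,l)$, a contradiction, with the other pairs handled symmetrically. You instead start from a standalone arithmetic lemma -- for primitive $\vec x,\vec y,\vec z\in\mathbb{Z}^2$ the three pairwise gcds of the intersection numbers coincide; your $SL(2,\mathbb{Z})$ normalization computation $\gcd(k,l)=\gcd(k,m)=\gcd(l,m)=\gcd(k,d)$ is right -- then use \eqref{3oddA} via $p$-adic valuations to get $\gcd(k/\delta,\delta)=\gcd(l/\delta,\delta)=\gcd(m/\delta,\delta)=1$, and finally reduce the cleared form of the first equation of \eqref{3odd3}, namely $k(1+m^2g)=l(mh-\eta_3lf)$, modulo $\delta$; all of this checks out (note $k,l,m\neq0$ in the ambient nontrivial case, so the valuations are finite). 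What the paper's route buys is brevity and independence from primitivity of the labels; what yours buys is the reusable observation that primitivity alone already equalizes the three pairwise gcds, a fact invisible in the paper that could, for instance, be of use in the more involved coprimality argument of Claim \ref{cl2}, at the cost of a normalization step and a valuation argument where the paper has a one-line descent. One small wording slip: what your second step actually establishes, and what the final reduction uses, is $\gcd(k',\delta)=1$, i.e. $v_p(k)=v_p(\delta)$ for each prime $p\mid\delta$, not that $\delta$ divides $k,l,m$ ``to exactly the first power''; this does not affect the argument.
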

\begin{proof}
Suppose not, say $\gcd(k,l)>1$ (a similar argument works for other pairs).
From the second equality in Equation \eqref{3odd2} we get
$\eta_2k^2c=-m(1+l^2d)+kle$. It follows that $\gcd(k,l)^2$ divides $m(1+l^2d)$, and hence $\gcd(k,l)^2$ divides $m$. From Equation \eqref{3oddA}, it follows that $\gcd(k,l)^2$ divides $k(1+k^2a)$ and $l(1+l^2d)$, and consequently it divides $k$ and $l$.
Thus,  $\gcd(k,l)^2$ must divide $\gcd(k,l)$, a contradiction.
\end{proof}


Thus $k$ and $l$ are coprime integers both dividing $1+m^2g$, and we must have for some integer $\theta$,
$1+m^2g=kl\theta$. Note that by Equation \eqref{3oddA}, $\theta$ must be coprime to $k,l$ and $m$. Suppose we set $\Delta=\eta_3lf-mh$, then we note that $\Delta=-k^2\theta$. We see by using Equations \eqref{3odd1}, \eqref{3odd2} and \eqref{3odd3} that $$\frac{\eta_1\alpha}{\eta_2\beta}=\frac{1+m^2g}{1+l^2d}=-\frac {\alpha\Delta}{\eta_3 \beta\Delta}=-\frac {\alpha}{\eta_3 \beta},$$ and hence we have $\eta_1\eta_3=-\eta_2$. Now we see
$$kb+kc=\eta_1\alpha (1+m^2g)+\eta_1mi+ \eta_2\beta(1+l^2d)+ \eta_2le$$
Since we know that $\eta_1\eta_3=-\eta_2$ we have $$kb+kc=\eta_1(\alpha (1+m^2g)+mi-\eta_3\beta(1+l^2d)- \eta_3le))$$
Now taking $\eta_1$ to the other side and substituting $$\eta_1 (kb+kc)=\alpha (1+m^2g)+mi-\eta_3\beta(1+l^2d)- \eta_3le= (\alpha^2 + \beta ^2)\Delta +mi -\eta_3le$$
$$= (\alpha^2 + \beta ^2)\Delta -\Delta +m(i-h)-\eta_3 l(e-f)$$
$$= (\alpha^2 + \beta ^2)\Delta -\Delta \pm m(1+m^2g)\pm l(1+l^2d)$$
$$=\Delta (\alpha^2 + \beta ^2-1\pm \alpha m \pm \beta l)=-\theta (l^2+m^2-k^2 \pm \gamma klm) $$

\noindent Thus $\theta$ divides $kb+kc$
and $kb-kc=\pm k(1+k^2a)$. Consequently $\theta$ divides $2kb$ and $1+k^2a$, and since $\vec x_1$ is primitive, it must be the case that $\theta\in\{\pm 1, \pm 2\}.$ Thus we have:


\begin{align}\label{3oddB}
    1+k^2a=|\theta lm|,\quad 1+l^2d=|\theta km|, \quad 1+m^2g=|\theta kl|
\end{align}

\noindent Note that one of $a,d$ or $g$ is 0 if and only if $p,q$ or $r$ equals $\pm 1$, respectively. If that happens, then we claim that must have $P(p,q,r)$ is a (left or right handed) trefoil.

For instance, if $a=0$ (the argument holds similarly for $d$ or $g$ being 0), then $1=|\theta lm|$, which means $|\theta|=|l|=|m|=1$, and so $|k|=1+d=1+g$. Since $a=0$, one of $b$ or $c$ must be 0 as well. Thus by Equations \eqref{3odd1} or \eqref{3odd2}, we see that one of $e$ or $i$ has to be 1,
which in turn implies one of $d$ or $g$  has to be 0 or 1. Since we know $|k|=1+d=1+g$, $|k|$ must be either equal to 1 or 2. We now have the case $|p|=1$, and since $d=g$, we must have  $|q|=|r|$, which must lie in $\{1,3\}$, with $|k|\in\{1,2\},$ and $|l|=|m|=1$. One can check\footnote{This can be done either directly, by trying to find solution to the system of equations, or by using the Alexander polynomial criterion, or by using Theorem \ref{2bridge}. For the last method, observe that $\pm P(1,3,-3)$ is isotopic to the $\pm 6_1$ knot, and $\pm P(1,3,3)$ is isotopic to the $\pm 7_4$ knot, and clearly $\pm P(1,-1,1)$ is the unknot.} that we get non-trivial colorings only on the trefoils, $\pm P(1,1,1)$ and $\pm P(3,3,-1)$.

Without loss of generality assume $|k|\leq |l|\leq m$. 
Let us now suppose all of $a,d,g$ are at least 1. It cannot be the case that $|\theta|=1$, since otherwise $1+m^2g$ will not strictly bigger than $|kl|$.

Thus $\theta=\pm 2$, and it follows that $k,l,m$ and $a,d,g$ are odd. If $g>1$, we again see that we cannot have $1+m^2g=2|kl|$, hence $g=1$. 

If $|m|=1$, it implies $|k|=|l|=1$. Thus by Equation \eqref{3oddB} we have $a=d=g=1$, and so $|p|=|q|=|r|=3$. In this case 9 divides the determinant of $P(p,q,r)$, and consequently there cannot be a non-trivial simple $B_3$ coloring. If $|m|=3,$ then $1+m^2g=10=2|kl|$, which implies either $|k|$ or $|l|$ has to be at least $5$, contradicting the maximality of $|m|$. Thus $|m|>4$, and so $2|kl|=1+m^2g=1+m^2>4|m|$, i.e $|m|\leq \frac{|kl|}{2}$. In particular this implies $|k|\geq 2$, and thus we also have the inequalities $|l|\leq \frac{|km|}{2}$ and $|k|\leq \frac{|lm|}{2}$. Thus $\vec x, \vec y$ and $\vec z$ are 1-proportional, and hence by \cite[Theorem 1.1]{K2}, there cannot be any relations between $T_x, T_y$ and $T_z$.
Hence, we have shown:
\begin{prop} Consider the three strand pretzel knot $P(p,q,r)$ with $p,q,r$ odd. Then a non-trivial tricoloring on $P(p,q,r)$ lifts
if and only if $\pm (p,q,r)\in\{(1,1,1),(3,3,-1),(3,-1,3),(-1,3,3)\}$.

\end{prop}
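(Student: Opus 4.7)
The plan is to combine an Alexander polynomial obstruction with a direct analysis of the simple $B_3$-coloring equations coming from the pretzel diagram. Since $\Delta_{P(p,q,r)}(t) = \frac{1}{4}\big((pq+qr+rp)(t-1)^2 + (t+1)^2\big)$ is divisible by $1 - t + t^2$ if and only if $pq + qr + rp = 3$, Theorem \ref{alpolydiv} immediately rules out any tricoloring lift whenever $\det(P(p,q,r)) = pq+qr+rp$ is a multiple of three different from $3$. So I would first reduce the problem to classifying triples of odd integers $(p,q,r)$ with $pq + qr + rp = 3$ for which the tricoloring genuinely lifts.

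For the remaining triples, I would orient the knot and label the bottom strands by primitive vectors $\vec x, \vec y, \vec z$ and the top strands by $\vec x_1, \vec y_1, \vec z_1$. Applying the formulas from Remarks \ref{same handed} and \ref{opp handed} in each twist region gives three pairs of equations, one per region, expressing each $\vec x_i$ as two different linear combinations of $\vec x, \vec y, \vec z$. Writing $\vec z = \alpha \vec x + \beta \vec y$ and setting $k = \langle \vec x, \vec y\rangle$, $l = \langle \vec y, \vec z\rangle = -\alpha k$, $m = \langle \vec z, \vec x\rangle = -\beta k$, a direct coefficient comparison yields the key identity
\[
|k|(1 + k^2 a) \;=\; |l|(1 + l^2 d) \;=\; |m|(1 + m^2 g),
\]
together with sign constraints $\eta_1, \eta_2, \eta_3 \in \{\pm 1\}$ that satisfy $\eta_1 \eta_3 = -\eta_2$.

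The main obstacle is the combinatorial case analysis extracted from these equations. I would first show that $k, l, m$ are pairwise coprime by bootstrapping divisibility from the equations coming from pairs of adjacent twist regions. This lets me write $1 + m^2 g = \theta k l$ for some integer $\theta$ coprime to $k, l, m$; substituting back and combining the six equations produces $\eta_1 k(b + c) = -\theta(l^2 + m^2 - k^2 \pm \gamma k l m)$, and primitivity of $\vec x_1$ then forces $\theta \in \{\pm 1, \pm 2\}$. Next I split into two subcases. If some $a, d, g$ vanishes, equivalently some $|p|, |q|, |r|$ equals $1$, then cascading the identity forces $|k| = |l| = |m| = 1$, and a finite check (or using Theorem \ref{2bridge} on the residual two-bridge candidates such as $6_1$ and $7_4$) singles out exactly $\pm(1,1,1)$ and cyclic permutations of $\pm(3,3,-1)$. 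If instead all of $a, d, g \geq 1$, then necessarily $\theta = \pm 2$; parity constraints eliminate $|m| \leq 3$ (since otherwise $9$ would divide the determinant), and for $|m| \geq 5$ the identity yields $|m| \leq |kl|/2$ together with the analogous bounds, exhibiting $\vec x, \vec y, \vec z$ as $1$-proportional, whence \cite[Theorem 1.1]{K2} ensures that $T_x, T_y, T_z$ satisfy no nontrivial relations and no lift exists.

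Finally, for the existence direction I would verify that $\pm P(1,1,1)$ (the trefoils) lift because the trefoil group is isomorphic to $B_3$ via an isomorphism sending meridians to the Artin generators; and for the cyclic permutations of $\pm(3,3,-1)$ an explicit simple $B_3$-coloring is obtained by choosing primitive vectors saturating the equations above, which completes the classification.
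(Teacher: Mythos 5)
Your proposal is correct and follows essentially the same route as the paper: the Alexander polynomial obstruction via Theorem \ref{alpolydiv}, the labelling of strands by primitive homology vectors with the key identity $|k|(1+k^2a)=|l|(1+l^2d)=|m|(1+m^2g)$, the pairwise coprimality of $k,l,m$ forcing $\theta\in\{\pm1,\pm2\}$, the split into the case where some twist parameter vanishes (finite check identifying $\pm(1,1,1)$ and the permutations of $\pm(3,3,-1)$) versus the case $a,d,g\geq 1$ handled by the $1$-proportionality criterion of \cite[Theorem 1.1]{K2}, and the trefoil-group-is-$B_3$ argument for existence. The only cosmetic difference is how the small cases $|m|\in\{1,3\}$ are dispatched (the paper kills $|m|=3$ by a direct size contradiction rather than a determinant-divisibility argument), which does not change the substance.
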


Recall $P(p,q,r)$ is tricolorable if and only if 3 divides $pq+qr+rp$, and so the above proposition gives infinitely many tricolrings of three strand odd pretzel knots which do not lift to braid colorings.

\begin{remark}
It follows from the discussion above that the only odd three stranded pretzel knots which is isotopic to the trefoil are the four in the statement of the proposition above.
\end{remark}

We are yet to see an example of a knot with two distinct tricolorings, one of which lift to $B_3$ and the other do not. In the next subsection we will see such examples using similar techniques as our discussion above.


\subsection{Pretzel knots II: three twist regions and one even}

In this subsection, we prove the second part of Theorem~\ref{thm3pret3}.
Let us consider the pretzel knot $P(p,q,r)$ with $p$ even and $q,r$  odd. Suppose we have a diagram of $P(p,q,r)$ with the strands in the bottom labelled by $\vec x,\vec y,\vec z$ and the three strands on top labelled by $\vec x_1,\vec y_1,\vec z_1$, as illustrated in Figure \ref{2odd1e},

\begin{figure}[H]
    \centering
    \includegraphics[width=9 cm]{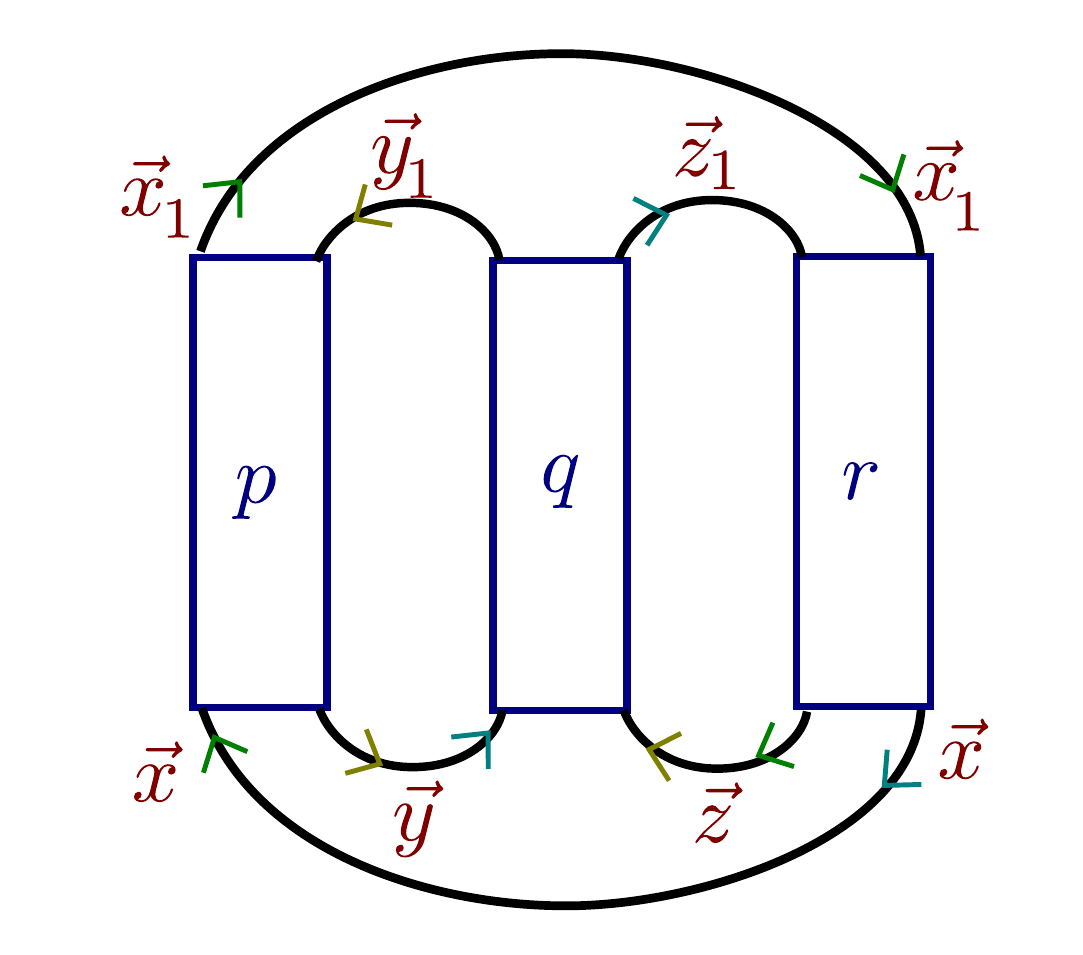}
    \caption{A three strand pretzel knot with $p$ even and $q$ and $r$ are odd, together with labellings on strands.}
    \label{2odd1e}
\end{figure}

\noindent and let us suppose $\langle \vec x, \vec y \rangle =k$ , $\langle \vec y, \vec z \rangle =l$  and $\langle \vec z, \vec x \rangle =m$. 

\noindent In the leftmost twist region we get:
$$\pm\vec x_1=(1+k^2a)\vec x+kb \vec y \text { and } \pm\vec y_1=kb\vec x+ (1+k^2c)\vec y.$$

\noindent From the middle twist region we get:
$$\pm\vec y_1=(1-l^2d)\vec z+le \vec y \text { and } \pm\vec z_1=(1-l^2d)\vec y+lf \vec z.$$

\noindent From the rightmost twist region we get:

$$\pm\vec z_1=(1-m^2g)\vec x+mh \vec z \text { and }  \pm\vec x_1=(1-m^2g)\vec z+mi \vec x.$$

In case any two of $\vec x$, $\vec y$ and $\vec z$ are linearly dependent, any braid coloring has to be trivial by Proposition \ref{equallab} or constant on the leftmost twist region. So we may assume that we are in the non trivial case of $\vec x$ and $\vec y$ are linearly independent in $H_1(S_{1,1})\cong\mathbb{Z}^2$, and thus forms a basis for $\mathbb{Q}^2$ over the rationals, and thus we can express $\vec z$ as a rational linear combination of $\vec x$ and $\vec y$, $\vec z=\alpha\vec x+\beta\vec y$.

Note that $l=\langle \vec y, \vec z \rangle= \alpha \langle \vec y, \vec x \rangle =-\alpha k$ and
$m=\langle \vec z, \vec x \rangle= \beta \langle \vec y, \vec x \rangle =-\beta k.$ Comparing coefficients in the expansion of $\vec x_1$, for some $\eta_1\in\{\pm1\}$ we obtain:

\begin{align}\label{3ev1}
    \eta_1 (1+k^2a)=\alpha (1-m^2g)+mi;\quad  \eta_1 kb= \beta (1-m^2g)
\end{align}

\noindent Similarly, comparing coefficients in the expansion of $\vec y_1$, for some $\eta_2\in\{\pm1\}$ we get:

\begin{align}\label{3ev2}
    \eta_2kb =\alpha(1-l^2d);\quad  \eta_2 (1+k^2c)= \beta(1-l^2d)+le
\end{align}

\noindent Finally, comparing coefficients in the expansion of $\vec z_1$, for some $\eta_3\in\{\pm1\}$ we have:
$$\eta_3lf\alpha =(1-m^2g)+mh\alpha;\quad  \eta_3 (lf\beta+(1-l^2d))= mh\beta $$

\noindent Equivalently, we obtain:

\begin{align}\label{3ev3}
    1-m^2g=\alpha (\eta_3lf-mh) ;\quad  1-l^2d= \beta (\eta_3mh -lf)= -\eta_3\beta (\eta_3lf-mh)
\end{align}

\noindent From the equalities in the right of Equation \eqref{3ev1} and left of Equation \eqref{3ev2}  we obtain:
\begin{align}\label{3evA}
    k^2b=|l(1-l^2d)|=|m(1-m^2g)|
\end{align}

\begin{claim} \label{cl2} The integers $k,l$ and $m$ are pairwise coprime.

\end{claim}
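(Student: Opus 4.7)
My plan is to adapt the strategy of Claim~\ref{cl1}, but the situation is more delicate in the even case because Equation~\eqref{3evA} reads $k^2 b = \pm l(1-l^2 d) = \pm m(1-m^2 g)$ rather than involving a $k(1+k^2 a)$ factor directly. The main tool throughout will be $p$-adic valuations combined with the substitutions $\alpha = -l/k$ and $\beta = -m/k$, which convert Equations~\eqref{3ev1}, \eqref{3ev2}, and~\eqref{3ev3} into integer identities. The repeated observation is that whenever a prime $p$ divides one of $k$, $l$, or $m$, the corresponding quantity $1+k^2 a$, $1-l^2 d$, or $1-m^2 g$ is $\equiv 1 \pmod{p}$ and hence $p$-adically trivial. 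I will treat the three pairs in the order $(l,m)$, $(k,m)$, $(k,l)$, since each later step exploits coprimality already established.

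For $\gcd(l,m) = 1$, I suppose for contradiction that some prime $p$ divides both and set $s := v_p(l) \geq 1$. Equation~\eqref{3evA} together with $v_p(1-l^2d) = v_p(1-m^2g) = 0$ forces $v_p(m) = s$ as well. Multiplying the first half of~\eqref{3ev3} by $k$ and using $\alpha = -l/k$ yields
\[
k(1-m^2g) \;=\; lmh - \eta_3 l^2 f,
\]
whose right-hand side has $v_p \geq 2s$; combined with $v_p(1-m^2g) = 0$ this forces $v_p(k) \geq 2s$. Substituting this into $k^2 b = -\eta_2\, l(1-l^2 d)$ (obtained from~\eqref{3ev2} after $\alpha = -l/k$) produces $v_p(b) = s - 2v_p(k) \leq -3s < 0$, contradicting $b \in \mathbb{Z}$.

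For the remaining pairs I will use a uniform ``mismatched valuations'' argument. If $p \mid \gcd(k,m)$, then $p \nmid l$ by the step above. Multiplying the first half of~\eqref{3ev1} by $k$ and using $\alpha = -l/k$ gives
\[
\eta_1 k(1 + k^2 a) \;=\; -l(1-m^2 g) + k m i,
\]
whose left-hand side has $v_p \geq 1$, while on the right the first summand has $v_p = 0$ (from $p \nmid l$ and $p \mid m$) and the second has $v_p \geq 2$, so the sum has $v_p = 0$: contradiction. The case $\gcd(k,l) = 1$ follows identically after multiplying the second half of~\eqref{3ev2} by $k$ to obtain $\eta_2 k(1+k^2 c) = -m(1-l^2 d) + k l e$ (using $p \nmid m$, which now follows from either prior coprimality).

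The main obstacle I anticipate is the first step: without the $+1$ shift present in the odd case, one cannot read coprimality directly off~\eqref{3evA}, and a naive symmetric argument between $l$ and $m$ does not close. The key is to notice that both summands on the right-hand side of the rewritten~\eqref{3ev3} individually carry valuation at least $2s$, so there is no unexpected cancellation; once this is in hand the decisive inequality $v_p(b) \leq -3s$ drops out cleanly, and the remaining two coprimalities follow by the same template.
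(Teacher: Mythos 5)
Your proof is correct and takes essentially the same route as the paper's: both first obtain $\gcd(l,m)=1$ by playing Equation~\eqref{3evA} against the identity $k(1-m^2g)=lmh-\eta_3 l^2 f$ coming from Equation~\eqref{3ev3}, and then deduce the remaining two coprimalities from the denominator-cleared forms of Equations~\eqref{3ev1} and~\eqref{3ev2} together with $\gcd(l,m)=1$. The only difference is bookkeeping: you argue prime-by-prime with $p$-adic valuations (ending in $v_p(b)<0$, respectively a valuation mismatch), while the paper encodes the same divisibilities via squared gcd's ($\gcd(k,l)^2\mid l$, $\gcd(l,m)^2\mid k$) and a coprime-combination step giving $k\mid 1-m^2g$.
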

\begin{proof} 
From Equation \eqref{3evA} we see that $\gcd(k,l)^2$ divides $l$ and $\gcd(k,m)^2$ divides $m$. Now, we observe that from Equation \eqref{3ev3} that  $k(1-m^2g)=-l(\eta_3lf-mh)$, and consequently $\gcd(l,m)^2$ divides $k$.

Suppose now $\gcd(l,m)>1$, and suppose $x$ is a prime power exactly dividing $\gcd(l,m)$, and so $x^2$ divides $k$. Without loss of generality, assume $x$ exactly divides $l$, and consequently $x$ divides $\gcd(k,l)$, and hence $x^2$ divides $l$, a contradiction. Thus $\gcd(l,m)=1$.

From Equation \eqref{3ev1}, we see that $\alpha(1-m^2g)$ and $\beta(1-m^2g)$, and consequently $k$ divides $l(1-m^2g)$ and $m(1-m^2g)$. Since $\gcd(l,m)=1$, it follows that $k$ divides $1-m^2g$, and thus $\gcd(k,m)=1$. A similar argument shows $\gcd(k,l)=1$. 
\end{proof}

Thus $k^2$ and $l$ are coprime integers both dividing $1-m^2g$, thus we must have for some integer $\theta$,
$1-m^2g=k^2l\theta$. Hence, from Equation \eqref{3ev3}, we see that $l(1-l^2d)=-\eta_3 m(1-m^2g)=-\eta_3 k^2lm\theta$, and so $1-l^2d=-\eta_3 k^2m\theta$. Similarly, from Equation \eqref{3ev1}, we see $k^2b=-\eta_1k^2lm\theta$.

Note that by Equation \eqref{3evA}, $\theta$ must be coprime to $l$ and $m$.
From Equations \eqref{3ev1}, \eqref{3ev2} and \eqref{3ev3} we see that:
$$\frac{\eta_1}{\eta_2}=\frac{\beta(1-m^2g)}{\alpha(1-l^2d)}=\frac{m(1-m^2g)}{l(1-l^2d)}=-\frac{1}{\eta_3}.$$
and hence $\eta_1\eta_2\eta_3=-1$. From Equations \eqref{3ev1} and \eqref{3ev2} we get:

$$1+k^2a+ 1+k^2c=\eta_1\alpha(1-m^2g)+\eta_1 mi + \eta_2\beta(1-l^2d)+\eta_2le$$
$$=\eta_1\alpha(1-m^2g)+\eta_1(mi-mh)+\eta_1mh + \eta_2\beta(1-l^2d)+\eta_2l(e-f)+\eta_2lf$$
$$=\eta_1\alpha(1-m^2g)-\eta_1\sgn(r)m(1-m^2g)+\eta_2\beta(1-l^2d)+\eta_2\sgn(q)l(1-l^2d)+\eta_2(lf-\eta_3mh)$$

\noindent We note that each summand on the right hand side is an integer divisible by $k\theta$, by using the formulas
$1-m^2g=k^2l\theta$, $1-l^2d=\eta_3k^2m\theta$, and $k(1-m^2g)=-l(\eta_3lf-mh)$.

Since $k\theta$ divides the sum and difference of $1+k^2a$ and $1+k^2c$, and $\vec x_1$ is primitive; it follows that $|k\theta|$ has to be either 0, 1 or 2.

Let us consider various cases:
\begin{enumerate}

    \item $m(1-m^2g)=0$ (Note that by Equation \eqref{3evA}, this contains the case $|k\theta|=0$): Since $m(1-m^2g)=0$, we see that either $m=0$, or $1-m^2g=0$, which implies $|m|=1$. In either case, the coloring by Dehn twists on the right twist region starts and ends with the same colors, hence by Proposition \ref{equallab}, the same must be true for all the twist regions.
    This means $k=0$, and $|m|,|l|$ can be either 0 or 1. The braid coloring is non-trivial if one of $m$ and $l$ is non-zero, which means the other has to be as well.
    We see that   all non-trivial braid colorings we get in this case which are constant in the leftmost twist region, and non-constant in the middle and right twist regions (but the coloring matches at the ends of either twist regions). In particular this can happen only when $q$ and $r$ are odd multiples of 3. 

    \item $|k|=1$ and $|\theta|=1$: we see  then $|1-m^2g|=|l|$ and $|1-l^2d|=|m|$. We consider the subcases: \\
    \begin{itemize}
        \item  $dg=0$: Without loss of generality let us assume $d=0$, which implies then $|m|=1$, and hence $|l|=|1-g|$.
Since $|m|=1$, and the strands are going in the same direction in the rightmost twist region, we know the pattern repeats every three half twists, we can assume $r\in\{\pm 1, \pm 3\}$. Consequently the possible values of $g$ are $0$ and $1$. Hence $|l|$ has to be 1 or 0 (which we already dealt with). So $b=1$, and hence $p=\pm 2$. 
let us assume $p=2$, by mirroring the pretzel knot if necessary. Note that $P(2,-1,-1)$, is the trefoil, and has a tricoloring which lifts to a simple $B_3$ coloring. Hence, by adding multiples of 6 to the right and middle twist regions, we obtain simple $B_3$-colorings\footnote{Note that  for these knots the determinant is $\pm 3$, and so there is only one tricoloring (up to conjugation), which lifts to a unique (up to conjugation) simple $B_3$-coloring.} on $\pm P(2, 6q_0-1,6r_0-1)$, for any integers $q_0,r_0$; and we see that no other pretzel knot has a non-trivial simple $B_3$ coloring of this type.


\item $dg\neq 0$: If $d\neq 1$, then we see $|m|=|1-l^2d|>|l|$, and similarly if $g\neq 1$, then $|l|=|1-m^2g|>|m|$. Since we cannot simultaneously have $|l|>|m|$ and $|m|>|l|$, it must be that one of $d$ or $g$ is 1. If $d=1$, we must have $|l|>1$ (otherwise $m=0$, a case we saw earlier), so $|m|=|l^2-1|>|l|$. Similarly if $g=1$ , then either $|m|=1$ (which implies $l=0$, which we saw earlier), or $|l|=|1-m^2g|>|m|$. Hence, we do not come across any new braid colorings in this subcase.

    \end{itemize}
    
    \item $|k|=1$ and $|\theta|=2$: we see then $|1-m^2g|=2|l|$ and $|1-l^2d|=2|m|$. Thus, $m,g,l$ and $d$ have to be odd.  If $g=1$, then $1-m^2g=(1-m)(1+m)$ is divisible by 4, since $m$ is odd.
    But this means $l$ is even, since $|1-m^2g|=2|l|$, which contradicts that $l$ is odd. We get a similar contradiction if $d=1$. If $g\neq 1$, then $2|l|=|1-m^2g|>m^2$, and so either $|l|>|m|$ or $|l|=|m|=1$. Similarly, if $d\neq 1$, then either $|m|>|l|$, or $|l|=|m|=1$. Thus, we have to have $|l|=|m|=1$, since we cannot simultaneously have $|l|>|m|$ and $|m|>|l|$. In case $|l|=|m|=1$, by Figure \ref{lab1} (and the fact that the labellings repeat after six half twists) the possible values of $d$ and $g$ are 0, 1 and $2$, which leads to a contradiction, as we saw above.
    Thus, we do not come across any new braid colorings in this case.
   
   \item $|k|=2$ and $|\theta|=1$:  we see then $|1-m^2g|=4|l|$ and $|1-l^2d|=4|m|$. Hence $d,g,l$ and $m$ are odd. Without loss of generality, let us assume $|l|\leq |m|$. 
   First suppose $|l|=1$, we know by Remark \ref{2braid} (since the braid coloring repeats) and Figure \ref{lab1}; the only possible values of $d$ are 0, 1 and 2. Since $d$ has to be odd, we have $d=1$, and so $4|m|=|1-l^2d|=0$, a contradiction. If $|l|\neq 1$, then $|l|\geq 3$, and so $|m|\geq |l|+2$ since $|m|$ has to be an odd integer coprime to $|l|$. Consequently, $4|l|=|m^2g-1|\geq |m^2-1|\geq (|l|+2)^2-1>4|l|$, a contradiction. Thus, we do not obtain any new braid colorings in this case.

\end{enumerate}

By combining our discussion of the various cases (and using that $P(p,q,r)$ is isotopic to $P(q,r,p)$), we have proved:
\begin{prop} Suppose we have a three strand pretzel knot $P(p,q,r)$  with one of $p,q$ or $r$ even. Then, any tricoloring on $P(p,q,r)$ lifts to a simple $B_3$-coloring if and only if the tricoloring is constant on the twist region with an even number of half twists. Moreover, there is unique (up to conjugation) lift to $B_3$, when the tricoloring does lift.
 
\end{prop}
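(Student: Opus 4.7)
The plan is to parallel the strategy used for the all-odd case in the previous proposition. Label the bottom strands of the diagram by primitive vectors $\vec x,\vec y,\vec z\in H_1(S_{1,1})\cong\mathbb{Z}^2$ (via the Birman-Hilden identification of $B_3$ with the mapping class group of the once-punctured torus) and the top strands by $\vec x_1,\vec y_1,\vec z_1$. By the cyclic symmetry $P(p,q,r)=P(q,r,p)$ I may assume $p$ is the even parameter. The input data now mixes the two flavors of twist-region formulas: Remark \ref{same handed} applied to the even left region (whose two strands run in the same direction) and Remark \ref{opp handed} applied to the two odd regions (whose strands run in opposite directions). Each top label appears as the endpoint of two adjacent twist regions, yielding two expressions that must agree up to an independent sign $\eta_i\in\{\pm1\}$.

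For the easy direction I would first dispose of the degenerate case by Proposition \ref{equallab}: if the two bottom labels of the even twist region are collinear, the tricoloring is constant there, and one obtains the lift by collapsing that region and applying the two-bridge result. For the converse, assume no two of $\vec x,\vec y,\vec z$ are collinear; then $\vec x,\vec y$ form a $\mathbb Q$-basis and $\vec z=\alpha\vec x+\beta\vec y$ with $l=-\alpha k$ and $m=-\beta k$, where $k=\langle\vec x,\vec y\rangle$, $l=\langle\vec y,\vec z\rangle$, $m=\langle\vec z,\vec x\rangle$. Comparing coefficients (with the signs $\eta_i$) yields the key chain of identities
\[k^2b=\lvert l(1-l^2d)\rvert=\lvert m(1-m^2g)\rvert,\]
together with the structural relations $1-m^2g=k^2l\theta$ and $1-l^2d=\eta_3k^2m\theta$ for an auxiliary integer $\theta$. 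An adaptation of Claim \ref{cl1} (the even region contributes a $k^2$ factor rather than $k$, but the same bootstrapping works) shows $k,l,m$ are pairwise coprime, and primitivity of $\vec x_1$ then forces $\lvert k\theta\rvert\in\{0,1,2\}$.

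The heart of the argument is a case-by-case analysis on $(\lvert k\rvert,\lvert\theta\rvert)$. The case $m(1-m^2g)=0$ returns via Proposition \ref{equallab} to the constant case. In the subcase $(1,1)$ with $dg=0$, one computes $b=\pm 1$, hence $p=\pm2$, and the periodicity from Remark \ref{2braid} forces $q,r\equiv\pm1\pmod6$, giving precisely the family $\pm P(2,6q_0-1,6r_0-1)$, which one verifies has the required constant-on-even-region property by direct labelling. The subcase $(1,1)$ with $dg\neq0$ is eliminated because $\lvert 1-m^2g\rvert$ outgrows $\lvert l\rvert$ as soon as $g\neq1$ (and symmetrically for $d$). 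The cases $(1,2)$ and $(2,1)$ are ruled out on parity grounds: $l,m,d,g$ must all be odd, but $1-m^2g=(1-m)(1+m)$ is divisible by $4$ whenever $g=1$, contradicting the magnitude relations.

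The main obstacle is the subcase $(\lvert k\rvert,\lvert\theta\rvert)=(1,1)$ with $dg\neq 0$, where one must simultaneously exploit $\lvert l\rvert=\lvert 1-m^2g\rvert$ and $\lvert m\rvert=\lvert 1-l^2d\rvert$ together with the monotonicity of $n\mapsto\lvert 1-n^2 s\rvert$ for $\lvert n\rvert\geq 2$; this is the only case that truly needs both odd-region equations in tandem. Uniqueness of the lift then follows because collapsing the constant even twist region reduces the diagram to a two-bridge one, and Theorem \ref{2bridge} combined with Remark \ref{2braid} shows that any simple $B_3$-lift of such a configuration is unique up to conjugation.
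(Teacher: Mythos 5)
Your overall skeleton is the paper's: the labelling by primitive vectors, the identity $k^2b=|l(1-l^2d)|=|m(1-m^2g)|$, the coprimality of $k,l,m$, the bound $|k\theta|\in\{0,1,2\}$, and the ensuing case analysis. But two of your steps fail as stated. First, the orientation bookkeeping is reversed: for $P(p,q,r)$ with $p$ even and $q,r$ odd (three twist regions, an odd number), it is the \emph{even} region whose two strands are oppositely oriented, so Remark \ref{opp handed} applies there, while the two odd regions are co-oriented and governed by Remark \ref{same handed}; you asserted the opposite. Your displayed equations happen to carry the correct signs ($1+k^2a$ type on the even region, $1-l^2d$ and $1-m^2g$ on the odd ones), but these signs are exactly what distinguishes this case from the all-odd case, so the mismatch between your setup and your equations must be repaired. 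Second, and more seriously, in the subcase $(|k|,|\theta|)=(1,1)$ with $dg=0$ you claim the family $\pm P(2,6q_0-1,6r_0-1)$ ``has the required constant-on-even-region property by direct labelling.'' It does not: in that subcase $|k|=1$ is odd, so by Claim \ref{odd} the two strands of the even region carry \emph{different} transpositions, i.e.\ the lifting tricoloring is non-constant there (indeed for $P(2,-1,-1)$, the trefoil, any tricoloring that is constant on the $2$-crossing region is forced to be constant on the whole diagram). The paper treats this subcase by simply exhibiting the lifts — the trefoil coloring on $P(2,-1,-1)$, propagated by adding multiples of six half twists to the odd regions — rather than by folding them into the constancy criterion; your proposed ``direct verification'' is the one step of the ``only if'' direction that cannot be carried out as described.

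The auxiliary device of ``collapsing the constant even twist region to reduce to a two-bridge diagram,'' which you invoke both for the ``if'' direction and for uniqueness, is also unjustified: deleting a twist region changes the knot and its group (e.g.\ $P(0,q,r)$ is a connected sum of torus links), so neither existence nor uniqueness of lifts transfers across that move; moreover constancy of the tricoloring on the even region does not by itself force the two braid labels there to coincide. The paper argues these points directly from the twist-region patterns: the constant-on-even-region lifts are produced (and constrained) in the case $m(1-m^2g)=0$ via Proposition \ref{equallab}, which forces $k=0$, $|l|=|m|=1$ and $q,r$ to be odd multiples of $3$, with the period-three pattern of Remark \ref{2braid} supplying both the lift in each odd region and its uniqueness up to conjugation — no passage to a two-bridge knot is made or needed.
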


We know \cite[Section 8]{K1} that if $p,q,r$ are all multiples of 3, then there are four inequivalent tricolorings, depending on how many twist regions have non-constant tricolorings. We illustrate this with the four colorings on the pretzel knot $P(3,3,6)$, see Figure \ref{p336}.
\begin{figure}[H]
    \centering
    \includegraphics[width=11 cm]{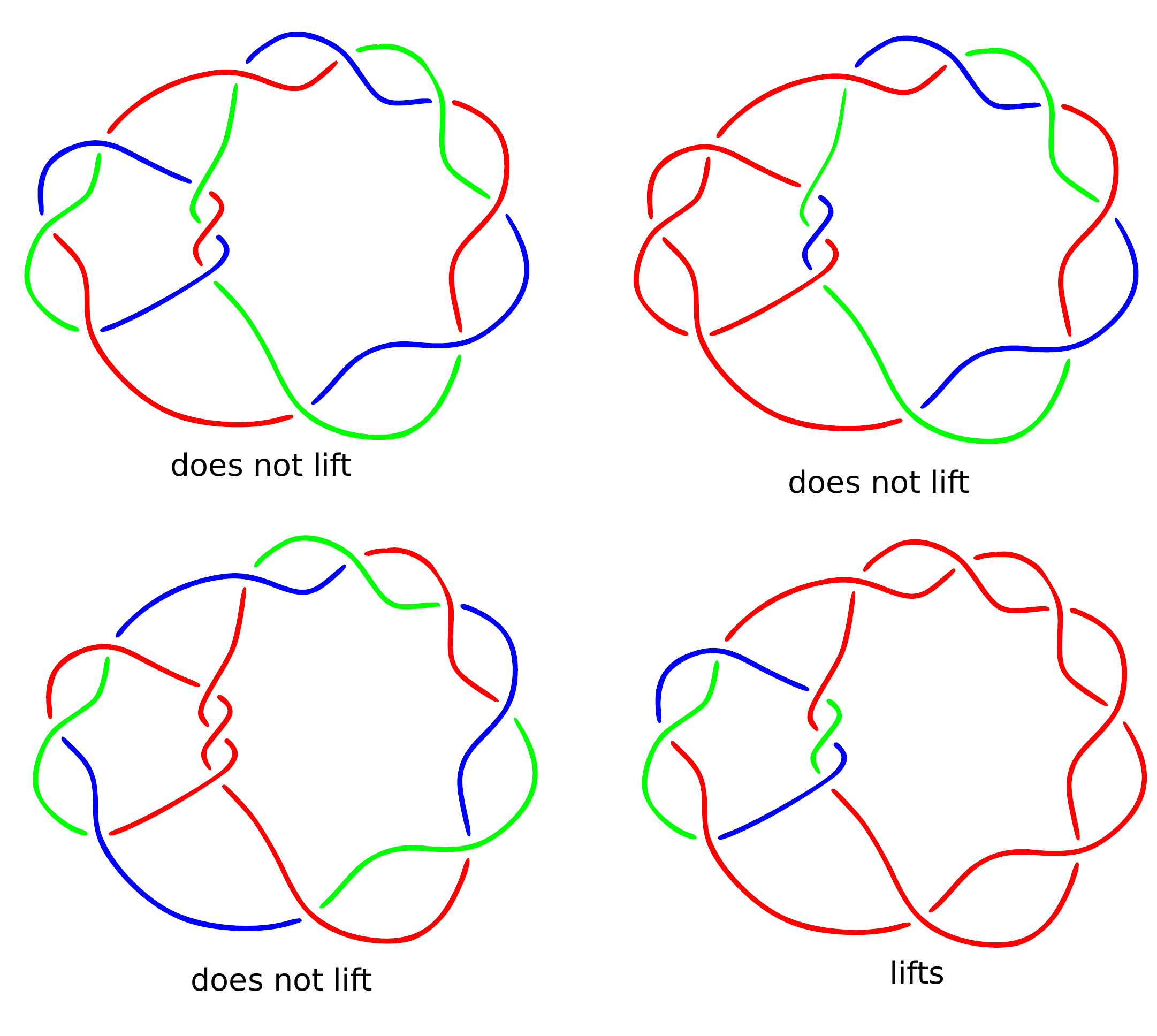}
    \caption{Four tricolorings of $P(3,3,6)$, of which only the last one lifts to a simple $B_3$-coloring.}
    \label{p336}
\end{figure}

By the Proposition above, we see that three of these tricolorings do not lift, while the fourth does.
More generally, we have:
\begin{prop} Consider the pretzel knot $P(p,q,r)$ with two of $p,q,r$ odd multiples of 3, and the third a multiple of 6. Then exactly one among the four inequivalent tricolorings of $P(p,q,r)$ lift to a simple $B_3$-coloring.
\end{prop}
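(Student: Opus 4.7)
The plan is to deduce this statement directly from the preceding proposition, combined with a short enumeration of tricolorings on pretzels with every parameter divisible by $3$. By the cyclic symmetry $P(p,q,r) \cong P(q,r,p)$, I may assume without loss of generality that $r$ is the multiple of $6$, so that $r$ is the unique even parameter while $p,q$ are odd multiples of $3$. The preceding proposition then tells us a tricoloring lifts to a simple $B_3$-coloring if and only if it is constant on the twist region with $r$ half twists, and the task reduces to counting how many of the four inequivalent tricolorings have this property.

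For the enumeration, recall that on any twist region the tricoloring is either constant (both strands carrying the same transposition) or non-constant, and the non-constant case requires the number of half twists to be a multiple of $3$: since two distinct transpositions $a,b \in S_3$ satisfy $(ab)^3 = e$, the coloring pattern in such a twist region closes up only every three half twists. With each of $p,q,r$ a multiple of $3$, any combination of constant/non-constant on the three regions is realizable, and the tricoloring is parametrized by three transpositions $\vec u_1, \vec u_2, \vec u_3 \in S_3$ placed on the three top horizontal arcs, with $\vec u_i$ joining the tops of twist regions $i$ and $i+1$ modulo $3$. Under this parametrization, twist region $i$ is constant precisely when $\vec u_{i-1} = \vec u_i$.

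A routine orbit count for the simultaneous $S_3$-conjugation action on triples of transpositions yields five orbits: one where all three $\vec u_i$ are equal (the trivial tricoloring), three where exactly one pair of $\vec u_i$ coincides, and one where the three $\vec u_i$ are pairwise distinct. The four non-trivial orbits are exactly the four inequivalent tricolorings referenced from \cite[Section~8]{K1}: three of them make exactly one twist region constant, with the specific region determined by which pair of indices coincides, and the fourth makes no twist region constant. The preceding proposition's criterion demands that twist region $3$ (the $r$-region) be the constant one, which is satisfied by precisely one orbit, represented by $\vec u_2 = \vec u_3 \neq \vec u_1$. The remaining three non-trivial orbits either make a different twist region constant or none at all, and hence fail to lift. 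The main content is already carried by the preceding proposition; this corollary is essentially a bookkeeping step, and the only non-trivial task, the orbit count, is elementary.
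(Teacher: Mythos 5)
Your argument is correct and follows the paper's route exactly: the paper likewise deduces the statement from the preceding proposition (lifting iff the tricoloring is constant on the even twist region) together with the classification of the four inequivalent tricolorings when all of $p,q,r$ are multiples of $3$, which it cites from \cite[Section~8]{K1} and illustrates with $P(3,3,6)$. The only difference is that you carry out the small orbit count of triples of transpositions yourself instead of quoting that reference, which is a harmless and correct bit of bookkeeping.
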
 

Thus we see that the answer to Question \ref{q1} is "Yes", and in fact there are infinitely many such examples.

In the next two subsections, we will study the lifting problem for more general pretzel knots, however we cannot give a complete characterization for the lifting problem 
because we cannot show analouge's of Claims \ref{cl1} and \ref{cl2}.

 



%

\subsection{Pretzel Knots III: more than three twist regions and all odd}
In this subsection, we show the part of Theorem~\ref{thm3pretm}.
Suppose we have a pretzel knot $P(q_1,...,q_m)$ with each $q_i$ and $m$ odd. Note that in this case if we pick any orientation on the knot, the two strands in each twist region are oppositely oriented.
Suppose we label the strands at the bottom and top with the labels $\vec x_1$,...,$\vec x_m$ and $\vec y_1$,...,$\vec y_m$, as illustrated in Figure \ref{podd}.

\begin{figure}[H]
    \centering
    \includegraphics[width=9 cm]{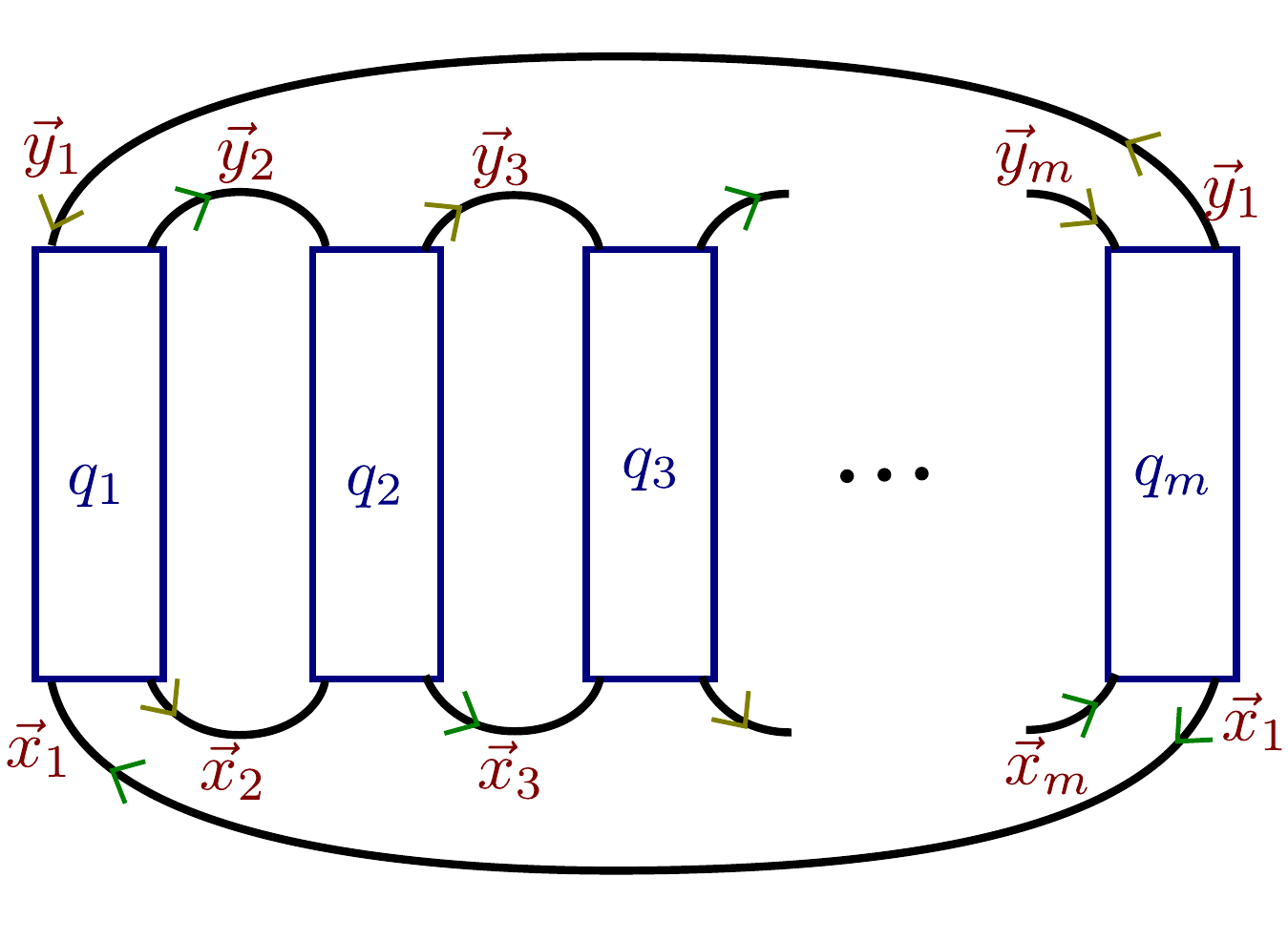}
    \caption{}
    \label{podd}
\end{figure}

Let us denote by $k_i$ the algebraic intersection number
$\langle \vec x_i, \vec x_{i+1} \rangle$. In the $i$-th twist region, we see by Remark \ref{opp handed} that $$\pm \vec y_{i}= k_ib_i\vec x_i +(1+k_i^2a_i)\vec x_{i+1} $$
$$\pm \vec y_{i+1}=(1+k_i^2a_i)\vec x_{i}+ k_ic_i\vec x_{i+1} $$

\noindent Using notation as introduced in Remark \ref{basis}, we have $$\vec x_{i+1}=\alpha_{i} \vec x_{i-1}+\beta_{i} \vec x_i \text { for some } \alpha_{i}, \beta_{i} \in \mathbb{Q}.$$

\noindent Consequently, $$ k_ib_i\vec x_i +(1+k_i^2a_i)\vec x_{i+1} = \alpha_{i}(1+k_i^2 a_i)\vec x_{i-1}+(\beta_{i}(1+k_i^2 a_i)+ k_ib_i)\vec x_i$$


\noindent Equating the labels $\vec y_{i}$ coming from the $(i-1)$-th and $i$-th twist regions we see that for some sign $\eta_i \in \{\pm 1\}$

$$\eta_i [(1+k_{i-1}^2a_{i-1})\vec x_{i-1}+ k_{i-1}c_{i-1}\vec x_{i}]= \alpha_{i}(1+k_i^2 a_i)\vec x_{i-1}+(\beta_{i}(1+k_i^2 a_i)+ k_ib_i)\vec x_i$$

\noindent Comparing the coefficients of $\vec x_{i-1}$ and $\vec x_{i}$ respectively we obtain
\begin{align}\label{eq1}
    \eta_i (1+k_{i-1}^2a_{i-1})=\alpha_{i}(1+k_i^2 a_i) \text {  and  } 
    \eta_ik_{i-1}c_{i-1}=\beta_{i}(1+k_i^2 a_i)+ k_ib_i
\end{align}

\noindent From the equality on the left we see that $\eta_i k_{i-1}(1+k_{i-1}^2a_{i-1})=-k_i(1+k_i^2 a_i)$. Since the $a_i$'s are all positive (we have Type III twist regions), we get $$|k_{i-1}|(1+k_{i-1}^2a_{i-1})=|k_i|(1+k_i^2 a_i) \text { for all } 1\leq i \leq m, \text{ and hence we have}$$
\begin{align}\label{eq2}
|k_{j}|(1+k_{j}^2a_{j})=|k_i|(1+k_i^2 a_i)\text {  for all } 1\leq i,j \leq m; 
\end{align}

\noindent  and let us denote this quantity by $K$.  Also, we have $\eta_ik_{i-1}$ has the same sign as $-k_i$. Rewriting the second equality from Equation \eqref{eq1}, we have
 

\begin{align}\label{eq3}\eta_ik_{i-1}c_{i-1}-k_ib_i=\beta_{i}(1+k_i^2 a_i) \end{align}
Since $a_i,b_i,c_{i-1}$ are all positive, we have

\begin{align}\label{eq4}
|k_{i-1}|c_{i-1}+|k_i|b_i=|\beta_{i}|(1+k_i^2 a_i)=K\frac{|\beta_i|}{|k_i|}.
\end{align}

\begin{prop} For the pretzel knot $P(q_1,...,q_m)$ with each $q_i$ and $m$ odd, if $q_j=\pm q_{j+1}$ for some $j$ with $|q_j|\geq 3$ and $|q_p|>3$ for some $p$, then any simple $B_3$-coloring on $P(q_1,...,q_m)$ must be trivial. 

\end{prop}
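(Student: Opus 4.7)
The plan is to leverage the invariant $K = |k_i|(1 + k_i^2 a_i)$ from Equation \eqref{eq2} in two stages: first using the symmetry hypothesis $q_j = \pm q_{j+1}$ to force $|k_j| = |k_{j+1}|$, and then comparing against the twist region with $|q_p| > 3$ to derive a contradiction with non-triviality.

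By the cyclic symmetry $P(q_1, \ldots, q_m) \cong P(q_2, \ldots, q_m, q_1)$, I may relabel indices so that $j = 1$. Writing $|q_1| = |q_2| = 2n+1$ with $n \geq 1$, the formulas for labellings in twist regions with opposite-handed Dehn twists (Remark \ref{opp handed}) give $a_1 = C_n(k_1)$ and $a_2 = C_n(k_2)$, where $C_n$ is the polynomial from Equation \eqref{recC}; note that $a_i$ is insensitive to the sign of $q_i$ because $C_n$ is a polynomial in $k^2$. The recursion shows that $C_n$ has non-negative integer coefficients with $C_n(0) \geq 1$ for $n \geq 1$, so the map $f(k) := |k|(1 + k^2 C_n(k))$ is strictly increasing on positive integers. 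The equation $f(|k_1|) = K = f(|k_2|)$ then forces $|k_1| = |k_2|$, which I denote $\kappa$.

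Suppose for contradiction that the simple $B_3$-coloring is non-trivial. Proposition \ref{equallab} combined with the odd-pretzel hypothesis (strands in every twist region go in opposite directions) rules out any vanishing $k_i$, since a non-constant odd twist region must have linearly independent bottom labels; so $k_i \neq 0$ for all $i$. At the position $p$ with $|q_p| > 3$, we have $n_p := (|q_p|-1)/2 \geq 2$ and therefore $a_p = C_{n_p}(k_p)$ with $C_{n_p}(k) \geq C_2(k) = 3 + k^2$. Moreover, the recursion implies that $C_{n_p}$ strictly dominates $C_n$ coefficient-wise whenever $n_p > n$. Comparing $\kappa(1 + \kappa^2 C_n(\kappa)) = |k_p|(1 + k_p^2 C_{n_p}(k_p))$ pins $|k_p|$ very tightly relative to $\kappa$, and combined with the integer relation \eqref{eq4}, namely $|k_{i-1}|c_{i-1} + |k_i|b_i = K |\beta_i|/|k_i|$, together with primitivity of each $\vec{x}_i$, I expect to force some $k_i = 0$, yielding the desired contradiction.

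The main obstacle is closing the contradiction in the smallest case $n = 1$ (that is, $|q_1| = |q_2| = 3$), where the constraint $|k_1| = |k_2|$ is relatively loose and the arithmetic leaves room for small integer solutions. Here I anticipate needing to supplement the arithmetic argument with \cite[Theorem 1.1]{K2} on subgroups of $B_3$ generated by Dehn twists about $1$-proportional curves in the once-punctured torus, which guarantees that such a subgroup is a free product and hence admits no non-trivial relation compatible with the pretzel knot Wirtinger presentation, thereby closing the final case.
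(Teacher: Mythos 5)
Your opening step is the same as the paper's: using \eqref{eq2} and the monotonicity of $k\mapsto |k|(1+k^2C_n(k))$ to conclude $|k_j|=|k_{j+1}|$. But after that the proposal stops proving things: the phrases ``pins $|k_p|$ very tightly,'' ``I expect to force some $k_i=0$,'' and ``I anticipate needing to supplement'' mark exactly where the actual argument should be, and a growth-rate comparison between the regions $j$ and $p$ does not by itself rule out integer solutions of $K=|k_p|(1+k_p^2C_{n_p}(k_p))$. The missing idea is an integrality/primitivity argument at the two \emph{adjacent equal} twist regions. Since $|q_j|=|q_{j+1}|$ and the coefficient polynomials are even in $k$, the equality $|k_j|=|k_{j+1}|$ gives $a_j=a_{j+1}$, $b_j=b_{j+1}$, $c_j=c_{j+1}$. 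Then $k_j\beta_{j+1}$ is an integer by \eqref{eqb}, and \eqref{eq3} shows $\beta_{j+1}(1+k_{j+1}^2a_{j+1})$ is an integer; as $k_j$ is coprime to $1+k_j^2a_j=1+k_{j+1}^2a_{j+1}$, this forces $\beta_{j+1}\in\mathbb{Z}$. Feeding this back into \eqref{eq3} (and using positivity of the $b$'s and $c$'s to fix signs) shows $1+k_j^2a_j$ divides $c_j+b_{j+1}=c_j+b_j$, while Remark \ref{opp handed} gives $b_j-c_j=\pm(1+k_j^2a_j)$; hence $1+k_j^2a_j$ divides $2b_j$. Primitivity of $\pm\vec y_j=k_jb_j\vec x_j+(1+k_j^2a_j)\vec x_{j+1}$ then forces $1+k_j^2a_j\in\{1,2\}$. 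The value $1$ gives $k_j=0$ (constant coloring) or $a_j=0$, i.e. $q_j=\pm1$, excluded by $|q_j|\geq3$; the value $2$ gives $|k_j|=1$, $a_j=1$, $|q_j|=3$, so $K=2$, and then \eqref{eq2} forces every region to satisfy $|k_i|=1,|q_i|=3$ or $|k_i|=2,|q_i|=1$, contradicting $|q_p|>3$. Without some version of this divisibility step your sketch establishes only the preliminary lemma $|k_j|=|k_{j+1}|$.

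You also mislocate the difficulty. The case $|q_j|=|q_{j+1}|=3$ is not a stubborn residual case needing \cite[Theorem 1.1]{K2}: it is precisely the branch $1+k_j^2a_j=2$ above, and it is eliminated by the hypothesis $|q_p|>3$, not by $1$-proportionality. Moreover, the $1$-proportionality theorem is used in the paper only in the three-strand all-odd analysis, and only after explicit inequalities ($|m|\leq|kl|/2$, etc.) are established for the three specific curves involved; for a general odd $m$ you would need analogous inequalities among all the labels $\vec x_1,\dots,\vec x_m$, which your outline does not supply, and it is not clear the cited theorem even applies to more than three twists. So as written the proposal has a genuine gap at the heart of the proof, and its proposed fallback would not close it.
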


\begin{proof} Suppose not, let us assume there is a non-constant coloring on such a pretzel knot.
From Equation \eqref{eq2}, we have $|k_{j}|(1+k_{j}^2a_{j})=|k_{j+1}|(1+k_{j+1}^2 a_{j+1})$, or in other words
$|k_j|(1+C_{q_j}(k_j))=|k_{j+1}|(1+C_{q_j}(k_{j+1}))$. Since $C_n(k)$ is a single variable polynomial with only even degrees of $k$ appearing and all coefficients are positive, it follows that if $|k_j|<|k_{j+1}|$, then $|k_j|(1+C_{q_j}(k_j))=|k_{j+1}|(1+C_{q_j}(k_{j+1}))$, and a similar result for $|k_j|>|k_{j+1}|.$ Thus it has to be the case that $|k_j|=|k_{j+1}|$.
If $k_j=\pm k_{j+1}$, then we have 
$b_j=b_{j+1}$ and $c_j=c_{j+1}$.
Thus $b_j-c_j=\pm(1+k_{j}^2a_{j}).$


We see from Equation \eqref{eqb} that $k_j\beta_{j+1}$ is an integer, and from Equation \eqref{eq2} that $(1+k_{j+1}^2a_{j+1})\beta_{j+1}$ is an integer. Since $k_j=\pm k_{j+1}$, we have 
$k_j$ and $(1+k_{j+1}^2a_{j+1})$ are coprime, and thus $\beta_{j+1}$ is an integer. 

We see from Equation \eqref{eq3} that $c_j+b_{j+1}$ is divisible by $(1+k_{j+1}^2a_{j+1})=(1+k_{j}^2a_{j})$. So we see that both $b_j+c_j$ and $b_j-c_j$ are divisible by $1+k_{j}^2a_{j}$, and thus so is $2b_j$. This means $1+k_{j}^2a_{j}$ must equal 1 or 2, since $k_jb_j\vec x_j +(1+k_j^2a_j)\vec x_{j+1}$ must be primitive. 

If $1+k_{j}^2a_{j}=1$, then either $k_j=0$ (this implies that we have a constant braid coloring, since for any $i$, $k_i(1+k_i^2a_i)=k_j(1+k_j^2a_j)=0$) or $a_j=0$ (this implies $q_j=\pm 1$, which is ruled out by the hypothesis).

If $1+k_{j}^2a_{j}=2$, then we must have $|k_j|=1$, $a_j=1$  and $|q_j|= 3$, and thus $K=2$.
It follows that for all $i$ we must have 
$|k_i|=1$ and $|q_i|= 3$, or $|k_i|=2$ and $|q_i|=1$.

Since we know that $|q_p|>3$ for some $p$, we have a contradiction.

\end{proof}

\begin{prop}
For the pretzel knot $P(q_1,...,q_m)$ with each $q_i$ and $m$ odd, if all the $q_i$  are of the same sign, then any simple $B_3$-coloring on $P(q_1,...,q_m)$ has to be the trivial coloring.
\end{prop}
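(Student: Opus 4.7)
My plan is to adapt the setup of the preceding proof to the all-same-sign hypothesis and derive a contradiction by combining sign constraints with the cyclic closure of the pretzel. Assume for contradiction that a non-trivial simple $B_3$-coloring on $P(q_1, \ldots, q_m)$ exists; by mirroring we may take each $q_i > 0$. Proposition~\ref{equallab} reduces us to the case in which every pair $\vec{x}_i, \vec{x}_{i+1}$ of consecutive bottom labels is linearly independent. I then invoke the machinery already developed in this subsection: the recursion $\vec{x}_{i+1} = \alpha_i \vec{x}_{i-1} + \beta_i \vec{x}_i$ with $\alpha_i = -k_i/k_{i-1}$, together with the constant
\[
K \;:=\; |k_i|\bigl(1 + k_i^2 a_i\bigr),
\]
independent of $i$, coming from Eq.~\eqref{eq2}.

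The key new ingredient is the uniformity of signs: because each $q_i > 0$, the coefficients $a_i, b_i, c_i$ are all non-negative, and strictly positive when $|q_i| > 1$. Reading Eq.~\eqref{eq3} under these sign constraints yields $\sgn(\beta_i) = -\sgn(k_i)$, hence $\beta_i/k_i < 0$ uniformly in $i$. Together with $\alpha_i = -k_i/k_{i-1}$, this forces all the recursion matrices $M_i = \begin{pmatrix} 0 & 1 \\ \alpha_i & \beta_i \end{pmatrix}$ to share a consistent sign pattern once a common orientation is fixed on the $\vec{x}_i$.

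The core of the argument is then to show that each consecutive triple $(\vec{x}_{i-1}, \vec{x}_i, \vec{x}_{i+1})$ is 1-proportional in the sense of \cite[Theorem~1.1]{K2}. The required bounds are precisely those supplied by Eq.~\eqref{eq4}: the intersection number $|\beta_i k_{i-1}| = |\langle \vec{x}_{i-1}, \vec{x}_{i+1}\rangle|$ is controlled from above in terms of $|k_{i-1}|$ and $|k_i|$ by $b_i, c_{i-1} \geq 1$ and the constancy of $K$, while $|k_{i-1}|$ and $|k_i|$ are themselves kept away from zero by that same constancy. Once 1-proportionality is established on every consecutive triple, \cite[Theorem~1.1]{K2} asserts that $T_{\vec{x}_{i-1}}, T_{\vec{x}_i}, T_{\vec{x}_{i+1}}$ satisfy no non-trivial relation in the once-holed torus mapping class group; chaining these statements around the cyclic pretzel rules out the relations that a $B_3$-coloring would have to satisfy, yielding the desired contradiction.

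The hard part will be handling the degenerate configurations where some $|q_i| = 1$ (forcing $a_i = 0$ and one of $b_i, c_i$ to vanish) or some $|k_i|$ is too small for 1-proportionality to apply on a particular triple. In both situations, the constancy $|k_i|(1 + k_i^2 a_i) = K$ together with the same-sign structure drastically constrain the configuration: one is reduced to a short finite list of possibilities, and each can be eliminated either by direct bookkeeping (mirroring the end of the three-strand analysis in the previous subsection) or by the Alexander-polynomial obstruction of Theorem~\ref{alpolydiv}, after checking that $1 - t + t^2$ does not divide $\Delta_{P(q_1,\ldots,q_m)}(t)$ in those residual small cases.
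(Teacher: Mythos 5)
Your setup (the labels $\vec x_i$, the constant $K$, $\alpha_i=-k_i/k_{i-1}$, and the sign analysis of Eq.~\eqref{eq3}) matches the paper's, but the core of your argument --- proving 1-proportionality of every consecutive triple and then citing \cite[Theorem~1.1]{K2}, ``chained'' around the pretzel --- has a genuine gap, on two counts. First, the quantitative input goes the wrong way: Eq.~\eqref{eq4} reads $|k_{i-1}|c_{i-1}+|k_i|b_i=K|\beta_i|/|k_i|$, which yields a \emph{lower} bound $|\beta_i|\ge |k_i|$ on $|\langle\vec x_{i-1},\vec x_{i+1}\rangle|/|k_{i-1}|$, not the upper control you would need for 1-proportionality; and 1-proportionality genuinely fails in configurations fully consistent with the local equations, e.g.\ all $q_i=3$ with $m$ odd arbitrary, where $|k_i|=1$, $a_i=1$, $K=2$ is allowed and a triple with $|k_{i-1}|=|k_i|=1$ can never be 1-proportional (this is exactly the borderline case that had to be excluded by separate arguments in the three-strand analysis). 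Since $m$ is arbitrary, these are not ``a short finite list,'' and neither direct bookkeeping nor Theorem~\ref{alpolydiv} (which obstructs existence of \emph{any} simple $B_3$-coloring, not a particular labelled configuration, and gives nothing when $1-t+t^2$ does divide the Alexander polynomial) disposes of them. Second, even where \cite[Theorem~1.1]{K2} applies, knowing that each consecutive triple of Dehn twists satisfies no relation does not ``chain'' into a global contradiction: the constraints a coloring imposes are the label-matching equations \eqref{eq1}--\eqref{eq4} around the whole closed pretzel, and freeness of each local triple does not by itself contradict them.

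The paper's proof uses the same-sign hypothesis only to extract $|\beta_i|\ge|k_i|$ from Eq.~\eqref{eq3}, and then runs a monotonicity argument with no appeal to \cite{K2}: writing $\vec x_{i+1}=\alpha_i\vec x_{i-1}+\beta_i\vec x_i$ and pairing with $\vec x_1$, the triangle inequality together with an integrality case analysis (using that $|\alpha_i||k_{i-1}|=|k_i|$ and $|\beta_i||k_{i-1}|$ are integers) gives $|\beta_i|-|\alpha_i|\ge 1$, hence $|\langle\vec x_1,\vec x_{i+1}\rangle|>|\langle\vec x_1,\vec x_i\rangle|$ once $k_1\ne 0$. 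Since closing up the pretzel forces $\vec x_{m+1}=\pm\vec x_1$ and hence $\langle\vec x_1,\vec x_{m+1}\rangle=0$ while $|k_1|>0$, this is a contradiction. To repair your proposal you would need to replace the 1-proportionality step with an argument of this monotone, purely arithmetic flavour (or otherwise handle the infinitely many small-$|k_i|$ configurations your method leaves open).
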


\begin{proof}

Suppose not, then $P(q_1,...,q_m)$ has a non constant coloring, which would mean some $k_i$ is non zero. It follows that $K$, and hence every $k_i, a_i,b_i$ and $c_i$ are non-zero.


\noindent Suppose all the $q_i$ are positive, then we have 
$b_i-c_i=1+k_i^2a_i$, and so from Equation \eqref{eq3}:
$$|k_{i-1}|c_{i-1}+|k_i|c_i =K \left(\frac{|\beta_i|}{|k_i|}-1 \right),\text{ and thus }|\beta_i|\geq |k_i|.$$
\noindent Suppose all the $q_i$ are negative, then we have 
$c_i-b_i=1+k_i^2a_i$, and thus from Equation \eqref{eq3}:
$$|k_{i-1}|b_{i-1}+|k_i|b_i =K \left(\frac{|\beta_i|}{|k_i|}-1 \right),\text{ and thus }|\beta_i|\geq |k_i|.$$
\noindent Thus, if all twist regions have the same sign, we have $|\beta_i|\geq |k_i|$ for all $1\leq i \leq m$.

\begin{claim} With notation as above, we have $(x_1,x_i)\leq (x_1,x_{i+1})$ for all $1\leq i \leq m$.
\end{claim}
\begin{proof} We will prove the claim by using induction on $i$. The base case follows from our assumption that $(x_1,x_2)=k_1>0=(x_1,x_1)$.
Let us inductively assume $(x_1,x_{i-1})\leq (x_1,x_{i})$. We have 
$\vec x_{i+1}=\alpha_{i} \vec x_{i-1}+\beta_{i} \vec x_i$, and hence
$$\langle  \vec x_1,\vec x_{i+1}\rangle= \alpha_{i} \langle  \vec x_1,\vec x_{i-1} \rangle+\beta_{i} \langle  \vec x_1,\vec x_i \rangle $$
\noindent and so we obtain using the triangle inequality
$$(x_1,x_{i+1})\geq -|\alpha_i|(x_1,x_{i-1})+|\beta_i|(x_1,x_{i}),$$
\noindent and by using the induction hypothesis we get
$$(x_1,x_{i+1})\geq (|\beta_i|-|\alpha_i| )(x_1,x_{i}).$$

Recall that $|\alpha_i||k_{i-1}|=(x_{i},x_{i+1})=k_i$ and $|\beta_i||k_{i-1}|=(x_{i-1},x_{i+1})$ are integers.
Now we observe that, $(|\beta_i|-|\alpha_i| )|k_{i-1}|=|\beta_i||k_{i-1}|-|k_i|>|k_i| |k_{i-1}|-|k_i|$.
In case $|k_{i-1}|=1$, then $|\beta_i|-|\alpha_i|$ is a positive integer, so has to be at least 1.
In case $|k_{i}|=1$, then $(|\beta_i|-|\alpha_i|)|k_{i-1}|>|k_{i-1}|-1$, and since the expression on the left is an integer, we get: $(|\beta_i|-|\alpha_i|)|k_{i-1}|\geq |k_{i-1}|$.
In case $|k_{i-1}|>1$ and $|k_{i}|>1$, $(|\beta_i|-|\alpha_i| )|k_{i-1}$, then $|k_i| |k_{i-1}|\geq |k_i|+ |k_{i-1}|$, and hence $(|\beta_i|-|\alpha_i|)|k_{i-1}|>|k_{i-1}|$. In any case we see $(|\beta_i|-|\alpha_i| )|k_{i-1}|\geq|k_{i-1}|$, and so $(|\beta_i|-|\alpha_i| )\geq 1$. Consequently,
$$(x_1,x_{i+1})>(|\beta_i|-|\alpha_i| )(x_1,x_{i})\geq(x_1,x_{i})$$
\text{and the claim holds}.
\end{proof}

Since $\vec x_{m+1}=\pm \vec x_1$, we have $(x_1,x_{m+1})=0$ and $(x_1,x_2)>0$, and this is a contradiction to our assumption that there is a non-trivial coloring. Thus the proposition follows  from the claim.
\end{proof}





\subsection{Pretzel Knots IV:  more than three twist regions one even} In this subsection, we prove the remaining part of Theorem~\ref{thm3pretm}, not covered in the previous subsection.
Suppose we have a pretzel knot $P(q_1,...,q_m)$ with $q_1$ even and every other $q_i$ odd, and one of the $q_i$ is a multiple of 3. 
Let us first consider the case that $m$ is even.

\begin{prop}\label{EvenA} Consider the pretzel knot $P(q_1,...,q_m)$ with $m$, $q_1$ even and every other $q_i$ odd, and one of the $q_i$ is a multiple of 3. Any tricoloring on the pretzel knot $P(q_1,...,q_m)$  lifts to a simple $B_3$- coloring.
\end{prop}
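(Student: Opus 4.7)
The plan is to exploit the parity hypothesis to reduce the lifting question to the period-three arithmetic of like-handed Dehn twists. Since $m$ is even, $q_1$ is even, and every other $q_j$ is odd, the pretzel orientation fact stated just before Proposition~\ref{equallab} places us in case (2): in every twist region the two strands go in the same direction, so each twist region consists of like-handed Dehn twists in the double branched cover, and the labelling formulas of Remark~\ref{same handed} apply. In any such region with bottom transpositions $(a,b)$ the internal $S_3$-colorings cycle with period three as $(a,b) \to (aba, a) \to (b, aba) \to (a,b)$. The analogous $B_3$-cycle $(\alpha,\beta) \to (\alpha\beta\alpha^{-1},\alpha) \to (\beta, \alpha\beta\alpha^{-1}) \to (\alpha,\beta)$ closes after three half-twists provided $\alpha,\beta$ are half-twists satisfying the braid relation $\alpha\beta\alpha=\beta\alpha\beta$, and any two half-twists in $B_3$ that project to distinct transpositions in $S_3$ automatically satisfy this relation (by Thurston's classification of relations between Dehn twists in the once-holed torus, already invoked in the two-bridge section). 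Moreover the braid-relation pair property is preserved by each step of the cycle, so all internal Wirtinger relations in a twist region hold as soon as the bottom pair is chosen compatibly.

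Given a tricoloring $\phi$, I construct the $B_3$-lift by propagation. Choose any half-twist lift $\alpha_1$ of $t_1$ and pick $\alpha_2$ in the fiber over $t_2$ satisfying the braid relation with $\alpha_1$ (or set $\alpha_2=\alpha_1$ if $t_1=t_2$). Propagating through the $q_1$ crossings of region~1 using the Wirtinger rule $\beta \mapsto \alpha\beta\alpha^{-1}$ at the under-strand determines the top-bar labels $\vec{y}_1,\vec{y}_2$ in $B_3$; reading the top bar into region~2 then forces $\alpha_3$ via its Wirtinger relation with $\alpha_2$. Iterating through regions $2,3,\ldots,m$ yields half-twists $\alpha_3,\ldots,\alpha_m$ and all internal labels, with adjacent pairs always in braid relation. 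The sole remaining condition is the cyclic closure of the labels at $\vec{y}_1$: a single identity in $B_3$ between two half-twists, both of which project to the same transposition $\phi(\vec{y}_1)$ by the tricoloring hypothesis.

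The main obstacle is verifying this cyclic closure, and this is where the parity enters crucially. I plan to handle it by reducing modulo the center: the quotient $PSL_2(\mathbb{Z})=B_3/Z(B_3)$ satisfies that any $PSL_2(\mathbb{Z})$-coloring of a link lifts to a $B_3$-coloring, by the discussion preceding Theorem~\ref{thmepi}. Under the standard projection $B_3\twoheadrightarrow PSL_2(\mathbb{Z})$, a parallel-strand twist region with $q_i$ half-twists acts on the label pair as a power of a fixed parabolic element, and the global monodromy around the knot is a product of $m$ such powers. Because $q_1$ is even while $m-1$ is odd (as $m$ is even) with each of the other $q_j$ odd, the parity contributions from the even region and the $m-1$ odd regions cancel in $PSL_2(\mathbb{Z})$, yielding the required identity; the resulting $PSL_2(\mathbb{Z})$-coloring then lifts to the desired simple $B_3$-coloring of $P(q_1,\ldots,q_m)$. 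The key step that requires care is precisely this parity bookkeeping in $PSL_2(\mathbb{Z})$: the evenness of $q_1$ is exactly what absorbs the potential $\pm I$ obstruction coming from having an odd number of odd-twist regions.
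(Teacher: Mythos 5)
Your first paragraph rests on a false claim that then infects the rest of the argument: it is not true that any two half-twists in $B_3$ projecting to distinct transpositions automatically satisfy the braid relation. In the paper's own framework, half-twists correspond to Dehn twists $T_{\vec u}$ about primitive vectors $\vec u\in\mathbb{Z}^2$; by Claim~\ref{odd} they project to distinct transpositions exactly when $\langle \vec u,\vec v\rangle$ is odd, whereas Thurston's criterion (invoked in the two-bridge subsection) gives the braid relation exactly when $|\langle \vec u,\vec v\rangle|=1$. So, for instance, the half-twists corresponding to $(1,0)$ and $(2,3)$ lift distinct transpositions but satisfy no relation at all; if your claim were true, lifting questions of this type would largely trivialize, contradicting the paper's many non-liftability results. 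Consequently your assertion in the propagation step that ``adjacent pairs are always in braid relation'' is unjustified as stated: it can be arranged, but it requires an argument (e.g.\ checking that the forced lifts stay in the standard orbit), not an appeal to this claim.

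The more serious gap is the closure step, which is the entire content of the proposition. You reduce everything to one identity (in $B_3$, or in $PSL_2(\mathbb{Z})$ after killing the center) and then assert it holds because ``a twist region acts as a power of a fixed parabolic element'' and ``the parity contributions from the even region and the $m-1$ odd regions cancel.'' Neither statement is proved, and neither is correct as stated: the monodromy through a twist region depends on the pair of labels entering it, not on a single fixed parabolic, and the closure condition is a specific equality of conjugate half-twists that depends on the tricoloring and on the $q_i$ modulo $3$, not merely on parities of $q_1$, the other $q_i$, and $m$; note also that in $PSL_2(\mathbb{Z})$ there is no central sign left to ``absorb.'' That parity bookkeeping cannot be the mechanism is already visible in the $m$ odd case (Propositions~\ref{EvenB}--\ref{EvenD}), where liftability depends on whether the tricoloring is constant on the even twist region, a distinction your counting argument cannot see. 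The hypotheses actually enter only through the observation you make at the start: $m$ even with exactly one even $q_i$ forces every twist region to have parallel strands, so the tricoloring restricted to each region is the repeating pattern of Remark~\ref{2braid}, and the paper's proof lifts that pattern region by region (the compatibility on the connecting arcs coming from the period-three lift, e.g.\ the mutually braid-related triple $\sigma_1,\sigma_2,\tau$), rather than from any cancellation in $PSL_2(\mathbb{Z})$. As it stands, your proposal leaves the decisive identity unverified.
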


\begin{proof}

Note that in this case if we pick any orientation on the knot, the two strands in each twist region are similarly oriented. Thus for any given tricoloring, the colorings on each of the twist regions have the same transpositions, and we know by Remark \ref{2braid} that we can lift them to simple braid colorings. The result follows.
\end{proof}

Let us now consider the case $m$ is odd. In this case, if we pick any orientation on the knot, the two strands in the first (leftmost) twist region are oppositely oriented, and are similarly oriented in every other twist region.
Similar to the above proposition (we skip the proof as it is essentially the same as above), we obtain:
\begin{prop}\label{EvenB} Consider the pretzel knot $P(q_1,...,q_m)$ with $q_1$ even and every other $q_i$ and $m$  odd, and one of the $q_i$ is a multiple of 3. Any tricoloring on the pretzel knot $P(q_1,...,q_m)$ lifts to a simple $B_3$- coloring if the tricoloring is constant on the leftmost (corresponding to $q_1$) twist region (this includes the case $q_1=0$ and the two strands have different colors).
\end{prop}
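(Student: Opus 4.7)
The approach is to mimic the proof of Proposition \ref{EvenA}, handling the exceptional oppositely oriented twist region $R_1$ separately by exploiting the constancy hypothesis. First I would observe that since $m$ is odd and $q_1$ is the only even parameter, only $R_1$ has its two strands oppositely oriented, while $R_2, \ldots, R_m$ all have similarly oriented strands with an odd number of half-twists; each of these latter regions falls into the setting of Remark \ref{2braid}.

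Next I would lift the tricoloring on $R_1$. When $q_1 \neq 0$, the constancy hypothesis says both strands of $R_1$ carry the same transposition $\tau \in S_3$ throughout, so I would assign a single half-twist $\sigma_\tau \in B_3$ lifting $\tau$ uniformly to both strands at every level; the Wirtinger relation at each crossing of $R_1$ is automatic since $\sigma_\tau \sigma_\tau \sigma_\tau^{-1} = \sigma_\tau$. In the degenerate case $q_1 = 0$, I would lift the two parallel strands to any chosen half-twists $\sigma_a, \sigma_b$ lifting their (possibly distinct) transpositions, with no Wirtinger check required. For each remaining region $R_i$ with $i \geq 2$, Remark \ref{2braid} provides a simple $B_3$-lift of the local tricoloring which is unique up to conjugation in $B_3$; I would propagate the lift cyclically from $R_1$ through $R_2, R_3, \ldots, R_m$, at each step using the conjugation freedom to match the boundary braid colors coming from the already-lifted neighbor.

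The main obstacle is verifying global consistency when closing the cycle at the boundary between $R_m$ and $R_1$. This is where the constancy of $R_1$'s lift pays off: because the assignment on $R_1$ is uniformly $\sigma_\tau$ (or a pair of fixed half-twists in the degenerate case), it presents no nontrivial holonomy at its interface with $R_m$, so the closing-up condition reduces to matching against a constant pattern. Combined with the fact that the underlying tricoloring already closes up globally by hypothesis, any residual ambiguity in the propagated lift lies in the centralizer of $\sigma_\tau$ in $B_3$ and can be trivialized by the simultaneous conjugation freedom provided by Remark \ref{2braid} in the similarly oriented regions. The resulting assignment is a globally consistent simple $B_3$-coloring lifting the given tricoloring.
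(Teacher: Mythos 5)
Your skeleton is the one the paper intends (it dismisses this proposition as ``essentially the same'' as Proposition~\ref{EvenA}): color $R_1$ by a single half twist and treat the parallel regions $R_2,\ldots,R_m$ by the pattern of Remark~\ref{2braid}. But two things are missing. First, to use Remark~\ref{2braid} in a region $R_i$, $i\ge 2$, you must know that whenever the tricoloring is non-constant there, $q_i$ is a multiple of $3$; this is exactly where the constancy hypothesis on $R_1$ does its work for the \emph{other} regions. Because both arcs of $R_1$ carry the same color, the two colors on the left side of $R_2$ agree; since a pair of distinct transpositions propagates through a parallel twist region with period $3$, a non-constant region whose two left-hand colors agree forces $q_i\equiv 0\pmod 3$ and returns its bottom pair at the top, so the ``equal colors on the left'' condition passes to $R_3,\ldots,R_m$ and closes up against $R_1$ (likewise in the $q_1=0$ case). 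You never record this, and without it the region-by-region lift you invoke is not available.

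Second, and more seriously, your closing-up step does not work as stated. Once the lift $\tilde t$ on $R_1$ is pinned, the freedom at a non-constant region $R_i$ is the choice of a half twist $\tilde u_i$ lifting the new transposition and satisfying the braid relation with the pinned neighbor $\tilde u_{i-1}$; there are infinitely many such choices (they differ by conjugation by powers of $\tilde u_{i-1}^{2}$), and nothing in a greedy propagation guarantees that the half twist arriving at the left of $R_m$ braids with $\tilde t$, which is forced on the right of $R_m$. For instance $\sigma_2^{2}\sigma_1\sigma_2^{-2}$ is a perfectly good lift of $(12)$ satisfying the braid relation with $\sigma_2$, but it does not satisfy the braid relation with $\sigma_1$; a chain of such choices leaves $R_m$ with a bottom pair admitting no period-$3$ return, and then no lift exists \emph{for those choices}. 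Your remedy is not correct: the residual ambiguity at step $i$ lies in the centralizer of $\tilde u_{i-1}$, not of $\sigma_\tau$, and conjugating a block of already-matched regions destroys the matches at the $R_1$--$R_2$ junction (only central elements preserve them, and they act trivially). The same objection applies to ``any chosen half-twists'' in the $q_1=0$ case. The clean fix is to make all choices at once instead of propagating: lift every transposition occurring on the bottom arcs by the fixed triple $\sigma_1,\sigma_2,\tau$ of Figure~\ref{braidcol}, all with one handedness. Any two distinct members of this triple satisfy the braid relation, so every bottom pair of every region is a braiding pair; combined with the multiple-of-$3$ fact above, each region returns its bottom pair at the top, and all junction conditions, including the one between $R_m$ and $R_1$, hold simultaneously.
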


However in this case ($m$ odd), there are examples of non-liftable tricolorings. Let us first set up equations corresponding to simple $B_3$-colorings, as we did in the last subsection.

Suppose we label the strands at the bottom and top with the labels $\vec x_1$, ..., $\vec x_m$ and $\vec y_1$, ..., $\vec y_m$ as illustrated in Figure \ref{pev}, as in Remark \ref{basis}.

\begin{figure}[H]
    \centering
    \includegraphics[width=9 cm]{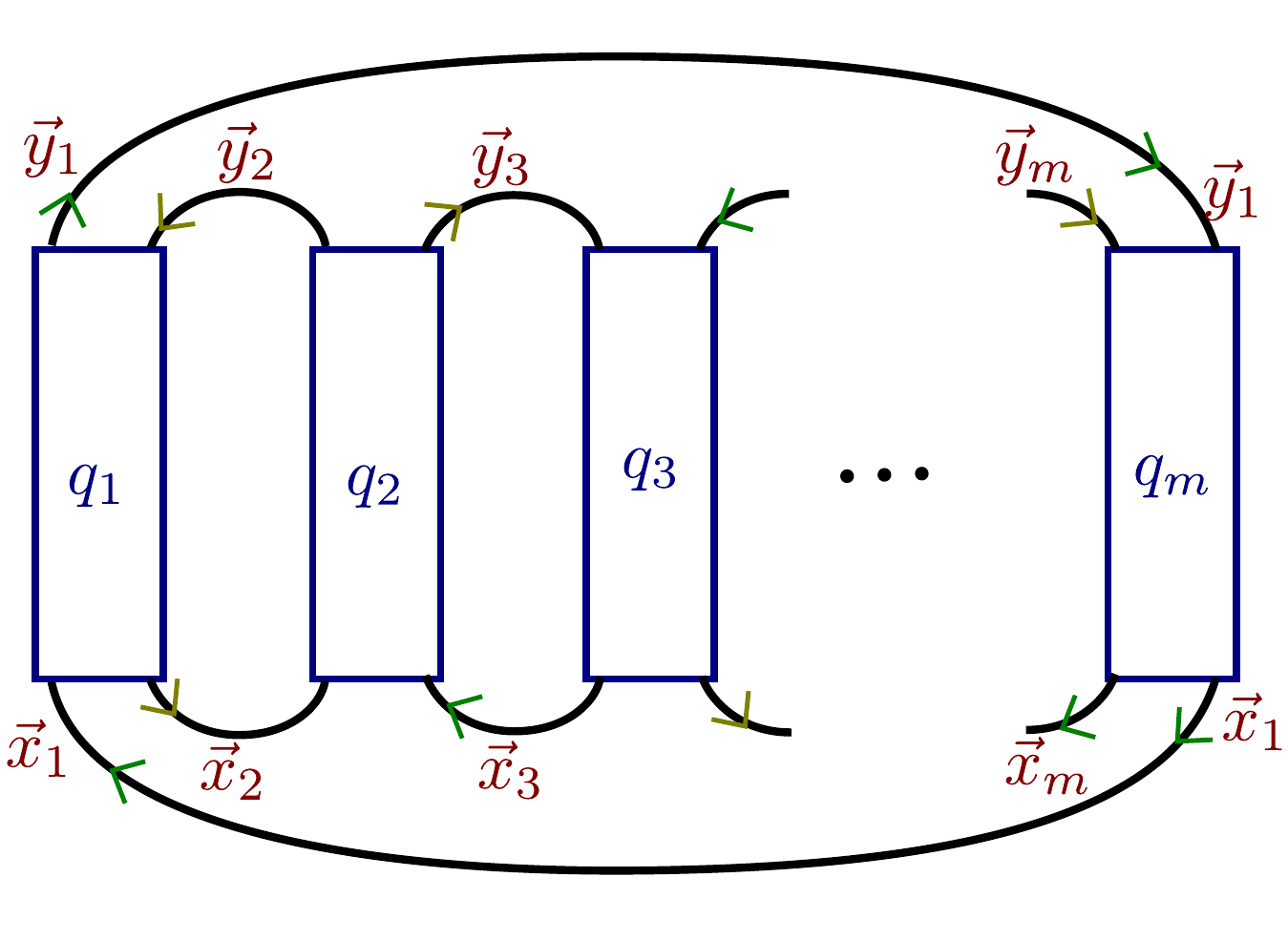}
    \caption{}
    \label{pev}
\end{figure}

\noindent In the first twist region, by Remark \ref{opp handed} we have:
$$\pm \vec y_{1}= (1+k_1^2a_1)\vec x_1 +k_1b_1\vec x_{2}, \text {and } $$
$$\pm \vec y_{2}=k_1b_1\vec x_{1}+ (1+k_1^2c_1)\vec x_{2}.$$
In the $i$-th (with $1<i\leq m$) twist region, similarly Remark \ref{same handed} we see that $$\pm \vec y_{i}= k_ib_i\vec x_i +(1-k_i^2a_i)\vec x_{i+1} $$
$$\pm \vec y_{i+1}=(1-k_i^2a_i)\vec x_{i}+ k_ic_i\vec x_{i+1} $$
Since we assume $$\vec x_{i+1}=\alpha_{i} \vec x_{i-1}+\beta_{i} \vec x_i \text { for some } \alpha_{i}, \beta_{i} \in \mathbb{Q}, \text{ we have }$$
$$(1+k_1^2a_1)\vec x_1 +k_1b_1\vec x_{2}= (1+k_1^2a_1)\vec x_1 +k_1b_1 (\alpha_1 \vec x_m +\beta_1 \vec x_1)=\alpha_1 k_1b_1\vec x_m +(1+k_1^2a_1+\beta_1k_1b_1)\vec x_1, $$
and
$$ k_ib_i\vec x_i +(1-k_i^2a_i)\vec x_{i+1} = k_ib_i\vec x_i +(1-k_i^2a_i)(\alpha_{i} \vec x_{i-1}+\beta_{i} \vec x_i)= \alpha_{i}(1-k_i^2 a_i)\vec x_{i-1}+(\beta_{i}(1-k_i^2 a_i)+ k_ib_i)\vec x_i.$$
Equating the labels $\vec y_{1}$ coming from the $m$-th and first twist regions we see that for some sign $\eta_1 \in \{\pm 1\}$
$$\eta_1 [(1-k_m^2a_m)\vec x_{m}+ k_mc_m\vec x_{1}]=\alpha_1 k_1b_1\vec x_m +(1+k_1^2a_1+\beta_1k_1b_1)\vec x_1. $$
Comparing the coefficients of $\vec x_{m}$ and $\vec x_{1}$ respectively we obtain
\begin{align}\label{pev1}
    \eta_1 (1-k_m^2a_m)=\alpha_1 k_1b_1 \text {  and  } 
    \eta_1k_mc_m=1+k_1^2a_1+\beta_1k_1b_1
\end{align}
Equating the labels $\vec y_{2}$ coming from the first and second twist regions we see that for some sign $\eta_2 \in \{\pm 1\}$
$$\eta_2 [k_1b_1\vec x_{1}+ (1+k_1^2c_1)\vec x_{2}]=\alpha_{2}(1-k_2^2 a_2)\vec x_{1}+(\beta_{2}(1-k_2^2 a_2)+ k_2b_2)\vec x_2. $$
Comparing the coefficients of $\vec x_{1}$ and $\vec x_{2}$ respectively we obtain
\begin{align}\label{pev2}
    \eta_2 k_1b_1=\alpha_{2}(1-k_2^2 a_2) \text {  and  } 
    \eta_2(1+k_1^2c_1)=\beta_{2}(1-k_2^2 a_2)+ k_2b_2.
\end{align}
For $3\leq i \leq m$, by equating the labels $\vec y_{i}$ coming from the $(i-1)$-th and $i$-th twist regions we see that for some sign $\eta_i \in \{\pm 1\}$
$$\eta_i [(1-k_{i-1}^2a_{i-1})\vec x_{i-1}+ k_{i-1}c_{i-1}\vec x_{i}]= \alpha_{i}(1-k_i^2 a_i)\vec x_{i-1}+(\beta_{i}(1-k_i^2 a_i)+ k_ib_i)\vec x_i$$
Comparing the coefficients of $\vec x_{i-1}$ and $\vec x_{i}$ respectively we obtain
\begin{align}\label{pev3}
    \eta_i (1-k_{i-1}^2a_{i-1})=\alpha_{i}(1-k_i^2 a_i) \text {  and  } 
    \eta_ik_{i-1}c_{i-1}=\beta_{i}(1-k_i^2 a_i)+ k_ib_i
\end{align}
Since $\alpha_i=-\frac{k_i}{k_{i-1}}$ for any $ 1\leq i \leq m$, we obtain
\begin{align}\label{pev4}
k_1^2b_1=|k_i(1-k_i^2 a_i)|\text {  for all } 2\leq i \leq m. 
\end{align}

\begin{prop}\label{EvenC} Consider the pretzel knot $P(q_1,...,q_m)$ with $q_1$ non-zero even and every other $q_i$ and $m$  odd, and every $q_i$ is a multiple of 3. None of the tricolorings which are non-constant on each twist region lift to a simple braid coloring.
\end{prop}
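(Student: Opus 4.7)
I would argue by contradiction, using the system of equations~\eqref{pev1}--\eqref{pev4} set up in the previous subsection. Since the tricoloring is non-constant on each twist region, Claim~\ref{odd} forces every intersection number $k_i=\langle \vec x_i,\vec x_{i+1}\rangle$ to be odd and non-zero. The central identity to exploit is~\eqref{pev4}, which pins $K:=k_1^2 b_1=|k_i(1-k_i^2 a_i)|$ to be the same integer for all $i\geq 2$.

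The first step of the plan is to rule out $|k_i|=1$ for some $i\geq 2$. When $|k_i|=1$ and $q_i$ is an odd multiple of $3$, the lifted coloring in that twist region is forced to be the standard repeating pattern of Remark~\ref{2braid}; in the notation of Remark~\ref{same handed} this gives $a_i=1$ and hence $1-k_i^2 a_i=0$. Then $K=0$, so $b_1=0$ (as $k_1\neq 0$), and the first twist region's top labels reduce to $(1+k_1^2 a_1)\vec x_1$ and $(1+k_1^2 c_1)\vec x_2$. Primitivity of these two vectors would require $k_1^2 a_1,k_1^2 c_1\in\{0,-2\}$; but since $q_1$ is a non-zero even multiple of $3$ we have $|q_1|\geq 6$, and the opposite-handed formula for $a_1$ yields $|a_1|=|C_{|q_1|/2}(k_1)|\geq C_3(1)=12$ for any odd $k_1$, contradicting the primitivity bound. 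This forces $|k_i|\geq 3$ for every $i\geq 2$.

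The harder step is to rule out $|k_i|\geq 3$ for every $i\geq 2$. In this regime, for each fixed $q_i$ the value $|k_i(1-k_i^2 a_i)|$ is a strictly monotone polynomial in $|k_i|$: for $q_i=\pm 3$ it equals $|k_i|(k_i^2-1)\geq 24$, and for larger odd multiples of $3$ the values grow rapidly via the $A_n$ recurrences. Comparing these to the explicit expression for $K=k_1^2 b_1$ coming from $q_1$ via the $D_n$ recurrence, the identity $K=|k_i(1-k_i^2 a_i)|$ rules out most candidate $(q_i,|k_i|)$ combinations; what remains can be killed by adapting the coprimality argument of Claim~\ref{cl2} to this setting and combining it with a growth estimate for $|\langle \vec x_1,\vec x_i\rangle|$ analogous to the one appearing in the proof of Theorem~\ref{thm3pretm}(1)(b). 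The odd parity of $m$ plays an essential role, since it prevents the sign cancellations in the product of twist-region contributions that would be needed for the closure condition $\vec x_{m+1}=\pm\vec x_1$.

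The main obstacle will be making this last step uniform in the $q_i$'s when they are not all equal, since each value of $q_i$ produces a different polynomial constraint on $k_i$ and the pairwise equalities $|k_i(1-k_i^2 a_i)|=|k_j(1-k_j^2 a_j)|$ must be analyzed across all such pairs simultaneously; I expect a case split on whether any two adjacent $q_i$'s coincide, analogous to the argument ruling out $q_j=\pm q_{j+1}$ in Theorem~\ref{thm3pretm}(1)(a).
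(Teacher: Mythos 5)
Your first step (ruling out $|k_i|=1$ for $i\geq 2$) is essentially correct, but it is not where the difficulty lies, and the second step---the heart of the proposition---is left as a hope of ``adapting the coprimality argument of Claim~\ref{cl2}'' together with a growth estimate modeled on Theorem~\ref{thm3pretm}(1)(b). Neither adaptation is available: the paper explicitly notes, just before these subsections, that analogues of Claims~\ref{cl1} and~\ref{cl2} are not known once there are more than three twist regions, and the monotonicity argument for $\langle \vec x_1,\vec x_i\rangle$ in Theorem~\ref{thm3pretm} depends on all twist regions having the same sign, a hypothesis you do not have here. Moreover, a pure size/coprimality analysis of the single relation $k_1^2b_1=|k_i(1-k_i^2a_i)|$ cannot by itself exclude all admissible $(q_i,k_i)$, and your appeal to ``$m$ odd prevents sign cancellations'' is not an argument. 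So as written the plan does not close, and you acknowledge as much.

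The paper's actual proof is a short mod-$4$ computation that makes no case split on $|k_i|$ at all. Since $q_1$ is a non-zero multiple of $6$ and $k_1$ is odd (non-constancy of the tricoloring on the first region), the coefficient $b_1$ is a strictly positive multiple of $4$; then \eqref{pev4} forces $4\mid(1-k_i^2a_i)$ for every $i\geq 2$ (each $k_i$ being odd), and since $k_{i-1}\beta_i\in\mathbb{Z}$ with $k_{i-1}$ odd, \eqref{pev3} gives $4\mid\bigl(\eta_ik_{i-1}c_{i-1}-k_ib_i\bigr)$, while $k_i(b_i+c_i)=k_i(1-k_i^2a_i)$ is also divisible by $4$. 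A signed telescoping sum of these congruences around the pretzel, combined with \eqref{pev1}, \eqref{pev2} and the sign identity $\eta_1\eta_2\cdots\eta_m=-1$ (this is where $m$ odd enters, via $\alpha_i=-k_i/k_{i-1}$), shows that $4$ divides $(1+k_1^2a_1)+(1+k_1^2c_1)$; since $4$ also divides the difference $\pm k_1^2b_1$, the coefficient $1+k_1^2a_1$ is even, contradicting primitivity of $\vec y_1$ (whose other coefficient $k_1b_1$ is even too). Your outline contains no mechanism playing the role of this parity computation, so the proposal has a genuine gap.
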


\begin{proof} Suppose not, let us assume there is a simple $B_3$-coloring lifting a tricoloring which is non-constant on each twist region.
Since $q_1$ is non-zero even multiple of 3, we see that $b_1$ is strictly positive and in fact a multiple of 4. Since we assumed that in each twist region the tricoloring is non-constant (and $q_i$ is odd for $i\neq 1$), by Claim \ref{colorofDT} we see that each $k_i$ is odd. By Equation \eqref{pev4}, we see for $1<i\leq m$, we must have $4$ divides $1-k_i^2a_i$, since $k_i$ is odd. Hence by Equation $\eqref{pev3}$, we see that for $1<i\leq m$, $\beta_i(1-k_i^2a_i)$ is an integer divisible by 4 (since $k_{i-1}\beta_i$ is an integer with $k_{i-1}$ odd). Hence for $2<i\leq m$, we have  $\eta_ik_{i-1}c_{i-1}-k_ib_i$ is a multiple of 4. Recall by Remark \ref{same handed} we know that
$k_{i}(b_i+c_i)=k_i(1-k_i^2a_i)$, and the right hand side is a multiple of $4$. Consequently, we can add these quantities (a signed telescoping sum) as follows:
$$k_2c_2-\eta_3k_3b_3=\eta_3\beta_3(1-k_3^2a_3)$$
$$-\eta_3 (k_3c_3-\eta_4k_4b_4)=-\eta_3\eta_4\beta_4(1-k_4^2a_4)$$
$$\eta_3\eta_4 (k_4c_4-\eta_5k_5b_5)=\eta_3\eta_4\eta_5\beta_5(1-k_5^2a_5)$$
$$ \cdots$$
$$(-1)^m(\eta_3\eta_4...\eta_m) (k_{m-1}c_{m-1}-\eta_mk_mb_m)=(-1)^m(\eta_3\eta_4...\eta_m)\beta_m(1-k_m^2a_m)$$

We conclude $k_2c_2 -(\eta_3...\eta_{m})k_mb_m$ is a multiple of 4, say  $4S$, for some integer $S$. We can now use Equations \eqref{pev1} and \eqref{pev2}, to relate this to $1+k_1^2a_1$ and $1+k_1^2c_1$.

$$k_2b_2 -\eta_2(1+k_1^2c_1)=\eta_2(1-k_2^2a_2)$$
$$k_2c_2 -(\eta_3...\eta_{m})k_mb_m=4S$$
$$(\eta_3...\eta_{m})(\eta_1(1+k_1^2a_1)- k_mc_m)=-(\eta_3...\eta_{m})\beta_1\eta_1k_1b_1 $$
Adding these three equations, we see that :
$$(k_2b_2+k_2c_2)-(\eta_3...\eta_{m})(k_mb_m+k_mc_m)+\eta_2[(\eta_1\eta_2...\eta_{m})(1+k_1^2a_1)-1-k_1^2c_1]$$
$$=\eta_2(1-k_2^2a_2)+4S+-(\eta_3...\eta_{m})\beta_1\eta_1k_1b_1$$
Since $4$ divides every other expression it follows that it also divides $$(\eta_1\eta_2...\eta_{m})(1+k_1^2a_1)-1-k_1^2c_1.$$ Now we claim that $\eta_1\eta_2...\eta_{m}=-1$. This follows from multiplying the equality in the left from Equations \eqref{pev1}, \eqref{pev2} and \eqref{pev3} (for all $3\leq i\leq m$), and recalling $\alpha_i=-\frac{k_i}{k_{i-1}}$. Thus it follows that 4 divides $1+k_1^2a_1+1+k_1^2c_1$, and it also divides their difference $1+k_1^2a_1-(1+k_1^2c_1)=\pm k_1^2b_1$. This means $1+k_1^2a_1$ is even, which contradicts the fact that $\vec y_1$ is primitive, and so we have proved the proposition.
\end{proof}

\begin{prop}\label{EvenD} Consider the pretzel knot $P(q_1,...,q_m)$ with $q_1$ even and every other $q_i$ and $m$  odd, and one of the $q_i$ is a multiple of 3. Suppose further  for some $j$ we have $q_j=q_{j+1}=\pm 3$. Then a tricoloring on the pretzel knot $P(q_1,...,q_m)$ lifts to a simple $B_3$- coloring if the tricoloring is constant on the leftmost twist region.
\end{prop}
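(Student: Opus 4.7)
The plan is to explicitly construct a simple $B_3$-coloring lifting the given tricoloring, exploiting the constancy on the leftmost twist region as the source of flexibility. Since Proposition \ref{EvenB} already yields the same conclusion under weaker hypotheses, Proposition \ref{EvenD} is a specialization; nevertheless the construction is instructive, as it is the same ingredient that is needed when investigating whether the converse holds under the added hypothesis $q_j = q_{j+1} = \pm 3$.

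First I would assign braid lifts in each of the non-leftmost twist regions. For each $i \geq 2$, region $i$ has $q_i$ odd half-twists with parallel-oriented strands, since $m$ odd means the orientation in a twist region reverses only at the leftmost one. If the tricoloring is non-constant on region $i$, Remark \ref{2braid} supplies a canonical braid lift (unique up to conjugation) obtained by repeating the three-half-twist pattern of Figure \ref{2br}; if the tricoloring is constant on region $i$, an identity braid suffices. On the leftmost region the strands are oppositely oriented and the tricoloring is constant by hypothesis, so both strands carry the same transposition. Thus each of the $q_1$ half-twists can be lifted to either $\sigma$ or $\sigma^{-1}$ for a single half-twist generator $\sigma$, giving us a free parameter $\beta_1 \in \langle \sigma \rangle \subset B_3$ (supplemented by central powers of $\Delta^2$).

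The main obstacle is verifying the global consistency around the pretzel: after fixing $\beta_2, \ldots, \beta_m$, the cyclic Wirtinger relation reduces to a single equation in $B_3$ that $\beta_1$ must satisfy. Using the recursive formulas for $A_n, B_n, C_n, D_n$ from the preceding subsection, one would compute the accumulated braid monodromy through regions $2, \ldots, m$ and show the residual obstruction lies in the subgroup $\langle \sigma \rangle \cdot Z(B_3)$, which is precisely the range of available $\beta_1$'s. The hypothesis $q_j = q_{j+1} = \pm 3$ simplifies this computation substantially: the pair of adjacent three-half-twist regions contributes (via Figure \ref{2br}) a product of Garside half-twists lying in $Z(B_3) = \langle \Delta^2 \rangle$ up to adjustments absorbable by $\beta_1$, so the required cancellation is automatic and the global lift exists.
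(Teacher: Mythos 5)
You have proved a different statement from the one the paper's proof establishes. As you yourself observe, the implication ``constant on the leftmost region $\Rightarrow$ lifts'' is exactly Proposition \ref{EvenB}, which requires none of the extra hypothesis $q_j=q_{j+1}=\pm 3$; the actual content of Proposition \ref{EvenD} --- what its proof in the paper shows, and what is needed for the ``if and only if'' statements in Theorems \ref{thm3pretm} and \ref{thm4pretB} --- is the converse: under the hypothesis $q_j=q_{j+1}=\pm 3$, \emph{every} simple $B_3$-coloring of $P(q_1,\ldots,q_m)$ is forced to be constant on the leftmost twist region, so a tricoloring that is non-constant there cannot lift (the ``if'' in the printed statement is functioning as ``only if''). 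Your proposal never engages with this necessity argument. In the paper it is carried out in the labelling formalism of the preceding subsections: $q_j=q_{j+1}=\pm 3$ gives $a_j=a_{j+1}=1$ by Figure \ref{lab1}; the first equality of \eqref{pev3} and the monotonicity of $x\mapsto x^3-x$ force $k_j=\eta_{j+1}k_{j+1}$; integrality of $\beta_{j+1}$ and the second equality of \eqref{pev3} give $1-k_j^2\mid c_j-b_{j+1}$ with $b_{j+1}=b_j$, and the recurrences for $A_n,B_n$ then yield $1-k_j^2\mid 2c_j$, which by primitivity of the labels forces $k_j\in\{-1,0,1\}$; hence $k_j(1-k_j^2)=0$, so \eqref{pev4} gives $k_1^2b_1=0$ and therefore $k_1=0$ whenever $q_1\neq 0$, i.e.\ the coloring is constant on the leftmost region. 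None of this appears in your attempt.

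Even judged purely as a proof of the sufficiency direction, your construction is incomplete: the decisive step --- that after choosing lifts in regions $2,\ldots,m$ the single remaining cyclic consistency condition lands in $\langle\sigma\rangle\cdot Z(B_3)$ and can be killed by the choice of $\beta_1$, with the two adjacent $\pm 3$ regions contributing only central powers of $\Delta^2$ --- is asserted (``one would compute \dots and show \dots'') rather than verified, and there is no reason to expect the hypothesis $q_j=q_{j+1}=\pm 3$ to produce such a cancellation: in the paper this hypothesis acts as a constraint that pins down $k_j$ and rules colorings out, not as a source of flexibility for building one. If you only want sufficiency, cite Proposition \ref{EvenB} and stop; to prove Proposition \ref{EvenD} as it is actually used, you need the divisibility and primitivity analysis sketched above, or some substitute for it, which is missing from your proposal.
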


\begin{proof}
Since $q_j=q_{j+1}=\pm 3$, by Figure \ref{lab1}, we have $a_j=1=a_{j+1}$. Thus the first equality of Equation \eqref{pev3} for the pair $i,i+1$ of indices we have $k_j(1-k_j^2)=\eta_j k_{j+1}(1-k_{j+1}^2)$.  Since the real valued function $x\mapsto x^3-x$ is monotonically increasing on $[1,\infty)$, it follows that we must have $k_j=\eta_{j+1}k_{j+1}$. We now see that $\beta_{j+1}$ has to be an integer, as $k_j\beta_{j+1}$ and $(1-k_j^2)\beta_{j+1}$ are (since $k_j$ is coprime to $1-k_j^2$). We see that we have by the right equality of Equation \eqref{pev3} that, $k_{j+1}(c_j-b_{j+1})=\beta_{j+1}(1-k_j^2)$, hence $1-k_j^2$ divides $c_j-b_{j+1}$.
Since $|q_j|=|q_{j+1}|=3$, we have by Figure \ref{lab1}, $b_{j+1}$ must equal $b_j$. Thus $c_j-b_j$ is divisible by $1-k_j^2$, and we know $c_j-b_j=\pm(1-k_j^2)$ from our discussion on the recurrence relations of the functions $A_n$ and $B_n$. Thus $1-k_j^2$ must divide $2c_j$, and since $\vec x_j$ has to be primitive, the only possible options are $k_j\in\{-1,0,1\}$. In any of these cases we see that $k_j(1-k_j^2)=0$, which implies $k_1^2b_1=0$ by Equation \eqref{pev4}. Note that $b_1\neq 0$ if and only if $q_1\neq 0$, so we must have $k_1=0$ if $q_1\neq 0$. Consequently, we see that we must have any simple $B_3$-coloring on such a pretzel knot must be constant in the leftmost twist region.
\end{proof}

\subsection{Lifting Simple $S_4$-Colorings}\label{fourfold}

Consider the homomorphism\footnote{We leave it to the reader to verify that this is indeed a homomorphism.} $\varrho:B_4\rightarrow B_3$ defined on the standard generators by $$\sigma_1\mapsto \sigma_1,\quad\sigma_2\mapsto \sigma_2,\quad \sigma_3\mapsto \sigma_1.$$  Note that if we have a simple $B_4$-coloring $\varphi:\pi_1(S^3\setminus L)\rightarrow B_4$, by composing with $\varrho$, we obtain a simple $B_3$-coloring $\varrho\circ\varphi:\pi_1(S^3\setminus L)\rightarrow B_3$.

We also get a similar homomorphism $\rho:S_4\rightarrow S_3$ defined by the same  formulas (except we now think of $\sigma_i$ as the transposition $(i,i+1)$) which is covered by $\varrho$. The map $\rho$ corresponds to quotienting by the Klein four subgroup $\{Id,(12)(34),(13)(24),(14)(23)\}$. Similar to above, given a simple $S_4$-coloring $\phi:\pi_1(S^3\setminus L)\rightarrow S_4$, by composing with $\rho$, we obtain a simple $S_3$-coloring $\rho\circ\phi:\pi_1(S^3\setminus L)\rightarrow S_3$. The resulting tricoloring is easy to describe: $(12),(13)$ and $(23)$ remains unchanged, and we replace the colors  $(14),(24)$ and $(34)$ by $(23),(13)$ and $(12)$ respectively.

\begin{remark}
We observe that if we have a simple $S_4$-coloring $\phi:\pi_1(S^3\setminus L)\rightarrow S_4$ which lifts to a simple $B_4$-coloring $\varphi:\pi_1(S^3\setminus L)\rightarrow B_4$, it is necessarily the case that 
 the tricoloring $\rho\circ\phi:\pi_1(S^3\setminus L)\rightarrow S_3$ lifts to the simple $B_3$-coloring $\varrho\circ\varphi:\pi_1(S^3\setminus L)\rightarrow B_3$. Since we know certain tricolorings do not lift, we can obstruct liftings of simple $S_4$-colorings.
\end{remark}

 We will need the following facts about simple $S_4$-colorings on pretzel links, and braid colorings in a twist region.

 \begin{fact}[\cite{CK0}] A pretzel link $P(q_1,...,q_m)$ is simple $S_4$-colorable if and only if two of the $q_i$'s are multiples of 3, and another $q_i$ is congruent to one of 0, 2, 3, or 4 modulo 6.
 
 \end{fact}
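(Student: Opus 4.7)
My plan is to translate simple $S_4$-colorings of a pretzel link $P(q_1, \ldots, q_m)$ into combinatorial data on the pretzel diagram and then derive the stated numerical conditions on the $q_i$. Such a coloring is determined by a cyclic sequence $(x_1, \ldots, x_m)$ of transpositions, one for each arc between consecutive twist regions at the bottom, together with a compatible cyclic top sequence $(y_1, \ldots, y_m)$. At each twist region $i$, the pair $(x_i, x_{i+1})$ falls into one of three types according to the subgroup $\langle x_i, x_{i+1}\rangle \subseteq S_4$, and in the standard form where $y_i = x_i$ for all $i$, the compatibility requirement reduces to: if $x_i = x_{i+1}$ (constant type), any $q_i$ works; if $x_i \neq x_{i+1}$ share an element (so they generate an $S_3 \subseteq S_4$, with local half-twist action of period $3$), we need $q_i \equiv 0 \pmod 3$; and if $x_i, x_{i+1}$ are disjoint (generating a Klein four-subgroup $V_4$ with period $2$), we need $q_i$ even.

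For necessity, I identify transpositions in $S_4$ with edges of the complete graph $K_4$ on $\{1,2,3,4\}$; then the image of the coloring equals $S_4$ iff the set of transpositions appearing on all strands spans a connected graph. Using this viewpoint I would establish three structural facts. First, if every twist region is of constant or disjoint type, then the uniqueness of the disjoint partner in $S_4$ forces the cyclic sequence to take at most two values (a transposition and its unique disjoint partner), confining the image to a single $V_4$. Second, if exactly one region is of share-an-element type, traversing the cyclic sequence through the remaining constant/disjoint transitions forces its two endpoints to be simultaneously conjugate and disjoint, a contradiction. Third, if exactly two share-an-element regions occur with all other transitions constant, the constancy of intermediate arcs forces both regions to involve transpositions supported on the same three-element subset, giving image $\subseteq S_3$. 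Consequently, image $S_4$ demands at least two share-an-element regions (yielding two $q_i \equiv 0 \pmod 3$) together with at least one further share-an-element or disjoint region (a third $q_i$ that is a multiple of $3$ or even, covering exactly the residues $\{0,2,3,4\} \pmod 6$).

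For sufficiency, given the stated hypothesis, I would exhibit an explicit coloring realizing image $S_4$. In the representative case of two multiples of $3$ and a third $q_i$ that is even, place two share-an-element regions in cyclic succession with successive colors $(12), (13), (34)$ and close the cycle through a disjoint-type region from $(34)$ back to $(12)$, using the remaining regions in constant form with any $q_i$. The internal transpositions $(23) = (12)(13)(12)$ and $(14) = (13)(34)(13)$ produced inside the two share-an-element regions, together with the visible colors $(12), (13), (34)$, yield all six transpositions of $S_4$ and hence image $S_4$; the case of three multiples of $3$ is analogous using $(12), (13), (14)$. For larger $m$, the surplus regions are filled with constant colorings that impose no constraints.

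The main obstacle will be rigorously completing the necessity step when the coloring is not in standard form, i.e., when a share-an-element region has $q_i \not\equiv 0 \pmod 3$ or a disjoint region has $q_i$ odd, so that $y_i \neq x_i$ somewhere. In such cases one must either normalize the coloring to standard form via a global relabeling that preserves the image, or directly analyze the cyclic composition of local half-twist actions around the pretzel and show that any globally consistent choice of $y_i$ still forces the same numerical conditions on the $q_i$.
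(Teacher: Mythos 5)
The paper itself does not prove this Fact (it is quoted from the Caplinger--Kolay paper in preparation), so your proposal has to stand on its own. Your setup, your three local types, your sufficiency construction, and your standard-form case analysis are essentially right, but the step you yourself flag at the end is a genuine gap, not a technicality: your necessity argument only applies to colorings with $y_i=x_i$ at every twist region, and colorings violating this really do occur (e.g.\ the usual tricoloring of $P(1,1,1)$, viewed in $S_4$, has $y_i\neq x_i$ at every region), so they must be ruled out, not assumed away. Without this, you have not excluded, say, a surjective coloring in which some share-type region has $q_i\not\equiv 0\pmod 3$ or some disjoint-type region has $q_i$ odd, which is exactly what the ``only if'' direction must forbid. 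Moreover, the first fix you suggest --- normalizing to standard form by a global relabeling preserving the image --- cannot work: conjugating the whole coloring by a fixed element of $S_4$ changes nothing about whether the top colors agree with the bottom colors, so the mismatch is not a gauge artifact.

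Your second suggested fix (direct analysis of the cyclic composition) does succeed, and here is the missing lemma you need. In every type of region, either both output colors equal the corresponding inputs (constant type always; disjoint type iff $q_i$ even; share type iff $3\mid q_i$) or both outputs differ from the inputs. Consistency of the single top color shared by regions $i-1$ and $i$ then forces, going around the cycle, that either every region returns its inputs (your standard form) or every region is ``twisted.'' In the all-twisted case, matching at a strand where a disjoint-type region meets a share-type region is impossible (one side produces a transposition disjoint from $x_i$, the other a transposition meeting $x_i$), so all regions have the same type; all twisted disjoint-type forces the colors to alternate between one transposition and its unique disjoint partner, giving image inside a Klein four-group, while all twisted share-type forces consecutive regions to have the same three-element support, giving image inside an $S_3$. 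Hence no coloring with image $S_4$ exists outside standard form, and your facts (a), (b), (c) then finish necessity. One further small point: in the sufficiency direction your recipe assumes the two multiple-of-$3$ regions are cyclically consecutive with the even region closing the cycle; if the even region lies between them one should instead use colors such as $(12),(13),(24)$, which still generate $S_4$, with constant colors on all remaining regions.
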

 
 Finally, we note that Theorem~\ref{thm4pretB} follows from the above discussion, Theorems ~\ref{thm3pret3} and ~\ref{thm3pretm}, and analogues of Remark \ref{2braid} for simple $B_4$-colorings.
 
 
 




\section{Lifting branched coverings in higher dimensions}
\subsection{Lifting coverings of manifolds in dimension bigger than four}
It turns out that liftings of coverings of manifolds of dimension bigger than four is essentially a question about algebra. 
This is because any finitely presented group is the fundamental group of a compact manifold of any dimension greater than or equal to four. For, given any finite presentation, we can attach one handles for each of the generators and one two handle for each of the relations, that winds around the one handles according to the relation. If dimension is bigger than four, we can make sure these one handles are disjoint, and hence we obtain a manifold. This stands in marked contrast with fundamental groups of two and three dimensional manifolds, where the dimension restricts the type of fundamental groups we can come across.

\subsection{Lifting branched coverings over higher dimensional spheres}
\subsubsection{Colorings of Fox's Example 12}
In Example 12 in \cite{F2}, Ralph Fox proved a conjecture of Morton Curtis, by showing that there is a 2-knot (i.e. smoothly knotted 2-sphere in $S^4$)
for which the knot group has torsion.

\begin{figure}[H]
    \centering
    \includegraphics[width=14 cm]{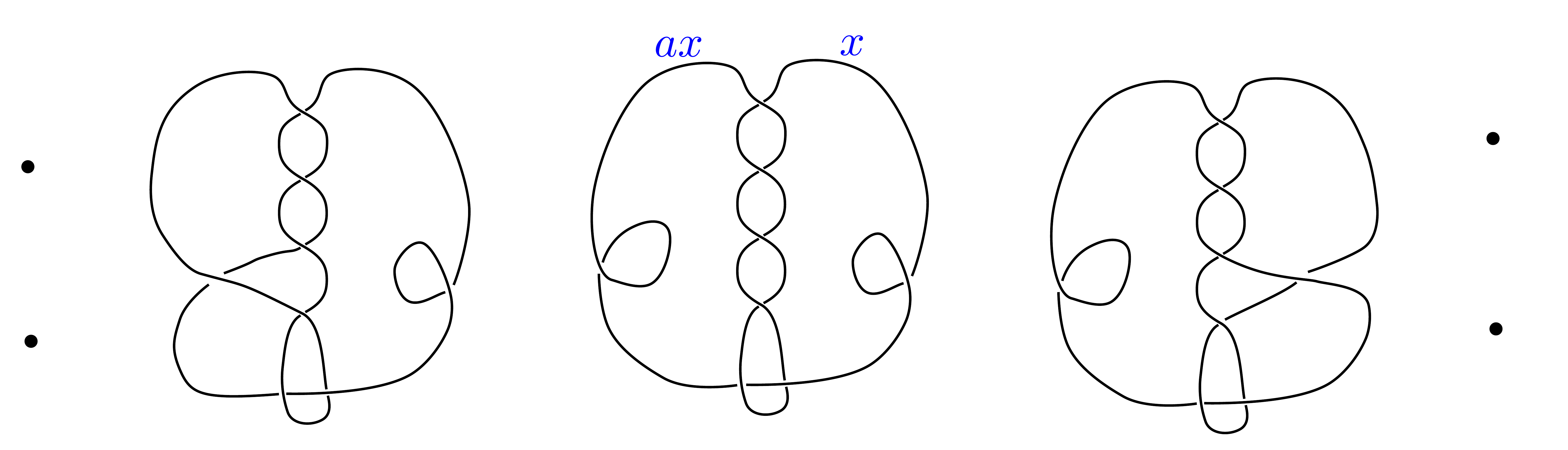}
    \caption{Fox's Example 12: A motion picture description of a 2-knot. Starting with the equatorial cross section (the middle picture), note that the result of attaching a band gives rise to two component unlinks, which can be then capped off.}
    \label{fox12}
\end{figure}

See Figure \ref{fox12} for a motion picture description of the 2-knot. It has knot group 
$$G=\langle x, a \enspace| \enspace xa^2=ax, a^2x=xa\rangle.$$
Note that in $G$ we have $a^2(xa^2)=a^2 (ax)$, and so $a^2(xa)=a^3x$, thus $a^3=1$, whence there is torsion in $G$. Given ths relation, the two relations in the presentation of $G$ become equivalent to $axa=x$. Thus we  have $$G=\langle x, a \enspace| \enspace a^3=1, axa=x\rangle.$$
Consider the tricoloring in Figure \ref{fox12tc},  of Fox's Example 12 defined by (observe that all meridians are conjugate to $x$)
$$a\mapsto (123), \enspace x\mapsto(23).$$

\begin{figure}[H]
    \centering
    \includegraphics[width=14 cm]{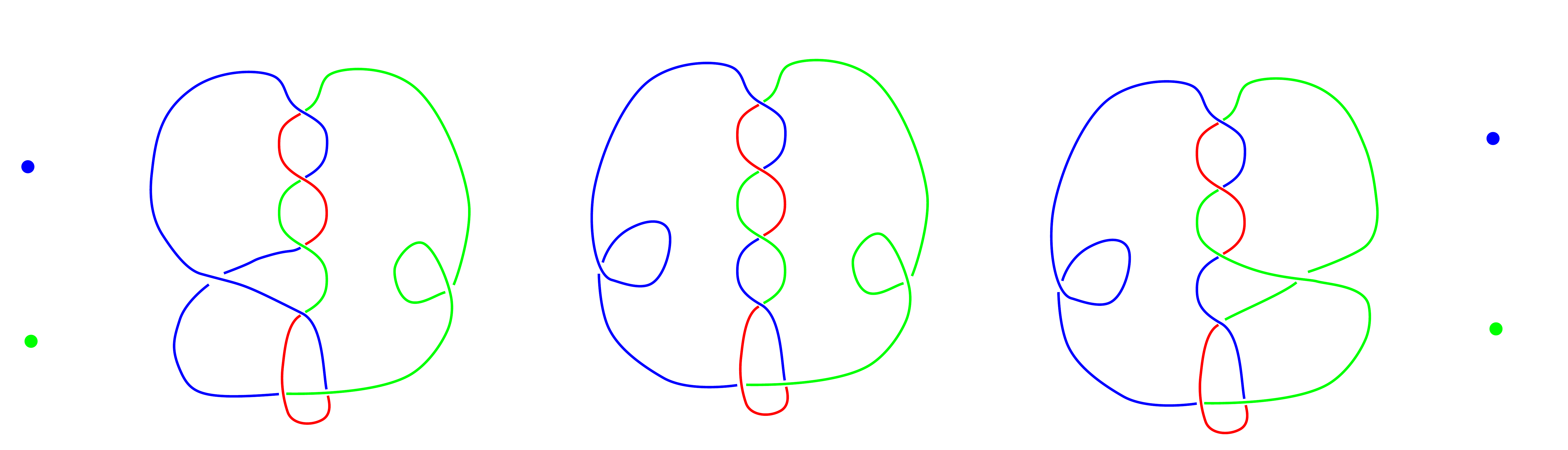}
    \caption{A tricoloring on Fox's Example 12.}
    \label{fox12tc}
\end{figure}
Since $a$ is torsion in $G$, we see that this tricoloring does not lift to a $B_3$-coloring, by Claim \ref{torsionobs}. Note that we do not need to add the constraint that the braid coloring be simple, meaning the branched cover associated to this coloring does not lift to any (even possibly non-locally flat topological) braided embedding.

\subsubsection{Non-liftable branched covers in dimension bigger than 4}
As there are other 2-knot groups with torsion \cite[Section 15.4]{Hil}
, and $n$-knot groups with $n\geq 3$ (Kervaire characterization) with torsion, we have lots of branched covers over $S^n$, with $n\geq 4$, which do not lift to braided embeddings. Once we find a non-liftable branched cover, we can get such families in all higher dimension, using Artin's spinning construction \cite{Ar2}, as explained below.

Recall that given any codimension 2 embedded connected submanifold $K$ in $\mathbb{R}^n$ (equivalently $S^n$), we can delete a standard ball $(B^n,B^{n-2})$ pair from $(\mathbb{R}^n,K)$  and we end up with a properly embedded submanifold $K_1$ with boundary in the right half plane $\mathbb{R}^n_+$. By spinning this half plane about an axis we get $\mathbb{R}^{n+1}$, and $K_1$ sweeps out a submanifold $S(K)$, called the spun knot of $K$. $S(K)$ is a knot (in the same category as the original knot $K$)
with the same knot group as $K$. 

Since the knot groups are isomorphic, there is a canonical bijection between colorings on these knots, and thus a branched coring on a knot lifts to a braided embedding if and only if the corresponding branched cover on the spun knot lifts to a braided embedding. Hence if we start with any non-liftable branched cover over a surface knot $K$ in $S^4$ (for instance the one coming from the tricoloring in Fox's example 12 mentioned in the previous subsection), we see that for any $m\in \mathbb{N}$, there is a branched cover (coming from the corresponding coloring) on the spun knot $S_m(K)$ in $S^{4+m}$ which does not lift to a braided embedding.

\newpage

\end{document}